\numberwithin{equation}{section}
\newtheorem{thm}{Theorem}[section]
\newtheorem{cor}[thm]{Corollary}
\newtheorem{lem}[thm]{Lemma}
\newtheorem{prop}[thm]{Proposition}
\theoremstyle{definition}
\theoremstyle{remark}
\newtheorem{rem}[thm]{Remark}
\newtheorem*{rem*}{Remark}
\newcommand{\ed}{\end {document}}
\newcommand{\tjs}{{\tilde J}^s}
\newcommand{\lsm}{\lesssim}
\newcommand{\bmo}{\operatorname{BMO}}
\newcommand{\boo}{{\dot B^0}_{\infty,\infty}}
\title[Kato-Ponce and Leibniz]
{On Kato-Ponce and fractional Leibniz}
\author[D. Li]{Dong Li}
\address[madli@ust.hk]
{{\sc D. Li}: Department of Mathematics, The Hong Kong University of Science and
Technology, Clear Water Bay, Kowloon, Hong Kong}
\keywords{Kato-Ponce, Fractional Leibniz}
\begin{document}
\begin{abstract}
We show that in the Kato-Ponce inequality $\|J^s(fg)-fJ^s g\|_p
\lesssim \| \partial f \|_{\infty} \| J^{s-1} g \|_p + \| J^s f \|_p
\|g\|_{\infty}$, the $J^s f$ term on the RHS can be replaced by
$J^{s-1} \partial f$. This solves a question 
raised in Kato-Ponce \cite{KP88}. We
propose 
a new fractional Leibniz rule for $D^s=(-\Delta)^{s/2}$ and similar
operators, generalizing the Kenig-Ponce-Vega estimate \cite{KPV93} to all $s>0$.
We also prove a family of generalized and refined Kato-Ponce type inequalities which include
many commutator estimates as special cases. To showcase the sharpness of the estimates at
various endpoint cases, we construct several counterexamples.
In particular, we
show that in the original Kato-Ponce inequality,
the $L^{\infty}$-norm on the RHS cannot be replaced by the weaker BMO norm. Some divergence-free
counterexamples are also included.
\end{abstract}

\maketitle

\tableofcontents 

\section{Introduction}
Let $J^s=(1-\Delta)^{s/2}$, $s\in \mathbb R$. In \cite{KP88},
Kato and Ponce proved the following fundamental commutator estimate:
\begin{align}
 &\| J^s(fg) - fJ^s g \|_p \lesssim_{s,p,d}
  \| J^s f \|_p \| g\|_{\infty}+\| \partial f \|_{\infty} \| J^{s-1} g \|_p,
  \label{a1}
\end{align}
where $s>0$, $1<p<\infty$, $\partial=(\partial_1,\cdots,\partial_d)$ (occasionally we also denote it
as $\nabla$)
and $f$, $g \in \mathcal S(\mathbb R^d)$. On page 892 of \cite{KP88} (see Remark 1.1(c) therein),
they conjectured that the $J^s f$ term on the RHS can be replaced by $J^{s-1} \partial f$.
The first purpose of this paper is to confirm that this is indeed the case.

\begin{thm} \label{thm1}
Let $s>0$, $1<p<\infty$. Then for any $f$, $g\in \mathcal S(\mathbb R^d)$,
\begin{align}
\| J^s(fg) - f J^s g\|_p \lesssim_{s,p,d}
\| J^{s-1} \partial f \|_p \| g\|_{\infty}+\| \partial f \|_{\infty} \| J^{s-1} g\|_p.
\label{a2}
\end{align}
Furthermore for $0<s\le 1$,
\begin{align*}
\| J^s(fg) - fJ^s g \|_p \lesssim_{s,p,d} \| J^{s-1} \partial f \|_p \| g\|_{\infty}.
\end{align*}
\end{thm}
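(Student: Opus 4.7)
The plan is to split $f$ via Littlewood--Paley into $f = f_L + f_H$ with $f_L := P_{\le 0}f$ and $f_H := P_{>0}f$, and to estimate $[J^s, f]g := J^s(fg) - fJ^s g = [J^s, f_L]g + [J^s, f_H]g$ separately.

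For the high-frequency part $f_H$, applying the classical Kato--Ponce inequality \eqref{a1} yields
\begin{equation*}
\|[J^s, f_H]g\|_p \lesssim \|J^s f_H\|_p\|g\|_\infty + \|\partial f_H\|_\infty\|J^{s-1}g\|_p.
\end{equation*}
Since $\widehat{f_H}$ is supported in $\{|\xi|\gtrsim 1\}$, the symbols $(1+|\xi|^2)^{s/2}$ and $|\xi|(1+|\xi|^2)^{(s-1)/2}$ are pointwise comparable there, so by Mikhlin $\|J^s f_H\|_p \approx \|J^{s-1}\partial f_H\|_p \lesssim \|J^{s-1}\partial f\|_p$; and $\|\partial f_H\|_\infty \lesssim \|\partial f\|_\infty$ because $P_{\le 0}$ is convolution with a Schwartz kernel.

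For the slowly varying part $f_L$, I use a Fourier-side commutator expansion. From
\begin{equation*}
\widehat{[J^s, f_L]g}(\xi) = \int \widehat{f_L}(\xi-\eta)\,[m_s(\xi)-m_s(\eta)]\,\widehat g(\eta)\,d\eta, \qquad m_s(\zeta) := (1+|\zeta|^2)^{s/2},
\end{equation*}
and the mean value identity $m_s(\xi)-m_s(\eta) = (\xi-\eta)\cdot\int_0^1\nabla m_s(\theta\xi+(1-\theta)\eta)\,d\theta$, the Fourier support constraint $|\xi-\eta|\le 2$ from $\widehat{f_L}$ keeps $\theta\xi+(1-\theta)\eta$ within $O(1)$ of $\eta$, so $\nabla m_s(\theta\xi+(1-\theta)\eta)\approx \nabla m_s(\eta)$. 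Consequently $[J^s, f_L]g$ is represented (essentially) as $\sum_j(\partial_jf_L)\cdot T_jg$, where each $T_j$ is an $L^p$-bounded Fourier multiplier of order $s-1$, yielding $\|[J^s, f_L]g\|_p \lesssim \|\partial f\|_\infty\|J^{s-1}g\|_p$ via a standard Mikhlin/Coifman--Meyer computation. Together with the $f_H$ bound this establishes \eqref{a2}.

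For the refined estimate in the range $0<s\le 1$, the $\|\partial f\|_\infty\|J^{s-1}g\|_p$ term must be absorbed into $\|J^{s-1}\partial f\|_p\|g\|_\infty$. The key feature of this range is that $J^{s-1}$ is a Bessel potential of nonpositive order, with $L^1$ kernel and hence $L^\infty$-bounded. For the $f_L$ contribution, rebalance H\"older in the representation from the previous step: $\|\partial f_L\cdot J^{s-1}g\|_p \lesssim \|\partial f_L\|_p\|J^{s-1}g\|_\infty \lesssim \|J^{s-1}\partial f\|_p\|g\|_\infty$, the last step using $\|P_{\le 0}\partial f\|_p \lesssim \|J^{s-1}\partial f\|_p$ because $P_{\le 0}J^{1-s}$ is a smooth, compactly Fourier-supported $L^p$-bounded multiplier. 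For the $f_H$ contribution, redo the Coifman--Meyer paraproduct decomposition of $[J^s, f_H]g$: the high-low and diagonal pieces are controlled by $\|J^s f_H\|_p\|g\|_\infty \approx \|J^{s-1}\partial f\|_p\|g\|_\infty$ as before, while the low-high piece, which in the classical \eqref{a1} produces the offending $\|\partial f_H\|_\infty\|J^{s-1}g\|_p$ term, becomes after the commutator expansion a weighted paraproduct
\begin{equation*}
\sum_{j<k} 2^{(1-s)(j-k)}\,P_j(J^{s-1}\partial f)\cdot P_kg,
\end{equation*}
in which the decay factor $2^{(1-s)(j-k)}\le 1$ for $s\le 1$ permits a Littlewood--Paley / square-function bound by $\|J^{s-1}\partial f\|_p\|g\|_\infty$. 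The main obstacle is precisely this last refined bilinear paraproduct estimate at the $L^\infty$-endpoint, which lies just outside the scope of standard Coifman--Meyer and requires careful book-keeping of the Littlewood--Paley bands (or, alternatively, an $H^1$--$\bmo$ duality argument).
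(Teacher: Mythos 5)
Your proposal is broadly sound and takes a genuinely different route from the paper for the first estimate \eqref{a2}. The paper derives Theorem \ref{thm1} as a corollary of the more general Theorem \ref{thm3}, which is proved in Section 8 by working with $\tjs = J^s - I$ applied to a full Bony decomposition of $fg$ (low--high, high--low, diagonal), together with the kernel estimates in Lemma \ref{lem921_1} and the paraproduct Lemma \ref{lemq1}. Your device for \eqref{a2} -- splitting $f = P_{\le 0}f + P_{>0}f$, absorbing the high part into the classical Kato--Ponce inequality via the comparability of the symbols of $J^s$ and $J^{s-1}\partial$ on $\{|\xi|\gtrsim 1\}$, and treating only the low part by a direct commutator expansion -- is a clean reduction the paper does not use, and is attractive precisely because it recycles \eqref{a1} rather than reproving it.

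There are, however, some genuine gaps, several of which you flag yourself. First, the representation of $[J^s, f_L]g$ as ``$\sum_j(\partial_jf_L)\cdot T_jg$ with $T_j$ a multiplier of order $s-1$'' is not quite accurate: the mean-value identity produces a \emph{bilinear} multiplier with symbol $\xi\cdot\int_0^1\nabla m_s(\theta\xi + \eta)\,d\theta$, not a sum of products of scalar operators. To get a literal product one must Taylor-expand in $\xi$ and obtain the leading term $\partial f_L\cdot(\nabla m_s)(D)g$ plus a bilinear error that still has to be estimated (but is easier since it carries $\partial^2 f_L$, which for $f_L$ low-frequency is controlled by $\|\partial f\|_\infty$). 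Likewise, the ``rebalanced H\"older'' $\|\partial f_L\cdot J^{s-1}g\|_p \lesssim \|\partial f_L\|_p\|J^{s-1}g\|_\infty$ only makes sense after this linearisation; in the bilinear form it becomes an endpoint estimate in its own right. Second, and more seriously, the weighted paraproduct $\sum_{j<k}2^{(1-s)(j-k)}P_j(J^{s-1}\partial f)\cdot P_kg$ is genuinely harder than your phrasing suggests. For $s<1$ the factor $2^{(1-s)(j-k)}$ gives geometric decay in $k-j$ so that the resulting ``smeared'' low--high paraproduct admits a square-function bound; but at $s=1$ the factor is identically $1$, the sum $\sum_{j<k}$ has no decay, and the bound $\|\sum_k(J^{s-1}\partial f)_{<k}\,g_k\|_p\lesssim\|J^{s-1}\partial f\|_p\|g\|_\infty$ is exactly the endpoint paraproduct of Lemma \ref{lemq1} (with $\bmo$ replaced by the stronger $L^\infty$), which is proved by $\mathcal H^1$--$\bmo$ duality and is the crux of the matter. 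You do acknowledge that this is ``the main obstacle,'' which is accurate, but the phrase ``$2^{(1-s)(j-k)}\le 1$ permits a Littlewood--Paley / square-function bound'' is misleading: boundedness of the weights is not sufficient, because $\|(g_k)_{l_k^2}\|_\infty$ is \emph{not} controlled by $\|g\|_\infty$. In short, the proposal correctly reduces the theorem to the paper's key paraproduct lemma but does not supply it, and some of the intermediate bilinear reductions need more care than the sketch admits.
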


More general results are available. See Theorem \ref{thm3} in the later part of this introduction.

Denote $D^s=(-\Delta)^{s/2}$. In \cite{KPV93}, Kenig, Ponce and Vega (KPV) proved the fundamental
estimate:
\begin{align}
\| D^s(fg) -f D^s g -g D^s f \|_p \lesssim_{s,s_1,s_2,p,p_1,p_2,d} \| D^{s_1} f\|_{p_1}
\| D^{s_2} g \|_{p_2}, \label{a4}
\end{align}
where $s=s_1+s_2$, $0<s,s_1,s_2<1$, $\frac 1p = \frac 1 {p_1} + \frac 1{p_2}$
and $1<p,p_1,p_2<\infty$. A natural question is to investigate what is the natural
formulation/generalisation of the KPV estimate when $s\ge 1$.
The second purpose of this paper is to solve this problem. Our theorem below establishes a new
fractional Leibniz rule for any $D^s$, $s>0$. It includes various end-point situations.

\begin{thm} \label{thm2}
\underline{Case 1}: $1<p<\infty$.

Let $s>0$ and $1<p<\infty$.  Then for any $s_1,s_2\ge 0$ with $s_1+s_2=s$, and
 any $f, g \in \mathcal S(\mathbb R^d)$, the following hold:

\begin{enumerate}

\item If  $1<p_1,p_2<\infty$ with $\frac 1p= \frac 1 {p_1} + \frac 1{p_2}$, then
\begin{align}
\| D^s(fg) &- \sum_{|\alpha|\le s_1} \frac 1 {\alpha !} \partial^{\alpha} f D^{s,\alpha} g
- \sum_{|\beta|\le s_2} \frac 1 {\beta!} \partial^{\beta} g D^{s,\beta} f \|_p \notag \\
& \lesssim_{s,s_1,s_2,p,p_1,p_2,d} \| D^{s_1} f \|_{p_1}
\| D^{s_2} g \|_{p_2}. \label{a5}
\end{align}

\item If $p_1=p$, $p_2=\infty$, then
\begin{align}
 \| D^s(fg)- & \sum_{|\alpha|<s_1}
 \frac 1 {\alpha!} \partial^{\alpha} f D^{s,\alpha} g - \sum_{|\beta| \le s_2}
 \frac 1 {\beta!} \partial^{\beta} g D^{s,\beta} f \|_p \notag \\
 & \lesssim_{s,s_1,s_2,p,d} \| D^{s_1} f \|_p \| D^{s_2} g \|_{\operatorname{BMO}}.  \label{a5c}
 \end{align}

 \item If $p_1=\infty$, $p_2=p$, then
 \begin{align}
 \| D^s(fg) &-\sum_{|\alpha|\le s_1} \frac 1 {\alpha!} \partial^{\alpha} f D^{s,\alpha} g
 - \sum_{|\beta|<s_2} \frac 1 {\beta!} \partial^{\beta} g D^{s,\beta} f \|_p \notag \\
& \lesssim_{s,s_1,s_2,p,d} \| D^{s_1} f \|_{\operatorname{BMO}} \| D^{s_2} g\|_p. \label{a5e}
\end{align}
\end{enumerate}

In the above we adopt the usual multi-index notation, namely $\alpha=(\alpha_1,\cdots,\alpha_d)$,
$\partial^{\alpha} = \partial_x^{\alpha} = \partial_{x_1}^{\alpha_1} \cdots \partial_{x_d}^{\alpha_d}$,
$|\alpha|=\sum_{j=1}^d \alpha_j$ and $\alpha ! =\alpha_1 ! \cdots \alpha_d !$. The operator $D^{s,\alpha}$ is
defined via Fourier transform\footnote{
The precise form of Fourier transform does not matter. But see \eqref{Fourier_def} for the definition used
in this paper.}
as
\begin{align*}
&\widehat{D^{s,\alpha} g} (\xi) = \widehat{ D^{s,\alpha}}(\xi) \hat g(\xi), \\
&\widehat{D^{s,\alpha}} (\xi) = i^{-|\alpha|} \partial_{\xi}^{\alpha}(|\xi|^s).
\end{align*}

\underline{Case 2}: $\frac 12 <p\le 1$.

If $\frac 12<p\le 1$, $s> \frac d p-d$ or $ s \in 2 \mathbb N$, then
 for any $1<p_1,p_2<\infty$ with $\frac 1p= \frac 1 {p_1} + \frac 1{p_2}$, any $s_1,s_2\ge 0$
 with $s_1+s_2=s$, 
\begin{align}
\| D^s(fg) &- \sum_{|\alpha|\le s_1} \frac 1 {\alpha !} \partial^{\alpha} f D^{s,\alpha} g
- \sum_{|\beta|\le s_2} \frac 1 {\beta!} \partial^{\beta} g D^{s,\beta} f \|_p \notag \\
& \lesssim_{s,s_1,s_2,p,p_1,p_2,d} \| D^{s_1} f \|_{p_1}
\| D^{s_2} g \|_{p_2}. \notag 
\end{align}

\end{thm}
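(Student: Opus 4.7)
The plan is to reduce everything to standard paraproduct estimates after a careful Taylor expansion of the symbol $|\xi|^s$ that precisely reproduces the main terms subtracted on the left-hand side. Using a Littlewood--Paley partition of unity, decompose
\begin{align*}
fg = \pi_f(g) + \pi_g(f) + \pi(f,g),
\end{align*}
where $\pi_f(g) = \sum_k P_{<k-3} f \cdot P_k g$ is the low-high paraproduct, $\pi_g(f)$ is its high-low mirror, and $\pi(f,g) = \sum_{|j-k|\le 2} P_j f \cdot P_k g$ is the diagonal piece. Apply $D^s$ to each and analyze separately.

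For $D^s \pi_f(g)$, every summand is supported where $|\xi-\eta| \ll |\eta|$, so the symbol $|\xi|^s$ admits a Taylor expansion around $\eta$ up to order $N = \lfloor s_1 \rfloor$:
\begin{align*}
|\xi|^s = \sum_{|\alpha|\le N} \frac{(\xi-\eta)^{\alpha}}{\alpha!}\,\partial_{\eta}^{\alpha}(|\eta|^s) + R_N(\xi,\eta).
\end{align*}
By the identity $\partial_{\eta}^{\alpha}(|\eta|^s) = i^{|\alpha|}\widehat{D^{s,\alpha}}(\eta)$ built into the definition of $D^{s,\alpha}$, the main-term contribution reassembles into $\sum_{|\alpha|\le N} \frac{1}{\alpha!}\pi_{\partial^{\alpha} f}(D^{s,\alpha} g)$, which matches the subtracted sum in \eqref{a5} modulo harmless low-frequency tails. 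The remainder $R_N$ is a smooth symbol of bi-homogeneous order $(N+1,\ s-N-1)$ on the paraproduct support; factoring $R_N(\xi,\eta) = |\xi-\eta|^{s_1}|\eta|^{s_2} m(\xi,\eta)$ with $m$ a Coifman--Meyer type symbol reduces its contribution to the classical paraproduct bound $\lesssim \|D^{s_1} f\|_{p_1} \|D^{s_2} g\|_{p_2}$. The high-low piece $D^s \pi_g(f)$ is treated by the mirror argument and produces the $\sum_{|\beta|\le s_2}$ sum.

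The diagonal $D^s \pi(f,g)$ admits no Taylor cancellation but is controlled directly: for $|j-k|\le 2$ the pointwise/frequency bound $|D^s(P_j f \cdot P_k g)| \lesssim 2^{js_1}|P_j f|\cdot 2^{ks_2}|P_k g|$, combined with the square function characterization and the Fefferman--Stein vector-valued maximal inequality, yields the desired product estimate. This completes Case 1(i). For the BMO endpoints in (ii)-(iii), the subtlety is that when $p_2 = \infty$ a top-order Taylor term $\partial^{\alpha} f \cdot D^{s,\alpha} g$ with $|\alpha|=s_1 \in \mathbb N$ would require the pointwise product of an $L^p$ and a BMO function, which is not well-defined; dropping exactly this one term yields the strict inequality $|\alpha| < s_1$ in \eqref{a5c}, after which the classical $L^p \times \bmo \to L^p$ boundedness of the low-high paraproduct closes the estimate. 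Bound \eqref{a5e} follows by symmetry.

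For Case 2 ($\tfrac12 < p \le 1$) the same paraproduct skeleton is used, but estimates are performed in the Hardy space $H^p \hookrightarrow L^p$; the threshold $s > d/p - d$ is precisely what the resulting atoms require by way of moment conditions, and the exceptional case $s \in 2\mathbb N$ reduces to the classical Leibniz rule since $D^s$ is then a local differential operator. The main technical obstacle throughout is the bookkeeping at integer values of $s_1, s_2$, where the Taylor remainder symbol is borderline: one must simultaneously choose $N$ to match the correct strict/non-strict sum and verify that the remainder multiplier remains Coifman--Meyer as $\eta \to 0$, which is ensured by the Littlewood--Paley frequency localization but must be tracked carefully through each case.
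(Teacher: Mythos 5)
Your Case 1 strategy is essentially the paper's: Bony decomposition into low-high, high-low, and diagonal pieces, followed by a Taylor expansion of the symbol. The only cosmetic difference is where the expansion is phrased: you expand $|\zeta+\eta|^s$ in $\zeta$ around $\eta$ on the Fourier side, whereas the paper writes the commutator $[D^s, f_{\le j-2}]g_j$ as a kernel convolution and Taylor-expands $f_{\le j-2}(x-\theta y)$ in $\theta$ on the physical side; these encode the same main terms $\partial^\alpha f\,D^{s,\alpha}g$. Your identification of why $|\alpha|=s_1$ must be dropped in the BMO endpoint is also essentially right. One place where you are glossing over a genuine step: the passage from the frequency-localized sums $\sum_j \partial^\alpha f_{\le j-2}\, D^{s,\alpha} g_j$ to the clean subtracted expression $\partial^\alpha f\, D^{s,\alpha} g$ is not merely ``harmless low-frequency tails.'' The difference contains high-low and diagonal contributions of the product $\partial^\alpha f\cdot D^{s,\alpha}g$, and in the BMO endpoint cases the paper treats these via Lemma \ref{lemq1} and a careful subcase analysis; this should be argued, not waved off.

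On Case 2 ($\tfrac 12 < p\le 1$), your proposal diverges from the paper and is not developed enough to assess. The paper's key observation is that the low-high and high-low pieces already work for all $\tfrac12 < p < \infty$ with no change; only the diagonal piece needs a new idea, and there the paper uses the Grafakos--Oh device: write $|z|^s\chi_1(z)=\sum_m C_m^s e^{2\pi i z\cdot m/L}$ as a Fourier series on a large torus (so $C_m^s = O((1+|m|)^{-d-s})$), leading to a sum $\sum_m C_m^s \sum_j 2^{js} P_j^m f_j\, P_j^m \tilde g_j$ that converges in $L^p$ precisely when $p(d+s)>d$, i.e.\ $s>d/p-d$, with the $s\in 2\mathbb N$ case handled by the faster decay of $C_m^s$. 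Your alternative sketch (``perform estimates in $H^p$, moment conditions on atoms require $s>d/p-d$'') is plausible heuristically, but the paraproduct pieces are not literally atoms and the claimed equivalence of the threshold to an atomic moment condition is not established. If you want to make the Hardy-space route rigorous you would need to verify that the relevant bilinear operator maps into $H^p$ under the stated hypothesis, which is the actual content that the paper's Fourier-series argument supplies.
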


\begin{rem}
As usual empty summation (such as $\sum_{0\le |\alpha|<0}$) is defined as zero. Let $0<s,s_1,s_2<1$ in
\eqref{a5}, then
\begin{align*}
\| D^s(fg) - f D^s g - g D^s f \|_p
\lesssim_{s,s_1,s_2,p_1,p_2,d} \| D^{s_1} f \|_{p_1} \| D^{s_2} g\|_{p_2},
\end{align*}
i.e. we recover the estimate \eqref{a4}. 
Let $s_1=0$, $s_2=s$, $0<s<1$ in \eqref{a5c}, then we get
\begin{align*}
\| D^s(fg) - g D^s f \|_p \lesssim \| f \|_p \| D^s g \|_{\operatorname{BMO}}
\lesssim \| f \|_p \| D^s g \|_{\infty}, \quad 1<p<\infty.
\end{align*}
Similarly let $s_1=s$, $0<s\le 1$ in \eqref{a5c}, then
we get
\begin{align}
\| D^s(fg) -f D^s g - g D^s f \|_p \lesssim_{s,p,d} \| D^s f \|_p \| g\|_{\bmo}.
\label{a5c_rm1}
\end{align}
Thus for $0<s\le 1$, $1<p<\infty$,
\begin{align}
\| D^s (fg) - f D^s g\|_p \lesssim_{s,p,d} \| D^s f \|_p \| g\|_{\infty}.
\label{a5c_rm2}
\end{align}
The inequality \eqref{a5c_rm2} for $0<s<1$, $1<p<\infty$ was proved in \cite{KPV93} (see also
Problem 2.7, Problem 2.8 on page 77 of  \cite{MS2}).
Let us also point it out that the estimate \eqref{a5c_rm1} suggests that, due to the presence of the
term $g D^sf$, the $L^{\infty}$ norm on the RHS of
\eqref{a5c_rm2} is sharp and cannot be replaced by the weaker BMO norm in general. See
Corollary \ref{prop922_1_cor1} for more definitive and precise statements.
\end{rem}

\begin{rem*}
If we slightly abuse our notation and denote $D^{s,\alpha}$ as a fractional
differentiation operator $\tilde D^{s-|\alpha|}$ (i.e. of order $s-|\alpha|$), then Theorem \ref{thm2} roughly says that
(suppressing constant coefficients)
\begin{align*}
D^s(fg) & \sim f D^s g + \partial f \tilde D^{s-1} g + \cdots + \partial^{[s_1]} f \tilde D^{s-[s_1]} g \notag \\
& \quad + g D^s f + \partial g \tilde D^{s-1} f + \cdots + \partial^{[s_2]} g \tilde D^{s-[s_2]} f \notag \\
& \quad + O(\| D^{s_1} f \|_{p_1} \cdot \| D^{s_2} g \|_{p_2}).
\end{align*}
In yet other words, 
neglecting error terms, the nonlocal operator $D^s$ can be effectively
regarded as a local operator obeying a generalized Leibniz rule.
\end{rem*}

Theorem \ref{thm2} actually holds for more general differential (and also pseudo-differential) operators. For
example, for $s>0$ suppose $A^s$ is a differential operator such that its symbol $\widehat{A^s}(\xi)$  is a homogeneous
function of degree $s$ and $\widehat{A^s}(\xi) \in C^{\infty}(\mathbb S^{d-1})$ (for example:
$\widehat{A^s}(\xi) = i |\xi|^{s-1} \xi_1$). Then we have the following corollary. We shall omit the proof since
it will be essentially a repetition of the proof of Theorem \ref{thm2}.

\begin{cor} \label{cor2}
 Let $1<p<\infty$ and $s>0$.
 Then for any $s_1,s_2\ge 0$ with $s_1+s_2=s$, and
 any $f, g \in \mathcal S(\mathbb R^d)$, the following hold:

\begin{enumerate}

\item If $1<p_1,p_2<\infty$ with $\frac 1p= \frac 1 {p_1} + \frac 1{p_2}$, then
\begin{align}
\| A^s(fg) &- \sum_{|\alpha|\le s_1} \frac 1 {\alpha !} \partial^{\alpha} f A^{s,\alpha} g
- \sum_{|\beta|\le s_2} \frac 1 {\beta!} \partial^{\beta} g A^{s,\beta} f \|_p \notag \\
& \lesssim_{A^s,s,s_1,s_2,p,p_1,p_2,d} \| D^{s_1} f \|_{p_1}
\| D^{s_2} g \|_{p_2}. \label{a6}
\end{align}

\item If $p_1=p$, $p_2=\infty$, then
\begin{align}
 \| A^s(fg)- & \sum_{|\alpha|<s_1}
 \frac 1 {\alpha!} \partial^{\alpha} f A^{s,\alpha} g - \sum_{|\beta| \le s_2}
 \frac 1 {\beta!} \partial^{\beta} g A^{s,\beta} f \|_p \notag \\
 & \lesssim_{A^s,s,s_1,s_2,p,d} \| D^{s_1} f \|_p \| D^{s_2} g \|_{\operatorname{BMO}}.  \label{a6a}
 \end{align}

 \item If $p_1=\infty$, $p_2=p$, then
 \begin{align}
 \| A^s(fg) &-\sum_{|\alpha|\le s_1} \frac 1 {\alpha!} \partial^{\alpha} f A^{s,\alpha} g
 - \sum_{|\beta|<s_2} \frac 1 {\beta!} \partial^{\beta} g A^{s,\beta} f \|_p \notag \\
& \lesssim_{A^s,s,s_1,s_2,p,d} \| D^{s_1} f \|_{\operatorname{BMO}} \| D^{s_2} g\|_p. \notag
\end{align}
\end{enumerate}

In the above the operator $A^{s,\alpha}$ is defined via Fourier transform as
\begin{align*}
&\widehat{A^{s,\alpha} g} (\xi) = i^{-|\alpha|} \partial_{\xi}^{\alpha} \Bigl( \widehat{ A^{s}}(\xi)
\Bigr) \hat g(\xi).
\end{align*}

\end{cor}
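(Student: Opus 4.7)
The plan is to repeat the proof of Theorem \ref{thm2} almost verbatim, replacing the multiplier $|\xi|^s$ by $\widehat{A^s}(\xi)$ throughout. The only two properties of $|\xi|^s$ actually used in that proof are that it is homogeneous of degree $s$ and that it is $C^\infty$ on $\mathbb{S}^{d-1}$, so that for every multi-index $\alpha$ the derivative $\partial_\xi^\alpha |\xi|^s$ is homogeneous of degree $s - |\alpha|$ and smooth away from the origin. Both properties are granted for $\widehat{A^s}(\xi)$ by hypothesis, so the whole apparatus transfers.

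Concretely, I would decompose $f$ and $g$ via a Littlewood--Paley partition of unity and split $fg$ into the three standard paraproduct regions: low-high ($f_j g_k$ with $j \ll k$), high-low ($j \gg k$), and high-high ($j \sim k$). On the low-high piece, where in Fourier variables $|\xi| \ll |\eta|$, I would Taylor-expand $\widehat{A^s}(\xi+\eta)$ in $\xi$ around $\eta$ up to order just above $s_1$. Using the identities $\xi^\alpha \hat f = i^{-|\alpha|} \widehat{\partial^\alpha f}$ and $\partial^\alpha \widehat{A^s}(\eta)\, \hat g = i^{|\alpha|} \widehat{A^{s,\alpha} g}$, the explicit terms of the Taylor polynomial reassemble into $\sum_{|\alpha|\le s_1} \tfrac{1}{\alpha!}\, \partial^\alpha f\, A^{s,\alpha} g$. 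The high-low region is symmetric and supplies the $\sum_{|\beta|\le s_2}$ contribution, while the high-high piece carries no main term and is estimated directly by frequency localization as in Theorem \ref{thm2}.

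The main technical obstacle, and essentially the only place where $A^s$ enters nontrivially beyond the proof of Theorem \ref{thm2}, is the multiplier bound on the Taylor remainder
\begin{align*}
m(\xi,\eta) \;=\; \widehat{A^s}(\xi+\eta) - \sum_{|\alpha|<N} \tfrac{1}{\alpha!}\, \xi^\alpha\, \partial^\alpha \widehat{A^s}(\eta)
\end{align*}
in the regime $|\xi| \le \tfrac12 |\eta|$. Writing $m$ in integral Taylor form and using the degree-$s$ homogeneity together with the $C^\infty(\mathbb{S}^{d-1})$ smoothness of $\widehat{A^s}$, one verifies the symbol estimate
\begin{align*}
|\partial_\xi^a \partial_\eta^b m(\xi,\eta)| \;\lesssim_{A^s,a,b,N}\; |\xi|^{N-|a|}\, |\eta|^{s - N - |b|},
\end{align*}
which is exactly the bound enjoyed by the analogous remainder for $|\xi|^s$. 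With this in hand, every Mikhlin--H\"ormander, maximal function, and Hardy space--BMO duality argument from the proof of Theorem \ref{thm2} applies line by line; in particular, the strict inequalities $|\alpha| < s_1$ and $|\beta| < s_2$ appearing in the BMO endpoints \eqref{a6a} and its mirror arise from the same integrability obstruction as in Theorem \ref{thm2} and are handled identically.
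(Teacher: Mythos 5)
Your proposal is correct and takes precisely the approach the paper indicates: the paper itself omits the proof of Corollary \ref{cor2} on the grounds that it is ``essentially a repetition of the proof of Theorem \ref{thm2},'' and your plan is exactly that repetition, with the correct key observation that the only properties of $|\xi|^s$ invoked in proving Theorem \ref{thm2} are degree-$s$ homogeneity and smoothness away from the origin, both of which $\widehat{A^s}(\xi)$ shares by hypothesis. The one cosmetic difference is that you phrase the Taylor-remainder control as a bilinear symbol estimate
$|\partial_\xi^a\partial_\eta^b m(\xi,\eta)|\lesssim |\xi|^{N-|a|}|\eta|^{s-N-|b|}$
in the region $|\xi|\ll|\eta|$, whereas the paper's proof of Proposition \ref{propforthm2} works on the physical side, Taylor-expanding $h(\theta)=f_{\le j-2}(x-\theta y)$ and controlling the remainder pointwise by $\mathcal M(\partial^{m_0+1}f_{\le j-2})(x)\,(1+2^j|y|)^d|y|^{m_0+1}$ via Lemma \ref{lemp2}, which pairs directly with the rapid decay of the kernel $K_j=A^s\tilde P_j\delta_0$ (this kernel is Schwartz at scale $2^{-j}$ precisely because $\widehat{A^s}\tilde\phi\in C_c^\infty$). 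The two formulations are equivalent---your symbol estimate is the Fourier-side restatement of the paper's kernel-plus-Taylor-remainder bound---and either one plugs into the rest of the machinery (the reassembly into $A^{s,\alpha}$ via the identities you state, the Fefferman--Stein maximal inequality, Lemma \ref{lemq1} for the BMO endpoints) exactly as you claim. Your treatment of the strict/non-strict index constraints at the endpoint cases is also consistent with what the paper does in the simplification step of Theorem \ref{thm2}'s proof.
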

\begin{rem*}
A further interesting problem is to identify the explicit dependence on the operators $A^s$ in
the implied constants.  This is useful in some problems connected with a family of operators rather
than a single operator.
\end{rem*}
\begin{rem*}
One should note that the error terms on the RHS of the above inequalities involve $D^s$ rather than
$A^s$. In particular for $0<s<1$, we have the following commonly used ones:
\begin{align*}
&\| A^s (fg) - f A^s g - g A^s f \|_p 
\lesssim \| f \|_{p_1} \| D^s g \|_{p_2}, \quad 1<p_1,p_2<\infty,\; \frac 1 {p_1}+\frac 1{p_2}=\frac 1p;\\
&\| A^s (fg) - g A^s f \|_p \lesssim \| f \|_p \| D^s g \|_{\operatorname{BMO}},  \quad 1<p<\infty;\\
&\| A^s (fg) - fA^s g - g A^s f \|_p \lesssim \| f \|_{\operatorname{BMO}} \| D^s g \|_p, \quad 1<p<\infty.
\end{align*}
Also for $1\le s<2$, 
\begin{align*}
&\| A^s (fg) - f A^s g - g A^s f - \nabla g \cdot A^{s,\nabla} f \|_p 
\lesssim \| f \|_{p_1} \| D^s g\|_{p_2}, \quad 1<p_1,p_2,p<\infty,\notag \\
&\qquad\qquad \; \frac 1{p_1}
+\frac 1 {p_2} = \frac 1p;\\
&\| A^s(fg) - g A^s f - \nabla g \cdot A^{s,\nabla} f \|_p 
\lesssim \| f \|_p \| D^s g \|_{\operatorname{BMO}}, \quad 1<p<\infty; \\
&\| A^s(fg) -f A^s g - g A^s f - \nabla g \cdot A^{s,\nabla} f \|_p
\lesssim \| f \|_{\operatorname{BMO}} \| D^s g \|_p, \quad 1<p<\infty.
\end{align*}
where $\widehat{A^{s,\nabla}}(\xi) = -i \nabla_{\xi} (\widehat{A^s}(\xi) )$.
\end{rem*}

\begin{rem*}
At this point it is useful to point out the explicit connection with the classical Leibniz rule and do a sanity
check of our formulae. Let $\gamma=(\gamma_1,\cdots,\gamma_d)$ be a multi-index.
 Recall that the classic Leibniz formula for a differential operator $\partial^{\gamma}$ takes the form
 \begin{align}
 \partial^{\gamma}(fg) = \sum_{\alpha \le \gamma}
 \frac{\gamma !} {\alpha ! (\gamma-\alpha)!} \partial^{\alpha} f \partial^{\gamma-\alpha} g. \label{a7}
 \end{align}
Set $s=|\gamma|$ and denote $A^s = \partial^{\gamma}$. Then easy to check that (in our notation)
\begin{align*}
\widehat{A^{s,\alpha}} (\xi) & = i^{-|\alpha|} \partial_{\xi}^{\alpha} ( i^{|\gamma|} \xi^{\gamma} ) \notag \\
& = i^{|\gamma|-|\alpha|}
\frac{\gamma !} {(\gamma-\alpha)!} \xi^{\gamma-\alpha} = \frac{\gamma !}{(\gamma-\alpha)!}
i^{|\gamma-\alpha|} \xi^{\gamma-\alpha}.
\end{align*}
Thus $A^{s,\alpha} = \frac{\gamma !}{ (\gamma-\alpha)!} \partial^{\gamma-\alpha}$.  Clearly \eqref{a6} then takes
the form
\begin{align}
\| \partial^{\gamma}(fg) -\sum_{|\alpha|\le s_1}
&\frac{\gamma!}{\alpha! (\gamma-\alpha)!} \partial^{\alpha} f \partial^{\gamma-\alpha} g 
 -\sum_{|\beta|\le s_2} \frac{\gamma !} {\beta! (\gamma-\beta)!} \partial^{\beta} g \partial^{\gamma-\beta} f
\|_p \notag \\
& \lesssim \| D^{s_1} f \|_{p_1} \| D^{s_2} g \|_{p_2}. \label{a8}
\end{align}
Note that \eqref{a8} captures essentially the main terms in \eqref{a7}. In this sense the formula \eqref{a8}
provides a ``natural'' generalization of the classical Leibniz formula \eqref{a7} to the fractional setting.
\end{rem*}

\begin{rem*}
One need not worry about the possibility that $\partial^{\alpha} f \partial^{\gamma-\alpha} g$ may coincide
with the terms $\partial^{\beta} g \partial^{\gamma-\beta} f$. This is because due to the constraint $|\alpha|\le s_1$,
$|\beta| \le s_2$, such two terms possibly coincide only when $|\alpha|=s_1$, $|\beta|=s_2$ and $s_1$, $s_2$ are
both integers. But in this case $\partial^{\alpha} f \partial^{\gamma-\alpha} g $  can be easily bounded by
$\| D^{s_1} f\|_{p_1} \| D^{s_2} g \|_{p_2}$ and thus can be included in the error term on the RHS.
\end{rem*}

\begin{rem*}
One may wonder what is the origin for the appearance of the operators $D^{s,\alpha}$, $A^{s,\alpha}$ in the new Leibniz rule. To see this, consider the symbol $\sigma(\xi,\eta)=|\xi+\eta|^s$ corresponding to
the operator $D^s$ which on the Fourier side reads:
\begin{align*}
\sigma_D(f,g) =\frac 1 {(2\pi)^{2d}} \int \sigma(\xi, \eta) \hat f(\xi) \hat g(\eta ) e^{i x\cdot (\xi+\eta)} d
\xi d\eta.
\end{align*}
Our new fractional Leibniz rule amounts to the justification of a Taylor expansion:
\begin{align*}
\sigma(\xi, \eta) = \sum_{|\alpha|\le s_1} \frac {(\partial^{\alpha}_\xi \sigma) (0,\eta)}
{\alpha !} \xi^{\alpha} + \sum_{|\beta|\le s_2} 
\frac{ (\partial_{\eta}^{\beta} \sigma)(\xi,0) } {\beta!} \eta^{\beta} +\sigma_{\operatorname{err}} ,
\end{align*}
where (in the non-endpoint situation $1<p_1,p_2<\infty$)
\begin{align*}
\| \sigma_{\operatorname{err} } (f,g) \|_p \lesssim \| D^{s_1} f \|_{p_1} \| D^{s_2} g\|_{p_2}.
\end{align*}
One may further write 
\begin{align*}
 {(\partial^{\alpha}_\xi \sigma) (0,\eta)}
 \xi^{\alpha} = i^{-\alpha}  {(\partial^{\alpha}_\xi \sigma )(0,\eta)} 
 \cdot (i\xi)^{\alpha}
 \end{align*}
and clearly the first factor accords with the $D^{s,\alpha}$ operator mentioned earlier. In short summary our new Leibniz rule is simply  a separable Taylor expansion of the symbol 
$\sigma(\xi,\eta)$ up to $O(D^{s_1} f D^{s_2} g)$ in an appropriate sense!

\end{rem*}

In the following remarks, we discuss a few applications of the new Leibniz rule. 

\begin{rem}
Let $m\ge 1$ be an integer and recall $\partial=(\partial_1,\cdots,\partial_d)$ on $\mathbb R^d$. For any
integer $n\ge 1$ denote
\begin{align*}
\| \partial^n f \|_p = \sum_{|\alpha|=n} \| \partial^{\alpha} f \|_p.
\end{align*}
The classical Kato-Ponce inequality for the usual differential operator $\partial^m$ (WLOG one may
assume $m\ge 3$) is:
\begin{align*}
\sum_{|\gamma|=m} \| \partial^{\gamma} (fg) - f \partial^{\gamma} g \|_p \lesssim_{p,d} \| \partial^m f \|_p \| g\|_{\infty}
+ \| \partial f \|_{\infty} \| \partial^{m-1} g \|_p, \qquad 1<p<\infty.
\end{align*}
The proof of the above inequality, roughly speaking, is a two-step procedure. Step 1: Leibniz. One writes
\begin{align*}
\partial^{\gamma} (fg) - f \partial^{\gamma} g =g \partial^{\gamma} f 
+\sum_{|\alpha|=1} \binom{\gamma}{\alpha}\partial^{\alpha} f \partial^{\gamma-\alpha} g +  \sum_{2\le |\alpha|\le m-1, \alpha+\beta=\gamma} 
\binom{\gamma}{\alpha} \partial^{\alpha} f \partial^{\beta} g.
\end{align*}
Step 2: (Gagliardo-Nirenberg) Interpolation. For $2\le |\alpha|\le m-1$, by using\footnote{See Lemma
\ref{lemc2} for a general proof of the interpolation inequalities.} 
\begin{align*}
 \| \partial^{\alpha} f \|_{\frac{p(m-1)}{|\alpha|-1}} \lesssim \| \partial^m f \|_p^{\frac{|\alpha|-1}{m-1}}
 \| \partial f \|_{\infty}^{\frac{m-|\alpha|} {m-1}},\\
 \| \partial^{\beta} g \|_{\frac{p(m-1)}{|\beta|}}
 \lesssim \| \partial^{m-1} g \|_p^{\frac{|\beta|}{m-1}} \| g \|_{\infty}^{1-\frac{|\beta|} {m-1} },
 \end{align*}
we get
\begin{align*}
\|\partial^{\alpha}f  \partial^{\beta} g \|_p & \lesssim \| \partial^{\alpha} f \|_{\frac{p(m-1)}{|\alpha|-1}}
\| \partial^{\beta} g \|_{\frac{p(m-1)}{|\beta|}}
\lesssim \| \partial^m f \|_p \| g\|_{\infty}
+ \| \partial f \|_{\infty} \| \partial^{m-1} g \|_p.
\end{align*}

Thanks to the new Leibniz rule, we can effectively regard the nonlocal
operator $D^s$ as the local one and ``revive" the above classical proof of Kato-Ponce to work for the nonlocal
case.  Indeed consider the case $s>1$ and $1<p<\infty$, by using Theorem \ref{thm2} with $s_1=s$,
$s_2=0$, we get
\begin{align*}
\| D^s (fg) -\sum_{|\alpha|<s}
\frac 1 {\alpha!} \partial^{\alpha} f D^{s,\alpha} g - g D^s f\|_p
\lesssim \| g \|_{\operatorname{BMO}} \| D^s f \|_p.
\end{align*}
We then have
\begin{align*}
\| D^s (fg) -f D^s g \|_p \lesssim \| \partial f \|_{\infty} \| D^{s-1} g \|_p + \| g \|_{\infty}
\| D^s f \|_p + \sum_{2\le |\alpha|<s} \| \partial^{\alpha} f D^{s, \alpha} g \|_p.
\end{align*}
Now observe that if $1<s\le 2$ the above summation in $\alpha$ is not present. For $s>2$, by using 
\begin{align*}
\| \partial^{\alpha} f \|_{\frac{p (s-1)} {|\alpha|-1} } \lesssim \| D^s f \|_p^{\frac{|\alpha|-1}{s-1} }
\| \partial f \|_{\infty}^{\frac{ s-|\alpha|} {s-1} }; \\
\| D^{s,\alpha} g \|_{\frac{p(s-1)}{s-|\alpha|} } 
\lesssim \| D^{s-1} g \|_p^{\frac{s-|\alpha|}{s-1} } \| g \|_{\infty}^{\frac{|\alpha|}{s-1}},
\end{align*}
we get the desired inequality:
\begin{align*}
\| D^s (fg) -f D^s g\|_p \lesssim \| \partial f \|_{\infty} \| D^{s-1} g \|_p + \| g \|_{\infty}
\| D^s f \|_p.
\end{align*}
\end{rem}

\begin{rem}
In recent \cite{Yezhuan}, Ye considered the 2D  incompressible B\'{e}nard equation in which the main
unknowns are the velocity $u:\,\mathbb R^2\to \mathbb R^2$ and the temperature $\theta:\, \mathbb R^2\to \mathbb R$.
Define $\mathcal R_1=(-\Delta)^{-\frac 12} \partial_{x_1}$. Ye proved the commutator estimate
\begin{align} \label{Ye_comm}
\| [\mathcal R_1, u\cdot \nabla ] \theta \|_{L^p(\mathbb R^2)}
\lesssim_{p,p_1,p_2} \| \nabla u \|_{L^{p_1}(\mathbb R^2)} \| \theta \|_{L^{p_2}(\mathbb R^2)},
\qquad \text{if $\nabla \cdot u=0$},
\end{align}
where $1<p,p_1,p_2<\infty$ with $\frac 1p=\frac 1{p_1}+\frac 1{p_2}$. We now explain how to deduce \eqref{Ye_comm}
from Corollary \ref{cor2}. For $j=1,2$, define $A_j =\mathcal R_1 \partial_j$. Set
$s=s_1=1$, $s_2=0$ in \eqref{a6} and we get
\begin{align*}
\| A_j (u_j \theta) - u_j A_j \theta -\sum_{|\alpha|=1}
\partial_{\alpha} u_j A_j^{\alpha} \theta - \theta A_j u_j \|_p  
\lesssim_{p,p_1,p_2} \| D u_j \|_{p_1} \| \theta \|_{p_2},
\end{align*}
where
\begin{align*}
\widehat{A_j^{\alpha}}(\xi) = -i \partial_{\xi}^{\alpha} ( |\xi|^{-1} \cdot (i\xi_1)\cdot (i \xi_j)).
\end{align*}
Easy to check that $\| A_j^{\alpha} \theta \|_{p_2} \lesssim_{p_2} \| \theta \|_{p_2}$, and we get
\begin{align*}
\| \sum_{j=1}^2 ( A_j (u_j \theta) - u_j A_j \theta) \|_p
&\lesssim_{p,p_1,p_2} \|D u\|_{p_1} \|\theta \|_{p_2} \\
& \lesssim_{p,p_1,p_2} \| \nabla u \|_{p_1} \| \theta \|_{p_2}.
\end{align*}
The estimate \eqref{Ye_comm} then easily follows. Note that one actually does not need to use the
divergence-free condition $\nabla \cdot u=0$ since
\begin{align*}
\| (-\Delta)^{-\frac 12} \partial_{x_1} ( (\nabla \cdot u) \theta ) \|_p \lesssim_{p,p_1,p_2} \| \nabla u \|_{p_1}
\|\theta \|_{p_2}.
\end{align*}
\end{rem}

\begin{rem}
In recent \cite{FR_JFA}, Fefferman, McCormick, Robinson and Rodrigo (FMRR) considered a class of non-resistive MHD
equations and proved a new Kato-Ponce type inequality
\begin{align} \label{FMRR_ineq}
\| D^s ( (u\cdot \nabla)B) - (u\cdot \nabla)(D^s B)\|_{L^2(\mathbb R^d)}
\lesssim_{s,d} \| \nabla u \|_{H^s(\mathbb R^d)} \| B\|_{H^s(\mathbb R^d)},
\end{align}
where $s>d/2$, $u=(u_1,\cdots,u_d)$, $B=(B_1,\cdots, B_d)$, $\nabla u$, $B\in H^s(\mathbb R^d)$. The condition
$s>d/2$ is critical for $L^{\infty}$-embedding. For dimension $d=2$, $s=d/2=1$, 
they exhibited a pair
of divergence-free $u\in H^2(\mathbb R^2)$, $B\in H^1(\mathbb R^2)$, such that
\begin{align*}
\partial_k ( (u\cdot \nabla) B) - (u\cdot \nabla)(\partial_k B) = ( (\partial_k u) \cdot \nabla) B
\notin L^2(\mathbb R^2).
\end{align*}
In this paper, Corollary \ref{cor2} can be used to generalize the FMRR inequality
\eqref{FMRR_ineq} to all $1<p<\infty$, $s>d/p$. It is also possible to give some refined inequalities
for the borderline case $s=d/p$ and construct divergence-free counterexamples
for the nonlocal operator $D^{d/p}$ for all $1<p<\infty$. 
 See Corollary
\ref{cor3c}, Remark \ref{rev_rem5.5} and Remark \ref{rev_rem5.6} in Section 5 for more details.

\end{rem}

\begin{rem}
In \cite{CCCGW12}, Chae, Constantin, Cordoba, Gancedo and Wu considered several generalized surface
quasi-geostrophic models with singular velocities. One of the models considered therein is the following:
\begin{align*}
\begin{cases}
\partial_t \theta + v \cdot \nabla \theta =0, \quad (t,x) \in (0,\infty) \times \mathbb R^2;\\
v = D^{-1+\gamma} \nabla^{\perp} \theta=(-D^{-1+\gamma} \partial_2 \theta,
D^{-1+\gamma} \partial_1 \theta), \quad 0<\gamma<1;\\
\theta(0,x)= \theta_0(x).
\end{cases}
\end{align*}
Note that $v$ scales as $D^{\gamma} \theta$ which is quite singular for $0<\gamma<1$ and this renders
the local wellposedness a very nontrivial problem.
For $\theta_0 \in H^m(\mathbb R^2)$ with $m\ge 4$ being an integer, they proved local wellposedness
by using skew-symmetry of the operator $D^{-1+\gamma} \nabla^{\perp}$ (to rewrite the nonlinear term
in terms of a commutator) and a commutator estimate of the form (see Proposition 2.1 therein)
\begin{align*}
\sum_{j=1}^2 \| D^s \partial_j (gf ) - g D^s \partial_j f \|_{L^2(\mathbb R^2)}
& \lesssim_s \| D^s f \|_2 \| \widehat{D g} (\eta) \|_{L^1_{\eta}}
+ \| f \|_2 \| \widehat{D^{1+s} g }(\eta) \|_{L^1_{\eta}} \notag \\
& \lesssim_{s,\epsilon} \| D^s f \|_2 \| g \|_{H^{2+\epsilon} }
+ \| f \|_2 \| g \|_{H^{2+s+\epsilon}},
\end{align*}
where $s\in \mathbb R$ and $\epsilon>0$. We now show how to use our new Leibniz rule
 to obtain a more refined result, namely sharp local wellposedness in $H^s$ for any $s>2+\gamma$.  Indeed by taking $f=D^{\gamma-1}
 \nabla^{\perp} \theta$, $g=\nabla \theta$, $s_1=1$, $s_2=s-1$, $p=p_2=2$, $p_1=\infty$ in 
 Theorem \ref{thm2}, we get
 \begin{align*}
 & \| D^s (fg)
 -\sum_{|\alpha|\le 1} \frac 1 {\alpha!} \partial^{\alpha} f D^{s,\alpha} g
 -\sum_{|\beta|<s-1} \frac 1 {\beta!} \partial^{\beta} g D^{s,\beta} f \|_2 \notag \\
 & \qquad\quad \lesssim \| D f \|_{\operatorname{BMO}} \| D^{s-1} g \|_2
 \lesssim \| \theta \|_{H^s}^2.
 \end{align*}
 Now consider the contribution of each summand (in either $\alpha$ or $\beta$) separately.
 \begin{itemize}
 \item $\alpha=0$. Obviously $f D^s g = D^{\gamma-1} \nabla^{\perp} \theta
 \cdot \nabla D^s \theta$ and
 \begin{align*}
 \int_{\mathbb R^2} (D^{\gamma-1} \nabla^{\perp} \theta \cdot \nabla D^s \theta)
  D^s \theta dx =0
  \end{align*}
  by using integration by parts.
  \item $|\alpha|=1$. In this case
  \begin{align*}
  \| \partial^{\alpha} f D^{s,\alpha} g \|_2
  & = \| \partial^{\alpha} D^{\gamma-1} \nabla^{\perp} \theta \cdot D^{s,\alpha} \nabla \theta\|_2
  \notag \\
  & \lesssim \| \partial^{\alpha} D^{\gamma-1}
  \nabla^{\perp} \theta \|_{\infty}
  \cdot \| D^{s,\alpha} \nabla \theta\|_2
  \lesssim \| \theta \|_{H^s} \cdot \| \theta \|_{H^s}.
  \end{align*}
 
 \item $\beta=0$. Observe that $g D^s f = \nabla \theta \cdot D^{\gamma-1} \nabla^{\perp} D^s \theta$
 \begin{align*}
\int ( \nabla \theta \cdot D^{\gamma-1} \nabla^{\perp} D^s \theta) D^s \theta dx
= - \int D^{\gamma-1} \nabla^{\perp}
\cdot ( D^s\theta \nabla \theta )  D^s \theta dx.
\end{align*}
We then write  (this is the elegant trick used in \cite{CCCGW12})
\begin{align*}
& \int ( \nabla \theta \cdot D^{\gamma-1} \nabla^{\perp} D^s \theta) D^s \theta dx
=- \frac 12 \int D^s \theta ( D^{\gamma-1} \nabla^{\perp} 
\cdot(D^s \theta \nabla \theta) \notag \\
&\qquad \quad- \nabla \theta \cdot D^{\gamma-1} \nabla^{\perp} D^s \theta) dx.
\end{align*}
 By Corollary \ref{cor2} with $A^{\gamma}= D^{\gamma-1} \nabla^{\perp}$, $f= D^s
 \theta$, $g= \nabla \theta$, $p=p_1=2$, $p_2=\infty$, we get
 \begin{align*}
 \| D^{\gamma-1} \nabla^{\perp} 
\cdot(D^s \theta \nabla \theta) - \nabla \theta \cdot D^{\gamma-1} \nabla^{\perp} D^s \theta
\|_2 \lesssim \| D^s \theta \|_2 \| D^{\gamma} \nabla \theta \|_{\operatorname{BMO}}
\lesssim \| \theta \|_{H^s}^2.
\end{align*}

\item $1\le |\beta|\le s-2$.  If $1\le |\beta|<s-2$, then clearly by Sobolev embedding
\begin{align*}
\| \partial^{\beta} g D^{s,\beta} f \|_2 \lesssim \| \partial^{\beta} \nabla \theta \|_{\infty}
\| D^{s,\beta} D^{\gamma-1} \nabla^{\perp} \theta \|_2
\lesssim \| \theta \|_{H^s}^2.
\end{align*}
Similarly if $|\beta|=s-2$ (in this case $s$ will be an integer), 
\begin{align*}
\| \partial^{\beta} g D^{s,\beta} f \|_2
\lesssim \| \partial^{s-1} \theta \|_{\infty-} \| D^{s,\beta} D^{\gamma-1} \nabla^{\perp} \theta
\|_{2+}  \lesssim \| \theta \|_{H^s}^2.
\end{align*}

\item $s-2<|\beta|<s-1$. We have
\begin{align*}
\| \partial^{\beta} g D^{s,\beta} f \|_2 \lesssim \| \partial^{\beta} \nabla \theta \|_{(\frac 12
-\frac{s-(|\beta|+1)}2)^{-1}} \| D^{s,\beta} D^{\gamma-1} \nabla^{\perp} \theta
\|_{(\frac {s-(|\beta|+1)} 2)^{-1}} \lesssim \| \theta \|_{H^s}^2.
\end{align*}
 \end{itemize}
 
Collecting the above estimates, we get for $s>\gamma+2$,
\begin{align*}
 \frac d {dt} (\| \theta \|_{H^s}^2) \lesssim \| \theta \|_{H^s}^3
 \end{align*}
 which (together with standard mollification/regularisation arguments) 
 easily yields the desired local wellposedness in $H^s$. 
\end{rem}

In Section 5 Theorem \ref{thm3a}, we state and prove a family of refined Kato-Ponce
inequalities for the operator $D^s=(-\Delta)^{s/2}$. Those inequalities are proved with the
help of Theorem \ref{thm2}. On the other hand, for the inhomogeneous operator $J^s$, we have
the following generalised inequalities. Note that in the following inequality, some of the endpoint
cases can be further improved along similar lines as in Theorem \ref{thm3a}. For simplicity of presentation (and practical
considerations), here we only state the simplest version.

\begin{thm} \label{thm3}
Let $1<p<\infty$. Let $1<p_1,p_2,p_3,p_4\le \infty$ satisfy $\frac 1{p_1}+\frac 1{p_2} =\frac 1 {p_3}
+\frac 1 {p_4} = \frac 1 p$. Then for any $f,\, g\in \mathcal S(\mathbb R^d)$, the following hold:
\begin{itemize}
\item If $0<s\le 1$, then
\begin{align*}
\| J^s(fg) - f J^s g \|_p \lesssim_{s,p_1,p_2,p,d} \| J^{s-1} \partial f \|_{p_1} \| g\|_{p_2}.
\end{align*}

\item If $s>1$, then
\begin{align*}
\| J^s (fg) -f J^s g\|_p \lesssim_{s,p_1,p_2,p_3,p_4,p,d}
\| J^{s-1} \partial f \|_{p_1} \|g\|_{p_2} + \| \partial f \|_{p_3} \| J^{s-2} \partial g\|_{p_4}.
\end{align*}
\end{itemize}

\end{thm}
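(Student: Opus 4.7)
The strategy is to split $f$ into low and high frequency pieces, use Theorem \ref{thm2} on the high-frequency part after converting the inhomogeneous operator $J^s$ to its homogeneous counterpart $D^s$, and handle the low-frequency part by a direct Taylor expansion of the bilinear Fourier symbol. Fix a smooth cutoff $\chi\in C_c^\infty(\mathbb R^d)$ which equals $1$ near the origin, set $f_L := \chi(D)f$, $f_H := (1-\chi(D))f$, and write
\begin{align*}
J^s(fg) - f J^s g = \bigl( J^s(f_L g) - f_L J^s g\bigr) + \bigl( J^s(f_H g) - f_H J^s g\bigr).
\end{align*}
Two elementary comparisons underlie the argument: since $\chi(\xi)\langle\xi\rangle^{-(s-1)}$ is compactly supported and smooth, $\chi(D)\langle D\rangle^{-(s-1)}$ is convolution with a Schwartz function, whence $\|\partial f_L\|_r\lesssim \|J^{s-1}\partial f\|_r$ for every $r\in(1,\infty]$; on $f_H$ the norms $\|J^s f_H\|_r$, $\|J^{s-1}\partial f_H\|_r$ and $\|D^s f_H\|_r$ are mutually equivalent, because the underlying symbols agree up to bounded Mikhlin-class factors on $|\xi|\gtrsim 1$. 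Entirely analogous remarks apply to $g_L$ and $g_H$.

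For the low-frequency piece I would Taylor expand
\begin{align*}
\langle\xi+\eta\rangle^s - \langle\eta\rangle^s = \sum_{j=1}^d \xi_j \, m_j(\xi,\eta), \qquad m_j(\xi,\eta) = \int_0^1 \partial_{\xi_j}\langle t\xi+\eta\rangle^s\,dt,
\end{align*}
uniformly for $\xi\in\mathrm{supp}\,\chi$, where $m_j(\xi,\cdot)$ lies in a H\"ormander symbol class of order $s-1$ in $\eta$. The associated bilinear operators $T_{m_j}$ are of Coifman--Meyer type and give $J^s(f_L g) - f_L J^s g = \sum_{j=1}^d T_{m_j}(\partial_j f_L, g)$. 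For $0<s\le 1$ the nonpositive $\eta$-order of $m_j$ yields bilinear boundedness $L^{p_1}\times L^{p_2}\to L^p$, producing $\|\partial f_L\|_{p_1}\|g\|_{p_2}\lesssim \|J^{s-1}\partial f\|_{p_1}\|g\|_{p_2}$ and matching the sole RHS term. For $s>1$ the natural bounds have the form $\|\partial f_L\|_q\|J^{s-1}g\|_r$; a further split $g = g_L + g_H$ routes the contribution $\|J^{s-1}g_L\|_{p_2}\lesssim \|g\|_{p_2}$ (with $q=p_1$) into the first RHS summand and $\|J^{s-1}g_H\|_{p_4}\lesssim \|J^{s-2}\partial g\|_{p_4}$ (with $q=p_3$) into the second.

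For the high-frequency piece $J^s(f_H g)-f_H J^s g$ I would use the decomposition $J^s = \psi(D) D^s + R$, where $\psi$ smoothly cuts off to $|\xi|\gtrsim 1$ and $R$ is a pseudo-differential operator with smooth symbol of order $s-2$ (bounded on $L^r$ modulo a Schwartz perturbation). The $R$-contribution is lower order and easily absorbed, while the $\psi(D)D^s$ part reduces modulo easy errors to the homogeneous piece $D^s(f_H g) - f_H D^s g$, controlled by Theorem \ref{thm2} with $s_1=s$, $s_2=0$ (invoking case (1), (2), or (3) according to whether $p_1, p_2$ are finite or $\infty$):
\begin{align*}
D^s(f_H g) - f_H D^s g = g\,D^s f_H + \sum_{1\le|\alpha|\le s}\tfrac{1}{\alpha!}\partial^\alpha f_H\,D^{s,\alpha}g + O\bigl(\|D^s f_H\|_{p_1}\|g\|_{p_2}\bigr).
\end{align*}
The error and the $g D^s f_H$ term fit into the first RHS summand after the norm equivalence above. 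The residual sum is empty for $s\le 1$, contains only $|\alpha|=1$ for $1<s\le 2$ (directly bounded by $\|\partial f\|_{p_3}\|J^{s-2}\partial g\|_{p_4}$ using $\|D^{s,\alpha}g\|_{p_4}\lesssim \|J^{s-2}\partial g\|_{p_4}$ at $|\alpha|=1$), and for $s>2$ is dispatched by Gagliardo--Nirenberg interpolation (cf.\ Lemma~\ref{lemc2} in the paper) between $\|\partial f\|_{p_3}$, $\|J^{s-1}\partial f\|_{p_1}$, $\|g\|_{p_2}$, $\|J^{s-2}\partial g\|_{p_4}$. The main obstacle is keeping the low/high splits on both $f$ and $g$ consistent with the two exponent pairs $(p_1,p_2)$ and $(p_3,p_4)$ so that each piece is routed to the intended RHS summand; a secondary technical hurdle is verifying that the bilinear symbols $m_j$ genuinely fall into a Coifman--Meyer class at the required exponents, including the $L^\infty$ endpoints $p_2=\infty$ or $p_4=\infty$ where one must freeze the compactly supported $f_L$-variable before applying standard multiplier bounds in the $g$-variable.
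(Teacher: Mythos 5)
Your overall strategy is a genuinely different route from the paper's. The paper works directly with the operator $\tilde J^s=J^s-I$, performs a full Littlewood--Paley paraproduct decomposition of $\tilde J^s(fg)$, and controls each dyadic piece by pointwise kernel estimates (Lemma~\ref{lem921_1}) together with the paraproduct--BMO pairing (Lemma~\ref{lemq1}); it never passes to $D^s$ and never invokes Theorem~\ref{thm2}. You instead split $f$ once into $f_L+f_H$, reduce the high piece to the homogeneous operator $D^s$ through $J^s=\psi(D)D^s+R$ and feed it to Theorem~\ref{thm2}, and handle the low piece by a Taylor expansion of the bilinear symbol $\langle\xi+\eta\rangle^s-\langle\eta\rangle^s$. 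Reusing Theorem~\ref{thm2} is an attractive economy, and the symbol--expansion idea is consonant with the remarks following Theorem~\ref{thm2}, but as written several steps do not close.

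Concretely: (i)~the assertion $\|D^{s,\alpha}g\|_{p_4}\lesssim\|J^{s-2}\partial g\|_{p_4}$ at $|\alpha|=1$ is false. The Fourier multiplier carrying $J^{s-2}\partial_j g$ to $D^{s,\alpha}g$ has symbol $\propto|\xi|^{s-2}\langle\xi\rangle^{2-s}$, which blows up like $|\xi|^{s-2}$ at the origin whenever $1<s<2$; the claim only becomes valid after $g$ too is split into low and high frequency pieces, with the low part of $g$ re-routed to $\|J^{s-1}\partial f\|_{p_1}\|g\|_{p_2}$. (ii)~The proposed treatment of the $L^\infty$ endpoints of the low-frequency piece---``freeze the compactly supported $f_L$-variable and apply multiplier bounds in $g$''---does not yield the needed H\"older balance: Minkowski on the frozen $\xi$-integral gives a bound of the form $\|\widehat{\partial f_L}\|_{L^1_\xi}\sup_\xi\|m_j(\xi,D)g\|_p$, which is neither $\|\partial f_L\|_{p_1}\|g\|_{p_2}$ nor, at $s=1$, even controlled by $\|g\|_\infty$, since the frozen symbol $m_j(\xi,\cdot)$ is then an order-zero inhomogeneous Riesz-type multiplier, which is not $L^\infty\to L^\infty$ bounded. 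Closing this endpoint requires the low-high paraproduct structure and the $\mathcal H^1$--BMO pairing of Lemma~\ref{lemq1}, which is precisely the mechanism the paper's proof uses. (iii)~The claim that $R=J^s-\psi(D)D^s$ is ``lower order and easily absorbed'' holds only for $s<2$; for $s\ge 2$ its symbol grows like $|\xi|^{s-2}$ at infinity, so $R(f_Hg)-f_HRg$ is a commutator of the same type as the original and needs its own (or an inductive) argument. Each of these defects points toward the finer, scale-by-scale frequency decomposition with which the paper's proof begins.
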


 There are also many other reformulations and generalisations of the Kato-Ponce commutator inequalities
(cf. \cite{BO_recent} and the references therein). One popular variant
 is the following fractional Leibniz rule which holds for any
$s>0$, $f$, $g\in \mathcal S(\mathbb R^d)$:
\begin{align}
 \| D^s (fg) \|_r \le C_{s,d,p_1,p_2,q_1,q_2} \cdot ( \| D^s f \|_{p_1} \| g \|_{q_1} +
 \|D^s g \|_{p_2} \| f \|_{q_2}), \label{1}
\end{align}
where $\frac 1 r = \frac 1 {p_1} + \frac 1 {q_1} = \frac 1 {p_2} + \frac 1 {q_2}$, $1<r<\infty$,
$1<p_1,p_2,q_1,q_2\le \infty$, and $C_{s,d,p_1,p_2,q_1,q_2}>0$ is a constant depending
only on $(s,d,p_1,p_2,q_1,q_2)$. One should note that the same
inequality also holds for the inhomogeneous operator $J^s$.
Recently Grafakos, Oh \cite{GO_CPDE} and Muscalu, Schlag \cite{MS2} have extended
the inequality \eqref{1} to the wider range $1/2<r<\infty$ under the assumption that $s>\max(0, \frac dr -d)$
or $s\in 2\mathbb N$. The end-point case $r=\infty$  was conjectured in
Grafakos, Maldonado and Naibo \cite{GMN} and solved in recent \cite{BL14}.

The rest of this paper is organized as follows. In Section 2 we collect some notation used in this paper and
also some preliminary lemmas. In Section 3 we prove an important paraproduct estimate and some auxiliary lemmas.
Section 4 is devoted to the proof of Theorem \ref{thm2}. In Section 5 we prove several refined inequalities
for the operator $D^s$. In Section 6 we prove refined Kato-Ponce inequalities for the operator $J^s$.
Section 7 contains several counterexamples for the operator $J^s$. Section 8 is devoted to the proof of
Theorem \ref{thm3}. Section 9 contains further divergence-free counterexamples.


\section{Notation and preliminaries}

In this section we introduce some notation and collect some preliminaries used in this paper.

We adopt the following convention for the Fourier transform pair:
\begin{align} \label{Fourier_def}
 & (\mathcal F f)(\xi)=\hat f (\xi) = \int_{\mathbb R^d} f(x) e^{-i x\cdot \xi} dx, \notag \\
 &f(x) = \frac 1 {(2\pi)^d} \int_{\mathbb R^n} \hat f(\xi) e^{i x \cdot \xi} d\xi.
\end{align}
The inverse Fourier transform is sometimes denoted as $\mathcal F^{-1}$ so that 
$$f(x)= (\mathcal F^{-1} (\hat f) )(x).$$

For any $x\in \mathbb R^d$, we denote $\langle x \rangle =(1+|x|^2)^{1/2}$. Similarly for any
$s\in \mathbb R$ we define $\langle \nabla \rangle^s  $ via its Fourier transform
$\widehat{\langle \nabla \rangle^s}(\xi) = (1+|\xi|^2)^{s/2}$. In this notation $J^s =(I-\Delta)^{s/2}
=\langle \nabla \rangle^s$.

For any real number $a\in \mathbb R$, we denote by $a+$ the quantity $a+\epsilon$ for sufficiently small
$\epsilon>0$. The numerical value of $\epsilon$ is unimportant and the needed
smallness of $\epsilon$ is usually clear from the context. 
The notation $a-$ is similarly defined. 

We denote by $\mathcal S(\mathbb R^d)$ the space of Schwartz functions and $\mathcal
S^{\prime}(\mathbb R^d)$ the space of tempered distributions. For any integer $k\ge 0$ and
open set $U \subset \mathbb R^d$, we shall denote by $C^k_{\operatorname{loc}} (U)$
the space of $k$-times continuously differentiable functions in $U$. 
  For any function $f:\; \mathbb R^d\to
\mathbb R$, we use $\|f\|_{L^p(\mathbb R^d)}$, $\|f\|_{L^p}$ or sometimes $\|f\|_p$ to denote
the  usual Lebesgue $L^p$ norm  for $0< p \le
\infty$. For a sequence of real numbers $(a_j)_{j={-\infty}}^{\infty}$, we denote
\begin{align*}
(a_j)_{l_j^p}=\|(a_j)_{j\in \mathbb Z} \|_{l^p} =\begin{cases}
(\sum_{j\in \mathbb Z} |a_j|^p )^{\frac 1p}, \qquad \text{if $0<p<\infty$}, \\
\sup_{j} |a_j|, \qquad \text{if $j=\infty$}.
\end{cases}
\end{align*}
We shall often use mixed-norm notation. For example, for a sequence of functions $f_j: \;\mathbb R^d \to \mathbb R$,
we will denote (below $0<q<\infty$)
\begin{align*}
\| (f_j)_{l_j^q} \|_{p} =\| (\sum_j |f_j(x)|^q)^{\frac 1q} \|_{L^p_x(\mathbb R^d)},
\end{align*}
with obvious modification for $q=\infty$.

For any two operators $A$, $B$, we shall denote by
\begin{align*}
[A,B] = AB- BA
\end{align*}
the usual commutator.

  For any two quantities $X$ and $Y$, we denote $X \lesssim Y$ if
$X \le C Y$ for some constant $C>0$. Similarly $X \gtrsim Y$ if $X
\ge CY$ for some $C>0$. We denote $X \sim Y$ if $X\lesssim Y$ and $Y
\lesssim X$. The dependence of the constant $C$ on
other parameters or constants are usually clear from the context and
we will often suppress  this dependence. We shall denote
$X \lesssim_{Z_1, Z_2,\cdots,Z_k} Y$
if $X \le CY$ and the constant $C$ depends on the quantities $Z_1,\cdots, Z_k$.

For any two quantities $X$ and $Y$, we shall denote $X\ll Y$ if
$X \le c Y$ for some sufficiently small constant $c$. The smallness of the constant $c$ is
usually clear from the context. The notation $X\gg Y$ is similarly defined. Note that
our use of $\ll$ and $\gg$ here is \emph{different} from the usual Vinogradov notation
in number theory or asymptotic analysis.

 We will need to use the Littlewood--Paley (LP) frequency projection
operators. To fix the notation, let $\phi_0$ be a radial function in
$C_c^\infty(\mathbb{R}^n )$ and satisfy
\begin{equation}\nonumber
0 \leq \phi_0 \leq 1,\quad \phi_0(\xi) = 1\ {\text{ for}}\ |\xi| \leq
1,\quad \phi_0(\xi) = 0\ {\text{ for}}\ |\xi| \geq 7/6.
\end{equation}
Let $\phi(\xi):= \phi_0(\xi) - \phi_0(2\xi)$ which is supported in $\frac 12 \le |\xi| \le \frac 76$.
For any $f \in \mathcal S(\mathbb R^n)$, $j \in \mathbb Z$, define
\begin{align*}
 &\widehat{P_{\le j} f} (\xi) = \phi_0(2^{-j} \xi) \hat f(\xi), \\
 &\widehat{P_j f} (\xi) = \phi(2^{-j} \xi) \hat f(\xi), \qquad \xi \in \mathbb R^n.
\end{align*}
We will denote $P_{>j} = I-P_{\le j}$ ($I$ is the identity operator).  Sometimes for simplicity of
notation (and when there is no obvious confusion) we will write $f_j = P_j f$, $f_{\le j} = P_{\le j} f$ and
$f_{a\le\cdot\le b} = \sum_{j=a}^b f_j$.
By using the support property of $\phi$, we have $P_j P_{j^{\prime}} =0$ whenever $|j-j^{\prime}|>1$.
This property will be useful in product decompositions. For example the Bony paraproduct for a pair of functions
$f,g$ take the form
\begin{align*}
f g = \sum_{i \in \mathbb Z} f_i \tilde g_i + \sum_{i \in \mathbb Z} f_i g_{\le i-2} + \sum_{i \in \mathbb Z}
g_i f_{\le i-2},
\end{align*}
where $\tilde g_i = g_{i-1} +g_i + g_{i+1}$.

The fattened operators $\tilde P_j$ are defined by
\begin{align*}
 \tilde{P}_j = \sum_{l=-{n_1}}^{n_2} P_{j+l},
\end{align*}
where $n_1\ge 0$, $n_2\ge 0$ are some finite integers whose values play no role in the argument.

Note that the Littlewood-Paley projection operators $P_j$ defined above depend on the function $\phi$.
Sometimes it is desirable to use a different function $\tilde \phi \in \mathcal S(\mathbb R^d)$. In that
case to stress the dependence on $\tilde \phi$ we shall
denote
\begin{align*}
\widehat{P^{\tilde \phi}_{j} f} (\xi) = \tilde \phi(\xi /2^j) \hat f(\xi).
\end{align*}

 We recall the Bernstein estimates/inequalities: for $1\le p\le q\le \infty$,
\begin{align*}
&\| D^s P_j f \|_p \sim 2^{js}  \| f \|_p, \qquad s \in \mathbb R; \\
& \|  P_{\le j} f \|_q  +\| P_j f \|_q \lesssim 2^{j d( \frac 1p - \frac 1 q)} \| f \|_p.
\end{align*}
In the above $D^s= (-\Delta)^{s/2}$.

 For $s\in \mathbb R$, $1\le p\le \infty$, the homogeneous Besov $\dot B^s_{p,\infty}$ (semi-)norm is given by
\begin{align*}
 \| f \|_{\dot B^s_{p,\infty} } = \sup_{j\in \mathbb Z}  \bigl( 2^{js} \| P_j f \|_{p} \bigr).
\end{align*}

For any $f \in L^1_{\operatorname{loc}}(\mathbb R^d)$, the BMO norm is given by
\begin{align*}
\| u\|_{\operatorname{BMO}} = \sup_{Q} \frac 1 {|Q|} \int_Q | u(y)-u_Q| dy,
\end{align*}
where $u_Q$ is the average of $u$ on $Q$, and the supreme is taken over all cubes $Q$ in $\mathbb R^d$.

It is well-known that the Besov $\dot B^0_{\infty,\infty}$ norm is weaker than the BMO norm. The following
proposition records this fact. We omit the (standard) proof. 
For any $\lambda>0$, $x_0\in \mathbb R^d$, denote
\begin{align*}
f^{\lambda,x_0}(y) = f(x_0 + \lambda y), \quad  y \in \mathbb R^d.
\end{align*}

\begin{prop} 
For any $f \in L^1_{\operatorname{loc}}(\mathbb R^d)$, we have
\begin{align*}
\|f \|_{\dot B^0_{\infty,\infty}}
\lesssim_d \sup_{\lambda>0, x_0\in \mathbb R^d}
\int_{\mathbb R^d} \frac{ |f^{\lambda,x_0}(y)- (f^{\lambda,x_0})_{Q_1} |}{
(1+|y|)^{d+1} } dy
\lesssim_d \| f \|_{\operatorname{BMO}},
\end{align*}
where $Q_1 = [-1,1]^d$.
\end{prop}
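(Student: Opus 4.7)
The plan is to prove the two inequalities in the chain separately; both are essentially routine, but both exploit the rescaling $f \mapsto f^{\lambda,x_0}$ as the bookkeeping device.

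For the lower bound $\|f\|_{\dot B^0_{\infty,\infty}} \lesssim \sup_{\lambda,x_0}\int\cdots$, I would pass through the LP convolution representation. Let $\psi = \mathcal F^{-1}\phi \in \mathcal S(\mathbb R^d)$, so that $P_j f(x) = 2^{jd}\int \psi(2^j(x-y)) f(y)\, dy$. Since $\phi$ vanishes near the origin, $\psi$ has mean zero, and we may therefore replace $f(y)$ by $f(y)-c$ for any constant $c$ without affecting the value. Changing variables $y = x + 2^{-j}z$ and picking $c = (f^{2^{-j},x})_{Q_1}$ rewrites $P_j f(x) = \int \psi(-z)\,[f^{2^{-j},x}(z) - (f^{2^{-j},x})_{Q_1}]\, dz$. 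Using the Schwartz decay $|\psi(-z)| \lesssim_d (1+|z|)^{-(d+1)}$, taking absolute values, and then taking suprema over $x$ and $j$ yields the claim with $(\lambda,x_0) = (2^{-j},x)$.

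For the upper bound $\sup \int \cdots \lesssim \|f\|_{\bmo}$, I would first use translation–dilation invariance $\|f^{\lambda,x_0}\|_{\bmo} = \|f\|_{\bmo}$ to reduce to showing $\int |g(y) - g_{Q_1}|/(1+|y|)^{d+1}\, dy \lesssim_d \|g\|_{\bmo}$ for a generic $g \in \bmo$. Decompose the integral into dyadic annuli $\{|y| \sim 2^k\}$, $k\ge 0$. On each annulus apply the triangle inequality $\int_{|y|\sim 2^k} |g - g_{Q_1}|\, dy \le \int_{Q_{2^k}} |g - g_{Q_{2^k}}|\, dy + |Q_{2^k}| \cdot |g_{Q_{2^k}} - g_{Q_1}|$. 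The first summand is bounded by the BMO definition by $\lesssim 2^{kd}\|g\|_{\bmo}$, and the second by the standard telescoping chain $|g_{Q_{2^k}} - g_{Q_1}| \lesssim k\|g\|_{\bmo}$. Multiplying by the annular weight $2^{-k(d+1)}$ gives a contribution of $\lesssim 2^{-k}(1+k)\|g\|_{\bmo}$, and summing over $k \ge 0$ produces a convergent geometric-type series.

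There is no serious obstacle here; the proposition is folklore. The one conceptual point worth emphasizing is that the mean-zero property of $\psi$ (which in turn is forced by the annular localization of the Littlewood–Paley multiplier $\phi$) is exactly what turns $P_j f(x)$ into a weighted oscillation integral of $f^{2^{-j},x}$ against $Q_1$; correspondingly, the weight exponent $d+1$ is chosen to be just large enough to dominate both the volume factor $2^{kd}$ and the logarithmic BMO growth $k\|g\|_{\bmo}$ in the dyadic sum. The claim is dimensionally consistent because both sides are invariant under the rescaling $f \mapsto f^{\lambda,x_0}$.
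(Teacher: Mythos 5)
Your proof is correct and is exactly the standard argument the paper has in mind; the paper itself states ``We omit the (standard) proof,'' so there is no internal proof to compare against. Both halves are handled as one would expect: for the first inequality you use that the Littlewood--Paley kernel $\psi=\mathcal F^{-1}\phi$ has vanishing mean (since $\phi$ vanishes near the origin) to freely subtract the constant $(f^{2^{-j},x})_{Q_1}$, then read off $|P_jf(x)|$ from the Schwartz bound $|\psi(-z)|\lesssim_d (1+|z|)^{-(d+1)}$; for the second you use the translation--dilation invariance of $\operatorname{BMO}$ to reduce to a single scale, split into dyadic annuli, and control $|g_{Q_{2^k}}-g_{Q_1}|$ by the usual $O(k)$ telescoping. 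The only point worth being explicit about (which you do implicitly) is that for general $f\in L^1_{\operatorname{loc}}$ one should interpret $P_jf(x)$ as the absolutely convergent integral $\int\psi(-z)\bigl[f^{2^{-j},x}(z)-(f^{2^{-j},x})_{Q_1}\bigr]\,dz$, whose convergence is exactly what finiteness of the middle quantity guarantees; this is why the exponent $d+1$ in the weight is the natural threshold, as you note.
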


We recall the definition of Hardy space $\mathcal H^1 =\{ f \in L^1(\mathbb R^d): \; \mathcal R_j f \in L^1, \quad
\forall\, 1\le j\le d\}$ with the norm
\begin{align*}
\|f \|_{\mathcal H^1} = \|f\|_1 + \| \mathcal R f \|_1,
\end{align*}
where $\mathcal R f = |\nabla|^{-1} \nabla f =:(\mathcal R_1,\cdots, \mathcal R_d f) $.
Let $\varphi \in \mathcal S(\mathbb R^d)$ with $\int \varphi =1$.
Define $\varphi_t(x)= t^{-d} \varphi(x/t)$.
An equivalent norm on
$\mathcal H^1$ is given by (see \cite{FS_Hp})
\begin{align*}
\| f \|_{\mathcal H^1}=\| \sup_{t>0}|\varphi_t \star f| \,\|_1.
\end{align*}
Similarly for $0<p\le 1$
\begin{align} \label{Hp_def_e2}
\| f\|_{\mathcal H^p} = \| \sup_{t>0}|\varphi_t \star f| \|_p.
\end{align}
Alternatively one can use the following characterization
(cf. \cite{Grafakos_Modern})
\begin{align*}
\| f\|_{\mathcal H^p} \sim \| (P_j f)_{l_j^2} \|_p.
\end{align*}
We will often use this latter characterisation without explicit mentioning.

We will sometimes use  (cf. \eqref{pf_lemq1_ea3} in the proof of Lemma \ref{lemq1}) the following useful fact: if $f\in L^1_{\operatorname{loc}}(\mathbb R^d)$ 
satisfies $\|f\|_{\bmo}<\infty$,  $g \in \mathcal H^1(\mathbb R^d)$, and 
\begin{align*}
\int_{\mathbb R^d} |f(x) g(x)| dx <\infty,
\end{align*}
then
\begin{align}
| \int_{\mathbb R^d} f(x) g(x) dx| \lesssim \| f\|_{\bmo} \|g\|_{\mathcal H^1}. \label{eq_H1_BMO}
\end{align}
Of course without absolute convergence the $\mathcal H^1$-$\bmo$ pairing is defined through
a careful limiting process.

For any $f\in L^1_{\operatorname{loc}}(\mathbb R^d)$, we shall denote by $\mathcal M f:\, \mathbb R^d \to [0,\infty]$
the usual Hardy-Littlewood maximal function defined as:
\begin{align*}
(\mathcal Mf)(x) = \sup_{r>0} \frac 1 {|B_r|} \int_{B_r} |f(x-y)| dy,
\end{align*}
where $B_r=B(0,r)$ is the Euclidean open ball of radius $r$ centered at the origin.

We will often use the following Fefferman-Stein inequality without explicit mentioning.

\begin{lem}[\cite{FS}] \label{lem_FS}
 Let $f=(f_j)_{j=1}^{\infty}$ be a sequence of locally integrable functions in $\mathbb R^d$. Let $1<p<\infty$ and $1<r\le \infty$. Then
 \begin{align*}
  \| (\mathcal Mf_j )_{l_j^r} \|_p \lesssim_{r,p,d} \| (f_j)_{l_j^r} \|_p,
 \end{align*}
where $\mathcal M f$ is the usual Hardy-Littlewood maximal function.
\end{lem}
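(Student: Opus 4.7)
The plan is to follow Fefferman and Stein's original strategy. The endpoint $r=\infty$ is immediate from the pointwise bound $\sup_j \mathcal M f_j(x)\le \mathcal M(\sup_j|f_j|)(x)$ combined with the scalar $L^p$-boundedness of $\mathcal M$, and the diagonal case $r=p$ follows at once by summing the scalar $L^p$-bound in $j$. So it remains to treat $1<p,r<\infty$ with $p\ne r$; I would reduce to a vector-valued weak type $(1,1)$ bound plus real interpolation on one side of the diagonal, and to a weighted-duality argument on the other side.

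For the vector-valued weak type $(1,1)$ bound
\begin{align*}
\bigl|\{x:\|(\mathcal M f_j(x))_j\|_{\ell^r}>\lambda\}\bigr|\lesssim_{r,d}\lambda^{-1}\int\|(f_j(x))_j\|_{\ell^r}\,dx,\qquad 1<r<\infty,
\end{align*}
the proof proceeds via a Calder\'on--Zygmund stopping-time decomposition of the scalar function $F(x)=\|(f_j(x))_j\|_{\ell^r}$ at height $\lambda$, producing disjoint dyadic cubes $\{Q_k\}$ with $\sum_k|Q_k|\lesssim\lambda^{-1}\|F\|_1$, $F\le\lambda$ on $G=\mathbb R^d\setminus\bigcup_k Q_k$, and $\langle F\rangle_{Q_k}\sim\lambda$. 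Split componentwise $f_j=g_j+b_j$ with $g_j=f_j\mathbf 1_G+\sum_k\langle f_j\rangle_{Q_k}\mathbf 1_{Q_k}$. Minkowski's inequality inside each cube-average gives the pointwise bound $\|(g_j(x))_j\|_{\ell^r}\le C_d\lambda$, so interpolating with $\|(g_j)\|_{L^1(\ell^r)}\lesssim\|F\|_1$ yields $\|(g_j)\|_{L^r(\ell^r)}^r\lesssim\lambda^{r-1}\|F\|_1$; the scalar $L^r$-bound of $\mathcal M$ summed in $j$ combined with Chebyshev handles the good part. For the bad part one discards the enlargement $\bigcup_k 2Q_k$ (whose measure is already $\lesssim \lambda^{-1}\|F\|_1$) and exploits the dyadic stopping-time structure: for every dyadic $Q\not\subset\bigcup_k Q_k$ and every $j$, $\langle|b_j|\rangle_Q\le 2\langle|f_j|\rangle_Q\le 2\langle F\rangle_Q\le 2\lambda$ by maximality of the $Q_k$, and a telescoping argument over the dyadic ancestors--together with a finite-shifts reduction from $\mathcal M^d$ to $\mathcal M$--localizes the $\ell^r$-valued dyadic maximal function of $(b_j)$ essentially inside $\bigcup_k Q_k$, giving the required level-set bound.

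Marcinkiewicz interpolation between this weak $(1,1)$ bound and the scalar-diagonal strong $(r,r)$ bound then yields the inequality for all $1<p\le r$. For the remaining range $p>r$ I would instead dualize at the level of $L^{p/r}$: since $p/r>1$,
\begin{align*}
\|(\mathcal M f_j)\|_{L^p(\ell^r)}^r=\sup_{h\ge 0,\,\|h\|_{L^{(p/r)'}}\le 1}\int h(x)\sum_j(\mathcal M f_j(x))^r\,dx,
\end{align*}
and applying the scalar Fefferman--Stein weighted inequality $\int(\mathcal M f)^r h\,dx\le C_r\int |f|^r \mathcal M h\,dx$ to each summand in $j$, summing, and invoking H\"older together with the scalar $L^{(p/r)'}$-boundedness of $\mathcal M$ applied to $h$, closes the estimate by $C\|F\|_{L^p}^r=C\|(f_j)\|_{L^p(\ell^r)}^r$.

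The main obstacle is the bad-part estimate at the vector-valued level. A naive pointwise tail bound for each $\mathcal M b_j$ via the standard kernel $(|Q_k|^{1/d}+\mathrm{dist}(x,Q_k))^{-d}$ fails to integrate in $\mathbb R^d$, and the pointwise $\ell^r$-norm of the sequence of scalar bounds does not telescope. One therefore cannot proceed componentwise; the correct route is to exploit the dyadic stopping-time structure simultaneously in all indices $j$ so that the $\ell^r$-norm plays only a passive role in the support statement for $\mathcal M^d b$. Once the vector-valued weak $(1,1)$ is in place, the remaining interpolation for $p\le r$ and weighted-duality step for $p>r$ are routine.
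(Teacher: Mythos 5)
The paper does not actually prove this lemma; it simply cites Fefferman--Stein \cite{FS} and observes that the $r=\infty$ case is trivial. Your proposal therefore goes beyond the paper by attempting the full argument. Your treatment of the endpoints $r=\infty$ and $p=r$ is correct, and the weighted-duality argument for $p>r$ (dualizing at the level of $L^{p/r}$ and invoking the scalar weighted inequality $\int(\mathcal M f)^r h\lesssim\int|f|^r\mathcal Mh$) is the standard and correct route.

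However, the vector-valued weak $(1,1)$ step that you need to cover $1<p<r$ has a genuine gap at the bad part. The claim that the stopping-time structure ``localizes the $\ell^r$-valued dyadic maximal function of $(b_j)$ essentially inside $\bigcup_k Q_k$'' is false. The mean-zero property $\int_{Q_k}b_j=0$ is destroyed by the absolute value inside $\mathcal M^d$, and the componentwise bound $\langle|b_j|\rangle_Q\le 2\lambda$ is an $\ell^\infty$-statement in $j$: because the supremum over dyadic $Q\ni x$ may be attained at a different ancestor for each $j$, it does not imply $\|(\mathcal M^d b_j(x))_j\|_{\ell^r}\lesssim\lambda$ for $x\notin\bigcup_k Q_k$. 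A concrete counterexample to the claimed localization: in $d=1$ take $f_j=\tfrac\lambda 2\,\mathbf 1_{[2^{j-1},2^j]}$ for $j=1,\dots,N$. Then $F=\tfrac\lambda 2$ on $\bigcup_j[2^{j-1},2^j]$ and every dyadic average $\langle F\rangle_{[0,2^m]}\le\lambda/2<\lambda$, so $x=0$ lies outside the Calder\'on--Zygmund cubes $\Omega$ at height $\lambda$; yet $\mathcal M^d f_j(0)=\lambda/4$ for every $j$, whence $\|(\mathcal M^d f_j(0))_j\|_{\ell^r}\sim\lambda N^{1/r}$ can be made arbitrarily larger than $\lambda$. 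The same effect occurs for $b_j$. The suggested ``telescoping over dyadic ancestors'' does not repair this: after Minkowski and summing over ancestors, one obtains a divergent chain of $O(\lambda)$-contributions. So while the weak $(1,1)$ bound is certainly true, your argument for it does not close, and the case $1<p<r$ remains unproved as written.
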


\begin{proof}
See \cite{FS}. Note that the inequality therein was stated for $1<r<\infty$. But for $r=\infty$ the 
inequality also holds trivially.
\end{proof}

\begin{lem} \label{lemp2}
Suppose $u \in \mathcal S^{\prime}(\mathbb R^d)$ with $\operatorname{supp}(\hat u) \subset \{ \xi:\; |\xi| <t \}$ for some
 $t>0$. Then for any $0< r<\infty$,
\begin{align*}
\sup_{z\in \mathbb R^d} \frac{|u(x-z)|} { (1+t|z|)^{\frac dr}}
\lesssim_{d,r} \Bigl( \mathcal M (|u|^r) (x) \Bigr)^{\frac 1r}, \qquad \forall\, x \in \mathbb R^d.
\end{align*}
In the above $\mathcal M f$ denotes the usual Hardy-Littlewood maximal function.
\end{lem}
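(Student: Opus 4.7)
This is the classical Peetre maximal function inequality, and the plan is to follow Peetre's original strategy. First, I reduce to $t=1$ by the scaling $v(y)=u(y/t)$: then $\hat v$ is supported in $\{|\xi|<1\}$, the substitution $z=tz'$ turns the weight $(1+t|z|)^{d/r}$ into $(1+|z'|)^{d/r}$, and the Hardy--Littlewood maximal function commutes with dilations in a way that matches both sides exactly. It therefore suffices to prove the inequality under the assumption $t=1$.

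With $t=1$, pick $\psi\in\mathcal S(\mathbb R^d)$ with $\hat\psi\equiv 1$ on $\{|\xi|\le 1\}$, so that the reproducing formula $u=u*\psi$ holds, $u\in C^{\infty}$, and $\nabla u = u*\nabla\psi$. Define the Peetre maximal quantity
\[
N(x_0) := \sup_{z\in\mathbb R^d}\frac{|u(x_0-z)|}{(1+|z|)^{d/r}}.
\]
The key step is a pointwise ``weighted Bernstein'' bound
\[
|\nabla u(y)| \;\lesssim_{\psi,d,r}\; N(x_0)\,(1+|y-x_0|)^{d/r}, \qquad \forall\, y\in\mathbb R^d,
\]
which I will establish by inserting the definition $|u(y-w)|\le N(x_0)(1+|w-(y-x_0)|)^{d/r}$ into $\nabla u(y)=\int u(y-w)\nabla\psi(w)\,dw$, applying the elementary bound $(1+|a+b|)^{d/r}\lesssim (1+|a|)^{d/r}(1+|b|)^{d/r}$, and using the Schwartz decay of $\nabla\psi$ to carry out the $w$-integration.

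Given this gradient bound, for any $\varepsilon\in(0,1)$ pick $z_0$ with $|u(x_0-z_0)|\ge (1-\varepsilon)N(x_0)(1+|z_0|)^{d/r}$. On the ball $B(x_0-z_0,\rho)$ with $\rho\le 1$, the bound above specialises to $|\nabla u(y)|\lesssim N(x_0)(1+|z_0|)^{d/r}$, and the mean value theorem then gives
\[
|u(y)|\;\ge\;\tfrac{1}{2}(1-\varepsilon)\,N(x_0)(1+|z_0|)^{d/r}
\]
once $\rho$ is fixed as a sufficiently small absolute constant depending only on $\psi,d,r$. Integrating $|u|^r$ over this ball and using $B(x_0-z_0,\rho)\subset B(x_0,|z_0|+\rho)$ together with $|z_0|+\rho\le 1+|z_0|$, I conclude
\[
\mathcal M(|u|^r)(x_0)\;\gtrsim\; \frac{|B_\rho|}{(|z_0|+\rho)^d}\,N(x_0)^r(1+|z_0|)^d\;\gtrsim\; N(x_0)^r,
\]
and taking $r$-th roots (and letting $\varepsilon\downarrow 0$) yields the desired estimate.

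The one technical obstacle is making sure $N(x_0)<\infty$ so that the above manipulations are legitimate. I will handle this by first proving the inequality under the extra hypothesis that $u$ itself is Schwartz (where $N(x_0)<\infty$ is automatic), and then extending to the general band-limited tempered distribution via standard mollification, relying on the Paley--Wiener fact that any $u\in\mathcal S'$ with $\hat u$ compactly supported is a smooth function of polynomial growth, which makes the convolution $u*\psi$ and the pointwise bounds rigorous.
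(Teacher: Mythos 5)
Your proposal is the classical Peetre argument (derive a gradient bound from $u=u*\psi$, then apply the mean value theorem near a near-maximizing point), which is a genuinely different route from the paper's: the paper estimates the convolution integral $u(z)=\int\psi(z-y)u(y)\,dy$ directly, splits into the cases $r\ge1$ and $0<r<1$, and for $r<1$ uses the factorization $|u|=|u|^r\cdot|u|^{1-r}$ together with a growth bound on $u$. Your Peetre-style argument has the advantage of treating all $r>0$ uniformly, but it has a nontrivial hidden cost, which is exactly the issue you flag at the end and then dispatch too quickly.

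The gap: as the paper's remark after the statement makes explicit, the whole point of the lemma as stated is to \emph{remove} the a priori growth hypothesis $\sup_x|u(x)|/(1+|x|)^{d/r}<\infty$ that textbook proofs assume --- and that hypothesis is precisely the finiteness of your $N(x_0)$. Your gradient bound is vacuous if $N(x_0)=\infty$, and ``standard mollification'' does not close the hole. Cutting off in space, $u_\epsilon=\rho(\epsilon\,\cdot)\,u$, makes $u_\epsilon$ compactly supported, so by Paley--Wiener $\hat u_\epsilon$ is entire and is no longer compactly supported: you lose the band-limitation that makes $u_\epsilon=u_\epsilon*\psi$ true. Convolving, $u_\epsilon=u*\phi_\epsilon$, preserves the Fourier support but does nothing to improve the spatial decay of $u$, so $N_\epsilon(x_0)$ may still be $\infty$. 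And the Paley--Wiener polynomial growth bound only gives $|u(x)|\lesssim(1+|x|)^{N_0}$ for \emph{some} $N_0$ depending on $u$, not $N_0\le d/r$. A working approximation exists --- e.g.\ multiply by $\prod_{i}\mathrm{sinc}(\epsilon x_i)^M$ for $M$ large enough, which is band-limited with $|u_\epsilon|\le|u|$ and $u_\epsilon\to u$ pointwise and forces $N_\epsilon(x_0)<\infty$ --- but you would have to identify it, as it is not a ``standard'' mollifier. The paper avoids approximation entirely and proves the finiteness directly by a bootstrap: starting from $|u(y)|\lesssim_u|y|^{N_0}$ it derives $U_R\le B\,R^d\,U_{2R}^{1-r}$ for $U_R=1+\max_{|y|\le R}|u(y)|$, and iterates to drive the growth exponent down to $d/r$. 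That iteration is the real mathematical content of this lemma, and it is the step your proposal leaves unproved.
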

\begin{rem}
For textbook proofs under slightly stronger conditions, see \cite{Triebel_vol1}
or \cite{Grafakos_Modern}. For example in \cite{Grafakos_Modern},  
the proof therein assumes the growth condition
\begin{align} \label{pf_lemp2_rem}
\sup_{x \in \mathbb R^d} \frac{ |u(x)|} {(1+|x|)^{\frac dr}} <\infty.
\end{align}
Here we show that this condition can be removed. The removal of such conditions is particularly
interesting for $0<r<1$. 
\end{rem}

\begin{proof}
First note that $u \in C^{\infty}(\mathbb R^d)$ since
$\hat u$ is compactly supported. By Paley-Wiener (for distributions), the function $u$ grows at most
polynomially at the spatially infinity. More precisely, there exists an
integer $N_0\ge 0$ and a constant $A_0>0$, such that 
\begin{align} \label{pf_lemp2_a0}
|u(y)| \le  A_0 |y|^{N_0}, \quad \forall\, |y| \ge 2.
\end{align}
This estimate will be used below. Note that both constants ($A_0$ and $N_0$) may depend on $u$. 

By scaling one can assume $t=1$. Also by
translation it suffices to prove the case $x=0$. Since $u=P_{<2} u$, we have
\begin{align*}
u(z) = \int_{\mathbb R^d} \psi(z-y) u(y) dy,
\end{align*}
where $\psi \in \mathcal S(\mathbb R^d)$ corresponds to $P_{<2}$. The convergence
of the integral is not an issue thanks to the estimate \eqref{pf_lemp2_a0}.

Consider first the case $r\ge 1$. Clearly
\begin{align*}
|u(z)| & \lesssim_{d,r}  \int_{|y-z|\le 1+|z|} |u(y)| dy + \sum_{i=1}^{\infty}
(2^i(1+|z|))^{-10d} \int_{|y-z| \sim 2^{i} (1+|z|)} |u(y) | dy \notag \\
& \lesssim_{d,r}  (\mathcal M(|u|^r)(0) )^{\frac 1 r} \cdot (1+|z|)^{\frac dr}.
\end{align*}
Thus this case is OK.

Next consider the case $0<r<1$. We first  assume the growth condition
\eqref{pf_lemp2_rem}
and complete the estimate. We have
\begin{align*}
|u(z)| & \lesssim_{d,r} \int |\psi(z-y)| \cdot |u(y)|^r \cdot |u(y)|^{1-r} dy \notag \\
& \lesssim_{d,r} \int |\psi(z-y)| |u(y)|^r (1+|y|)^{\frac{d(1-r)}r} dy \Bigl(\sup_{\tilde y  \in \mathbb R^d}
\frac{|u(\tilde y)|} { (1+|\tilde y|)^{\frac dr} } \Bigr)^{1-r} \notag \\
& \lesssim_{d,r} \; (1+|z|)^{\frac dr} \cdot \mathcal M (|u|^r)(0) \cdot
\Bigl(\sup_{\tilde y  \in \mathbb R^d}
\frac{|u(\tilde y)|} { (1+|\tilde y|)^{\frac dr} } \Bigr)^{1-r}.
\end{align*}
The desired inequality then follows.

Finally we show how to prove \eqref{pf_lemp2_rem} for the case $0<r<1$.  For any
$|z|=R\ge 2$, by using \eqref{pf_lemp2_a0}, it is easy to check that
\begin{align*}
|u(z)| &\lesssim_{d,r,u} \int_{|y-z|>\frac 23 R} \langle z-y\rangle^{-10d-N_0} |u(y)| dy 
+ \int_{|y-z|\le \frac 23 R} |u(y)| dy \notag \\
& \lesssim_{d,r,u} 1 + \mathcal M(|u|^r)(0) \cdot R^d \max_{ |\tilde y| \le 2R} |u(\tilde y)|^{1-r}.
\end{align*}
Define for $R\ge 2$, 
\begin{align*}
U_R= 1+ \max_{|y|\le R}  |u(y)|.
\end{align*}
Note that we may assume $\mathcal M(|u|^r)(0)<\infty$. Otherwise there is nothing to prove.
It follows that for some positive constant $B=B(u,d,r)\ge 2$,
\begin{align*}
U_R \le B \cdot R^d\cdot U_{2R}^{1-r}, \quad\forall\, R\ge 2.
\end{align*}
We inductively assume 
\begin{align*}
U_R \le A_k R^{N_k}, \quad \forall\, R\ge 2.
\end{align*}
The base case $k=0$ certainly holds in view of \eqref{pf_lemp2_a0}.
Then 
\begin{align*}
U_R \le B \cdot A_k^{1-r} 2^{N_k (1-r)} \cdot R^{d+ (1-r) N_k}.
\end{align*}
Set $N_{k+1}=d+(1-r)N_k$.  Clearly $N_k \to \frac d r$ as $k\to \infty$ and $N_k \le M$ (for some $M>0$) 
for all
$k$. Set 
\begin{align*}
A_{k+1} = B\cdot 2^{M(1-r)} A_k^{1-r}.
\end{align*}
It is easy to check that $A_k $ converges to some constant $A=B^{\frac 1r} 2^{\frac{M(1-r)}r} $ as $k\to \infty$. Thus 
\eqref{pf_lemp2_rem} is proved. 
\end{proof}

\begin{lem} \label{lemp2a}
Suppose $(f_j)_{j\in \mathbb Z}$ is a sequence of functions satisfying
\begin{align*}
 \operatorname{supp}(\widehat{f_j} ) \subset \{ \xi:\; |\xi| \le B_1 2^j \}, \qquad \forall\, j,
 \end{align*}
 where $B_1>0$ is a constant. Then for any $0<p,q<\infty$, we have
 \begin{align*}
 \| (P_j f_j)_{l_j^q} \|_p \lesssim \| (f_j)_{l_j^q} \|_p.
 \end{align*}
In the above $P_j$ can be replaced by $\tilde P_j$ or $\tilde P_{\le j}$.

On the other hand, if $1<p,q<\infty$, and $(g_j)_{j\in \mathbb Z}$ is a sequence of functions, then
\begin{align}
\| (P_j g_j)_{l_j^q} \|_p \lesssim \| (g_j)_{l_j^q} \|_p. \label{lemp2a_etmp_1}
\end{align}

\end{lem}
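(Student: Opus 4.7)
The plan is to prove the two parts by reducing everything to the Fefferman--Stein vector-valued maximal inequality (Lemma \ref{lem_FS}), with Lemma \ref{lemp2} bridging the frequency-localized pointwise control needed for the $p,q \le 1$ regime in part one.

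For the first inequality, I would write $P_j f_j = \psi_j \star f_j$ where $\psi_j(x) = 2^{jd}\psi(2^j x)$ for $\psi \in \mathcal{S}(\mathbb{R}^d)$, and fix any $0 < r < \min(p,q)$. Since $\operatorname{supp}(\widehat{f_j}) \subset \{|\xi| \le B_1 2^j\}$, Lemma \ref{lemp2} gives the pointwise bound
\begin{align*}
|f_j(x-y)| \lesssim_{d,r} (1+B_1 2^j|y|)^{d/r} \bigl(\mathcal{M}(|f_j|^r)(x)\bigr)^{1/r}, \qquad \forall\, x,y\in\mathbb{R}^d.
\end{align*}
Combining with $|\psi_j(y)| \lesssim 2^{jd}(1+2^j|y|)^{-N}$ for arbitrarily large $N$, the convolution integral is uniformly bounded in $j$, yielding
\begin{align*}
|P_j f_j(x)| \lesssim_{d,r} \bigl(\mathcal{M}(|f_j|^r)(x)\bigr)^{1/r}.
\end{align*}
Taking $\ell_j^q$ then $L^p_x$, this is $\|(\mathcal{M}(|f_j|^r))_{l_j^{q/r}}\|_{p/r}^{1/r}$. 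Since $q/r > 1$ and $p/r > 1$, Lemma \ref{lem_FS} bounds this by $\|(|f_j|^r)_{l_j^{q/r}}\|_{p/r}^{1/r} = \|(f_j)_{l_j^q}\|_p$, as desired. The replacement of $P_j$ by $\tilde P_j$ or $\tilde P_{\le j}$ is identical: each is a convolution against a Schwartz kernel of the same rescaled form.

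For the second inequality, no frequency-support assumption is available, so the trick above fails for $p$ or $q \le 1$; this is exactly why the hypothesis $1 < p,q < \infty$ appears. The standard pointwise estimate
\begin{align*}
|P_j g_j(x)| \le \int |\psi_j(x-y)||g_j(y)|dy \lesssim \mathcal{M} g_j(x),
\end{align*}
uses only that $\psi_j$ has integrable radially decreasing majorant, and then a direct application of Lemma \ref{lem_FS} with $1 < q < \infty$ and $1 < p < \infty$ finishes the claim.

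The main subtlety, and the only place where care is actually required, is the $r < \min(p,q)$ choice in part one together with the removal of any a priori growth hypothesis on $f_j$: this is precisely what Lemma \ref{lemp2} provides (cf. its remark), so this step is essentially free once that lemma is in hand. Aside from this, the argument is a routine convolution-plus-maximal-function reduction.
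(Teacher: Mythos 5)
Your proof is correct and follows essentially the same route as the paper: apply Lemma \ref{lemp2} with some fixed $0<r<\min(p,q)$ to get the pointwise bound $|P_j f_j|\lesssim\bigl(\mathcal M(|f_j|^r)\bigr)^{1/r}$, then invoke the Fefferman--Stein inequality in $L^{p/r}(\ell^{q/r})$ for part one; and use the elementary $|P_j g_j|\lesssim\mathcal M g_j$ followed by Fefferman--Stein for part two. The paper's writeup just condenses the same convolution estimate into a single display, so the two arguments coincide.
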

\begin{proof}[Proof of Lemma \ref{lemp2a}]
The inequality \eqref{lemp2a_etmp_1} follows easily from the simple fact that $|P_j g_j| \lesssim \mathcal Mg_j$.
Therefore we only need to prove the first inequality.

By Lemma \ref{lemp2},
\begin{align*}
| (P_j f_j)(x)| & \lesssim \int 2^{jd} |\psi(2^j y)| \cdot (1+2^j|y|)^{\frac dr} \cdot
\frac{|f_j(x-y)|} { (1+2^j|y|)^{\frac dr}} dy \notag \\
& \lesssim ( \mathcal M (|f_j|^r) (x) )^{\frac 1r}.
\end{align*}
Now choose $0<r<\min\{p,q\}$ and use Lemma \ref{lem_FS}. We get
\begin{align*}
\| (P_j f_j)_{l_j^q} \|_p &\lesssim \| (\mathcal M(|f_j|^r) )^{\frac 1r}_{l_j^{\frac qr}} \|_p
= \| (\mathcal M (|f_j|^r) )_{l_j^{\frac qr}} \|^{\frac 1r}_{\frac pr} \notag \\
& \lesssim \| (|f_j|^r)_{l_j^{\frac qr}} \|_{\frac pr}^{\frac 1r} = \| (f_j)_{l_j^q} \|_p.
\end{align*}

\end{proof}

The following lemma is more or less trivial. But it is certainly relieving for some
intermediate computations.

\begin{lem} \label{lemp2a_1}
Let $L\ge 2$ be an integer. Suppose $f_j\in \mathcal S(\mathbb R^d)$, $j=-L,-L+1,\cdots, L-1,L$ ,
is a (finite) sequence of functions satisfying
\begin{align*}
\operatorname{supp}(\widehat{f_j}) \subset \{ \xi:\,  |\xi| < B_1 2^j \}, \qquad \forall\, j,
\end{align*}
for some constant  $B_1>0$. Let $\psi \in C_c^{\infty}(\mathbb R^d)$ be such that
$\psi \equiv 0$ in a neighborhood of the origin. 
Then for all $1<p<\infty$,
\begin{align*}
\| \sum_{j=-L}^L P_j^{\psi} f_j \|_p \lesssim_{p,\psi, B_1,d}  \| (f_j)_{l_j^2} \|_p;
\end{align*}
and for $0<p\le 1$, 
\begin{align*}
\| \sum_{j=-L}^L P_j^{\psi} f_j \|_{\mathcal H^p} \lesssim_{p,\psi, B_1,d}  \| (f_j)_{l_j^2} \|_p.
\end{align*}

\end{lem}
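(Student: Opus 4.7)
The plan is to exploit the combined frequency localization of $P_j^\psi f_j$. Since $\psi \in C_c^\infty(\mathbb R^d)$ vanishes in a neighborhood of the origin, there exist constants $0<a<b$ with $\operatorname{supp}(\psi) \subset \{ a \le |\xi| \le b\}$. Combined with the support hypothesis on $\widehat{f_j}$, this yields
\[
\operatorname{supp}(\widehat{P_j^\psi f_j}) \subset \{ a \cdot 2^j \le |\xi| \le \min(b, B_1) \cdot 2^j \},
\]
so each $P_j^\psi f_j$ is frequency-localized to a dyadic annulus at scale $2^j$. A key consequence is that for the standard Littlewood--Paley projection $P_k$, one has $P_k P_j^\psi f_j = 0$ unless $|k-j| \le n_0$ for some fixed $n_0 = n_0(a,b,B_1)$.

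For the case $1<p<\infty$, I would first invoke the Littlewood--Paley characterization $\|g\|_p \sim \|(P_k g)_{l_k^2}\|_p$ applied to $g = \sum_{j=-L}^L P_j^\psi f_j$. The annulus localization from the previous step reduces the inner sum over $j$ to $O(1)$ terms for each $k$, yielding
\[
\Bigl\|\sum_{j=-L}^L P_j^\psi f_j\Bigr\|_p \lesssim \|(P_j^\psi f_j)_{l_j^2}\|_p.
\]
The remaining task is to bound the right-hand side by $\|(f_j)_{l_j^2}\|_p$. For this I would repeat the argument from Lemma \ref{lemp2a}: apply Lemma \ref{lemp2} to the convolution representation of $P_j^\psi f_j$ to obtain the pointwise bound $|P_j^\psi f_j(x)| \lesssim (\mathcal M(|f_j|^r)(x))^{1/r}$ valid for any $0<r<\infty$, and then select $0<r<\min(p,2)$ so that the Fefferman--Stein inequality (Lemma \ref{lem_FS}) applies with outer index $p/r>1$ and inner index $2/r>1$.

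For the quasi-Banach case $0<p\le 1$, the same strategy goes through with the characterization $\|f\|_{\mathcal H^p} \sim \|(P_k f)_{l_k^2}\|_p$ replacing the $L^p$ Littlewood--Paley one. The main point to watch is the Fefferman--Stein step, since Lemma \ref{lem_FS} requires both integration exponents to exceed $1$; the fix is exactly as in Lemma \ref{lemp2a}, namely to choose $0<r<p\le 1$, which gives $p/r>1$ and $2/r>1$ and hence the required $\|(P_j^\psi f_j)_{l_j^2}\|_p \lesssim \|(f_j)_{l_j^2}\|_p$. Combined with the almost orthogonality from the first step, this closes the argument, and the finiteness of the index range ($j=-L,\ldots,L$) ensures there are no convergence issues while the implied constants remain independent of $L$.
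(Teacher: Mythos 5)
Your proof is correct and follows essentially the same route as the paper: both invoke the Littlewood--Paley (resp.\ $\mathcal H^p$) square-function characterization, observe that $P_k P_j^{\psi} f_j$ vanishes unless $|j-k|$ is bounded by a constant depending on $\operatorname{supp}(\psi)$ and $B_1$, and then finish via the maximal-function/Fefferman--Stein mechanism of Lemma \ref{lemp2a} with $0<r<\min(p,2)$. The only difference is cosmetic — the paper cites Lemma \ref{lemp2a} directly for the last step, whereas you re-derive its bound.
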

\begin{proof}[Proof of Lemma \ref{lemp2a_1}]
Assume $\operatorname{supp}(\psi) \subset \{ \xi:\, 2^{-n_0} <|\xi| <2^{n_0} \}$ for some
integer $n_0>0$.  Then clearly for all $0<p<\infty$,
\begin{align*}
\| (P_k( \sum_{j=-L}^L P_j^{\psi} f_j) )_{l_k^2} \|_p
&\lesssim \sum_{|a|\le n_0+10} \| (P_k P_{k+a}^{\psi} f_{k+a} )_{l_k^2} \|_p \notag \\
&\lesssim \| (f_j)_{l_j^2} \|_p,
\end{align*}
where the last inequality follows from  Lemma \ref{lemp2a}.
\end{proof}
\begin{lem} \label{lemc1}
Let $(a_j)_{j\in \mathbb Z}$ be a sequence of real numbers. Then for any $0<\theta<1$,
$0<p\le \infty$,
$s_1\ne s_2$, $s=\theta s_1+(1-\theta)s_2$,  we have
\begin{align}
&(2^{j s} a_j)_{l_j^p}  \lesssim_{\theta,p, s_1,s_2} \Bigl| (2^{js_1}a_j)_{l_j^{\infty}} \Bigr|^{\theta}
\cdot \Bigl| (2^{js_2} a_j)_{l_j^{\infty}} \Bigr|^{1-\theta}; \notag \\
& (2^{j s} a_j)_{l_j^p} \lesssim_{\theta,p, s_1,s_2} \Bigl| (2^{js_1}a_j)_{l_j^{p}} \Bigr|^{\theta}
\cdot \Bigl| (2^{js_2} a_j)_{l_j^{\infty}} \Bigr|^{1-\theta}. \notag 
\end{align}
\end{lem}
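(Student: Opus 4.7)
The plan is to reduce to a single splitting argument. First I would observe that the second inequality is weaker than (and therefore follows from) the first: for any sequence $(b_j)$ and any $0<p<\infty$, one has $\|(b_j)\|_{l_j^{\infty}}\le \|(b_j)\|_{l_j^p}$ since $|b_{j_0}|^p\le \sum_j |b_j|^p$ for every $j_0$. Applied to $b_j=2^{js_1}a_j$, this upper-bounds the first factor on the RHS of the first inequality by the first factor on the RHS of the second. Thus it suffices to prove the first inequality. The case $p=\infty$ is then immediate from the pointwise identity $2^{js}|a_j|=(2^{js_1}|a_j|)^{\theta}(2^{js_2}|a_j|)^{1-\theta}\le X^{\theta}Y^{1-\theta}$, where $X=\|(2^{js_1}a_j)\|_{l_j^{\infty}}$ and $Y=\|(2^{js_2}a_j)\|_{l_j^{\infty}}$.

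For $0<p<\infty$, I would WLOG assume $s_1>s_2$ (swapping $(s_1,\theta)\leftrightarrow(s_2,1-\theta)$ otherwise) and assume $X,Y\in(0,\infty)$ (the degenerate cases being trivial). From $|a_j|\le X\,2^{-js_1}$ and $|a_j|\le Y\,2^{-js_2}$, I obtain the two bounds
\[
2^{js}|a_j|\le X\cdot 2^{-j(1-\theta)(s_1-s_2)},\qquad 2^{js}|a_j|\le Y\cdot 2^{j\theta(s_1-s_2)},
\]
the first being small as $j\to+\infty$ and the second as $j\to-\infty$. They cross at $j_0:=\log_2(X/Y)/(s_1-s_2)$, at value $X^{\theta}Y^{1-\theta}$. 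Using the second bound for $j\le \lfloor j_0\rfloor$ and the first for $j>\lfloor j_0\rfloor$, I would split
\[
\sum_j (2^{js}|a_j|)^p \le Y^p\sum_{j\le\lfloor j_0\rfloor} 2^{jp\theta(s_1-s_2)} + X^p\sum_{j>\lfloor j_0\rfloor}2^{-jp(1-\theta)(s_1-s_2)}
\]
into two convergent geometric series (the positivity of the exponential rates $p\theta(s_1-s_2)$ and $p(1-\theta)(s_1-s_2)$ being crucial). Summing each explicitly gives a contribution of order $2^{\pm\lfloor j_0\rfloor\cdot(\cdot)}$, and substituting $2^{j_0(s_1-s_2)}=X/Y$ yields that each piece is bounded by a constant multiple (depending only on $p,\theta,s_1,s_2$) of $X^{p\theta}Y^{p(1-\theta)}$. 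Taking $p$-th roots gives the first inequality.

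There is no real obstacle; the argument is the standard Besov-style interpolation and the only bookkeeping is the geometric-series computation together with handling the (possibly non-integer) breakpoint $j_0$, which is routine.
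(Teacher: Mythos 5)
Your proof is correct and follows essentially the same splitting-the-sum-at-a-threshold argument as the paper's: the paper reduces to $p=1$, splits at an arbitrary $J_0$, and then optimizes over $J_0$, whereas you work with general $p$ directly and choose the optimal breakpoint $j_0$ up front, but these are cosmetic variations of the same idea.
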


\begin{rem} \notag
The condition $s_1\ne s_2$ is crucial. For $s=s_1=s_2$ the above inequalities are obviously false (unless $p=\infty$). 
\end{rem}

\begin{proof}[Proof of Lemma \ref{lemc1}]
The case $p=\infty$ is trivial. For $0<p<\infty$ since $(2^{j s} a_j)_{l_j^p}^p = (2^{jp s} |a_j|^p)_{l_j^1}$,
 it suffices to prove the case $p=1$, and we may assume $a_j \ge 0$. It is clear that the second inequality 
 follows from the first inequality by using the the fact that $l_j^1 \hookrightarrow l_j^{\infty}$.
Consider first the case $s_1<s_2$. Let $J_0$ be chosen later. Then
\begin{align*}
\sum_j 2^{j (\theta s_1+(1-\theta)s_2 )} a_j & = \sum_{j\le J_0} 
2^{js_1}a_j \cdot 2^{j(1-\theta)(s_2-s_1)}  + 
\sum_{j>J_0} 2^{js_2 } a_j \cdot 2^{-j(1-\theta)(s_2-s_1)}
\notag \\
& \lesssim (2^{js_1} a_j)_{l_j^{\infty}} \cdot 2^{J_0 (1-\theta)(s_2-s_1)} + (2^{js_2} a_j)_{l_j^\infty} 
\cdot 2^{-J_0(1-\theta)(s_2-s_1)}.
\end{align*}
Optimizing in $J_0$ 
 then yields the result.  
The argument for $s_1>s_2$ is similar.
\end{proof}
\begin{rem} \label{lemc2_rem-1}
Other proofs are available. For example (for the second inequality) if $s>0$, $s_1=s$, $s_2=0$, 
$\theta=\frac 12$, $p=1$, then
\begin{align*}
(2^{j \frac 12 s} a_j)_{l_j^1}^2 & \lesssim \sum_{j_1\le j_2} 2^{j_1 \frac 12 s} 2^{j_2 \frac 12 s}
|a_{j_1}| |a_{j_2}| \notag \\
& \lesssim (a_j)_{l_j^{\infty}} \sum_{j_2} 2^{j_2 s} |a_{j_2}|
\lesssim (a_j)_{l_j^{\infty}} (2^{js} a_j)_{l_j^1}.
\end{align*}
One should recognise this as the usual ``squaring trick" with low to high re-ordering. See
Remark \ref{rem2.12} below. 
\end{rem}
\begin{lem} \label{lemc2}
Let $1<r\le \infty$, $0<\theta<1$ and recall $D=(-\Delta)^{\frac 12}$.
Then for any $f \in \mathcal S(\mathbb R^d)$, we have
\begin{align*}
&\| D^{\theta s} f \|_r \lesssim \|D^s f\|_p^{\theta} \cdot \| f \|_q^{1-\theta},
\quad \text{if $s\ge 0$, $1<p,q<\infty$, and $\frac 1 r= \frac{\theta} p+ \frac{1-\theta} q$};\\
&\|D^{\theta s} f\|_r \lesssim \| D^s f\|_{\dot B^0_{\infty,\infty}}^{\theta}
\cdot \| f \|_{r(1-\theta)}^{1-\theta}, \quad \text{if $s>0$ }
\notag \\
&\qquad\qquad\qquad \text{(this corresponds to $p=\infty$,
 $q=r(1-\theta)$)};\\
& \|D^{\theta s} f \|_r \lesssim \| D^s f \|_{r\theta}^{\theta}
\cdot \| f \|_{\dot B^0_{\infty,\infty}}^{1-\theta}, \quad \text{if $s>0$ (correspondingly $p=r\theta$, $q=\infty$)}.
\end{align*}
\end{lem}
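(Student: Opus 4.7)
All three inequalities are of Gagliardo--Nirenberg type and will follow from one pointwise bound plus H\"older's inequality. The plan is to implement the usual dyadic splitting used in the proof of Lemma \ref{lemc1}, but at the function level.

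\emph{Step 1: Pointwise Gagliardo--Nirenberg.} Decompose $D^{\theta s}f=\sum_{j\in\mathbb Z}D^{\theta s}P_jf$. Since each $D^{\theta s}P_jf$ has Fourier support in the annulus of scale $2^j$, the Bernstein-type estimate extracted from Lemma \ref{lemp2} (or Lemma \ref{lemp2a}) gives
\[
|D^{\theta s}P_jf(x)|\lesssim 2^{j\theta s}\,\mathcal M(P_jf)(x).
\]
Set $A(x):=\sup_j\mathcal M(P_jf)(x)$ and $B(x):=\sup_j 2^{js}\mathcal M(P_jf)(x)$. For any $J_0=J_0(x)\in\mathbb Z$ split the $j$-sum at $J_0$; using $0<\theta<1$ the geometric series on both halves converges and yields
\[
|D^{\theta s}f(x)|\lesssim 2^{J_0\theta s}A(x)+2^{-J_0(1-\theta)s}B(x).
\]
Optimising $J_0$ so that $2^{J_0 s}\sim B(x)/A(x)$ produces the key pointwise bound
\[
|D^{\theta s}f(x)|\lesssim A(x)^{1-\theta}B(x)^{\theta},
\]
which is nothing but Lemma \ref{lemc1} (with $s_1=s$, $s_2=0$, $p=\infty$) applied to the sequence $a_j=\mathcal M(P_jf)(x)$.

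\emph{Step 2: H\"older and controlling $A,B$.} Taking the $L^r$ norm and applying H\"older with $\tfrac1r=\tfrac{\theta}{r_1}+\tfrac{1-\theta}{r_2}$ gives
\[
\|D^{\theta s}f\|_r\lesssim \|B\|_{r_1}^{\theta}\,\|A\|_{r_2}^{1-\theta}.
\]
For inequality (i), take $r_1=p$, $r_2=q$ with $1<p,q<\infty$. Since $|P_jf|\lesssim\mathcal Mf$ pointwise and $\mathcal M$ is $L^q$-bounded, $\|A\|_q\lesssim\|f\|_q$. For $\|B\|_p$, pass from $l_j^\infty$ to $l_j^2$ trivially, apply Fefferman--Stein (Lemma \ref{lem_FS}) and the Littlewood--Paley square-function characterisation:
\[
\|B\|_p\le\bigl\|(2^{js}\mathcal M(P_jf))_{l_j^2}\bigr\|_p\lesssim\bigl\|(2^{js}P_jf)_{l_j^2}\bigr\|_p\sim\|D^sf\|_p.
\]
For inequality (ii), take $r_1=\infty$ and $r_2=r(1-\theta)$. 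Using $\mathcal M(P_jf)(x)\le\|P_jf\|_\infty$ one gets
\[
\|B\|_\infty\le\sup_j 2^{js}\|P_jf\|_\infty=\|D^sf\|_{\dot B^0_{\infty,\infty}},
\]
and $\|A\|_{r(1-\theta)}\lesssim\|f\|_{r(1-\theta)}$ by the usual $L^{r(1-\theta)}$ boundedness of $\mathcal M$. Inequality (iii) is handled symmetrically with the roles of $A$ and $B$ interchanged.

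The main obstacle, and essentially the only real bookkeeping issue, lies in the endpoint ranges of exponents in (ii) and (iii): for Step 2 one needs $r(1-\theta)>1$ (resp.\ $r\theta>1$) so that the Hardy--Littlewood maximal function is bounded on the target space. Outside this range one would have to invoke the Hardy-space characterisation \eqref{Hp_def_e2} and an $\mathcal H^p$-version of the Fefferman--Stein inequality. Everything else in the argument is standard Littlewood--Paley manipulation, so the proof amounts essentially to Step 1 together with the identifications in Step 3.
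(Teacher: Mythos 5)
Your proposal is correct and, at bottom, is the same argument as the paper's: both reduce the statement to the dyadic interpolation of Lemma~\ref{lemc1} applied pointwise in $x$ to the Littlewood--Paley pieces, then finish with H\"older in $x$ and Fefferman--Stein/square-function bounds. The only real difference is packaging: the paper interpolates inside the square-function norm $\|(2^{j\theta s}f_j)_{l_j^2}\|_r$ using the $l^2/l^\infty$ case of Lemma~\ref{lemc1}, while you use the $l^\infty/l^\infty$ case (together with a maximal-function Bernstein bound) to obtain the cleaner pointwise inequality $|D^{\theta s}f|\lesssim A^{1-\theta}B^\theta$ and only invoke the square function when estimating $\|B\|_p$; the two are interchangeable. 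Your remark that the Besov endpoints (ii)--(iii) implicitly require $r(1-\theta)>1$ resp.\ $r\theta>1$ is correct: the lemma as stated is silent on this, but the constraint is recorded in Remark~\ref{lemc2_rem1} and is satisfied in every application made in the paper, so it is an implicit hypothesis rather than a gap.
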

\begin{rem*}
The second and third inequalities are \emph{false} for $s=0$.
\end{rem*}

\begin{rem} \label{lemc2_rem1}
By the same proof of lemma \ref{lemc2} below,  one can show that more generally for $0<\theta<1$, $s=s_1\theta +s_2(1-\theta)$ with $s_1\ne s_2$,
$1/\theta<r\le \infty$, 
\begin{align*}
\| D^s f \|_r \lesssim \| D^{s_1} f \|_{r\theta}^{\theta} \| D^{s_2} f \|_{\dot B^0_{\infty,\infty}}^{1-\theta},
\end{align*}
provided (of course) that these quantities are well-defined (especially when dealing with
$D^{s}$ operators with $s<0$).  
\end{rem}
\begin{proof}[Proof of Lemma \ref{lemc2}]
The inequalities are trivial for $r=\infty$. Also the first inequality is trivial if $s=0$.
Thus we may assume $s>0$ and $1<r<\infty$. By Lemma \ref{lemc1}, we have
\begin{align*}
\| D^{\theta s} f \|_r & \sim \| (2^{j\theta s} f_j )_{l_j^2} \|_r \notag \\
& \lesssim \| (2^{js} f_j)_{l_j^2}^{\theta} (f_j)_{l_j^{\infty}}^{1-\theta} \|_r \notag \\
& \lesssim \| (2^{js} f_j)_{l_j^2} \|_p^{\theta} \cdot \| (f_j)_{l_j^{\infty}} \|_q^{1-\theta} \notag \\
& \lesssim \| D^s f\|_p^{\theta} \cdot \| f \|_q^{1-\theta}, \qquad \text{if $p,q<\infty$}.
\end{align*}
The argument for the second and third inequalities are similar. We omit details.
\end{proof}
\begin{rem} \label{rem2.12}
By using Remark \ref{lemc2_rem1} and taking $r=2d/(d-2)$,
$\theta=(d-2)/d$, $s=0$, $s_1=1$, $s_2=-(d-2)/2$, we obtain for $d\ge 3$
and $f \in \mathcal S(\mathbb R^d)$:
\begin{align} \label{bubble_1st_ineq}
\|f \|_{\frac {2d}{d-2}}
\lesssim \|\nabla f \|_2^{\frac{d-2}d} \| f \|^{\frac 2d}_{\dot B^{-\frac{d-2}2}_{\infty,\infty}}.
\end{align}
By the obvious embedding $\dot B^0_{\frac{2d}{d-2},\infty}
\hookrightarrow \dot B^{-\frac{d-2}2}_{\infty,\infty}$, we obtain
\begin{align*}
\| f \|_{\frac{2d}{d-2}} \lesssim \| \nabla f \|_2^{\frac{d-2} d}  
\| f \|^{\frac 2d}_{\dot B^0_{\frac{2d}{d-2},\infty}}.
\end{align*}
This refined inequality plays an important role\footnote{In fact the first inequality \eqref{bubble_1st_ineq} 
already provides a quick
route to the extraction of ``bubbles" in linear profile decomposition.}
in the theory of linear profile decomposition for nonlinear Schr\"odinger
equations (cf. Proposition 4.8 on page 36 of \cite{RV_Clay}).  Note that the proof
therein uses a squaring trick and a low to high ordering of the dyadic sum, in a way
that is similar to the idea
in Remark \ref{lemc2_rem-1}. Our treatment here seems much simpler.
\end{rem}

We shall often use the following simple lemma, sometimes without explicit mentioning.
\begin{lem} \label{lemc2_a1}
The following hold.
\begin{itemize}
\item If $s>0$, $1<p<\infty$, then
\begin{align*}
\| (2^{-js} D^s P_{\le j} f )_{l_j^2} \|_p + \| (2^{-js} D^s P_{\le j} f)_{l_j^{\infty}}\|_p \lesssim_{s,p,d}
\| f\|_p.
\end{align*}
\item If $s> 0$, $p=\infty$, then
\begin{align*}
\| (2^{-js} D^s P_{\le j} f)_{l_j^{\infty}}\|_{\infty} \lesssim \| f \|_{\dot B^0_{\infty,\infty}}.
\end{align*}
\end{itemize}
\end{lem}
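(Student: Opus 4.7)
The strategy is to decompose $P_{\le j}f=\sum_{k\le j}P_k f$ and exploit the geometric gain provided by the factor $2^{-js}$ with $s>0$. I will set $g_k:=2^{-ks}D^s P_k f$, so that
\begin{align*}
2^{-js}D^s P_{\le j}f = \sum_{k\le j}2^{(k-j)s}\,g_k.
\end{align*}
A direct Fourier-side computation gives $\widehat{g_k}(\xi)=\chi(2^{-k}\xi)\hat f(\xi)$ with $\chi(\xi):=|\xi|^s\phi(\xi)$; since $\phi$ is supported in the annulus $1/2\le|\xi|\le 7/6$, the symbol $\chi$ lies in $C_c^\infty$ and hence its inverse Fourier transform is Schwartz. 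Consequently $|g_k(x)|\lesssim \mathcal M(P_k f)(x)\lesssim \mathcal Mf(x)$ pointwise, uniformly in $k$.

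For the $l_j^2$ part of Part 1, the identity above is a convolution on $\mathbb Z$ with kernel $a_m:=2^{-ms}\mathbf 1_{m\ge 0}$, which lies in $l^1(\mathbb Z)$ precisely because $s>0$. Young's inequality in $\mathbb Z$ then yields the pointwise estimate $\bigl(2^{-js}D^s P_{\le j}f(x)\bigr)_{l_j^2}\lesssim_s \bigl(g_k(x)\bigr)_{l_k^2}$, from which
\begin{align*}
\| (2^{-js}D^s P_{\le j}f)_{l_j^2} \|_p \lesssim \| (g_k)_{l_k^2} \|_p \lesssim \|f\|_p
\end{align*}
will follow. The last step is a standard vector-valued Littlewood--Paley bound, which I plan to derive by writing $g_k = P_k^\chi \tilde P_k f$ and invoking \eqref{lemp2a_etmp_1} of Lemma \ref{lemp2a} to absorb $P_k^\chi$, after which the usual $\|(\tilde P_k f)_{l_k^2}\|_p\lesssim\|f\|_p$ applies.

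For the $l_j^\infty$ part of Part 1, the same convolution structure gives $\bigl(2^{-js}D^s P_{\le j}f(x)\bigr)_{l_j^\infty}\lesssim \sup_k|g_k(x)|\lesssim\mathcal Mf(x)$, and then $\|\mathcal Mf\|_p\lesssim\|f\|_p$ for $1<p<\infty$ finishes Part 1. For Part 2, observe that $\|g_k\|_\infty\lesssim \|P_k f\|_\infty \le \|f\|_{\dot B^0_{\infty,\infty}}$ uniformly in $k$; combined with $\sum_{m\ge 0}2^{-ms}<\infty$, this yields $|2^{-js}D^s P_{\le j}f(x)|\lesssim\|f\|_{\dot B^0_{\infty,\infty}}$ uniformly in $j$ and $x$, as required.

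Since everything reduces to Young's inequality on $\mathbb Z$ together with standard maximal-function and LP square-function bounds, I anticipate no real obstacle; the one point to monitor is the square-function bound $\|(g_k)_{l_k^2}\|_p\lesssim \|f\|_p$ for the perturbed bump $\chi$, but this fits directly into the framework of Lemma \ref{lemp2a}.
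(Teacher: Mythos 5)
Your proof is correct and takes essentially the same route as the paper: both decompose $P_{\le j}f$ into $\sum_{k\le j}P_k f$, exploit the $l^1$-summability of $2^{(k-j)s}$ for $s>0$ to pass the $l_j^2$ (resp.\ $l_j^\infty$) norm through the sum, and then close with the maximal-function pointwise bound plus Fefferman--Stein (in the paper directly, in your write-up via Lemma~\ref{lemp2a}, which is itself a Fefferman--Stein consequence). The only cosmetic difference is that you phrase $2^{-ks}D^sP_k$ as a single renormalized operator $P_k^\chi$ with $\chi=|\xi|^s\phi(\xi)\in C_c^\infty$, while the paper inserts a fattened projector $\tilde P_k$ and reads off the kernel bound $|D^s\tilde P_k g|\lesssim 2^{ks}\mathcal M g$; these are the same observation.
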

\begin{proof}[Proof of Lemma \ref{lemc2_a1}]
We have
\begin{align*}
|2^{-js} (D^s P_{\le j} f)(x) | &= 2^{-js} |\sum_{k\le j} (D^s \tilde P_k P_k f)(x) | \notag \\
&\lesssim 2^{-js} \sum_{k\le j} 2^{ks} \mathcal M (P_k f)(x).
\end{align*}
Since $s>0$, we have
\begin{align*}
\bigl(2^{-js} \sum_{k\le j} 2^{ks} \mathcal M (P_k f) \bigr)_{l_j^2} \lsm ( \mathcal M (P_k f))_{l_k^2}.
\end{align*}
Thus
\begin{align*}
\| (2^{-js} D^s P_{\le j} f )_{l_j^2} \|_p & \lsm \| (\mathcal M (P_k f) )_{l_k^2} \|_p
\lsm \| f\|_p.
\end{align*}
Since the sequence $l^2$ norm controls $l^{\infty}$ norm, the inequality for $l_j^{\infty}$ follows.
Finally
\begin{align*}
\| (2^{-js} D^s P_{\le j} f )_{l_j^{\infty}} \|_{\infty}
&\lesssim ( 2^{-js} \| D^s P_{\le j} f \|_{\infty} )_{l_j^{\infty}} \notag \\
&\lesssim ( 2^{-js} \sum_{k\le j} 2^{ks} \|f\|_{\boo} )_{l_j^{\infty}} \lsm \|f\|_{\boo}.
\end{align*}

\end{proof}

The following lemma collects some useful properties of $J^s$ and $D^s$ operators. We will often
use it without explicit mentioning in later computations.

\begin{lem}[Properties of $J^s$, $D^s$ operators] \label{lem_JsDs}
Let $s>0$ and recall $D^s=(-\Delta)^{s/2}$, $J^s=(1-\Delta)^{s/2}$. Let $a$ be any
given real number. Then the following
inequalities hold for any $f \in \mathcal S(\mathbb R^d)$:
\begin{align}
&\|P_{>a} D^s f \|_p \lesssim_{a,p,d,s} \| J^{s-1} \partial f \|_p, \qquad\forall\, 1<p<\infty,\label{lem_JsDs_e1} \\
& \| P_j f \|_p \lesssim_{d,s}
\begin{cases}
2^{-j} \| J^{s-1} \partial f \|_p, \quad \text{if $j\le 0$, $1\le p\le \infty$} \\
 2^{-js} \| J^{s-1} \partial f\|_p, \quad \text{if $j>0$, $1\le p\le \infty$};
 \end{cases}
\label{lem_JsDs_e2}\\
& \| J^s f - f\|_p \lesssim_{p,d,s} \| J^{s-1} \partial f \|_p, \qquad\forall\, 1<p<\infty,
\label{lem_JsDs_e3}\\
& \| J^{-1} \partial f \|_{\bmo} \lesssim_{d} \| f\|_{\bmo}, \label{lem_JsDs_e4}\\
& \| J^s P_{>a} f  \|_{\bmo} \lesssim_{a,d,s} \| J^{s-1} \partial f  \|_{\bmo}. \label{lem_JsDs_e5}
\end{align}

\end{lem}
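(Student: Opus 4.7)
My plan is to prove each of the five estimates by realising the relevant Fourier multiplier in the Mikhlin/Calder\'on--Zygmund class; the only estimate where this is insufficient (because of the $p=1$ and $p=\infty$ endpoints) is \eqref{lem_JsDs_e2}, which I would handle by direct rescaling and Young's inequality. A unifying trick for \eqref{lem_JsDs_e1}, \eqref{lem_JsDs_e3}, and \eqref{lem_JsDs_e5} is the identity $\sum_{k=1}^{d}(-i\xi_k)(i\xi_k)=|\xi|^2$, which allows me to divide any symbol by $\langle\xi\rangle^{s-1}(i\xi_k)$ (summed over $k$) at the cost of a $|\xi|^{-2}$ factor; in each case I will check that the numerator kills this singularity.

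Concretely, for \eqref{lem_JsDs_e1} I would factor
\[
(1-\phi_0(2^{-a}\xi))|\xi|^s=\sum_{k=1}^{d}m_k(\xi)\,\langle\xi\rangle^{s-1}(i\xi_k),\qquad m_k(\xi)=\frac{(1-\phi_0(2^{-a}\xi))|\xi|^{s-2}(-i\xi_k)}{\langle\xi\rangle^{s-1}}.
\]
The support restriction $|\xi|\gtrsim 2^a$ removes the origin singularity, and at infinity $m_k$ is smooth and asymptotically homogeneous of degree $0$, so Mikhlin applies and yields \eqref{lem_JsDs_e1} for $1<p<\infty$. The same recipe with $\langle\xi\rangle^s$ replacing $|\xi|^s$ gives $m_k(\xi)=(1-\phi_0(2^{-a}\xi))\langle\xi\rangle(-i\xi_k)/|\xi|^2$, which is likewise Mikhlin/CZO and delivers the $\bmo$ version \eqref{lem_JsDs_e5}. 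For \eqref{lem_JsDs_e3} I would write $J^sf-f=\sum_k T_k(J^{s-1}\partial_k f)$ with symbol $(\langle\xi\rangle^s-1)(-i\xi_k)/(\langle\xi\rangle^{s-1}|\xi|^2)$: here the numerator $\langle\xi\rangle^s-1$ is a smooth function of $|\xi|^2$ vanishing to second order at $\xi=0$, so the $|\xi|^{-2}$ factor is absorbed, the whole symbol is smooth, and at infinity it is homogeneous of degree $-1$. Finally, for \eqref{lem_JsDs_e4} the symbol $i\xi_k/\langle\xi\rangle$ is smooth everywhere (being $\xi_k$ times a smooth function of $|\xi|^2$) and matches the Riesz symbol $i\xi_k/|\xi|$ at infinity; it is therefore a CZO and bounded on $\bmo$.

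The remaining inequality \eqref{lem_JsDs_e2} needs a rescaling argument since the $p=1,\infty$ endpoints are allowed. I would write $P_jf=\sum_k T_{j,k}(J^{s-1}\partial_k f)$ with symbol $m_{j,k}(\xi)=\phi(2^{-j}\xi)(-i\xi_k)/(|\xi|^2\langle\xi\rangle^{s-1})$ and rescale by $\xi=2^j\eta$, where $\eta$ runs over the fixed annulus $\operatorname{supp}\phi$. For $j\le 0$ one has $\langle 2^j\eta\rangle\sim 1$ uniformly on this annulus, so $m_{j,k}(\xi)=2^{-j}\tilde m_{j,k}(2^{-j}\xi)$ with $\tilde m_{j,k}$ uniformly Schwartz, giving $\|\mathcal F^{-1}m_{j,k}\|_{L^1}\lesssim 2^{-j}$ and the desired bound via Young's inequality on every $L^p$, $1\le p\le\infty$. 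For $j>0$ instead $\langle 2^j\eta\rangle\sim 2^j$ on the annulus, producing an extra factor $2^{-j(s-1)}$ and hence $\|\mathcal F^{-1}m_{j,k}\|_{L^1}\lesssim 2^{-js}$. The main thing to be careful about throughout is that the cutoff (in \eqref{lem_JsDs_e1} and \eqref{lem_JsDs_e5}) or the second-order vanishing of the numerator at the origin (in \eqref{lem_JsDs_e3}) genuinely cancels the $|\xi|^{-2}$ singularity introduced by the factorisation; once that is verified, the Mikhlin or scaling estimate is routine.
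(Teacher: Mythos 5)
Your approach matches the paper's: both realise each estimate by factoring the relevant symbol through $J^{s-1}\partial$ (i.e. writing it as $\sum_k m_k(\xi)\,\langle\xi\rangle^{s-1}(i\xi_k)$) and then checking that the residual multiplier $m_k$ is Mikhlin/Calder\'on--Zygmund, with a direct dyadic rescaling and kernel bound for \eqref{lem_JsDs_e2} to capture $p=1,\infty$. One small slip: in your treatment of \eqref{lem_JsDs_e3} the symbol $(\langle\xi\rangle^s-1)(-i\xi_k)/(\langle\xi\rangle^{s-1}|\xi|^2)$ is asymptotically homogeneous of degree $0$ at infinity (behaving like $\xi_k/|\xi|$), not degree $-1$; this does not affect the Mikhlin conclusion, but the stated exponent is off.
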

\begin{proof}[Proof of Lemma \ref{lem_JsDs}]
For \eqref{lem_JsDs_e1}, we write
\begin{align*}
P_{>a} D^s f  & = P_{>a} D^{s-2} (-\partial \cdot \partial)f \notag \\
& = -P_{>a} D^{s-2} J^{-(s-1)} \partial \cdot (J^{s-1} \partial f).
\end{align*}
Easy to check that $-P_{>a} D^{s-2} J^{-(s-1)} \partial $ maps $L^p$ to $L^p$ for $1<p<\infty$. Thus \eqref{lem_JsDs_e1} holds.

For \eqref{lem_JsDs_e2}, consider first $j\le 0$. Clearly
\begin{align*}
\| P_j f\|_p  & = \| P_j D^{-2} \partial J^{1-s} \cdot J^{s-1} \partial f \|_p \notag \\
& \lesssim_{d} 2^{-j} \| P_{<10}  J^{1-s} \cdot J^{s-1} \partial f \|_p \notag \\
& \lesssim_{s,d} 2^{-j} \| J^{s-1} \partial f \|_p.
\end{align*}
For $j>0$, one can easily verify that the kernel $K_j=  D^{-2} J^{1-s} \partial P_j \delta_0$ satisfies the point-wise bound
\begin{align*}
|K_j(x)| \lesssim_{d,m,s} 2^{j(-s+d)} (1+2^j |x|)^{-m}, \qquad \forall \, m \ge 1.
\end{align*}
Thus the inequality for $j>0$ also holds.

For \eqref{lem_JsDs_e3}, we first bound the low frequency piece. By using Lemma \ref{lem921_1}, we have
\begin{align*}
\| (J^s-1)P_{\le 1} f \|_p & \lesssim_{s,p,d} \sum_{j\le 1} 2^{2j} \| P_j f \|_p \notag \\
& \lesssim_{s,p,d} \sum_{j\le 1} 2^j \| J^{s-1} \partial f \|_p \lesssim \| J^{s-1} \partial f \|_p,
\end{align*}
where in the second inequality we have used \eqref{lem_JsDs_e2}.
For the high frequency piece, we first note that
\begin{align*}
\| P_{>1} f\|_p & \lesssim_{s,p,d} \sum_{j>1} 2^{-js} \| J^{s-1} \partial f \|_p \lesssim \| J^{s-1} \partial f\|_p.
\end{align*}
On the other hand, by writing $P_{>1} J^s f = -P_{>1} D^{-2}J \partial \cdot J^{s-1} \partial f$, it
is clear that
\begin{align*}
\| P_{>1} J^s f\|_p \lesssim_{s,p,d} \| J^{s-1} \partial f\|_p, \qquad \forall\, 1<p<\infty.
\end{align*}

The inequality \eqref{lem_JsDs_e4} follows easily from the fact that $J^{-1} \partial$ is a standard singular
integral operator. 

The last inequality \eqref{lem_JsDs_e5} is similarly proved by writing
\begin{align*}
J^s P_{>a} f= -P_{>a} D^{-2} J \partial \cdot J^{s-1} \partial f.
\end{align*}
\end{proof}

\section{Paraproduct estimates}

\begin{lem} \label{lemq1}
Let $\phi \in C_c^{\infty}(\mathbb R^d)$, $\psi \in C_c^{\infty} (\mathbb R^d)$, and
$\psi\equiv 0$ in a neighborhood of the origin. Define
\begin{align*}
&\widehat{P_j^{\phi} f}(\xi) = \phi(\xi/2^j) \hat f(\xi), \\
&\widehat{P_j^{\psi} f} (\xi) = \psi(\xi/2^j) \hat f(\xi), \quad \xi \in \mathbb R^d.
\end{align*}
Then for any $1<p<\infty$, $f \in L^p(\mathbb R^d)$, 
$g\in L^1_{\operatorname{loc}}(\mathbb R^d)$ with $\|g\|_{\operatorname{BMO}}<\infty$, the 
series $$\sum_{j \in \mathbb Z} P_j^{\phi} f \cdot P_j^{\psi} g$$
 converges in $L^p$, and
satisfies
\begin{align}
\| \sum_{j\in \mathbb Z} P_j^{\phi} f \cdot P_j^{\psi} g \|_p
 \lesssim_{p,d,\phi,\psi} \| f \|_p \| g\|_{\operatorname{BMO}}. \label{lemq1_e1}
 \end{align}
 In particular, for the usual Littlewood-Paley projector $ P_j$, we have
 \begin{align*}
 \| \sum_j  P_{\le j} f \cdot P_j g \|_p \lesssim_{p,d} \| f \|_p \cdot \| g \|_{\operatorname{BMO}}.
 \end{align*}
 If $\|D^{-1} f\|_p <\infty$, $\|Dg \|_{\operatorname{BMO}}<\infty$, then
 \begin{align}
 \| \sum_j P_{\le j } f \cdot P_{j} g \|_p \lesssim_{p,d} \| D^{-1} f \|_{p} \| D g \|_{\bmo}. \label{lemq1_e1_new1}
 \end{align}
 More generally if $s>0$, $f \in L^p$, $g \in L^1_{\operatorname{loc}}(\mathbb R^d)$ with
 $\|g \|_{\operatorname{BMO} } <\infty$, then
 \begin{align} \label{lemq1_e1_new2}
 \| \sum_j D^s P_{\le j } f D^{-s} P_j g \|_p \lesssim_{p,d,s} \| f \|_p \| g \|_{\operatorname{BMO}}.
 \end{align}
 \end{lem}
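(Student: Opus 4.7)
The plan is to prove the main estimate \eqref{lemq1_e1} via Hardy space $\mathcal H^1$--$\bmo$ duality and then to derive the remaining variants by absorbing the extra derivatives into modified multipliers. Fix an arbitrary $h\in L^{p'}(\mathbb R^d)$ with $\|h\|_{p'}\le 1$. Using the self-adjointness of the Fourier multiplier $P_j^\psi$, one rewrites
\begin{align*}
\int h\cdot \sum_j P_j^\phi f\cdot P_j^\psi g\,dx = \int g\cdot G\,dx, \qquad G := \sum_j P_j^\psi\bigl(h\cdot P_j^\phi f\bigr).
\end{align*}
By the pairing \eqref{eq_H1_BMO} it is then enough to establish $\|G\|_{\mathcal H^1}\lesssim \|f\|_p\|h\|_{p'}$.

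After a preliminary smooth decomposition (reducing to the case of narrow Fourier supports for $\phi$ and $\psi$), direct Fourier-support bookkeeping shows
\begin{align*}
P_j^\psi\bigl(h\cdot P_j^\phi f\bigr) = P_j^\psi\bigl(\tilde P_j h\cdot P_j^\phi f\bigr),
\end{align*}
where $\tilde P_j$ denotes a fattened Littlewood--Paley projector at scale $2^j$. Since $\tilde P_j h\cdot P_j^\phi f$ is Fourier-supported in $\{|\xi|\le B_1 2^j\}$ for a fixed constant $B_1$, applying Lemma \ref{lemp2a_1} (to finite truncations followed by a limiting argument) gives
\begin{align*}
\|G\|_{\mathcal H^1}\lesssim \bigl\|(\tilde P_j h\cdot P_j^\phi f)_{\ell^2_j}\bigr\|_{L^1}.
\end{align*}
The uniform pointwise domination $|P_j^\phi f(x)|\lesssim \mathcal Mf(x)$, coming from $\phi\in C_c^\infty$, then lets me pull $P_j^\phi f$ out of the $\ell^2_j$ norm as a scalar factor; H\"older's inequality in $x$ with exponents $(p,p')$, the $L^p$-boundedness of $\mathcal M$, and the Littlewood--Paley identity $\|(\tilde P_j h)_{\ell^2_j}\|_{p'}\lesssim \|h\|_{p'}$ together yield
\begin{align*}
\bigl\|(\tilde P_j h\cdot P_j^\phi f)_{\ell^2_j}\bigr\|_{L^1}\lesssim \|\mathcal Mf\|_p\cdot \bigl\|(\tilde P_j h)_{\ell^2_j}\bigr\|_{p'}\lesssim \|f\|_p\|h\|_{p'}.
\end{align*}
Taking the supremum over admissible $h$ closes \eqref{lemq1_e1}.

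The main technical obstacle is the Fourier-support identification $P_j^\psi(h\cdot P_j^\phi f)=P_j^\psi(\tilde P_j h\cdot P_j^\phi f)$: without this prior localization of $h$ to scale $2^j$ one would be forced into the unavailable $L^1$-endpoint of the vector-valued Fefferman--Stein inequality. The remaining assertions now follow by absorbing derivatives into the multipliers. For the Littlewood--Paley corollary take $\phi=\phi_0$ and $\psi=\phi$ in \eqref{lemq1_e1}. For \eqref{lemq1_e1_new1}, use the identity $P_{\le j}f\cdot P_j g = P_j^{\tilde\phi}(D^{-1}f)\cdot P_j^{\tilde\psi}(Dg)$ with $\tilde\phi(\eta)=|\eta|\phi_0(\eta)$ and $\tilde\psi(\eta)=|\eta|^{-1}\phi(\eta)$ (the factors $2^{\pm j}$ cancel in the product) and adapt the same proof scheme with $f$ replaced by $D^{-1}f$ and $g$ by $Dg$. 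The estimate \eqref{lemq1_e1_new2} is handled analogously, with $\Phi_s(\eta)=|\eta|^s\phi_0(\eta)$ and $\Psi_s(\eta)=|\eta|^{-s}\phi(\eta)$ in place of $\tilde\phi,\tilde\psi$.
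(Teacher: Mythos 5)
Your overall strategy -- duality with the $\mathcal H^1$--$\bmo$ pairing, reduction to the bound $\|\sum_j P_j^\psi(h\cdot P_j^\phi f)\|_{\mathcal H^1}\lesssim \|f\|_p\|h\|_{p'}$, and Lemma \ref{lemp2a_1} -- is exactly the paper's. However, the pivotal Fourier-support identity
$P_j^\psi\bigl(h\cdot P_j^\phi f\bigr)=P_j^\psi\bigl(\tilde P_j h\cdot P_j^\phi f\bigr)$
is \emph{false}, and the step you flag as the ``main technical obstacle'' is precisely where the argument breaks. Since $\phi$ need not vanish near the origin, the supports of $\phi$ and $\psi$ overlap at scale one. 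Take $d=1$, $\phi=\phi_0$, $\psi=\phi$, $j=0$, $\hat h$ concentrated near $\xi\approx 0.1$ (far below the annulus of $\tilde P_0$), and $\hat f$ concentrated near $\xi\approx 0.9$. Then $P_0^\psi(h\cdot P_0^\phi f)$ picks up the output frequency $\approx 1\in\operatorname{supp}(\psi)$ and is nonzero, while $\tilde P_0 h\equiv 0$. The preliminary reduction to narrow supports does not repair this: the trouble occurs exactly when $\phi$ and $\psi$ have comparable supports, and that range is unavoidable when $\phi$ is supported near the origin.

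What is missing is the second, ``high--low'' piece. The paper splits
$\sum_j P_j(P_{\le j}f\cdot h)=\sum_j P_j(f_{\le j-3}\,\tilde h_j)+\sum_j P_j(\tilde f_j\, h_{\le j+3})$,
and your argument only treats the first summand. The second is symmetric: there one has a square function in $f$ and a maximal bound $|h_{\le j+3}|\lesssim\mathcal Mh$, giving
$\|(\tilde f_j\, h_{\le j+3})_{\ell^2_j}\|_1\lesssim\|(\tilde f_j)_{\ell^2_j}\|_p\,\|\mathcal Mh\|_{p'}\lesssim\|f\|_p\|h\|_{p'}$; adding that piece closes the gap. Two smaller issues: your $\tilde\phi(\eta)=|\eta|\phi_0(\eta)$ for \eqref{lemq1_e1_new1} is not $C^\infty$ at the origin, so it falls outside the class the lemma is stated for (the paper avoids this by writing $f=-\nabla\cdot(-\Delta)^{-1}\nabla f$, putting the derivative into $\phi_l(\eta)=i\eta_l\phi_0(\eta)$, which \emph{is} smooth, while $|\eta|^{-1}\phi(\eta)$ is smooth because $\phi$ vanishes near $0$); and the $L^p$-convergence of the infinite series, which is a separate assertion of the lemma, is not addressed.
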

\begin{rem}
The estimate \eqref{lemq1_e1} actually holds under the weaker
assumption that $\psi(0)=0$. But the argument is slightly more involved.
\end{rem}

\begin{proof}[Proof of Lemma \ref{lemq1}]
For simplicity of notation we shall write $\lesssim_{p,d,\phi,\psi}$ as $\lesssim$.

\underline{Step 1}: We first show for any integer $L_1 \ge 2$, $L_2 \ge 2$,
\begin{align} \label{pf_lemq1_ea0}
\| \underbrace{ \sum_{-L_1 \le j \le L_2}
P_j^{\phi} f \cdot P_j^{\psi} g }_{=:S} \|_p \lesssim \| f\|_p \|g\|_{\operatorname{BMO}}.
\end{align}
Note that the summand is well-defined in $L^p$ since $\| P_j^{\psi} g\|_{\infty} \lesssim \|g\|_{\operatorname{BMO}}$.
We shall write $\sum_{-L_1\le j\le L_2}$ simply as $\sum_j$ and keep in mind that the summation in
$j$ is finite. By a density argument we only need to prove \eqref{pf_lemq1_ea0} for $f \in C_c^{\infty}(\mathbb R^d)$. 

It suffices to prove for any $h \in C_c^{\infty}(\mathbb R^d)$, 
\begin{align*}
|\langle S, h \rangle | \lesssim \| f\|_p \|g \|_{\operatorname{BMO}} \| h\|_{p^{\prime}},
\end{align*}
where $p^{\prime}=p/(p-1)$, and $\langle\cdot, \cdot\rangle$ denotes the usual $L^2$ pairing.

Now observe
\begin{align}
|\langle S, h \rangle|& = |\langle g, \sum_{j} P_j^{\psi} ( h P_{ j} ^{\phi} f) \rangle|  \notag \\
& \lesssim \| g \|_{\operatorname{BMO}} \| \sum_{j} P_j ^{\psi} (h P_{ j}^{\phi}  f ) \|_{\mathcal H^1},
\label{pf_lemq1_ea3}
\end{align}
where we used \eqref{eq_H1_BMO}.

Thus we only need to show
\begin{align*}
\| \sum_j P_j ^{\psi} (h P_j^{\phi}  f) \|_{\mathcal H^1} \lesssim \| h\|_{p^{\prime}} \|f\|_p.
\end{align*}

WLOG we assume $P_j^{\phi}=P_{\le j}$, $P_j^{\psi} = P_j$ the usual Littlewood-Paley projectors.
The argument can be easily modified for the general case. 

By frequency localization,
\begin{align*}
\sum_j P_j (P_{\le j} f h) &= \sum_j P_j ( P_{\le j-3} f P_{j-2<\cdot <j+2} h) + \sum_j P_j (P_{j-2\le \cdot \le j} f
P_{\le j+3} h) \notag \\
& =: \sum_j P_j (f_{\le j-3} \tilde h_j) + \sum_j P_j (\tilde f_j h_{\le j+3}).
\end{align*}

Clearly by Lemma \ref{lemp2a_1},
\begin{align*}
\| \sum_j P_j( f_{\le j-3} \tilde h_j ) \|_{\mathcal H^1}
& \lesssim \| (f_{\le j-3} \tilde h_j )_{l_j^2} \|_1
\notag \\
& \lesssim \| (f_{\le j-3} )_{l_j^{\infty}} (\tilde h_j)_{l_j^2} \|_1 \lesssim \| f\|_p \|h\|_{p^{\prime}}.
\end{align*}
The other piece is estimated similarly. Thus \eqref{pf_lemq1_ea0} holds.

\underline{Step 2}: Convergence of the series in $L^p$.  First observe that for any $M_1>L_1\ge 2$, 
\begin{align} \notag 
\| \sum_{-M_1<j<-L_1} P_j^{\phi} f  P_j^{\psi} g \|_p  
\lesssim \| \tilde P_{<-L_1+10} f \|_p \|g\|_{\operatorname{BMO}} \to 0,
\quad \text{ as $L_1\to \infty$,}
\end{align}
where $\tilde P_{<-L_1+10}$ denotes a fattened frequency projection.
We only need to show  for any $M_2>L_2 \ge 2$,  
\begin{align*}
\| \sum_{ L_2<j<M_2} P_j^{\phi} f P_j^{\psi} g \|_p \to 0, \quad \text{as $L_2 \to \infty$}.
\end{align*}
By a density argument we may assume $f \in C_c^{\infty}(\mathbb R^d)$.  Easy to check
that 
\begin{align*}
\| \phi(0) f - P_j^{\phi} f \|_p \lesssim 2^{-j } \| \nabla f \|_p.
\end{align*}
Thus
\begin{align*}
&\| \sum_{L_2<j<M_2} ( \phi(0) f - P_j^{\phi} f) P_j^{\psi} g \|_p \notag \\
\lesssim & \sum_{L_2<j<M_2} 2^{-j} \| \nabla f \|_p \|g \|_{\operatorname{BMO}} 
\to 0, \quad \text{as $L_2 \to \infty$}.
\end{align*}
It remains for us to show 
\begin{align} \label{pf_lemq1_ea5}
\| f \sum_{L_2<j<M_2} P_j^{\psi} g \|_p \to 0, \quad \text{as $L_2\to \infty$.}
\end{align}
Assume $\operatorname{supp}(f) \subset B(0,R_0)$ for some $R_0\ge 2$. Let $\chi \in C_c^{\infty}$
be such that $\chi(x)=1$ for $|x| \le 2R_0$ and $\chi(x)=0$ for $|x|>3R_0$.  Denote the
average of $g$ on $B(0,1)$ as $g_{B_1}$. 
Then it
is not difficult to check that
\begin{align*}
&\sum_{L_2< j< M_2} \| f P_j^{\psi} \bigl( (1-\chi) (g-g_{B_1}) \bigr) ) \|_p \notag \\
 \lesssim &
\sum_{L_2<j<M_2} \|f\|_p (2^j R_0)^{-1} \| (1+|y|)^{-(1+d)}|g(y)-g_{B_1}| \|_{L_y^1(\mathbb R^d)} \notag \\
 \lesssim &\sum_{L_2<j<M_2} \|f\|_p  2^{-j} \|g\|_{\operatorname{BMO} }\to 0,
\quad \text{as $L_2\to \infty$}.
\end{align*}
On the other hand, note that $\chi \cdot (g-g_{B_1}) \in L^{q}$ for any $1\le q<\infty$.
Assuming $\operatorname{supp}(\psi) \subset \{\xi:\, 2^{-n_0} <|\xi|<2^{n_0} \}$
for some integer $n_0$, then
\begin{align*}
&\| f \sum_{L_2<j<M_2} P_j^{\psi} \bigl( \chi (g-g_{B_1} ) \bigr) \|_p \notag \\
\lesssim\; & \|f\|_{2p} \| P_{>L_2-n_0-10} \bigl( \chi (g-g_{B_1} ) \bigr) \|_{2p} \to 0,
\quad \text{as $L_2\to \infty$}.
\end{align*}
Thus \eqref{pf_lemq1_ea5} is proved and the series  converges in $L^p$.

\underline{Step 3}: Proof of \eqref{lemq1_e1_new1}. In this case, 
one just observes that $f= -\nabla \cdot (-\Delta)^{-1} \nabla f$, and
\begin{align*}
 \sum_{j} P_{\le j}f P_j g  & = -
 \sum_{l=1}^d \sum_j \bigl(2^{-j} \partial_l P_{\le j} \cdot(-\Delta)^{-1}\partial_l f \bigr)
 \bigl( 2^{j} D^{-1} P_j D g\bigr).
\end{align*}
Note that for each $l$ one can write $2^{-j} \partial_l P_{\le j} = P_{j}^{\phi_l} $, and $2^{j} D^{-1} P_j = P_j^{\psi}$,
for some functions $\phi_l \in \mathcal S(\mathbb R^d)$, $\psi \in \mathcal S(\mathbb R^d)$ with
$\psi$ vanishing near the origin. The desired inequality then easily follow
from \eqref{lemq1_e1}.

\underline{Step 4}: Proof of \eqref{lemq1_e1_new2}. This is similar to the argument in Step 1. By duality and
density, it suffices to prove for any $f, h \in C_c^{\infty}(\mathbb R^d)$,
\begin{align*}
\| (D^{-s} P_j ( h D^s P_{\le j} f ) )_{l_j^2} \|_1 \lesssim \| f \|_p \| h \|_{p^{\prime}}.
\end{align*}
One can then split $h$ as $h_{<j-2}$ and $\tilde h_j =h_{[j-2,j+2]}$ and proceed to estimate
\begin{align*}
\| (D^{-s}  P_j ( h D^s P_{\le j} f ) )_{l_j^2} \|_1 
& \lesssim \|  (h_{<j-2} \tilde f_j )_{l_j^2} \|_1 + \| (\tilde h_j  2^{-js} D^s f_{\le j} ) _{l_j^2} \|_1 \notag \\
& \lesssim  \| f \|_p \| h\|_{p^{\prime}}.
\end{align*}
\end{proof}

\begin{rem}
The BMO norm on the RHS of \eqref{lemq1_e1} cannot be replaced by a weaker norm such as
$\|\cdot \|_{\dot B^0_{\infty,\infty}}$. For a counterexample one can take $p=2$, $d=1$,
$g=f$, and we shall \emph{disprove}
\begin{align*}
\| \sum_j (P_j f)^2 \|_2  \lesssim \|f\|_2 \cdot \|f\|_{\dot B^0_{\infty,\infty}}.
\end{align*}
To this end, take $\widehat{\phi_0} \in C_c^{\infty}(\mathbb R)$ such that
$0\le \widehat{\phi_0}(\xi) \le 1$ for all $\xi$, $\widehat{\phi_0}(\xi)=1$ for $|\xi|<1/10$,
and $\widehat{\phi_0}(\xi)=0$ for $|\xi|>1/5$. Then for some $\rho_0>0$, we have
\begin{align*}
|\phi_0(x)| \gtrsim 1, \qquad\text{for all $|x|<\rho_0$}.
\end{align*}

Now take
\begin{align*}
f= \sum_{j\ge 10} a_j \lambda_j^{\frac 12} \phi_0(\lambda_j x) e^{i 2^j x} =: \sum_{j\ge 10} a_j f_j,
\end{align*}
where $a_j= j^{-(\frac 12+\delta)}$, $\lambda_j = j^{\epsilon}$, with $\frac 1{10}>\epsilon \ge 4\delta>0$.
Easy to check that
\begin{align*}
& \sum_j a_j^2 <\infty \Rightarrow \; \|f\|_2<\infty,\\
& \sup_j |a_j| \lambda_j^{\frac 12} <\infty \; \Rightarrow \|f\|_{\dot B^0_{\infty,\infty}}<\infty.
\end{align*}
On the other hand, for $\frac{\rho_0}{\lambda_{l+1}} < |x|<\frac{\rho_0}{\lambda_l}$, we have
\begin{align*}
\sum_j |P_j f(x)|^2  & = \sum_j |a_j|^2 \cdot \lambda_j \cdot |\phi_0(\lambda_j x)|^2 \notag \\
& \gtrsim \sum_{j<l} |a_j|^2 \cdot \lambda_j.
\end{align*}

Thus
\begin{align*}
\int ( \sum_j |(P_j f)(x)|^2)^2 dx & \gtrsim \sum_l ( \sum_{j<l} a_j^2 \lambda_j)^2 \cdot (\frac{\rho_0}{\lambda_l}
-\frac{\rho_0}{\lambda_{l+1}} ) \notag \\
& \gtrsim \sum_{l} ( \sum_{j<l} j^{-1-2\delta} j^{\epsilon} )^2 \cdot \frac 1 {l^{1+\epsilon}} \notag \\
& \gtrsim \sum_l l^{2(\epsilon-2\delta)} \cdot \frac 1 {l^{1+\epsilon}} =\infty.
\end{align*}

Finally we should point it out that by considering real and imaginary parts, one can also make the
above counterexample  real-valued.

\end{rem}

\begin{rem}
For the periodic domain $\mathbb T=\mathbb R/2\pi \mathbb Z$, it is much easier to
construct counterexamples to the estimate 
\begin{align*}
\| \sum_j (P_j f)^2 \|_{L^2(\mathbb T)} \lesssim \| f\|_{L^2(\mathbb T)} 
\| f \|_{\boo(\mathbb T)}.
\end{align*}
Indeed if the above estimate were true, then by H\"older, one gets for any
periodic function $f$ with mean zero,
\begin{align*}
\| f \|_{L^4(\mathbb T)} \lesssim \| f \|_{\boo (\mathbb T)}.
\end{align*}
Now take $\lambda_j =4^j$, $c_j=1/\sqrt j$ and consider $f$ in the form of a 
lacunary series,
\begin{align*}
f = \sum_{j\ge 1} c_j e^{i \lambda_j x}.
\end{align*}
Clearly 
\begin{align*}
\|f \|_{\boo(\mathbb T)} \lesssim \sup_{j} |c_j|<\infty; \\
\|f\|_{L^2(\mathbb T)} \sim (\sum_{j} |c_j|^2 )^{\frac 12}=\infty
\end{align*}
which is an obvious contradiction.

\end{rem}

\begin{rem}
The first part of the statements of Lemma \ref{lemq1} can also be deduced from the following proposition due to Coifman-Meyer
(see \cite{CM78}, Chapter V. Proposition 2):
let $\sigma = \sigma(\xi,\eta) \in C^{\infty}(\mathbb R^d\times \mathbb R^d \setminus (0,0))$ satisfy
\begin{align}
&\bullet\; |\partial_{\xi}^{\alpha} \partial_{\eta}^{\beta} \sigma(\xi,\eta)|
\lesssim_{\alpha,\beta} (|\xi|+|\eta|)^{-(|\alpha|+|\beta|)}, \quad\forall\, \alpha,\beta,  \forall\, (\xi,\eta)\ne (0,0),
\label{CM_cond_e1}\\
&\bullet\; \sigma(\xi,0)=0. \label{CM_cond_e2}
\end{align}
Define
\begin{align*}
\sigma(D) (f,g)(x) = \int e^{ix \cdot(\xi+\eta)} \sigma(\xi,\eta) \hat f(\xi) \hat g (\eta) d\xi d\eta.
\end{align*}
Then for any $1<p<\infty$,
\begin{align}
\| \sigma(D)(f,g)\|_p  \lesssim_{\sigma,p,d} \| f\|_p \|g\|_{\operatorname{BMO}}. \label{lemC1_rem4_e1}
\end{align}
In our setting
\begin{align*}
\sigma(\xi,\eta) = \sum_j \phi(2^{-j} \xi) \psi(2^{-j} \eta)
\end{align*}
and it is easy to check that it satisfies the conditions \eqref{CM_cond_e1}--\eqref{CM_cond_e2}.
See also Theorem \ref{thm_cm_1} and Theorem \ref{thm_cm_1a} for more refined results.
\end{rem}

\begin{rem}
Take $f=g$ and use the Littlewood-Paley projection $P_j$ in Lemma \ref{lemq1}, and we get
\begin{align*}
\| \sum_j f_j^2 \|_p \lesssim \| f \|_p \| f\|_{\operatorname{BMO}}.
\end{align*}
Fix any $0<p_0<p$. Then
\begin{align*}
\|f\|_{2p}^2 \lesssim \| f\|_p \|f\|_{\operatorname{BMO}} \lesssim \|f \|_{p_0}^{\theta}
\|f\|_{2p}^{1-\theta} \|f\|_{\bmo},
\end{align*}
where $\theta \in (0,1)$ satisfies $\frac 1 p = \frac{\theta}{p_0} + \frac{1-\theta}{2p}$.
This in turn yields
\begin{align*}
\|f\|_{2p} \lesssim \|f\|_{p_0}^{\frac{p_0}{2p}} \|f \|_{\bmo}^{1-\frac{p_0}{2p}}.
\end{align*}
By another interpolation if necessary, one can then get for any $q>p_0$,
\begin{align*}
\|f\|_q \lesssim \| f\|_{p_0}^{\frac{p_0}q} \| f \|_{\bmo}^{1-\frac{p_0}p}
\end{align*}
which is the usual BMO refinement of H\"older interpolation inequality (albeit with no explicit
constants).

Similarly by writing
\begin{align*}
fg & = \sum_j f_{\le j+2} g_j + \sum_j f_{>j+2} g_j \notag \\
& = \sum_j f_{\le j+2} g_j + \sum_j f_j g_{<j-2}
\end{align*}
and applying Lemma \ref{lemq1}, we get for any $1<p<\infty$,
\begin{align*}
\|f g\|_p \lesssim \|f \|_p \| g\|_{\bmo} + \|g\|_p \|f\|_{\bmo}.
\end{align*}
These (and more) bilinear BMO type inequalities were derived by Kozono and Taniuchi \cite{KT00}
and have important applications in Navier-Stokes and Euler equations.

\end{rem}

The idea of duality used in  Lemma \ref{lemq1} is quite useful. For example the following well-known commutator estimate can be proved along
similar lines as in the proof of Lemma \ref{lemq1}. 

\begin{prop}
Let the dimension $d\ge 2$ and $\mathcal R_{ij} = \Delta^{-1}\partial_i \partial_j$, $1\le i,j\le d$ be the usual Riesz transform on $\mathbb R^d$.
Then for any $1<p<\infty$, $a\in L^1_{\operatorname{loc}}(\mathbb R^d)$ with 
$\|a\|_{\operatorname{BMO}}<\infty$, we have
\begin{align*}
\| [\mathcal R_{ij}, a] f \|_p \lesssim_{p,d} \| a\|_{\bmo} \|f\|_p, \qquad \forall\, f\in \mathcal S(\mathbb R^d).
\end{align*}
\end{prop}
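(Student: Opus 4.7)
The plan is to mirror the duality template from the proof of Lemma~\ref{lemq1} and reduce the $L^p$-boundedness of $[\mathcal R_{ij},a]$ to a bilinear $\mathcal H^1$ estimate. By density I would fix $1<p<\infty$ and $f, h \in C_c^{\infty}(\mathbb R^d)$ with $\|h\|_{p'}\le 1$. Since the symbol $m(\xi):=-\xi_i\xi_j/|\xi|^2$ of $\mathcal R_{ij}$ is real and even, $\mathcal R_{ij}$ is self-adjoint on $L^2$, so a direct calculation gives
\begin{align*}
\langle [\mathcal R_{ij},a]f, h\rangle = \langle a,\; f\mathcal R_{ij}h - h\mathcal R_{ij}f\rangle.
\end{align*}
For Schwartz $f, h$ and $a\in L^1_{\operatorname{loc}}$ with $\|a\|_{\bmo}<\infty$ the pairing is absolutely convergent, so \eqref{eq_H1_BMO} bounds the left-hand side by $\|a\|_{\bmo}\cdot \|\Pi(f,h)\|_{\mathcal H^1}$, where $\Pi(f,h):= f\mathcal R_{ij}h - h\mathcal R_{ij}f$. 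Taking the supremum over admissible $h$, the theorem reduces to the bilinear Hardy-space estimate
\begin{align}
\|f\mathcal R_{ij}h - h\mathcal R_{ij}f\|_{\mathcal H^1} \lesssim_{p,d} \|f\|_p \|h\|_{p'}. \label{Rij_plan_biH1}
\end{align}

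For \eqref{Rij_plan_biH1} I would apply Bony's paraproduct to both $f\mathcal R_{ij}h$ and $h\mathcal R_{ij}f$, using that $\mathcal R_{ij}$ commutes with every Littlewood--Paley projector ($(\mathcal R_{ij}u)_j = \mathcal R_{ij}(u_j)$). Collecting terms, $\Pi(f,h)$ splits into three groups: a low--high piece $\sum_j(f_{\le j-2}\mathcal R_{ij}h_j - h_{\le j-2}\mathcal R_{ij}f_j)$, a symmetric high--low piece, and a high--high diagonal $\sum_j(f_j \widetilde{\mathcal R_{ij}(h_j)} - h_j \widetilde{\mathcal R_{ij}(f_j)})$. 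The (LH) and (HL) summands are Fourier-supported in annuli of scale $\sim 2^j$, hence the square-function characterisation $\|F\|_{\mathcal H^1}\sim \|(P_k F)_{l_k^2}\|_1$ combined with the uniform pointwise bound $|f_{\le j-2}(x)|\lesssim (\mathcal Mf)(x)$, H\"older, Fefferman--Stein (Lemma~\ref{lem_FS}) and $L^{p'}$-boundedness of $\mathcal R_{ij}$ control each by
\begin{align*}
\|(f_{\le j-2}\mathcal R_{ij}h_j)_{l_j^2}\|_1 + \|(h_{\le j-2}\mathcal R_{ij}f_j)_{l_j^2}\|_1 \lesssim \|f\|_p \|h\|_{p'}.
\end{align*}

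The main obstacle is the high--high diagonal, whose summands only satisfy the one-sided Fourier bound $|\xi|\lesssim 2^j$, so the direct square-function argument fails. The key feature is that the bilinear symbol of $\Pi$, namely $\sigma(\xi,\eta) = m(\xi) - m(\eta)$, vanishes on the antidiagonal $\eta=-\xi$ because $m$ is even; equivalently, $\Pi(f,f)\equiv 0$. I would exploit this by observing that in the relevant regime $|\xi|\sim|\eta|\sim 2^j$, $|\xi+\eta|\sim 2^k \ll 2^j$, $\eta$ is close to $-\xi$ and Taylor expansion of $m$ gives $|\sigma(\xi,\eta)|\lesssim 2^{k-j}$; hence each $P_k$-projection of the (HH) summand gains a factor $2^{k-j}$, which is enough to absorb the double summation in $(j,k)$ via a weighted Cauchy--Schwarz and Fefferman--Stein. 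This is in essence the classical $L^p\times L^{p'}\to \mathcal H^1$ bound for antisymmetric bilinear Coifman--Meyer paraproducts, cf.~\cite{CM78}. Once each of the three pieces is controlled by $\|f\|_p\|h\|_{p'}$, estimate \eqref{Rij_plan_biH1} follows, and unwinding the duality yields the commutator bound.
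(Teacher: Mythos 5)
Your proposal follows the same global strategy as the paper: reduce by duality to the bilinear Hardy-space bound $\|f\mathcal R_{ij}h - h\mathcal R_{ij}f\|_{\mathcal H^1}\lesssim\|f\|_p\|h\|_{p'}$, decompose via Bony's paraproduct, dispatch the low--high and high--low pieces via Lemma~\ref{lemp2a_1}-type square-function bounds, and exploit a cancellation in the diagonal piece. Where you diverge is in \emph{how} the diagonal cancellation is realized. The paper specializes to $\mathcal R_{11}$, writes $A=\Delta^{-1}f_j$, $B=\Delta^{-1}\tilde g_j$, and uses the explicit algebraic identity $\eta_k^2(\xi_1-\eta_1)^2 - \eta_1^2(\xi_k-\eta_k)^2 = \eta_k^2\xi_1^2 - 2\eta_k^2\eta_1\xi_1 - \eta_1^2\xi_k^2 + 2\eta_1^2\eta_k\xi_k$ to exhibit the diagonal term as a sum of total output-derivatives, $O(\partial^2(\partial^2A\cdot B)) + O(\partial(\partial^3A\cdot B))$; the Littlewood--Paley projection $P_k$ then gains factors $2^{2(k-j)}$ and $2^{k-j}$ respectively. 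You instead observe that the bilinear symbol $\sigma(\xi,\eta)=m(\xi)-m(\eta)$ vanishes on the antidiagonal $\eta=-\xi$ because $m$ is even, and a first-order Taylor expansion yields the same $2^{k-j}$ gain in the regime $|\xi|\sim|\eta|\sim 2^j$, $|\xi+\eta|\sim 2^k$. These are two articulations of one and the same cancellation: your symbol-level argument is more conceptual (it works uniformly for any $\mathcal R_{ij}$ and indeed for any even Calder\'on--Zygmund multiplier, as you note by appeal to antisymmetric Coifman--Meyer paraproducts), while the paper's is more concrete and elementary since it makes the total-derivative structure explicit by hand. To turn your sketch into a full proof one would need kernel estimates for the diagonal symbol with the stated gain (e.g.\ along the lines of Lemma~\ref{thm_cm_1_lem2}), but the blueprint is sound and the logic matches the paper's.
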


\begin{proof}
WLOG we prove the inequality for $\mathcal R_{11} = \Delta^{-1} \partial_{11}$. By duality,
it suffices to prove for any $g\in C_c^{\infty}(\mathbb R^d)$,
\begin{align*}
\| f \mathcal R_{11} g - g \mathcal R_{11} f \|_{\mathcal H^1} \lesssim \|f\|_p \|g\|_{p^{\prime}}.
\end{align*}

Write
\begin{align*}
&f \mathcal R_{11} g = \sum_j f _{<j-2} \mathcal R_{11} g_j
+ \sum_j f_j \mathcal R_{11} g_{<j-2} + \sum_j f_j \mathcal R_{11} \tilde g_j,\\
& (\mathcal R_{11} f) g = \sum_j (\mathcal R_{11} f)_{<j-2} g_j
+\sum_j (\mathcal R_{11} f)_j g_{<j-2} + \sum_j (\mathcal R_{11} f)_j \tilde g_j.
\end{align*}

Easy to check that
\begin{align*}
& \| \sum_j f_{<j-2} \mathcal R_{11} g_j \|_{\mathcal H^1}
\lesssim \| (f_{<j-2} \mathcal R_{11} g_j )_{l_j^2} \|_1
\lesssim \| f\|_p \|g\|_{p^{\prime}}, \notag \\
& \| \sum_j( f_j \mathcal R_{11} g_{<j-2} + (\mathcal R_{11} f)_{<j-2} g_j
+ (\mathcal R_{11} f)_j g_{<j-2} ) \|_{\mathcal H^1}
\lesssim \| f\|_p \|g\|_{p^{\prime}}.
\end{align*}

For the diagonal piece, denote $A=\Delta^{-1} f_j$, $B=\Delta^{-1} \tilde g_j$, and observe
\begin{align*}
 & f_j \mathcal R_{11} \tilde g_j - (\mathcal R_{11} f_j) \tilde g_j \notag \\
 = & \; \sum_{k=1}^d \Bigl( \partial_{kk} A \partial_{11} B- \partial_{11} A \partial_{kk} B \Bigr).
 \end{align*}

 In terms of the frequency variables $(\eta,\xi-\eta)$ (i.e. $\hat A(\eta)$, $\hat B(\xi-\eta)$), note that
 \begin{align*}
  & \eta_k^2 (\xi_1-\eta_1)^2 - \eta_1^2 (\xi_k -\eta_k)^2 \notag \\
  = &\; \eta_k^2 \xi_1^2 -2 \eta_k^2 \eta_1 \xi_1 - \eta_1^2 \xi_k^2 + 2 \eta_1^2 \eta_k \xi_k.
  \end{align*}

Therefore we can write
\begin{align*}
\sum_{k=1}^d \Bigl( \partial_{kk} A \partial_{11} B
- \partial_{11} A \partial_{kk} B \Bigr) = O \Bigl( \partial^2( \partial^2 A\cdot B) ) \Bigr)
+ O \Bigl( \partial ( \partial^3 A \cdot B ) \Bigr).
\end{align*}

Clearly then
\begin{align*}
 & \| \sum_j \Bigl( f_j \mathcal R_{11} \tilde g_j -\mathcal R_{11} f_j \tilde g_j \Bigr) \|_{\mathcal H^1}
 \notag \\
 \lesssim & \sum_k   2^{2k} \sum_{j\ge k-4} \|\tilde P_j f \cdot 2^{-2j} \tilde P_j g   \|_1
 + \sum_k 2^k \sum_{j\ge k-4} 2^{-j}  \| \; \tilde P_j f \tilde P_j g  \; \|_1 \notag \\
 \lesssim & \| (\tilde P_j f \cdot \tilde P_j g)_{l_j^1} \|_1 \lesssim \| f\|_p \|g\|_{p^{\prime}}.
 \end{align*}
The desired inequality then follows.
\end{proof}

\begin{prop} \label{prop3.7a}
Denote by $H$ the  usual Hilbert transform on $\mathbb R$. Then for any $1<p<\infty$, and any integers $l$, $m\ge 0$, we have
\begin{align*}
\| \partial_x^l [H,a] \partial_x^m f \|_p \lesssim_{l,m,p} \| \partial_x^{l+m} a \|_{\operatorname{BMO}}
\| f \|_p.
\end{align*}
\end{prop}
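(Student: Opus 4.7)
The plan is to mirror the duality strategy used in the proof of the preceding proposition. By density and duality it suffices to prove, for every $g\in C_c^\infty(\mathbb R)$, that
$$|\langle \partial_x^l[H,a]\partial_x^m f,g\rangle| \lesssim_{l,m,p} \|\partial_x^{l+m} a\|_{\operatorname{BMO}}\,\|f\|_p\,\|g\|_{p'}.$$
Using $H^*=-H$ and $\partial_x^*=-\partial_x$, this pairing rewrites as
$$\langle \partial_x^l[H,a]\partial_x^m f,g\rangle = -(-1)^l\langle a,\,\partial_x^m f\cdot H\partial_x^l g + H\partial_x^m f\cdot \partial_x^l g\rangle.$$
I would then integrate by parts $l+m$ times, using the Leibniz rule and the commutativity $\partial_x H=H\partial_x$, to transfer all the $l+m$ derivatives off of $a$. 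The result is an identity
$$\langle\partial_x^l[H,a]\partial_x^m f,g\rangle = \pm\langle \partial_x^{l+m} a,\,\Psi(f,g)\rangle,$$
with $\Psi(f,g)$ a bilinear expression in $f,g$ built from products of their derivatives and Hilbert transforms thereof. By the $\mathcal H^1$--$\operatorname{BMO}$ duality \eqref{eq_H1_BMO}, it remains to show $\|\Psi(f,g)\|_{\mathcal H^1}\lesssim \|f\|_p\|g\|_{p'}$.

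The key structural input is the commutator cancellation
$$[H,a_{<j-2}]F_j=0 \quad\text{whenever } F_j \text{ has Fourier support in }\{|\xi|\ge 2^{j-1}\},$$
which follows because the convolution $a_{<j-2}F_j$ has Fourier support in $\{|\zeta|\ge 2^{j-2}\}$ with $\operatorname{sgn}(\zeta)=\operatorname{sgn}(\xi)$ throughout the relevant region, allowing the Fourier multiplier $-i\operatorname{sgn}(\cdot)$ defining $H$ to be pulled inside and to act only on $F_j$. After Bony paraproduct decomposition, this kills the ``low-high'' contribution entirely, so $\Psi(f,g)$ is organized as a sum of only ``high-low'' and diagonal ``high-high'' paraproducts, in which the frequency of $a$ always dominates or matches that of $f$ or $g$.

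For these surviving pieces I would invoke the Littlewood--Paley characterization $\|\Psi\|_{\mathcal H^1}\sim \|(P_k\Psi)_{l_k^2}\|_1$ together with Lemma \ref{lemp2a_1}, reducing matters to square-function estimates: a diagonal-type bound of the form $\|(\widetilde f_j\widetilde g_j)_{l_j^2}\|_1\lesssim \|f\|_p\|g\|_{p'}$ follows from Cauchy--Schwarz and the Littlewood--Paley square function theorem, while the high-low piece is controlled directly by Lemma \ref{lemq1} once the $l+m$ derivatives have been extracted from the $a$-slot.

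The hardest part will be the bookkeeping in the integration-by-parts step: distributing $l+m$ derivatives among $f$, $g$, and the Hilbert transforms in $\Psi(f,g)$ produces many cross terms, and one must verify that after regrouping they admit the clean paraproduct structure outlined above. This is analogous to the algebraic symbol identity $\eta_k^2(\xi_1-\eta_1)^2-\eta_1^2(\xi_k-\eta_k)^2$ used in the diagonal analysis of the preceding proposition, and I expect a higher-order Taylor expansion of the bilinear symbol $-i(\operatorname{sgn}(\eta+\xi)-\operatorname{sgn}(\xi))$ weighted by $(i(\eta+\xi))^l(i\xi)^m$ will be required, with the polynomial factor precisely compensating the finite regularity of $\operatorname{sgn}$ on its jump locus $\eta+\xi=0$.
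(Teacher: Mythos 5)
Your overall strategy---dualize against $g$, extract $\partial_x^{l+m}a$, and estimate $\|\Psi(f,g)\|_{\mathcal H^1}\lesssim\|f\|_p\|g\|_{p'}$ by paraproducts and square functions in the spirit of the preceding $\mathcal R_{ij}$ commutator---is a reasonable alternative to what the paper actually does. The paper works directly with the bilinear symbol $\sigma(\xi,\eta)=i^{l+m}(\xi+\eta)^l\eta^m(-i)(\operatorname{sgn}(\xi+\eta)-\operatorname{sgn}(\eta))$, observes the support constraint $|\eta|<|\xi|$ (killing the low-$a$/high-$f$ paraproduct piece), rewrites the surviving pieces as $H\circ\sigma_1(D^{l+m}a,\cdot)+\sigma_2(D^{l+m}a,H\cdot)$ with $\sigma_1,\sigma_2$ smooth Coifman--Meyer symbols, and closes with the BMO-variant of the Coifman--Meyer bound \eqref{lemC1_rem4_e1}/Theorem \ref{thm_cm_1}.

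There is a concrete gap in your final paragraph, and it is the crux. After you divide the symbol by $(i\xi)^{l+m}$, the residual symbol of $\Psi$ is essentially $(\xi+\eta)^l\eta^m|\xi|^{-(l+m)}\cdot(\operatorname{sgn}(\xi+\eta)-\operatorname{sgn}(\eta))$, and the $\operatorname{sgn}$ difference is a piecewise-constant function with jump discontinuities along \emph{both} $\eta=0$ and $\xi+\eta=0$. A ``higher-order Taylor expansion of the bilinear symbol $\operatorname{sgn}(\xi+\eta)-\operatorname{sgn}(\eta)$'' does not exist: $\operatorname{sgn}$ has a jump, not a higher-order tangency, and the polynomial weights $(\xi+\eta)^l$ and $\eta^m$ regularize only to $C^{l-1}$ and $C^{m-1}$ across those jump loci. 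In particular for $l=0$ or $m=0$ (and more generally for small $l,m$) the symbol of $\Psi$ is not even continuous, so no Coifman--Meyer-type smoothness hypothesis is verified and the $\mathcal H^1$ estimate does not close. The correct move---which the paper makes---is to factor the $\operatorname{sgn}$ difference \emph{operationally}: $-i(\operatorname{sgn}(\xi+\eta)-\operatorname{sgn}(\eta))$ is ``$H$ on the output minus $H$ on the $f$-slot,'' so one pulls off these Hilbert transforms first and is left with the genuinely smooth homogeneous polynomial symbols $\sigma_1,\sigma_2$. With that factorization your duality route can be made to work; without it, it fails. As a secondary point, note that your cancellation $[H,a_{<j-2}]F_j=0$ kills the low-$a$/high-$f$ piece in the $a$-versus-$f$ decomposition of the original commutator, but it does \emph{not} eliminate any piece of the $f$-versus-$g$ decomposition of the dualized $\Psi$: there all three paraproduct types (low-$f$/high-$g$, high-$f$/low-$g$, diagonal) occur and each must be bounded.
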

\begin{rem}
In \cite{DMP08}, Dawson, McGahagan and Ponce proved 
\begin{align*}
\| \partial_x^l [H,a] \partial_x^m f \|_p \lesssim_{l,m,p} \| \partial_x^{l+m} a \|_{\infty}
\| f \|_p.
\end{align*}
Here Proposition \ref{prop3.7a} gives a slight improvement replacing the $L^{\infty}$-norm by BMO-norm.
\end{rem}

\begin{proof}[Proof of Proposition \ref{prop3.7a}]
Write
\begin{align*}
( \partial_x^l [H,a] \partial_x^m f)(x) = \frac 1 {(2\pi)^2}
\int \sigma(\xi,\eta) \hat a (\xi) \hat f (\eta) e^{i (\xi+\eta) \cdot x} d\xi d\eta,
\end{align*}
where
\begin{align*}
\sigma(\xi,\eta) = i^{l+m} (\xi+\eta)^l \eta^m \cdot (-i) \cdot ( \operatorname{sgn}(\xi+\eta)
-\operatorname{sgn}(\eta) ).
\end{align*}
By a slight abuse of notation, we shall denote 
\begin{align*}
\sigma(a,f)= \partial_x^l [H,a] \partial_x^m f.
\end{align*}
Now note that neglecting the measure zero sets such as $\xi+\eta=0$ or $\eta=0$, 
the factor $\operatorname{sgn}(\xi+\eta) -\operatorname{sgn}(\eta)$ in $\sigma(\xi,\eta)$ does not vanish
only when $\xi+\eta>0$, $\eta<0$ or $\xi+\eta<0$, $\eta>0$.  In either cases easy to check
that $|\eta|<|\xi|$.  Then clearly
\begin{align*}
\sigma(a,f) = \sum_j \sigma(P_j a, P_{<j-2} f) + \sum_j  \sigma(P_j a, \tilde P_j f),
\end{align*}
where $\tilde P_j= P_{[j-2,j+2]}$.
For the first piece, write
\begin{align*}
\sum_j \sigma(P_ja ,P_{<j-2} f) &= H( \sum_j \partial_x^l (P_ja \partial_x^m P_{<j-2} f) )
-\sum_j \partial_x^l ( P_j a \partial_x^m H P_{<j-2} f) \notag \\
&=H(\sum_j\partial_x^l (   (D^{-(l+m)} P_j D^{l+m} a) \partial_x^m P_{<j-2} f) )
\notag \\
&\quad -\sum_j \partial_x^l ( (D^{-(l+m)} P_j D^{l+m} a) \partial_x^m P_{<j-2} Hf) \notag \\
& = H (\sigma_1(D^{l+m} a, f) ) + \sigma_2(D^{l+m} a, Hf),
\end{align*}
where the symbols $\sigma_1$, $\sigma_2$ satisfy the conditions \eqref{CM_cond_e1}--\eqref{CM_cond_e2}
(with $\xi$ and $\eta$ swapped therein). Thus 
\begin{align*}
\| \sum_j \sigma(P_j a, P_{<j-2} f) \|_p 
\lesssim \| D^{l+m} a \|_{\operatorname{BMO} } \|f\|_p
\lesssim \| \partial_x^{l+m} a\|_{\operatorname{BMO}} \| f \|_p.
\end{align*}
For the second  piece, write
\begin{align*}
\sum_j \sigma(P_ja ,\tilde P_j f) &= H( \sum_j \partial_x^l (P_ja \partial_x^m \tilde P_j f) )
-\sum_j \partial_x^l ( P_j a \partial_x^m \tilde P_j H f) \notag \\
& = H( \sum_j \partial_x^l (D^{-(l+m)}P_j D^{l+m} a \partial_x^m \tilde P_j f) )
\notag \\
&\qquad - \sum_{\tilde l=0}^l \binom l {\tilde l}  \sum_j \partial_x^{\tilde l}  (D^{-(l+m)} P_j 
D^{l+m} a) \partial_x^{m+l-\tilde l} \tilde P_j H f).\notag 
\end{align*}
Easy to check that the associated symbols again satisfy \eqref{CM_cond_e1}--\eqref{CM_cond_e2}, and we have
\begin{align*}
\| \sum_j \sigma(P_j a, \tilde P_j f) \|_p \lesssim \| \partial_x^{l+m} a\|_{\operatorname{BMO}} \| f \|_p.
\end{align*}
\end{proof}

In \cite{DMP08}, Dawson, McGahagan and Ponce also proved the following inequality: let $0\le \alpha<1$,
$0<\beta<1$, $0<\alpha+\beta\le 1$, $1<p,q<\infty$, $\delta>1/q$, then
\begin{align*}
\| D^{\alpha}[D^{\beta}, a] D^{1-(\alpha+\beta)} f \|_{L^p(\mathbb R)}
\lesssim_{\alpha,\beta,p,q,\delta} \| J^{\delta} \partial_x a \|_{L^q(\mathbb R)}
\| f \|_{L^p(\mathbb R)},
\end{align*}
where $D= (-\partial_{xx})^{1/2}$ and $J^{\delta}=(1-\partial_{xx})^{\delta/2}$. 
Note that
\begin{align*}
D^{\alpha}[D^{\beta}, a] D^{1-(\alpha+\beta)} f 
= D^{\alpha+\beta} ( a D^{1-(\alpha+\beta)} f ) - D^{\alpha} (a D^{1-\alpha} f ).
\end{align*}
The next proposition gives a sharp version of the above estimate. Moreover it holds on any 
$\mathbb R^d$, $d\ge 1$.
\begin{prop} \label{prop3.7b}
For any $0\le \alpha<1$, $0<\beta\le 1-\alpha$, $1<p<\infty$, we have
\begin{align*}
\|D^{\alpha+\beta} ( a D^{1-(\alpha+\beta)} f ) &- D^{\alpha} (a D^{1-\alpha} f )
+ \beta \nabla a \cdot D^{-1} \nabla f  \|_{L^p(\mathbb R^d)}
\notag \\
& \lesssim_{\alpha, \beta,p,d} 
\| D a \|_{\operatorname{BMO}} \| f \|_{L^p(\mathbb R^d)}.
\end{align*}
Consequently 
\begin{align*}
\|D^{\alpha+\beta} ( a D^{1-(\alpha+\beta)} f ) - D^{\alpha} (a D^{1-\alpha} f )
  \|_{L^p(\mathbb R^d)} \lesssim_{\alpha, \beta,p,d} 
\| \nabla a \|_{L^{\infty}(\mathbb R^d)} \| f \|_{L^p(\mathbb R^d)}.
\end{align*}
\end{prop}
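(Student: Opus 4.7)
The plan is to view the whole expression on the left-hand side as a bilinear Fourier multiplier $\sigma(D)(a,f)$ with symbol
\begin{align*}
\sigma(\xi,\eta) = |\xi+\eta|^{\alpha+\beta}|\eta|^{1-(\alpha+\beta)} - |\xi+\eta|^{\alpha}|\eta|^{1-\alpha} - \beta\,\frac{\xi\cdot\eta}{|\eta|},
\end{align*}
and to prove $\|\sigma(D)(a,f)\|_p \lesssim \|Da\|_{\bmo}\|f\|_p$ using Littlewood-Paley decomposition together with the paraproduct bound of Lemma \ref{lemq1} and the Coifman-Meyer bilinear multiplier bound of Remark 3.5. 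A direct computation using $\partial_{\xi_i}|\xi+\eta|^s|_{\xi=0} = s\eta_i|\eta|^{s-2}$ shows that $\sigma(0,\eta)=0$ and $\partial_{\xi_i}\sigma(0,\eta)=0$ for every $\eta\neq 0$ and every $i$; the purpose of the explicit correction term $-\beta\,\xi\cdot\eta/|\eta|$ is precisely to cancel the first-order Taylor coefficient of the other two terms at $\xi=0$.

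Given this second-order vanishing, I will use a standard Bony paraproduct decomposition $af = \sum_j a_{<j-2}f_j + \sum_j a_j f_{<j-2} + \sum_j a_j \tilde f_j$ to induce a splitting $\sigma(D)(a,f) = T_{\rm LH} + T_{\rm HL} + T_{\Delta}$. On the low-high piece $T_{\rm LH}$, in which $|\xi|\sim 2^j \ll 2^k \sim |\eta|$ on each dyadic block, I factor $\sigma = |\xi|\,\tilde\sigma(\xi,\eta)$ where $\tilde\sigma$ satisfies the Coifman-Meyer symbol bounds $|\partial_\xi^\mu \partial_\eta^\nu \tilde\sigma| \lesssim (|\xi|+|\eta|)^{-|\mu|-|\nu|}$ together with $\tilde\sigma(0,\eta)=0$. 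Applying the BMO bilinear multiplier bound to $\tilde\sigma(D)(Da,f)$ then yields $\|T_{\rm LH}\|_p \lesssim \|Da\|_{\bmo}\|f\|_p$.

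For the high-low and diagonal pieces, the frequency of $a$ dominates that of $f$, and each of the three summands in $\sigma$ is bounded pointwise by $|\xi|$ on the relevant support. Extracting this factor produces a Coifman-Meyer symbol that requires no low-frequency cancellation, and the elementary estimate $\|P_j a\|_\infty \lesssim 2^{-j}\|Da\|_\bmo$ (which follows from $L^\infty$-boundedness of Littlewood-Paley blocks of $\bmo$ functions combined with one application of $D^{-1}$ on a single dyadic shell) together with a Fefferman-Stein square function argument via Lemma \ref{lem_FS} yields $\|T_{\rm HL}\|_p + \|T_\Delta\|_p \lesssim \|Da\|_\bmo\|f\|_p$. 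The second inequality in the statement then follows immediately by moving the correction term to the other side and bounding $\|\nabla a \cdot D^{-1}\nabla f\|_p \lesssim \|\nabla a\|_\infty\|f\|_p$ via $L^p$-boundedness of the Riesz transforms $\mathcal{R}_i\mathcal{R}_j$.

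The main obstacle I anticipate is verifying the Coifman-Meyer smoothness of $\tilde\sigma = \sigma/|\xi|$ in the low-high region. Since $|\xi|$ is not smooth at the origin, the quotient is not automatically a classical symbol; one must convert the second-order vanishing of $\sigma$ into a usable pointwise bound via the integral Taylor remainder formula, restricted to the cone $|\xi| < |\eta|/2$ in which $|\xi+\eta| \ge |\eta|/2$ keeps the non-smooth set of $|\xi+\eta|^s$ at bay. This step is purely technical bookkeeping of symbol derivatives, but it requires no new ideas beyond the paraproduct and Coifman-Meyer tools already developed in this section.
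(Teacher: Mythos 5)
Your identification of the bilinear symbol and its second-order vanishing $\sigma(0,\eta)=\partial_{\xi}\sigma(0,\eta)=0$ is correct, and your treatment of the high-low and diagonal pieces is essentially in line with the paper (though note that when $\alpha+\beta=1$ the first summand of the high-low piece does not close by a pure square-function estimate --- the paper invokes the duality argument of Lemma \ref{lemq1} at this endpoint, and your proposal does not flag this). The genuine problem is in the low-high piece, and it is not the technicality you anticipate.

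You factor $\sigma=|\xi|\,\tilde\sigma$ and assert that $\tilde\sigma$ satisfies the Coifman--Meyer bounds $|\partial_{\xi}^{\mu}\partial_{\eta}^{\nu}\tilde\sigma|\lesssim(|\xi|+|\eta|)^{-|\mu|-|\nu|}$. This is false. Second-order vanishing in $\xi$ buys exactly one order of cancellation against $|\xi|^{-1}$: $\tilde\sigma$ and its first $\xi$-derivative are controlled, but the second $\xi$-derivative is not. Take the model remainder $\sigma(\xi,\eta)=\xi_1^2|\eta|^{-1}$; then $\tilde\sigma=\xi_1^2/(|\xi||\eta|)$, and $\partial_{\xi_1}^2\tilde\sigma$ contains the term $2/(|\xi||\eta|)$, which on the cone $|\xi|\ll|\eta|$ is vastly larger than the required $(|\xi|+|\eta|)^{-2}\sim|\eta|^{-2}$. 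The obstacle is not the non-smoothness of $|\xi+\eta|^s$ (which, as you observe, is safely away from its singularity on the cone); it is the singularity of $1/|\xi|$ at $\xi=0$, introduced by your own factorization. Dividing by the non-smooth $|\xi|$ genuinely destroys the Mihlin estimates in $\xi$ beyond first order, and no bookkeeping with the Taylor remainder repairs this.

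The paper avoids this entirely by Taylor-expanding $\sigma$ to second order in $\xi$ and pulling out the polynomial factor $\xi_i\xi_j$ --- two powers of the smooth monomial $\xi$, not $|\xi|$ --- which on the physical side is the honest differential operator $\partial_i\partial_j$ applied to $a_{<j-2}$, with no singular symbol. The remaining kernel $\tilde\sigma_{ij}(\xi,\eta)|\eta|^{1-\alpha}$ is a smooth order-$(-1)$ symbol on the cone, leading to the pointwise bound $|b_j|\lesssim 2^{-j}\mathcal{M}(\partial^2 a_{<j-2})\cdot\mathcal{M}(f_j)$; the factor $2^{-j}$ compensates for the extra derivative via $\|2^{-j}\partial^2 a_{<j-2}\|_{\infty}\lesssim\|\partial a\|_{\dot B^0_{\infty,\infty}}$, closing the estimate. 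Extracting $\xi_i\xi_j$ rather than $|\xi|$ is the key idea your proposal is missing.
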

\begin{rem*}
Clearly for dimension $d=1$, by using Sobolev embedding $W^{\delta,q} \hookrightarrow
L^{\infty}(\mathbb R)$ for $\delta>1/q$, we recover the Dawon-McGahagan-Ponce estimate.
\end{rem*}
\begin{proof}[Proof of Proposition \ref{prop3.7b}]
First  we denote
\begin{align*}
\sigma(a,f) = D^{\alpha+\beta} ( a D^{1-(\alpha+\beta)} f ) - D^{\alpha} (a D^{1-\alpha} f )
\end{align*}
with the symbol (by a slight abuse of notation)
\begin{align*}
\sigma(\xi,\eta)= |\xi+\eta|^{\alpha+\beta} |\eta|^{1-(\alpha+\beta)}
- |\xi+\eta|^{\alpha} |\eta|^{1-\alpha}.
\end{align*}
We then have
\begin{align*}
\sigma(a,f) = \sum_j \sigma(a_j, f_{<j-2}) + \sum_j \sigma(a_{<j-2},f_j) + \sum_j \sigma(a_j, \tilde f_j).
\end{align*}
For the high-low piece, we observe 
\begin{align*}
\sum_j D^{\alpha} (a_j D^{1-\alpha} f_{<j-2} ) = 
\sum_j \tilde P_j ( \tilde P_j (Da)  2^{-j(1-\alpha)} D^{1-\alpha} f_{<j-2} ),
\end{align*}
where by an abuse of notation we use $\tilde P_j$ to denote generic smooth frequency projection operators with frequency $|\xi|\sim 2^j$. Noting that $0\le \alpha<1$, we have
\begin{align*}
\| \sum_j \tilde P_j ( \tilde P_j (Da)  2^{-j(1-\alpha)} D^{1-\alpha} f_{<j-2} ) \|_p 
\lesssim \| Da \|_{\dot B^0_{\infty,\infty}} \| f \|_p  \lesssim \| Da \|_{\operatorname{BMO}} \| f \|_p.
\end{align*}
Similar estimate holds for the piece corresponding to the operator $D^{\alpha+\beta}$ if
$\alpha+\beta<1$. For $\alpha+\beta=1$, one
can use the duality argument as in the proof of Lemma \ref{lemq1}, it is easy to check that
\begin{align*}
\| \sum_j \tilde P_j ( \tilde P_j (Da)  f_{<j-2} ) \|_p 
\lesssim \| Da \|_{\operatorname{BMO}} \| f \|_p.
\end{align*}
 Thus
\begin{align*}
\|\sum_j \sigma(a_j, f_{<j-2}) \|_p \lesssim \| Da \|_{\operatorname{BMO}} \| f \|_p.
\end{align*}
The argument for the diagonal piece is similar, and we have 
\begin{align*}
\|\sum_j \sigma(a_j, \tilde f_j) \|_p \lesssim \| Da \|_{\operatorname{BMO}} \| f \|_p.
\end{align*}
Now we focus on the low-high piece where a correction term is needed for the final estimate.  Note that on this
piece $|\xi| \ll |\eta|$, and we shall write
\begin{align*}
 &\sigma(\xi,\eta)= |\xi+\eta|^{\alpha+\beta} |\eta|^{1-(\alpha+\beta)} - |\xi+\eta|^{\alpha} |\eta|^{1-\alpha}
 \notag \\
 =& (|\xi+\eta|^{\alpha+\beta} - |\eta|^{\alpha+\beta}) |\eta|^{1-(\alpha+\beta)}
 -(|\xi+\eta|^{\alpha} - |\eta|^{\alpha}) |\eta|^{1- \alpha} \notag \\
 =: & \sigma_2(\xi,\eta) - \sigma_3(\xi,\eta).
 \end{align*}
We now consider the piece corresponding to $\sigma_3$ (the estimate for $\sigma_2$ will be similar). Observe
\begin{align*}
|\eta+\xi|^{\alpha} - |\eta|^{\alpha} 
& = \alpha \int_0^1 |\eta+\theta \xi|^{\alpha-2} (\eta+\theta \xi) d\theta \cdot \xi \notag \\
& =  \alpha |\eta|^{\alpha-2} \eta \cdot \xi + \sum_{i,j=1}^d \tilde \sigma_{ij} (\xi,\eta) \xi_i \xi_j,
\end{align*}
where 
$\tilde \sigma_{ij}(\xi,\eta)$ is of order $|\eta|^{\alpha-2}$ when $|\xi|\ll |\eta|$. It is also easy to check
that the $m^{th}$ derivatives of $\tilde \sigma_{ij}$ decays as $O(|\eta|^{\alpha-2-m})$ when
$|\xi|\ll |\eta|$.  To simplify notation we shall write $\sum_{i,j=1}^d \tilde \sigma_{ij}\xi_i \xi_j$
simply as $\tilde \sigma(\xi,\eta) \xi^2$. Now we can write
\begin{align*}
\sum_j \sigma_3(a_{<j-2},f_j)
=-\sum_j \alpha \nabla a_{<j-2} \cdot D^{-1} \nabla f_j
+ \sum_j b_j,
\end{align*}
where
\begin{align*}
b_j(x) = \frac 1 {(2\pi)^{2d}}
\int \tilde \sigma(\xi,\eta) \xi^2 |\eta|^{1-\alpha} \widehat{a_{<j-2}}(\xi)
\widehat{f_j}(\eta) e^{i(\xi+\eta)\cdot x} d\xi d\eta.
\end{align*}
It is not difficult to check that
\begin{align*}
\| \sum_j b_j \|_p & \lesssim \|  (2^{-j} \mathcal M ( \partial^2 a_{<j-2}) \mathcal M (f_j) )_{l_j^2} \|_p
\notag \\
& \lesssim \| \partial a \|_{\dot B^0_{\infty,\infty}} \| f \|_p.
\end{align*}

On the other hand,  observe
\begin{align*}
\|\sum_j  \nabla a_{\ge j-2} \cdot D^{-1} \nabla f_j \|_p \lesssim \| \nabla a \|_{\operatorname{BMO}}
\| f \|_p.
\end{align*}
The desired result then easily follows.

\end{proof}

\begin{thm} \label{thm_cm_1}
Let $n_0=2d+2$ and $\sigma=\sigma(\xi, \eta) \in C_{\operatorname{loc}}^{n_0}
(\mathbb R^d\times \mathbb R^d \setminus (0,0) )$ satisfy
\begin{itemize}
\item  $\sigma(\xi,0)=0$, for any $\xi$ ;
\item $|\partial_{\xi}^{\alpha} \partial_{\eta}^{\beta} \sigma (\xi,\eta)|
\lesssim_{\alpha,\beta,d}
(|\xi|+ |\eta| )^{- (|\alpha|+|\beta|)}$, for any $ (\xi,\eta)\ne (0,0)$,
and any $ |\alpha|+|\beta| \le n_0$. 
\end{itemize}
For $f$, $g\in \mathcal S(\mathbb R^d)$ define
\begin{align*}
\sigma(D)(f,g)(x)
= \int \sigma(\xi,\eta) \hat f(\xi) \hat g(\eta) e^{i x \cdot (\xi+\eta)} d\xi d\eta.
\end{align*}
Then for any $1<p<\infty$, we have
\begin{align*}
\| \sigma(D)(f,g) \|_p
\lesssim_{p,d} 
\| \sigma\|_{\star} \| f\|_p \| g\|_{\bmo},
\end{align*}
where
\begin{align*}
\| \sigma\|_{\star} 
= \sup_{\substack{ (\xi,\eta) \ne (0,0)\\
|\alpha|+|\beta| \le n_0}} 
| (|\xi|+|\eta|)^{|\alpha|+|\beta|}
(\partial_{\xi}^{\alpha} \partial_{\eta}^{\beta} \sigma) (\xi,\eta) |.
\end{align*}

\end{thm}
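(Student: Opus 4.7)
\medskip

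\noindent\textbf{Proof plan.} The plan is to reduce the estimate to a countable family of concrete paraproducts already covered by Lemma \ref{lemq1}, via a standard Coifman--Meyer Fourier-series decomposition of the localized symbol. First I would decompose $\sigma(D)(f,g)=\Pi^{LH}(f,g)+\Pi^{HH}(f,g)+\Pi^{HL}(f,g)$ by inserting smooth Littlewood--Paley cutoffs in both frequency variables, corresponding to the three regions $|\xi|\ll|\eta|$, $|\xi|\sim|\eta|$, $|\eta|\ll|\xi|$. For a given dyadic level $j$, each localized symbol $\sigma_j(\xi,\eta)$ is supported in a cube of side length $\sim 2^j$ in $(\xi,\eta)$; one then expands $\sigma_j$ in a Fourier series on a slightly larger cube, producing coefficients
\begin{align*}
\sigma_j(\xi,\eta)=\sum_{k_1,k_2\in\mathbb Z^d} c^{(j)}_{k_1,k_2}\, \chi(\xi/2^{j+3})\chi(\eta/2^{j+3})\, e^{i 2^{-j-3}\pi(k_1\cdot\xi+k_2\cdot\eta)},
\end{align*}
where repeated integration by parts in $\xi,\eta$ (permitted because $\sigma\in C^{n_0}$ with $n_0=2d+2$) gives the crucial decay $|c^{(j)}_{k_1,k_2}|\lesssim \|\sigma\|_{\star}(1+|k_1|+|k_2|)^{-n_0}$, which is absolutely summable in $(k_1,k_2)$ with polynomial room to spare.

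With this expansion, each dyadic summand of $\sigma(D)(f,g)$ becomes a weighted sum of translated paraproducts of the form $(P_j^{\phi_{k_1}}f)(x-2^{-j}k_1)\cdot (P_j^{\psi_{k_2}}g)(x-2^{-j}k_2)$. The LH piece is the easy case: $f$ is smooth-low-frequency projected and $g$ is annular-projected, which is exactly the setup of Lemma \ref{lemq1}, so for each fixed $(k_1,k_2)$ that lemma bounds the $j$-sum by $\|f\|_p\|g\|_{\bmo}$ (the translations contribute at most $(1+|k_1|+|k_2|)^{O(d)}$ via the pointwise bound in Lemma \ref{lemp2}, and get absorbed into the rapidly decaying coefficients). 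The HH piece is handled analogously since the diagonal-diagonal paraproduct $\sum_j (\tilde P_j f)(\tilde P_j g)$ also falls within the scope of the duality argument used to prove Lemma \ref{lemq1}.

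The main obstacle is the HL piece, because there $f$ carries the annular projection while $g$ carries the smooth-low projection, which is the \emph{opposite} orientation from Lemma \ref{lemq1}. This is where the hypothesis $\sigma(\xi,0)=0$ must be used essentially. At the Fourier-series level this condition forces the cancellation
\begin{align*}
\sum_{k_2\in\mathbb Z^d} c^{(j)}_{k_1,k_2}=0 \qquad \text{for every }k_1,
\end{align*}
which lets me rewrite, for each $k_1$, the inner $k_2$-sum of low-frequency projections as a sum of \emph{differences} $(P_{<j-2}^{(2)}g)(x-2^{-j}k_2)-(P_{<j-2}^{(2)}g)(x)$. Each such difference is a convolution of $g$ against a Schwartz kernel at scale $2^{-j}$ whose integral vanishes, so the standard mean-zero-convolution estimate $\|(\nabla \chi)_{2^{-j}}\ast g\|_{\infty}\lesssim \|g\|_{\bmo}$ yields the pointwise bound $O(|k_2|\|g\|_{\bmo})$. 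I would then pair with $h\in L^{p'}$ and execute the same duality/$\mathcal H^1$--$\bmo$ argument as in Step 1 of the proof of Lemma \ref{lemq1}: after swapping $g$ with the pairing, what remains is to control an $\mathcal H^1$ norm of a bilinear expression in $f$ and $h$, which is proved by the standard paraproduct/square-function estimate (the mean-zero cancellation just extracted is precisely what promotes $L^1$-absolute summability to an $\mathcal H^1$ bound). Summing over $j$, then over $(k_1,k_2)$ using the decay of $c^{(j)}_{k_1,k_2}$, yields the full estimate with the stated dependence on $\|\sigma\|_{\star}$.
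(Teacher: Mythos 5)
Your plan is a Coifman--Meyer Fourier-series decomposition reducing the symbol to a convergent superposition of translated paraproducts, followed by an appeal to Lemma~\ref{lemq1}. This is a genuinely different route from the paper's proof, which dualizes \emph{first} (reducing to an $\mathcal{H}^1$ bound on $\tilde\sigma(D)(f,h)$ with $\tilde\sigma(\xi,\eta)=\sigma(\xi,-\xi-\eta)$) and then estimates kernels directly via integration by parts (Lemmas~\ref{thm_cm_1_lem1} and \ref{thm_cm_1_lem2}), never passing through a Fourier-series expansion of the symbol. You both locate the essential use of $\sigma(\xi,0)=0$ in the same geometric regime: your ``HL'' piece is, after the paper's substitution $\eta\mapsto -\xi-\eta$, exactly the diagonal subcase $|\xi+\eta|\ll 2^j$ that the paper treats with Lemma~\ref{thm_cm_1_lem2}.

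However, there is a quantitative gap that, as written, prevents your argument from yielding the stated hypothesis $n_0=2d+2$. Your Fourier coefficients decay as $(1+|k_1|+|k_2|)^{-n_0}$ with $n_0=2d+2$, so you have only $2$ units of excess over the $2d$ needed to sum over $(k_1,k_2)\in\mathbb{Z}^{2d}$. You then claim the translations cost a factor $(1+|k_1|+|k_2|)^{O(d)}$ ``via the pointwise bound in Lemma~\ref{lemp2}'' and that this ``gets absorbed.'' It cannot be absorbed: a polynomial loss of order $d$ (which is indeed what Lemma~\ref{lemp2} produces, since the exponent there is $d/r$ with $r<\min(p,2)$, and this approaches $d$ as $p\to 1$) leaves a summand of the form $(1+|k|)^{-2d-2+O(d)}$, which fails to be summable over $\mathbb{Z}^{2d}$ for $d\ge 2$. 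To rescue the Coifman--Meyer route under the stated smoothness you would need the sharper (logarithmic) growth of the $L^p(\ell^2)$ operator norms of the modulated/translated projections $P_j^m$ -- the bound $\lesssim \log(10+|m|)$ that the paper invokes only in the $\frac12<p\le 1$ part of Section~4 -- but you do not cite or prove that; the polynomial loss you do cite is incompatible with $n_0=2d+2$. This is precisely the inefficiency the paper avoids by working with the dualized symbol and kernel estimates, which only consume $2d+1$ or $2d+2$ derivatives directly.

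A secondary gap is in the HL piece. The cancellation $\sum_{k_2} c^{(j)}_{k_1,k_2}=0$ is correct, and rewriting the $k_2$-sum as differences of translates of the low-pass projection is the right idea. But the resulting symbol on the $g$-side, $\chi_2(\eta/2^j)\bigl(e^{ick_2\cdot\eta/2^j}-1\bigr)$, vanishes only to first order at $\eta=0$ and is \emph{not} supported on an annulus, so the $\mathcal{H}^1$ estimate you want to port over from the proof of Lemma~\ref{lemq1} (which relies on the annular support of $\psi$ via Lemma~\ref{lemp2a_1}) does not apply verbatim. The paper explicitly flags this distinction in the remark after Lemma~\ref{lemq1}: the case $\psi(0)=0$ without compact support away from the origin ``is slightly more involved.'' One can make it work by a further dyadic decomposition of the low-pass difference (and this also incurs an extra factor of $|k_2|$, compounding the summability issue above), but you would need to carry that out rather than invoke the lemma as if it applied directly.
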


\begin{proof}[Proof of Theorem \ref{thm_cm_1}]
In this proof we shall ignore the dependence on $(p,d)$ and write
$\lesssim_{p,d}$ simply as $\lesssim$. It suffices to show for any
$h \in C_c^{\infty}(\mathbb R^d)$, 
\begin{align*}
\langle \sigma(D)(f,g), h\rangle
\lesssim \| \sigma\|_{\star} \| f\|_p \|g\|_{\bmo} \|h\|_{p^{\prime}}, 
\end{align*}
where $p^{\prime} = p/(p-1)$.

We then only need to show for any $f  \in \mathcal S(\mathbb R^d)$,
$h \in C_c^{\infty}(\mathbb R^d)$, 
\begin{align*}
\| \tilde \sigma (D) (f,h) \|_{\mathcal H^1} 
\lesssim \| \sigma \|_{\star} \| f\|_p \| h \|_{p^{\prime}},
\end{align*}
where
\begin{align*}
\tilde \sigma(D) (f,h)(x)
= \int \sigma(\xi, -\xi -\eta) \hat f(\xi) \hat h(\eta) e^{i (\xi+\eta) \cdot x}  d\xi d\eta.
\end{align*}

Now write
\begin{align*}
\tilde \sigma(D)(f,h)
=\sum_j \tilde \sigma(D) (f_{\le j-2}, h_j)
+ \sum_j \tilde \sigma(D) (f_j, h_{\le j-2})
+ \sum_j \tilde \sigma(D) (f_j, \tilde h_j),
\end{align*}
where $\tilde h_j = h_{j-1} + h_j + h_{j+1}$.  We refer to these three summands as
low-high, high-low and diagonal pieces respectively.

\texttt{Low-high piece}. Note that $|\xi| \ll |\eta|$ and $|\xi+\eta| \sim |\eta| \sim 2^j$. Thus
\begin{align*}
\| \sum_j \tilde \sigma(D) (f_{\le j-2}, h_j ) \|_{\mathcal H^1}
\lesssim \| ( \tilde \sigma(D)(f_{\le j-2}, h_j ) )_{l_j^2} \|_{L_x^1}.
\end{align*}

Now we need a simple lemma.

\begin{lem} \label{thm_cm_1_lem1}
Suppose $\chi_1$, $\chi_2 \in C_c^{\infty}(\mathbb R^d)$ such that $\chi_1$ or $\chi_2$ is
supported on an annulus. Then
\begin{align*}
 & \left| \int \sigma(\xi,-\xi-\eta)
 \chi_1 ( \frac{\xi} {2^j} ) \chi_2 (\frac{\eta}{2^j} )
 e^{i \xi \cdot (x-y) } e^{i \eta \cdot (x-z) } d\xi d\eta \right| \notag \\
 \lesssim& 
 \sup_{\substack{|\alpha|+|\beta| \le 2d+1\\
 (\xi,\eta)\ne (0,0)}} 
 | (|\xi|+|\eta|)^{|\alpha|+|\beta|}
 \partial_{\xi}^{\alpha} 
 \partial_{\eta}^{\beta} \sigma(\xi,\eta)|
 \cdot \frac {2^{2jd}} { (1+2^j|x-y| +2^j |x-z|)^{2d+1}}.
 \end{align*}
 \end{lem}
 
 \begin{proof}[Proof of Lemma \ref{thm_cm_1_lem1}]
 WLOG we can assume $|x-y| \ge |x-z|$. If $|x-y| \le 2^{-j}$, then the bound is trivial.
 Now assume $|x-y| >2^{-j}$, then just integrate by parts in  the variable $\xi$ up to $(2d+1)$-times and
 note that $|\xi|+|\xi+\eta| \sim |\xi| +|\eta|$. More precisely if one denotes $\tilde \xi = 2^{-j}
 \xi$ and $\tilde \eta = 2^{-j} \eta$, then by the support assumption on $\chi_1$ and $\chi_2$,
 one has $|\tilde \xi| + |\tilde \xi +\tilde \eta|\sim 1$.  It follows that for $|\tilde \xi|\lesssim 1$,
 $|\tilde \eta|\sim 1$ or $|\tilde \xi| \sim 1$, $|\tilde \eta|\lesssim 1$, we have
 \begin{align*}
 &\max_{|\alpha|+|\beta|=2d+1}
 |\partial_{\tilde \xi}^{\alpha}
 \partial_{\tilde \eta}^{\beta}
 ( \sigma(2^j \tilde \xi, -2^j (\tilde \xi+\tilde \eta) ) ) | \notag \\
 \lesssim\;& 2^{j(2d+1)} \max_{|\alpha|+|\beta|=2d+1} (|\tilde \xi|+|\tilde \xi+\tilde \eta|)
 | (\partial_{\xi}^{\alpha} \partial_{\eta}^{\beta} \sigma)(2^j \tilde \xi,
 -2^j(\tilde \xi +\tilde \eta)) |\lesssim \| \sigma \|_{\star}.
 \end{align*}
 
 \end{proof}

By using Lemma \ref{thm_cm_1_lem1}, it is easy to check that 
\begin{align*}
 & | \tilde \sigma(D)(f_{\le j-2}, h_j) (x) | \notag \\
 \lesssim &\;\| \sigma \|_{\star} 
 \cdot 2^{jd} \int \frac {   |f_{\le j-2}(y)| } { (1+2^j |x-y|)^{d+\frac 12} }dy 
 \cdot 2^{jd} \int \frac{  |h_j(z)| } { (1+ 2^j |x-z| )^{d+\frac 12}  } dz \notag \\
 \lesssim &\;\| \sigma \|_{\star} \cdot \mathcal M f_{\le j-2} (x) \cdot \mathcal M h_j (x).
 \end{align*}
 
 Therefore
 \begin{align*}
 \| (\tilde \sigma(D) (f_{\le j-2}, h_j ) )_{l_j^2} \|_1
 & \lesssim \| \sigma\|_{\star} \cdot \| (\mathcal M f_{\le j-2} )_{l_j^{\infty}} \|_p
 \cdot \| (\mathcal M h_j )_{l_j^2} \|_{p^{\prime}} \notag \\
 & \lesssim \| \sigma \|_{\star} \cdot \| f\|_{p} \cdot \| h\|_{p^{\prime}}.
 \end{align*}

This finishes the estimate of the low-high piece. 
The estimate for
the high-low piece is similar.

\texttt{Diagonal-piece}.  In this case we need to consider the integral
\begin{align*}
 & \tilde \sigma(D) (f_j, \tilde h_j )(x) \notag \\
 = &\int \sigma(\xi,-\xi-\eta)
 \chi_1(\frac{\xi} {2^j} ) \chi_2(\frac{\eta} {2^j} )
 \hat{f_j}(\xi) \widehat{\tilde h_j} (\eta) e^{i (\xi +\eta) \cdot x} d\xi d\eta,
 \end{align*}
 where $\chi_1$, $\chi_2$ are smooth cut-off functions with support in the annulus
 $\{z\in \mathbb R^d:\, 2^{-m_0} \le |z| \le 2^{m_0} \}$ for some integer $m_0>0$.
 
 We now consider two subcases.
 
 Subcase 1: $|\xi+\eta| \ge 2^{j-m_0-10}$. Note that in this case $|\xi+\eta| \sim 2^j$.
 By using frequency localization and Lemma \ref{thm_cm_1_lem1}, we have
 \begin{align*}
  & \| \sum_j P_{>j-m_0-10} ( \tilde \sigma (D) (f_j, \tilde h_j) ) \|_{\mathcal H^1} \notag \\
  \sim & \| \sum_j P_{j-m_0-10<\cdot <j+m_0+100}
  (\tilde \sigma(D) (f_j,\tilde h_j ) ) \|_{\mathcal H^1} \notag \\
  \lesssim & \| ( \mathcal Mf_j \cdot \mathcal M \tilde h_j )_{l_j^2} \|_1 \notag \\
  \lesssim & \| f\|_p \|h\|_{p^{\prime}}.
  \end{align*}
  
  Subcase 2: $|\xi +\eta| <2^{j-m_0-10}$. In yet other words, $|\xi+\eta|
  \ll \min \{ |\xi|, \, |\eta| \}$. Then since $\sigma(\xi,0)=0$ by assumption, 
  we just need to consider 
  \begin{align*}
  \int \chi_{|\xi+\eta| <2^{j-m_0-10}}
  ( \sigma(\xi,-\xi-\eta) -\sigma(\xi,0)) \cdot
  \chi_1(\frac{\xi}{2^j} ) \chi_2(\frac{\eta}{2^j} )
  \hat f_j(\xi) \hat{\tilde h}_j (\eta) e^{i (\xi+\eta) \cdot x} d\xi d\eta,
  \end{align*}
  where $\chi$ is a smooth cut-off function.
  
  Now we need another simple lemma.
  
  \begin{lem} \label{thm_cm_1_lem2}
  For all $0\le \theta \le 1$, we have
 \begin{align*}
  & 
  \left| \int \chi_{|\xi+\eta| <2^{j-m_0-10}}
  (\partial_{\eta} \sigma)(\xi,  -\theta (\xi+\eta) )
  \chi_1(\frac{\xi}{2^j} ) \chi_2(\frac {\eta} {2^j} )
  e^{i \xi \cdot (x-y) } e^{i \eta \cdot (x-z)} d\xi d\eta 
  \right| \notag \\
  \lesssim & \;
  \sup_{\substack{|\alpha|+|\beta| \le 2d+2 \\
  (\xi,\eta) \ne (0,0)}}
  | (|\xi|+|\eta|)^{|\alpha|+|\beta|}
  (\partial_{\xi}^{\alpha} \partial_{\eta}^{\beta} \sigma) (\xi,\eta) |
  \notag \\
  & \qquad \qquad \qquad \cdot 2^{2jd-j} \cdot (1+2^j |x-y|+2^j |x-z|)^{-(2d+1)}.
  \end{align*}
  \end{lem}

The proof of Lemma \ref{thm_cm_1_lem2} is similar to Lemma
\ref{thm_cm_1_lem1} and therefore we omit it.

By Lemma \ref{thm_cm_1_lem2}, we then have
\begin{align*}
 & \| \sum_j \tilde \sigma(D) (f_j, \tilde h_j ) \|_{\dot B^0_{1,1} } \notag \\
 \lesssim & \sum_k 2^k \sum_{j\ge k+m_0+10} 2^{-j} \| \mathcal M f_j \cdot \mathcal M \tilde h_j \|_1
 \notag \\
 \lesssim & \sum _j \| \mathcal M f_j \cdot \mathcal M \tilde h_j \|_1 \notag \\
 \lesssim & \| (\mathcal M f_j )_{l_j^2} \|_p \cdot \| (\mathcal M \tilde h_j)_{l_j^2} \|_{p^{\prime}} \notag \\
 \lesssim & \| f\|_p \| h\|_{p^{\prime}}.
 \end{align*}
 
 This concludes the proof of Theorem \ref{thm_cm_1}.
 
 \begin{rem*}
 A close inspection shows that the diagonal piece in fact belongs to $\dot B^0_{1,1}$ which embeds into
 $\mathcal H^1$. 
 \end{rem*}

\end{proof}

It is possible to refine Theorem \ref{thm_cm_1} further. The following only
requires $2d+1$ derivatives on the symbol $\sigma$. 

\begin{thm} \label{thm_cm_1a}
Let $n_0=2d+1$ and $\sigma=\sigma(\xi, \eta) \in C_{\operatorname{loc}}^{n_0}
(\mathbb R^d\times \mathbb R^d \setminus (0,0) )$ satisfy
\begin{itemize}
\item  $\sigma(\xi,0)=0$, for any $\xi$ ;
\item $|\partial_{\xi}^{\alpha} \partial_{\eta}^{\beta} \sigma (\xi,\eta)|
\lesssim_{\alpha,\beta,d}
(|\xi|+ |\eta| )^{- (|\alpha|+|\beta|)}$, for any $ (\xi,\eta)\ne (0,0)$,
and any $ |\alpha|+|\beta| \le n_0$. 
\end{itemize}
For $f$, $g\in \mathcal S(\mathbb R^d)$ define
\begin{align*}
\sigma(D)(f,g)(x)
= \int \sigma(\xi,\eta) \hat f(\xi) \hat g(\eta) e^{i x \cdot (\xi+\eta)} d\xi d\eta.
\end{align*}
Then for any $1<p<\infty$, we have
\begin{align*}
\| \sigma(D)(f,g) \|_p
\lesssim_{p,d} 
\| \sigma\|_{\star} \| f\|_p \| g\|_{\bmo},
\end{align*}
where
\begin{align*}
\| \sigma\|_{\star} 
= \sup_{\substack{ (\xi,\eta) \ne (0,0)\\
|\alpha|+|\beta| \le n_0}} 
| (|\xi|+|\eta|)^{|\alpha|+|\beta|}
(\partial_{\xi}^{\alpha} \partial_{\eta}^{\beta} \sigma) (\xi,\eta) |.
\end{align*}
\end{thm}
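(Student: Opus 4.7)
The plan is to mirror the proof of Theorem \ref{thm_cm_1}, tracking carefully where the derivative count is used and refining the one step that originally cost $2d+2$ derivatives. By $\mathcal{H}^1$--BMO duality and a density argument, it suffices to establish
\begin{align*}
\| \tilde\sigma(D)(f, h) \|_{\mathcal H^1} \lesssim \|\sigma\|_{\star} \| f\|_p \| h\|_{p'}
\end{align*}
for $f \in \mathcal S(\mathbb R^d)$ and $h \in C_c^\infty(\mathbb R^d)$, where $\tilde\sigma(\xi,\eta) = \sigma(\xi,-\xi-\eta)$ and $p' = p/(p-1)$. I would then apply the Bony paraproduct decomposition, splitting $\tilde\sigma(D)(f,h)$ into low-high, high-low, and diagonal pieces.

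For the low-high, high-low, and the first subcase of the diagonal (output frequency $2^k$ comparable to the input frequencies $2^j$), the estimates used in the proof of Theorem \ref{thm_cm_1} rely exclusively on Lemma \ref{thm_cm_1_lem1}, which requires only $2d+1$ derivatives on $\sigma$; these parts carry over verbatim. The only place where the proof of Theorem \ref{thm_cm_1} spent $2d+2$ derivatives was the second subcase of the diagonal (output frequency $2^k$ much smaller than input $2^j$), where Lemma \ref{thm_cm_1_lem2} used one additional derivative via the Taylor expansion $\sigma(\xi,-\xi-\eta) = -(\xi+\eta)\cdot \int_0^1 (\nabla_\eta\sigma)(\xi,-t(\xi+\eta))\,dt$ in order to extract the full vanishing gain $2^{k-j}$.

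The refinement replaces the $\dot B^0_{1,1}$-based bound in subcase 2 by the square function characterization $\|F\|_{\mathcal H^1} \sim \|(P_k F)_{l_k^2}\|_1$. Because $l^2_k$-summation is strictly weaker than $l^1_k$-summation, it suffices to establish the pointwise bound
\begin{align*}
\bigl| P_k [\tilde\sigma(D)(f_j, \tilde h_j)]\bigr|(x) \lesssim \|\sigma\|_\star \cdot 2^{-\delta(j-k)} \mathcal M f_j(x) \mathcal M \tilde h_j(x)
\end{align*}
for some fixed $\delta \in (0,1)$, rather than the full $2^{-(j-k)}$ of Lemma \ref{thm_cm_1_lem2}. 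Granted this, Cauchy--Schwarz in the $l^2_k$ sum together with Fefferman--Stein and H\"older's inequality yield
\begin{align*}
\bigl\| (P_k \textstyle\sum_j T_j^{(2)})_{l_k^2} \bigr\|_1 \lesssim \bigl\| (\mathcal M f_j \cdot \mathcal M \tilde h_j)_{l_j^2} \bigr\|_1 \lesssim \| f \|_p \| h \|_{p'},
\end{align*}
which closes the $\mathcal H^1$ estimate.

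The hard part is deriving the pointwise bound with a positive $\delta$ while spending only $2d+1$ derivatives on $\sigma$. The trivial size bound $|\sigma(\xi,-\xi-\eta)| \lesssim \|\sigma\|_\star$ paired with the $(2d+1)$-derivative IBP kernel decay of Lemma \ref{thm_cm_1_lem1} gives the estimate without any geometric gain in $j-k$, while the Taylor-based bound $|\sigma(\xi,-\xi-\eta)| \lesssim \|\sigma\|_\star \cdot 2^{k-j}$ gives the full gain but costs one additional derivative. My plan is to interpolate between these two regimes: the Hölder estimate $|\sigma(\xi,-\xi-\eta)| \lesssim \|\sigma\|_\star \cdot 2^{(k-j)\delta}$ for any $\delta \in (0,1)$ follows by interpolating between the $C^0$ and $C^1$ bounds of $\sigma$ (both within the $2d+1$ derivative budget). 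Verifying that this fractional-order size bound combines cleanly with the integration-by-parts kernel decay without exceeding the $2d+1$ derivative budget---via either a real or complex interpolation argument on the bilinear operator, or a careful Littlewood--Paley decomposition of $\sigma$ in its second variable---is the technical heart of the argument.
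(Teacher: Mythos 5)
Your overall strategy (duality to $\mathcal H^1$, paraproduct decomposition, reduce to showing a geometric gain $2^{-\delta(j-k)}$ in the diagonal subcase where the output frequency $2^k$ is much smaller than $2^j$) matches the paper's, and you correctly locate the only place Theorem~\ref{thm_cm_1} spent the extra derivative. However, the specific mechanism you propose for obtaining the gain --- interpolating the $C^0$ and $C^1$ size bounds on $\sigma$ to get $|\sigma(\xi,-\xi-\eta)| \lesssim \|\sigma\|_\star\, 2^{(k-j)\delta}$ --- does not combine cleanly with the kernel estimate, which is precisely the step you flagged as the ``technical heart'' and left unresolved. The obstruction is that a fractional H\"older bound on the \emph{size} of $\sigma$ does not survive differentiation: once you integrate by parts to obtain the spatial decay $(1+2^j|\cdot|)^{-(2d+1)}$, you need to hit the symbol with up to $2d+1$ derivatives, and those derivatives destroy the fractional-size gain. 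A naive multiplication of ``size gain $\times$ kernel decay'' double-counts the derivative budget.

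The paper resolves this differently. After changing variables to $\tilde\xi = \xi+\eta$, $\tilde\eta = \eta$ (so that $\tilde\xi$ is tied to scale $2^k$ and $\tilde\eta$ to scale $2^j$) and splitting into cases based on whether $|y-z|$ dominates $|x-y|,|x-z|$ or not, the key Case~2 estimate interpolates not between size bounds on $\sigma$ but between \emph{three full kernel estimates}, each of which uses exactly $n_0=2d+1$ derivatives: (i) integrating by parts $d+1$ times in $\tilde\xi$ and $d$ in $\tilde\eta$, (ii) the mirror split, and (iii) extracting one factor $\tilde\xi \sim 2^k$ from the Taylor expansion $\sigma(\tilde\xi-\tilde\eta,-\tilde\xi) = -\tilde\xi\cdot\int_0^1(\partial_\eta\sigma)(\tilde\xi-\tilde\eta,-\theta\tilde\xi)\,d\theta$ (giving the full $2^{k-j}$ gain), followed by $d$ integrations by parts in each variable. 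Geometric averaging of (i)--(ii) against (iii) then produces $2^{(k-j)/2}(1+2^k|x-y|)^{-(d+\frac14)}(1+2^j|y-z|)^{-(d+\frac14)}\cdot 2^{jd}2^{kd}$, which is both summable in $(j,k)$ and integrable in $x,z$. Without the change of variables and the geometric casing in physical space, the interpolation has nowhere to happen; so what you are missing is concrete, not merely technical bookkeeping.
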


\begin{proof}[Proof of Theorem \ref{thm_cm_1a}]
We shall use the same notation as in the proof of Theorem \ref{thm_cm_1} and sketch the needed modifications.
It suffices to show 
\begin{align*}
\sum_k \sum_{j \ge k+m_0+10} \| P_k ( \tilde \sigma(D) (f_j, \tilde h_j) ) \|_{L_x^1}
\lesssim \| f \|_p \| h \|_{p^{\prime}}.
\end{align*}

Clearly
\begin{align}
&\| P_k ( \tilde \sigma(D) (f_j, \tilde h_j) ) \|_{L_x^1} \notag \\
\lesssim & 
\| \int \chi( \frac{\xi+\eta} {2^k} ) \chi( 2^{-j} \xi) \chi(2^{-j} \eta)
\sigma(\xi, -\xi-\eta) e^{i (\xi\cdot (x-y)+\eta\cdot (x-z)) }  \notag \\
&\qquad\qquad
d\xi d\eta f_j (y) \tilde h_j (z) dy dz \|_{L_x^1},  \label{pf_thmcm1a_e1}
\end{align}
where we have slightly abused the notation and denote all smooth cutoff functions by
the same symbol $\chi$ (whose support is on the annulus $\{ z:\; |z| \sim 1\}$). 
Now set $\tilde \xi= \xi+\eta$, $\tilde \eta = \eta$. Then
\begin{align}
 & |\int \chi(\frac{\xi+\eta} {2^k}) \chi(2^{-j} \xi) \chi(2^{-j} \eta) \sigma(\xi,-\xi-\eta)
 e^{i ( \xi \cdot (x-y) + \eta \cdot (x-z))} d\xi d\eta | \notag \\
 =& | \int \chi( 2^{-k} \tilde \xi) \chi (2^{-j}(\tilde \xi-\tilde \eta) )
 \chi(2^{-j} \tilde \eta) \sigma(\tilde \xi-\tilde \eta, -\tilde \xi) e^{i ( \tilde \xi\cdot (x-y)
 + \tilde \eta\cdot (y-z) )} d\tilde \xi d\tilde \eta|. \label{pf_thmcm1a_e3}
 \end{align}

Case 1: $|y-z|\ge \frac 1 {10} |x-y|$ or $|y-z|\ge \frac 1 {10} |x-z|$.
 In this case easy to check that $|z-x|+|y-x| \lesssim |y-z|$. Integrating by
parts in $\tilde \eta$ up to $2d+1$ times, we obtain
\begin{align*}
\eqref{pf_thmcm1a_e3} &\lesssim  (1+2^j|y-z|)^{-(2d+1)} \cdot 2^{jd} \cdot 2^{kd} \notag \\
& \lesssim  (1+2^j|x-y|)^{-(d+\frac 12)} (1+2^j|x-z|)^{-(d+\frac 12)} \cdot 2^{2jd} \cdot 2^{(k-j)d}.
\end{align*}
It follows easily that in this case 
\begin{align*}
\eqref{pf_thmcm1a_e1} \lesssim 2^{(k-j)d} \| \mathcal M f_j \mathcal M \tilde h_j \|_{L_x^1}.
\end{align*}
Summing in $k$ and $j$ then easily yields the desired inequality.

Case 2: $|y-z|<\frac 1 {10} \min\{ |x-y|, \, |x-z|\}$.  In this case $|x-y| \sim |x-z|$. 
We estimate \eqref{pf_thmcm1a_e3} in several different ways. First  integrating by parts in
$\tilde \xi$ up to $d+1$ times and then in $\tilde \eta$ up to $d$ times, we get
\begin{align*}
\eqref{pf_thmcm1a_e3} \lesssim (1+2^k|x-y|)^{-(d+1)} (1+2^j|y-z|)^{-d}
2^{jd} 2^{kd}.
\end{align*}
Then  integrating by parts in
$\tilde \xi$ up to $d$ times and then in $\tilde \eta$ up to $d+1$ times, we get
\begin{align*}
\eqref{pf_thmcm1a_e3} \lesssim (1+2^k|x-y|)^{-d} (1+2^j|y-z|)^{-(d+1)}
2^{jd} 2^{kd}.
\end{align*}
Interpolating these two estimates gives us
\begin{align} \label{pf_thmcm1a_e5}
\eqref{pf_thmcm1a_e3} \lesssim (1+2^k|x-y|)^{-(d+\frac 12)} (1+2^j|y-z|)^{-(d+\frac 12)}
2^{jd} 2^{kd}.
\end{align}
Since $\sigma(\xi,0)=0$ for any $\xi$, we have
\begin{align*}
\sigma(\tilde \xi-\tilde \eta, -\tilde \xi) = -\tilde \xi \cdot \int_0^1 (\partial_{\eta} \sigma)
(\tilde \xi-\tilde \eta,-\theta \tilde \xi) d\theta.
\end{align*}
It is then not difficult to check that
\begin{align*}
\eqref{pf_thmcm1a_e3} \lesssim  2^{k-j} 
(1+2^k|x-y|)^{-d} (1+2^j|y-z|)^{-d}
2^{jd} 2^{kd}.
\end{align*}
Interpolating this estimate with \eqref{pf_thmcm1a_e5} yields
\begin{align*}
\eqref{pf_thmcm1a_e3} \lesssim  2^{\frac 12(k-j)} 
(1+2^k|x-y|)^{-(d+\frac 14)} (1+2^j|y-z|)^{-(d+\frac 14)}
2^{jd} 2^{kd}.
\end{align*}
We then get
\begin{align*}
\eqref{pf_thmcm1a_e1} &
\lesssim 2^{kd} 2^{\frac 12(k-j)}
\int (1+2^k|x-y|)^{-(d+\frac 14)} 
|f_j(y)| (\mathcal M \tilde h_j)(y) dy dx \notag \\
& \lesssim 2^{\frac 12(k-j)}
\int |f_j(y)| (\mathcal M \tilde h_j)(y) dy.
\end{align*}
Summing in $k$ and $j$  then easily implies the desired result.
\end{proof}

The next two simple lemmas will be needed in Section 5.  It is interesting that one can obtain some
BMO (or Besov) refinements of some terms in Kato-Ponce inequalities.

\begin{lem}\label{lemc3}
Let $s>1$ be an integer and let $1<p<\infty$. Let $\mathcal R$ be the usual Riesz-type operator. Then
for any $f,g \in \mathcal S(\mathbb R^d)$, we have
\begin{align*}
\| \partial^s f \cdot \mathcal R g \|_p \lesssim_{s,p,d,r_1,r_2}
\| D^s f\|_{\bmo} \| g\|_p + \| \partial f \|_{r_1} \| D^{s-1}  g \|_{r_2},
\end{align*}
where $\frac 1p=\frac 1 {r_1}+\frac 1 {r_2}$, and $1<r_1,r_2<\infty$. The notation $\partial^s$ denotes
any differentiation operator $\partial_{x_1}^{\alpha_1}\cdots \partial_{x_d}^{\alpha_d}$ with $|\alpha|=s$.
If $r_1=\infty$, $r_2=p$, then
\begin{align*}
\| \partial^s f \cdot \mathcal R g \|_p \lesssim_{s,p,d}
\| D^s f\|_{\bmo} \| g\|_p + \| \partial f \|_{\dot B^0_{\infty,\infty}} \| D^{s-1} g\|_p.
\end{align*}
And if $r_1=p$, $r_2=\infty$, then
\begin{align*}
\| \partial^s f \cdot \mathcal R g \|_p \lesssim_{s,p,d}
\| D^s f \|_{\bmo} \| g \|_p + \| \partial f \|_p \| D^{s-1} g \|_{\dot B^0_{\infty,\infty}}.
\end{align*}

\end{lem}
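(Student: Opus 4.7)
The plan is to perform a Bony paraproduct decomposition
\begin{align*}
\partial^s f \cdot \mathcal R g = \underbrace{\sum_j (\partial^s f)_j\, (\mathcal R g)_{<j-2}}_{\mathrm{(I)}}
+ \underbrace{\sum_j (\partial^s f)_{<j-2}\, (\mathcal R g)_j}_{\mathrm{(II)}}
+ \underbrace{\sum_j (\partial^s f)_j\, \widetilde{(\mathcal R g)}_j}_{\mathrm{(III)}}
\end{align*}
and estimate the three pieces separately. The key symbol identity, exploited throughout, is that since $|\alpha|=s$ the ratio $(i\xi)^\alpha/|\xi|^s$ is smooth and homogeneous of degree zero away from the origin; consequently (with $\tilde\phi_i$ denoting generic Schwartz functions supported in an annulus around the unit sphere) one has simultaneously the two representations $(\partial^s f)_j = P_j^{\tilde\phi_1}(D^s f) = 2^{j(s-1)} P_j^{\tilde\phi_2}(\partial f)$, and likewise $(\mathcal R g)_j = 2^{-j(s-1)} P_j^{\tilde\phi_3}(D^{s-1} g)$. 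This dual representation lets us move the derivatives onto whichever factor is convenient.

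For piece (I) the first representation rewrites the sum as $\sum_j P_j^{\tilde\phi_1}(D^s f)\cdot (\mathcal R g)_{<j-2}$, which is exactly the paraproduct form of Lemma \ref{lemq1} with an annular projector on $D^s f\in\bmo$ and a low-frequency projector on $\mathcal R g\in L^p$; hence $\|\mathrm{(I)}\|_p \lesssim \|\mathcal R g\|_p \|D^s f\|_{\bmo} \lesssim \|g\|_p\|D^s f\|_{\bmo}$. For piece (II), each summand has Fourier support in $|\xi|\sim 2^j$, so Lemma \ref{lemp2a} and the Littlewood--Paley characterization of $L^p$ yield $\|\mathrm{(II)}\|_p \lesssim \|((\partial^s f)_{<j-2}\,(\mathcal R g)_j)_{l_j^2}\|_p$. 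Employing the geometric summation $|(\partial^s f)_{<j-2}|\lesssim \sum_{k<j-2} 2^{k(s-1)}\mathcal M(P_k\partial f)\lesssim 2^{j(s-1)}(\mathcal M\circ\mathcal M)(\partial f)$, which is convergent precisely because $s>1$, together with $|(\mathcal R g)_j|\lesssim 2^{-j(s-1)}\mathcal M(\tilde P_j D^{s-1} g)$, the factors $2^{\pm j(s-1)}$ cancel, and H\"older together with Fefferman--Stein (Lemma \ref{lem_FS}) close the bound by $\|\partial f\|_{r_1}\|D^{s-1} g\|_{r_2}$ for $1<r_1,r_2<\infty$.

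Piece (III) is the main technical obstacle: its summands are only frequency-localized to the ball $|\xi|\lesssim 2^j$ rather than an annulus, so the square function argument used for (II) does not directly apply. The resolution is that using the second representation on \emph{both} factors yields the perfect cancellation $(\partial^s f)_j\widetilde{(\mathcal R g)}_j = P_j^{\tilde\phi_2}(\partial f)\cdot\tilde P_j^{\tilde\phi_3}(D^{s-1} g)$ with no scaling weight left, whence pointwise $|(\partial^s f)_j\widetilde{(\mathcal R g)}_j|\lesssim \mathcal M(\tilde P_j\partial f)\cdot\mathcal M(\tilde P_j D^{s-1} g)$. Pointwise Cauchy--Schwarz in the index $j$, followed by H\"older in $x$ and Fefferman--Stein in each factor, then delivers $\|\mathrm{(III)}\|_p\lesssim \|\partial f\|_{r_1}\|D^{s-1} g\|_{r_2}$. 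For the endpoints $r_1=\infty$ or $r_2=\infty$, the same scheme works after replacing the relevant Hardy--Littlewood maximal bound by the uniform pointwise estimate $|P_j^{\tilde\phi}(\partial f)|\lesssim \|\partial f\|_{\dot B^0_{\infty,\infty}}$ (or its analog for $D^{s-1} g$); a Schur-type $l^1$-convolution in $j$, whose summability is once again guaranteed by $s>1$, collapses the remaining sum to $\|\partial f\|_p$ or $\|D^{s-1} g\|_p$ as required.
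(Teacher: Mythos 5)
Your three-piece Bony decomposition is a minor repackaging of the paper's two-piece split (the paper groups your (I) and (III) together as $\sum_j \partial^s f_j\,\mathcal R g_{\le j+2}$ and bounds that whole sum at once by Lemma~\ref{lemq1}), and your treatment of (I) and of (II) in the range $1<r_1,r_2<\infty$ is correct and essentially matches the paper. The endpoint argument for (II) is also fine: it reduces, as in the paper, to H\"older on $l_j^{\infty}\times l_j^2$ with the $2^{\pm j(s-1)}$ weights kept on the factors.

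The gap is in the endpoint treatment of the diagonal piece (III). As you correctly observe, after absorbing the symbols the two weights $2^{j(s-1)}$ and $2^{-j(s-1)}$ cancel exactly, so there is \emph{no} remaining geometric decay in $j$; in particular there is no ``summability once again guaranteed by $s>1$'' for a Schur-type $l^1$ sum to exploit. If you replace one factor by the uniform bound $|P_j^{\tilde\phi}(\partial f)|\lesssim\|\partial f\|_{\dot B^0_{\infty,\infty}}$ you are left with $\|\partial f\|_{\dot B^0_{\infty,\infty}}\|\sum_j |\tilde P_j^{\tilde\phi_3}(D^{s-1}g)|\,\|_p$, and the remaining $l_j^1$ sum is a $\dot B^0_{p,1}$-type quantity which is not controlled by $\|D^{s-1}g\|_p$. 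Worse, the resulting claimed estimate $\|\sum_j P_j^{\tilde\phi_2}(\partial f)\tilde P_j^{\tilde\phi_3}(D^{s-1}g)\|_p\lesssim\|\partial f\|_{\dot B^0_{\infty,\infty}}\|D^{s-1}g\|_p$ is false in general: the remark immediately after Lemma~\ref{lemq1} constructs a counterexample showing that $\dot B^0_{\infty,\infty}$ cannot replace $\operatorname{BMO}$ in exactly this kind of bilinear paraproduct bound. The correct remedy, and the one the paper uses, is not to try to squeeze (III) into the second term of the conclusion at all: since $(\partial^s f)_j$ is an annular projection and $\widetilde{(\mathcal R g)}_j$ a compactly supported one, Lemma~\ref{lemq1} applies directly to (III) and gives $\|\text{(III)}\|_p\lesssim\|\partial^s f\|_{\operatorname{BMO}}\|\mathcal R g\|_p\lesssim\|D^s f\|_{\operatorname{BMO}}\|g\|_p$, so (III) should be absorbed into the \emph{first} term of the stated inequality, for all three endpoint configurations of $(r_1,r_2)$.
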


\begin{rem}
The same estimates also hold for $\|\partial^s f \cdot g\|_p$.
\end{rem}
\begin{proof}[Proof of Lemma \ref{lemc3}]
Write
\begin{align*}
\partial^s f \cdot \mathcal R g
& = \sum_j \partial^s f_{<j-2} \mathcal R g_j + \sum_j \partial^s f_{\ge j-2} \mathcal R g_j \notag \\
& = \sum_j \partial^s f_{<j-2} \mathcal R g_j + \sum_j \partial^s f_{j} \mathcal R g_{\le j+2}.
\end{align*}

First note that by Lemma \ref{lemq1}, we have
\begin{align*}
\| \sum_j \partial^s f_j \cdot \mathcal R g_{\le j+2} \|_p \lesssim \| \partial^s f\|_{\bmo} \| \mathcal R g\|_p
\lesssim \| D^s f \|_{\bmo} \| g\|_p.
\end{align*}
Thus we only need to estimate the piece $\sum_j \partial^s f_{<j-2} \mathcal R g_j$. Consider
first the case $r_2<\infty$, then
\begin{align*}
\| \sum_j \partial^s f_{<j-2} \mathcal R g_j \|_p
& \lesssim \| (\partial^s f_{<j-2} \cdot \mathcal R g_j )_{l_j^2} \|_p \notag \\
& \lesssim \| (2^{-j(s-1)} \partial^s f_{<j-2} )_{l_j^{\infty}} (2^{j(s-1)} \mathcal R g_j )_{l_j^2} \|_p \notag \\
& \lesssim
\begin{cases}
 \| \partial f \|_{r_1} \| D^{s-1} g\|_{r_2}, \quad \text{if $r_1<\infty$}, \\
 \| \partial f \|_{\dot B^0_{\infty,\infty}} \| D^{s-1} g \|_p, \quad \text{if $r_1=\infty$}.
 \end{cases}
\end{align*}

If $r_2=\infty$, then $r_1=p$ and
\begin{align*}
\| \sum_j \partial^s f_{<j-2} \mathcal R g_j \|_p
& \lesssim \| (2^{-j(s-1)} \partial^s f_{<j-2} )_{l_j^2} \cdot
(2^{j(s-1)} \mathcal R g_j )_{l_j^{\infty}} \|_p \notag \\
& \lesssim \| (2^{-j(s-1)} \partial^s f_{<j-2})_{l_j^2} \|_p \cdot \| D^{s-1}  g \|_{\dot B^0_{\infty,\infty}}
\notag \\
& \lesssim \| \partial f \|_p \| D^{s-1} g \|_{\dot B^0_{\infty,\infty} }.
\end{align*}

\end{proof}

\begin{lem} \label{lemc4}
Let $s>1$ and $1<p<\infty$. Let $1<p_1,p_2\le \infty$
satisfy $\frac 1{p_1}+\frac 1{p_2}= \frac 1p$.
 Then for any $f,g \in \mathcal S(\mathbb R^d)$, we have
\begin{align}
&\| \partial f \cdot D^{s-2} \partial g \|_p \lesssim \| \partial f \|_{p_1} \| D^{s-1} g \|_{p_2},
\quad \text{if $1<p_1\le \infty$, $1<p_2<\infty$}, \label{lemc4_e1}\\
&\| \partial f \cdot D^{s-2} \partial g \|_p \lesssim
\| D^s f \|_{p_1} \| g\|_{p_2} + \| \partial f \|_p \| D^{s-1} g \|_{\bmo}, \;\, \label{lemc4_e2}\\
&\qquad\qquad\qquad
\text{if $1<p_1 <\infty$, $1<p_2<\infty$}, \notag\\
& \| \partial f \cdot D^{s-2} \partial g \|_p \lesssim
\| D^s f \|_p \| g \|_{\dot B^0_{\infty,\infty}} + \| \partial f \|_p \| D^{s-1} g\|_{\bmo}, \;\,
\label{lemc4_e3}\\
&\| \partial f \cdot D^{s-2} \partial g \|_p
\lesssim \| D^s f \|_{\dot B^0_{\infty,\infty}} \| g \|_p + \| \partial f \|_p \| D^{s-1} g \|_{\bmo}. \label{lemc4_e4}
\end{align}

\end{lem}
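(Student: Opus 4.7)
The plan is to prove all four estimates by applying a single Littlewood--Paley paraproduct decomposition to the product $\partial f \cdot D^{s-2}\partial g$ and then distributing the work across the three resulting pieces according to the estimate being proved.

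First I would dispose of \eqref{lemc4_e1}: since $s>1$ and $1<p_2<\infty$, the operator $D^{s-2}\partial = \mathcal R D^{s-1}$ (with $\mathcal R$ a Riesz-type transform) is $L^{p_2}$-bounded modulo the power shift, so H\"older immediately gives $\|\partial f \cdot D^{s-2}\partial g\|_p \lesssim \|\partial f\|_{p_1}\|D^{s-1}g\|_{p_2}$. The remaining three estimates require more care because they involve the weaker $\bmo$ and $\boo$ norms, so H\"older is not available.

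For \eqref{lemc4_e2}--\eqref{lemc4_e4} I would write the Bony decomposition
\begin{align*}
\partial f \cdot D^{s-2}\partial g
= \sum_j \partial f_{<j-2}\cdot D^{s-2}\partial g_j
+ \sum_j \partial f_j \cdot D^{s-2}\partial g_{<j-2}
+ \sum_j \partial f_j \cdot D^{s-2}\partial \tilde g_j
\end{align*}
and handle the three summands separately. The \emph{low-high piece} $\sum_j \partial f_{<j-2}\cdot D^{s-2}\partial g_j$ can be rewritten, after absorbing a bounded Riesz-type factor, as $\sum_j P_{\le j-2}^{\phi}(\partial f)\cdot P_j^{\psi}(D^{s-1}g)$ where $\psi$ vanishes near the origin. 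Lemma \ref{lemq1} then bounds this by $\|\partial f\|_p\|D^{s-1}g\|_{\bmo}$, which is common to all three target estimates. The \emph{diagonal piece} $\sum_j \partial f_j \cdot D^{s-2}\partial \tilde g_j$ is similarly of the form $\sum_j P_j^{\phi_1}(\partial f) \cdot P_j^{\phi_2}(D^{s-1}g)$ (with $\phi_1,\phi_2$ supported in annuli), and a second application of Lemma \ref{lemq1} again yields the bound $\|\partial f\|_p\|D^{s-1}g\|_{\bmo}$.

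The variation across \eqref{lemc4_e2}, \eqref{lemc4_e3}, \eqref{lemc4_e4} is entirely localized in the \emph{high-low piece} $\sum_j \partial f_j \cdot D^{s-2}\partial g_{<j-2}$. Each summand is Fourier-supported in the shell $|\xi|\sim 2^j$, so Littlewood--Paley square function gives
\begin{align*}
\Bigl\|\sum_j \partial f_j \cdot D^{s-2}\partial g_{<j-2}\Bigr\|_p
\lesssim \bigl\|\bigl(2^{js}P_j f\cdot 2^{-j(s-1)}D^{s-1}g_{<j-2}\bigr)_{l_j^2}\bigr\|_p,
\end{align*}
after absorbing bounded multipliers. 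Depending on the target, I would distribute this mixed norm using H\"older with $l^2/l^\infty$: for \eqref{lemc4_e2}, $l^2_j$ on the $f$-factor in $L^{p_1}$ and $l^\infty_j$ on the $g$-factor in $L^{p_2}$, using Lemma \ref{lemc2_a1} to convert $\|(2^{-j(s-1)}D^{s-1}g_{<j-2})_{l_j^\infty}\|_{p_2}\lesssim \|g\|_{p_2}$; for \eqref{lemc4_e3}, the same split but with $p_2=\infty$, replacing the Lemma \ref{lemc2_a1} output by $\|g\|_{\boo}$; for \eqref{lemc4_e4}, swapping the roles, i.e.\ $l^\infty_j$ on $f$ and $l^2_j$ on $g$, producing $\|D^s f\|_{\boo}\|g\|_p$. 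I do not anticipate a genuine obstacle here; the only delicate point is checking that the rearranged multipliers (of the form $\xi^\alpha/|\xi|^\beta$ multiplied against an annular cutoff) are indeed bounded in the relevant spaces, so that Lemma \ref{lemq1} and Lemma \ref{lemc2_a1} apply verbatim.
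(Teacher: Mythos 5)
Your proposal is correct and follows essentially the same line as the paper's own proof: frequency decomposition, Lemma \ref{lemq1} for the pieces with $g$ at the dominant frequency, and a square-function plus $l^2/l^\infty$ H\"older split (with Lemma \ref{lemc2_a1} or a $\boo$ variant) for the high-low piece, with the variation across \eqref{lemc4_e2}--\eqref{lemc4_e4} localized there. The only cosmetic difference is that you use the three-piece Bony decomposition and apply Lemma \ref{lemq1} twice, whereas the paper groups the low-high and diagonal pieces together as $\sum_j \partial f_{\le j+2}\cdot D^{s-2}\partial g_j$ and applies Lemma \ref{lemq1} once.
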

\begin{proof}[Proof of Lemma \ref{lemc4}]
The first inequality is trivial. For \eqref{lemc4_e2}, by using frequency localization and Lemma \ref{lemq1},
we have
\begin{align*}
\| \partial f & \cdot D^{s-2} \partial g \|_p \notag \\
& \lesssim \| \sum_j \partial f_j \cdot D^{s-2} \partial g_{<j-2}
\|_p + \| \sum_j \partial f_{\le j+2}  \cdot D^{s-2} \partial g_j \|_p \notag \\
& \lesssim \| (2^{j(s-1)} \partial f_j)_{l_j^2} (2^{-j(s-1)} D^{s-2} \partial g_{<j-2})_{l_j^{\infty}}
\|_p +\| \partial f \|_p \|D^{s-1} g \|_{\bmo} \notag \\
& \lesssim \| D^s f\|_{p_1} \| g\|_{p_2} + \| \partial f \|_p \|D^{s-1} g \|_{\bmo}.
\end{align*}
For \eqref{lemc4_e3}, just observe
\begin{align*}
\|(2^{-j(s-1)} D^{s-2} \partial g_{<j-2})_{l_j^{\infty}} \|_{\infty}
& \lesssim \| g\|_{\dot B^0_{\infty,\infty}}.
\end{align*}
For \eqref{lemc4_e4}, we have
\begin{align*}
\| \sum_j \partial f_j \cdot D^{s-2} \partial g_{<j-2}
\|_p  & \lesssim \| (2^{j(s-1)} \partial f_j)_{l_j^{\infty}} \|_{\infty} \| (2^{-j(s-1)} D^{s-2}
\partial g_{<j-2} )_{l_j^2} \|_p \notag \\
& \lesssim \| D^s f \|_{\dot B^0_{\infty,\infty}} \| g\|_p.
\end{align*}

\end{proof}

\section{Proof of Theorem \ref{thm2}}
In the first subsection, we shall give the proof of Theorem \ref{thm2} for the case $1<p<\infty$.
In the second subsection, we sketch the needed modification for the case $\frac 12 <p\le 1$.

\subsection{The case $1<p<\infty$}
To prove Theorem \ref{thm2} for the case $1<p<\infty$, we first prove the following proposition.

\begin{prop} \label{propforthm2}
Let $s>0$ and $1<p<\infty$. Let $s_1,s_2\ge 0$ and $s_1+s_2=s$.
Then for any $f$, $g \in \mathcal S(\mathbb R^d)$, we have

\begin{align}
 &\Bigl\| D^s(fg) - \sum_j \Bigl( \sum_{|\alpha|\le s_1} \frac 1 {\alpha!}
\partial^{\alpha} f_{\le j-2} D^{s,\alpha} g_j
+ \sum_{|\beta|\le s_2} \frac 1 {\beta!} \partial^{\beta} g_{\le j-2} D^{s,\beta} f_j \Bigr) \Bigr\|_p  \notag \\
\lesssim &
\begin{cases}
\| D^{s_1} f\|_{p_1} \cdot \| D^{s_2} g\|_{p_2}, \quad \text{if $1<p_1,p_2<\infty$, $\frac 1p=\frac 1 {p_1}+\frac 1{p_2}$};\\
\| D^{s_1} f\|_p \| D^{s_2} g \|_{\dot B^0_{\infty,\infty}}, \quad \text{if $p_1=p$, $p_2=\infty$};\\
\|D^{s_1} f \|_{\dot B^0_{\infty,\infty}} \cdot \|D^{s_2} g \|_p, \quad \text{if $p_1=\infty$, $p_2=p$}.
\end{cases}
\end{align}
\end{prop}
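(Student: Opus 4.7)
The plan is to combine the Bony paraproduct decomposition with a Taylor expansion of the symbol $|\xi+\eta|^s$ in the low-frequency variable, and to control the resulting remainders by pointwise bilinear multiplier bounds together with the Littlewood--Paley square function and the Fefferman--Stein vector-valued maximal inequality. First I would write
\begin{align*}
fg = \sum_j f_{\le j-2}\,g_j + \sum_j g_{\le j-2}\,f_j + \sum_j f_j\,\tilde g_j
\end{align*}
and apply $D^s$; the two mixed sums should recover the two explicit paraproduct sums on the LHS of the proposition after a Taylor expansion, while the diagonal sum will be absorbed directly into the error.

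For the low--high piece one has $|\xi|\ll|\eta|$ on the Fourier support of $\widehat{f_{\le j-2}\,g_j}$, so I would Taylor-expand $|\xi+\eta|^s$ in $\xi$ at $\xi=0$ to strict order $\tilde N:=[s_1]+1$, obtaining
\begin{align*}
|\xi+\eta|^s = \sum_{|\alpha|\le s_1}\frac{\xi^\alpha}{\alpha!}\bigl(\partial^\alpha_\xi|\xi+\eta|^s\bigr)\big|_{\xi=0} + R(\xi,\eta),\qquad |\partial^a_\xi\partial^b_\eta R|\lesssim |\xi|^{\tilde N-|a|}|\eta|^{s-\tilde N-|b|}.
\end{align*}
The identities $\partial^\alpha_\xi|\xi+\eta|^s|_{\xi=0}=i^{|\alpha|}\widehat{D^{s,\alpha}}(\eta)$ and $\widehat{\partial^\alpha f}(\xi)=(i\xi)^\alpha\hat f(\xi)$ convert the Taylor main term into $\sum_{|\alpha|\le s_1}\frac{1}{\alpha!}\partial^\alpha f_{\le j-2}\,D^{s,\alpha}g_j$ in physical space. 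Dyadically decomposing $f_{\le j-2}=\sum_{k<j-2}f_k$, standard kernel-decay estimates for the symbol $R(\xi,\eta)\phi_k(\xi)\phi_j(\eta)$ give the pointwise bound
\begin{align*}
|R(D)(f_k,g_j)(x)|\lesssim 2^{k\tilde N}\,2^{j(s-\tilde N)}\,\mathcal M f_k(x)\,\mathcal M g_j(x),
\end{align*}
which, after the fattened-LP relations $\mathcal M f_k\lesssim 2^{-ks_1}\mathcal M(D^{s_1}\widetilde f_k)$ and $\mathcal M g_j\lesssim 2^{-js_2}\mathcal M(D^{s_2}\widetilde g_j)$, rewrites as $2^{(k-j)(\tilde N-s_1)}\mathcal M(D^{s_1}\widetilde f_k)(x)\,\mathcal M(D^{s_2}\widetilde g_j)(x)$ with the strictly positive exponent $\tilde N-s_1>0$. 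Since each $R_j$ is frequency-localized to $|\zeta|\sim 2^j$, Littlewood--Paley combined with Schur's lemma on the summable kernel $2^{(k-j)(\tilde N-s_1)}$ collapses $\|\sum_j R_j\|_p$ to $\|(\mathcal M(D^{s_2}\widetilde g_j)\,A_j)_{\ell^2_j}\|_p$ where $A_j:=\sum_{k<j-2}2^{(k-j)(\tilde N-s_1)}\mathcal M(D^{s_1}\widetilde f_k)$. Splitting this sequence product by $\ell^\infty\cdot\ell^2\subset\ell^2$ in the orientation adapted to the exponents, then Hölder and Fefferman--Stein, yields the desired bound in all three endpoint configurations; the high--low remainder is symmetric under the swap $(f,s_1,p_1)\leftrightarrow(g,s_2,p_2)$.

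For the diagonal piece $\sum_j D^s(f_j\tilde g_j)$, since $f_j\tilde g_j$ is frequency-supported in $|\zeta|\lesssim 2^j$, Lemma \ref{lemp2} with $r$ chosen so that $s>d/r$ yields $|P_l D^s(f_j\tilde g_j)(x)|\lesssim 2^{ls}\,2^{(j-l)d/r}(\mathcal M|f_j\tilde g_j|^r)^{1/r}(x)$ for $l\le j+O(1)$. Summing the convergent geometric factor $2^{(l-j)(s-d/r)}$ via Schur, taking the $\ell^2_l$ square function, and applying Fefferman--Stein at exponent $2/r>1$ reduces the $L^p$ norm to $\|(2^{js}|f_j\tilde g_j|)_{\ell^2_j}\|_p$; splitting $2^{js}=2^{js_1}\,2^{js_2}$, pairing $(2^{js_1}|f_j|)\in\ell^\infty_j$ with $(2^{js_2}|\tilde g_j|)\in\ell^2_j$ (or vice versa), and invoking the embedding $\dot F^{s_1}_{p_1,2}\hookrightarrow\dot B^{s_1}_{p_1,\infty}$ together with the analogous fact $\sup_j 2^{js_2}\|\tilde g_j\|_\infty\lesssim\|D^{s_2}g\|_{\dot B^0_{\infty,\infty}}$ at the $p_2=\infty$ endpoint then finishes all three cases.

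The principal obstacle is the bookkeeping at the integer boundary $s_1\in\mathbb Z$ (and symmetrically for $s_2$): one must retain the boundary term $|\alpha|=s_1$ inside the explicit sum, where the symbol $\partial^\alpha_\xi|\xi+\eta|^s|_{\xi=0}$ is merely homogeneous of degree $s_2\ge 0$ rather than smooth at $\eta=0$, while Taylor-expanding strictly to order $\tilde N=s_1+1=[s_1]+1$, and verify that the remainder symbol $R$ still satisfies the Coifman--Meyer-type derivative bounds on $|\xi|\ll|\eta|$. Once this symbol analysis is done the strict positivity of $\tilde N-s_1$ supplies the summable ratio $2^{(k-j)(\tilde N-s_1)}$ that closes the paraproduct argument, and the remaining ingredients are routine applications of Fefferman--Stein, Hölder on sequence spaces, and the Triebel--Lizorkin--Besov embedding $\dot F^{s_1}_{p_1,2}\hookrightarrow\dot B^{s_1}_{p_1,\infty}$.
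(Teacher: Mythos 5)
Your overall strategy — Bony decomposition, Taylor expansion to order $[s_1]+1$ producing the $D^{s,\alpha}$ main terms, maximal-function bounds plus Fefferman--Stein for the remainder — is essentially the same as the paper's. The one visible stylistic difference is that you Taylor-expand the symbol $|\xi+\eta|^s$ in $\xi$ on the Fourier side, whereas the paper writes the commutator as an integral against a kernel $K_j=D^s\tilde P_j\delta_0$ and Taylor-expands the physical-space function $\theta\mapsto f_{\le j-2}(x-\theta y)$; these are dual viewpoints of the same computation, and your identification $\partial^\alpha_\xi|\xi+\eta|^s|_{\xi=0}=i^{|\alpha|}\widehat{D^{s,\alpha}}(\eta)$ is exactly what lets one check the main terms agree.

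There is, however, a genuine gap in your treatment of the diagonal piece. You apply the Peetre/Fefferman--Stein machinery with a parameter $r$ subject to the dual constraints $s>d/r$ (so the geometric factor $2^{(l-j)(s-d/r)}$ sums) and $2/r>1$ together with $p/r>1$ (so that Fefferman--Stein applies at exponents $(p/r,2/r)$), i.e.\ $d/s < r < \min\{p,2\}$. This is vacuous whenever $s\le \max\{d/p,d/2\}$, so the argument as written does not cover the full stated range $s>0$, $1<p<\infty$ (for instance $d=2$, $s=1/10$, $p=3/2$). The fix is to drop the Peetre inequality here: since $P_lD^s$ has a Schwartz kernel at scale $2^{-l}$, one has the clean pointwise bound $|P_lD^s(f_j\tilde g_j)(x)|\lesssim 2^{ls}\,\mathcal M(f_j\tilde g_j)(x)$ with no frequency-comparison factor at all, and then the offset sum over $m=j-l\ge -O(1)$ is handled by Minkowski's inequality using only $s>0$:
\begin{align*}
\bigl\|\bigl(2^{ls}\textstyle\sum_{m\ge -10}\mathcal M(f_{l+m}\tilde g_{l+m})\bigr)_{l_l^2}\bigr\|_p
\le \sum_{m\ge -10}2^{-ms}\bigl\|\bigl(2^{js}\mathcal M(f_j\tilde g_j)\bigr)_{l_j^2}\bigr\|_p
\lesssim \bigl\|\bigl(2^{js}f_j\tilde g_j\bigr)_{l_j^2}\bigr\|_p,
\end{align*}
after which your Hölder split in $j$ finishes all three endpoint cases. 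This is in fact what the paper does; the rest of your argument is sound.
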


\begin{proof}[Proof of Proposition \ref{propforthm2}]
We write
\begin{align*}
f g = \sum_{j \in \mathbb Z} f_j \tilde g_j + \sum_{j\in \mathbb Z} f_{\le j-2} g_j +
\sum_{j \in \mathbb Z} g_{\le j-2} f_j,
\end{align*}
where $\tilde g_j = g_{j-1} + g_j +g_{j+1}$.

We shall analyze each term separately.

1) \texttt{The diagonal piece}. Denote $h=\sum_{j} f_j \tilde g_j$. Then
\begin{align*}
\| D^s h \|_p & \lesssim \| (2^{ks} P_k h)_{l_k^2} \|_p \notag \\
& = \| ( 2^{ks} P_k( \sum_{j\ge k-10} f_j \tilde g_j ) )_{l_k^2} \|_p \notag \\
& \lesssim \sum_{l\ge -10}  \| ( 2^{ks} f_{k+l} \tilde g _{k+l} )_{l_k^2} \|_p \notag \\
& \lesssim \sum_{l \ge -10} 2^{-ls} \| (2^{ks} f_k \tilde g_k )_{l_k^2} \|_p
\lesssim \| (2^{ks} f_k \tilde g_k )_{l_k^2} \|_p.
\end{align*}
Now discuss three cases.

If $p_1<\infty$ and $p_2<\infty$, then
\begin{align*}
\| (2^{ks} f_k \tilde g_k )_{l_k^2} \|_p & \lesssim \| (2^{ks_1} f_k)_{l_k^2} \|_{p_1}
\cdot \| (2^{k s_2} \tilde g_k )_{l_k^{\infty}} \|_{p_2} \notag \\
& \lesssim \| D^{s_1} f \|_{p_1} \cdot \| D^{s_2} g \|_{p_2}.
\end{align*}

If $p_1=p$ and $p_2=\infty$, then
\begin{align*}
\| (2^{ks} f_k \tilde g_k )_{l_k^2} \|_p & \lesssim \| D^{s_1} f \|_p \cdot \| D^{s_2} g \|_{\dot B^0_{\infty,\infty}}.
\end{align*}

Similarly if $p_1=\infty$ and $p_2=p$, then
\begin{align*}
\| (2^{ks} f_k \tilde g_k)_{l_k^2} \|_p
& \lesssim \|D^{s_1} f \|_{\dot B^0_{\infty,\infty}} \|D^{s_2} g\|_p.
\end{align*}
Thus the diagonal piece is OK.

2) \texttt{The low-high piece}. For each $j$, write
\begin{align} \label{low-high_e1}
D^s(f_{\le j-2} g_j) = [D^s, f_{\le j-2}] g_j + f_{\le j-2} D^s g_j.
\end{align}

We shall simplify further the commutator piece $[D^s, f_{\le j-2}]g_j$. To write out its explicit
form, we need to use a fattened frequency cut-off $\tilde P_j$ such that $\tilde P_j P_j =P_j$.
More precisely let $\tilde \phi \in C_c^{\infty}(\mathbb R^d)$ be such that
\begin{align*}
\tilde \phi(\xi)=1, \qquad \forall\, \xi \in \operatorname{supp}(\phi),
\end{align*}
where we recall $\widehat{P_j \delta_0}(\xi)= \phi(2^{-j} \xi)$.  Define $\widehat{\tilde P_j \delta_0}(\xi)
=\tilde \phi(2^{-j} \xi)$. 
Clearly
\begin{align*}
(D^s \tilde P_j \delta_0)(y) = 2^{j(d+s)} \psi_1 (2^j y)
\end{align*}
where $\widehat{\psi_1}(\xi)= |\xi|^s \tilde \phi(\xi)$. 
Then
\begin{align}
 & ([D^s, f_{\le j-2}] g_j )(x) \notag \\
 = & 2^{j(d+s)} \int_{\mathbb R^d}
 \psi_1(2^j y) (f_{\le j-2}(x-y) -f_{\le j-2}(x) ) g_j(x-y) dy. \label{a20}
 \end{align}

Denote
\begin{align*}
h(\theta) = f_{\le j-2} (x-\theta y).
\end{align*}

Set $m_0 = [s_1]$. We then Taylor expand $h(\theta)$ up to $m_0^{\operatorname{th}}$ term:
\begin{align} \label{e_herror}
h(1)-h(0) = h^{\prime}(0) + \cdots + \frac{h^{(m_0)}(0)} {m_0 !}
+\underbrace{ \int_0^1 \frac{(1-\theta)^{m_0}} {m_0!} h^{(m_0+1)} (\theta) d\theta }_{\operatorname{error}}.
\end{align}

\underline{Contribution of the ``error'' term}.

We first show that the contribution of the ``error term'' in \eqref{e_herror} to \eqref{a20} can be bounded
by $\|D^{s_1} f \|_{p_1} \| D^{s_2} f \|_{p_2}$ (after summation in $j$).

Easy to check that
\begin{align*}
h^{(m_0+1)} (\theta) = \sum_{|\alpha|=m_0+1} C_{\alpha} (\partial^{\alpha} f_{\le j-2}) (x-\theta y) \cdot y^{\alpha},
\end{align*}
where $C_{\alpha}$ are constant coefficients whose value do not matter in our estimates.

By Lemma \ref{lemp2}, we have
\begin{align*}
| (\partial^{m_0+1} f_{\le j-2} )(x-\theta y) | \lesssim
(1+2^j |y|)^d \mathcal M ( |\partial^{m_0+1} f_{\le j-2} |) (x).
\end{align*}

Then
\begin{align*}
 &  2^{j(d+s)} \int_{\mathbb R^d}
 |\psi_1 (2^j y) | \cdot |y|^{m_0+1} \cdot
 | \partial^{m_0+1} f_{\le j-2} (x-\theta y) | \cdot |g_j(x-y) | dy \notag \\
 \lesssim &\; 2^{-j(m_0+1-s_1)}
 \mathcal M ( |\partial^{m_0+1} f_{\le j-2} |) (x) \cdot
 (\mathcal M g_j)(x) \cdot 2^{js_2}.
 \end{align*}

 Now discuss two cases.

 Case 1: $p_1 \le \infty$, $p_2 <\infty$. Then
  \begin{align*}
   & \Bigl\| ( 2^{j(d+s)} \int \psi_1(2^j y) \cdot \int_0^1 \frac{(1-\theta)^{m_0}} {m_0!}
   h^{(m_0+1)}(\theta) d\theta \cdot g_j(x-y) dy )_{l_j^2} \Bigr\|_p \notag \\
   \lesssim & \;
   \| (2^{-j(m_0+1-s_1)} \mathcal M ( |\partial^{m_0+1} f_{\le j-2} |) )_{l_j^{\infty}}
   \cdot (\mathcal M g_j \cdot 2^{js_2} )_{l_j^2} \|_p \notag \\
   \lesssim & \| \mathcal M ( \sup_j 2^{-j(m_0+1-s_1)} |\partial^{m_0+1} f_{\le j-2} |) \|_{p_1}
   \cdot \| (\mathcal M (2^{js_2} g_j) )_{l_j^2} \|_{p_2} \notag \\
   \lesssim &\;
   \begin{cases}
   \| D^{s_1} f \|_{p_1} \cdot \| D^{s_2} g\|_{p_2}, \qquad \text{if $p_1<\infty$, $p_2<\infty$} \\
   \| D^{s_1} f \|_{\dot B^0_{\infty,\infty}} \cdot \| D^{s_2} g \|_p,
   \qquad \text{if $p_1=\infty$, $p_2<\infty$}.
   \end{cases}
   \end{align*}

 Case 2: $p_1=p$, $p_2=\infty$. Then
\begin{align*}
 & \Bigl\| ( 2^{j(d+s)} \int \psi_1(2^j y) \cdot \int_0^1 \frac{(1-\theta)^{m_0}} {m_0!}
   h^{(m_0+1)}(\theta) d\theta \cdot g_j(x-y) dy )_{l_j^2} \Bigr\|_p \notag \\
   \lesssim & \;
   \| ( 2^{-j(m_0+1-s_1)} \mathcal M ( |\partial^{m_0+1} f_{\le j-2} |) )_{l_j^2}
   \cdot (\mathcal M g_j \cdot 2^{js_2} )_{l_j^{\infty}} \|_p \notag \\
   \lesssim &\;
   \| ( \mathcal M( 2^{-j(m_0+1-s_1)} |\partial^{m_0+1} f_{\le j-2} | ) )_{l_j^2} \|_p
   \cdot \| D^{s_2} g \|_{\dot B^0_{\infty,\infty}} \notag \\
   \lesssim &\; \| D^{s_1} f\|_p \cdot \|D^{s_2} g \|_{\dot B^0_{\infty,\infty}}.
   \end{align*}
   Thus the contribution of the ``error'' term to \eqref{a20} is OK for us.

\underline{The form of the other terms in \eqref{e_herror} }.

Easy to check that
\begin{align*}
 & h^{\prime}(0) + \cdots+ \frac{h^{(m_0)}(0)} {m_0!} \notag \\
 = &\; \sum_{0<|\alpha|\le m_0} \frac 1 {\alpha!} (\partial^{\alpha} f_{\le j-2}) (x) \cdot (-1)^{|\alpha|} y^{\alpha}.
 \end{align*}

 Therefore in \eqref{a20}, we have
 \begin{align*}
  & 2^{j(d+s)} \int_{\mathbb R^d} \psi_1(2^j y) \cdot ( \sum_{l=1}^{m_0} \frac{h^{(l)}(0)} {l!} ) g_j(x-y) dy
  \notag \\
  = & \; \sum_{0<|\alpha|\le m_0}
  2^{j(d+s)} \int_{\mathbb R^d}
  \psi_1(2^j y) \cdot
  \frac 1{\alpha!} (\partial^{\alpha} f_{\le j-2} )(x)
  \cdot (-1)^{|\alpha|} \cdot y^{\alpha} \cdot g_j(x-y) dy \notag \\
  = & \; \sum_{0<|\alpha| \le m_0}
  \frac 1 {\alpha!} \cdot (-1)^{|\alpha|}
  \cdot 2^{js}
  \cdot \mathcal F^{-1} \Bigl( i^{\alpha} \partial_{\xi}^{\alpha}( \widehat{\psi_1}(\xi/2^j )) \widehat{g_j}(\xi) \Bigr)(x)
  \cdot (\partial^{\alpha} f_{\le j-2} )(x) \notag \\
  =& \; \sum_{0<|\alpha|\le m_0} \frac 1 {\alpha!} \mathcal F^{-1} \Bigl(i^{-|\alpha|} \partial_{\xi}^{\alpha}(|\xi|^s)
  \widehat{g_j}(\xi) \Bigr) (x) \cdot (\partial^{\alpha} f_{\le j-2})(x) \notag \\
  =& \; \sum_{0<|\alpha|\le m_0} \frac 1 {\alpha!} D^{s,\alpha} g_j \cdot \partial^{\alpha} f_{\le j-2}.
  \end{align*}

Note that the second term in \eqref{low-high_e1} corresponds to $\alpha=0$. Thus the low-high piece can be written as
\begin{align*}
\Bigl(\sum_j \sum_{0\le |\alpha| \le [s_1]} \frac 1{\alpha!} \partial^{\alpha} f_{\le j-2} D^{s,\alpha} g_j
\Bigr) + E_1,
\end{align*}
where
\begin{align} \label{a20_b10}
\| E_1 \|_p \lesssim
\begin{cases}
\| D^{s_1} f\|_{p_1} \cdot \| D^{s_2} g\|_{p_2}, \quad \text{if $1<p_1,p_2<\infty$};\\
\| D^{s_1} f\|_p \| D^{s_2} g \|_{\dot B^0_{\infty,\infty}}, \quad \text{if $p_1=p$, $p_2=\infty$};\\
\|D^{s_1} f \|_{\dot B^0_{\infty,\infty}} \cdot \|D^{s_2} g \|_p, \quad \text{if $p_1=\infty$, $p_2=p$}.
\end{cases}
\end{align}

3)\texttt{The high-low piece}.
This is similar to the low-high piece. One can get the results by symmetry.

Collecting all the estimates, we obtain
\begin{align}
D^s(fg) =\sum_j \Bigl( \sum_{|\alpha|\le s_1} \frac 1 {\alpha!}
\partial^{\alpha} f_{\le j-2} D^{s,\alpha} g_j
+ \sum_{|\beta|\le s_2} \frac 1 {\beta!} \partial^{\beta} g_{\le j-2} D^{s,\beta} f_j \Bigr) + \operatorname{error},
\label{a20_b12}
\end{align}
where ``error'' term satisfies the same bound as in \eqref{a20_b10}.

\end{proof}

We are now ready to complete the proof of Theorem \ref{thm2} for the case $1<p<\infty$.

\begin{proof}[Proof of Theorem \ref{thm2}, case $1<p<\infty$]

By Proposition \ref{propforthm2}, we just need to
 simplify the expression in \eqref{a20_b12}. For this we have to discuss several
cases.

Case a): $1<p_1<\infty$ and $1<p_2<\infty$.

Note that
\begin{align*}
\sum_j f_{\le j-2} D^s g_j & = f D^s g - \sum_j f_{>j-2} D^s g_j \notag \\
& = f D^s g - \sum_j f_j D^s g_{<j+2} \notag \\
& = f D^s g - \sum_j f_j D^s g_{j-2\le \cdot <j+2} - \sum_j f_j D^s g_{<j-2}.
\end{align*}

Clearly
\begin{align*}
\| \sum_j f_j D^s g_{j-2\le \cdot <j+2} \|_p
& \lesssim \| (2^{js_1} f_j)_{l_j^2} (2^{-js_1} D^s g_{j-2\le \cdot <j+2} )_{l_j^2} \|_p \notag \\
& \lesssim \| D^{s_1} f \|_{p_1} \cdot \| D^{s_2} g\|_{p_2};
\end{align*}
and by frequency localization,
\begin{align*}
\| \sum_j f_j D^s g_{<j-2} \|_p & \lesssim  \| (2^{js_1} f_j )_{l_j^2} \cdot (2^{-js_1} D^s g_{<j-2})_{l_j^{\infty}}
\|_p \notag \\
& \lesssim \| D^{s_1} f \|_{p_1} \cdot \| D^{s_2} g \|_{p_2}.
\end{align*}

Similarly for each $0<|\alpha| \le [s_1]$, $0\le |\beta| \le [s_2]$, it holds that
\begin{align*}
\| \sum_j \partial^{\alpha} f_{>j-2} D^{s,\alpha} g_j \|_p
& = \| \sum_j \partial^{\alpha} f_j D^{s,\alpha} g_{<j+2} \|_p  \lesssim \| D^{s_1} f\|_{p_1} \| D^{s_2} g\|_{p_2};\\
\| \sum_j \partial^{\beta} g_{>j-2} D^{s,\beta} f_j \|_p & = \| \sum_j \partial^{\beta} g_j D^{s,\beta}
f_{<j+2} \| \notag \lesssim \| D^{s_1} f \|_{p_1} \cdot\| D^{s_2} g \|_{p_2}.
\end{align*}

Thus for $1<p_1,p_2<\infty$, $s_1 \ge 0$, $s_2\ge 0$, $s_1+s_2=s$,
\begin{align*}
 \| D^s(fg) - \sum_{|\alpha|\le s_1} \frac 1{\alpha!} \partial^{\alpha}f
 D^{s,\alpha} g -\sum_{|\beta|\le s_2}
 \frac 1 {\beta!} \partial^{\beta} g D^{s,\beta} f
 \|_p \lesssim \| D^{s_1} f\|_{p_1} \| D^{s_2} g\|_{p_2}.
 \end{align*}

Case 2): $p_1=p$, $p_2=\infty$.
If $|\alpha|=s_1$ (in this case $s_1$ has to be an integer), then
\begin{align*}
\| \sum_j \partial^{\alpha} f_{\le j-2} D^{s,\alpha} g_j \|_p
\lesssim \|D^{s_1} f \|_p \| D^{s_2} g \|_{\operatorname{BMO}}.
\end{align*}

If $|\alpha|<s_1$, then rewrite
\begin{align*}
\sum_j \partial^{\alpha} f_{>j-2} D^{s,\alpha} g_j
& = \sum_j \partial^{\alpha} f_j D^{s,\alpha} g_{<j+2} \notag \\
& =\sum_j \partial^{\alpha} f_j D^{s,\alpha} g_{<j-2} + \sum_{j} \partial^{\alpha} f_j
D^{s,\alpha} g_{j-2\le \cdot <j+2}.
\end{align*}
Clearly then
\begin{align*}
 & \| \sum_j \partial^{\alpha} f_{>j-2} D^{s,\alpha} g_j \|_p \notag \\
 \lesssim&\; \| (\partial^{\alpha} f_j \cdot 2^{j(s_1-|\alpha|)} )_{l_j^2}
 \cdot ( 2^{-j(s_1-|\alpha|)} D^{s,\alpha} g_{<j-2} )_{l_j^{\infty} } \|_p
 + \| D^{s_1} f \|_p \cdot \| D^{s_2} g \|_{\operatorname{BMO}} \notag \\
 \lesssim&\; \|D^{s_1} f \|_p \|D^{s_2} g \|_{\dot B^0_{\infty,\infty}}
 + \|D^{s_1} f \|_p \| D^{s_2} g \|_{\operatorname{BMO}} \notag \\
 \lesssim & \| D^{s_1} f \|_p \| D^{s_2} g \|_{\operatorname{BMO}}.
 \end{align*}

 On the other hand, for each $ |\beta|=s_2$, we have
 \begin{align*}
 \| \sum_j \partial^{\beta} g_{>j-2} D^{s,\beta} f_j \|_p &= \| \sum_j \partial^{\beta} g_j
 D^{s,\beta} f_{<j+2} \|_p \notag \\
 & \lesssim \| \partial^\beta g \|_{\bmo} \| D^{s,\beta} f \|_p \notag \\
 & \lesssim \| D^{s_1} f \|_p \cdot \| D^{s_2} g \|_{\operatorname{BMO}}.
 \end{align*}
Similarly for each $0\le |\beta|<s_2$, we have
\begin{align*}
 \| \sum_j &\partial^{\beta} g_{>j-2} D^{s,\beta} f_j \|_p  \notag \\
 &= \| \sum_j \partial^{\beta} g_j
 D^{s,\beta} f_{<j+2} \|_p \notag \\
 & \lesssim \| \sum_j \partial^{\beta} g_j D^{s,\beta} f_{<j-2} \|_p
 + \| \sum_j \partial^{\beta} g_j D^{s,\beta} f_{j-2\le \cdot <j+2} \|_p \\
 & \lesssim  \| (\partial^{\beta} g_j D^{s,\beta} f_{<j-2} )_{l_j^2} \|_p+
 \| D^{s_1} f \|_p \cdot \| D^{s_2} g \|_{\operatorname{BMO}} \notag \\
 & \lesssim \| (2^{(s_2-|\beta|)j} \partial^{\beta} g_j)_{l_j^{\infty}} (2^{-(s_2-|\beta|)j}
 D^{s,\beta} f_{<j-2} )_{l_j^2} \|_p +
 \| D^{s_1} f \|_p \cdot \| D^{s_2} g \|_{\operatorname{BMO}} \notag \\
 & \lesssim \| D^{s_2} g\|_{\dot B^0_{\infty,\infty}} \| D^{s_1} f \|_p +
 \| D^{s_1} f \|_p \cdot \| D^{s_2} g \|_{\operatorname{BMO}} \notag \\
 & \lesssim
 \| D^{s_1} f \|_p \cdot \| D^{s_2} g \|_{\operatorname{BMO}}.
 \end{align*}

 Collecting the estimates, we have the following:

 If $s_1$ is not an integer, then
 \begin{align*}
 \| D^s(fg) -\sum_{|\alpha|\le [s_1]} \frac 1 {\alpha !}
 \partial^{\alpha} f D^{s,\alpha} g
 -\sum_{|\beta|\le [s_2]} \frac 1 {\beta!}
 \partial^{\beta} g D^{s,\beta} f \|_p \lesssim \| D^{s_1} f\|_p \| D^{s_2} g\|_{\operatorname{BMO}}.
 \end{align*}
 If $s_1\ge 0$ is an integer, then
 \begin{align*}
 \| D^s (fg) -\sum_{|\alpha|<s_1} \frac 1 {\alpha !} \partial^{\alpha} f D^{s,\alpha} g
 -\sum_{|\beta| \le [s_2]} \frac 1 {\beta!} \partial^{\beta} g D^{s,\beta} f \|_p
 \lesssim \| D^{s_1} f \|_p \| D^{s_2} g\|_{\operatorname{BMO}}.
 \end{align*}
Then clearly for all $s_1\ge 0$, we have
\begin{align*}
 \| D^s (fg) -\sum_{|\alpha|<s_1} \frac 1 {\alpha !} \partial^{\alpha} f D^{s,\alpha} g
 -\sum_{|\beta| \le s_2} \frac 1 {\beta!} \partial^{\beta} g D^{s,\beta} f \|_p
 \lesssim \| D^{s_1} f \|_p \| D^{s_2} g\|_{\operatorname{BMO}}.
 \end{align*}

Case 3): $p_1=\infty$, $p_2=p$.
This is similar to Case 2 (with $f$ and $g$ swapped).  Clearly
\begin{align*}
 \| D^s (fg) -\sum_{|\alpha|\le s_1} \frac 1 {\alpha !} \partial^{\alpha} f D^{s,\alpha} g
 -\sum_{|\beta| <s_2} \frac 1 {\beta!} \partial^{\beta} g D^{s,\beta} f \|_p
 \lesssim \| D^{s_1} f \|_{\operatorname{BMO}} \| D^{s_2} g\|_{p}.
 \end{align*}

\end{proof}

\subsection{Proof of Theorem \ref{thm2}, case $\frac 12 <p\le 1$}
Here we shall use the condition $s>\frac dp -d $ or $s \in 2 \mathbb N$. A close inspection of
the proof for the case $1<p<\infty$ shows that we only need to modify the estimate for the
diagonal piece $\| \sum_j D^s (f_j \tilde g_j) \|_p$. For the low-high and high-low pieces, the
estimate works for the whole range $\frac 12<p<\infty$, $s>0$ (note  that when
$\frac 12<p\le 1$ we require $1<p_1,p_2<\infty$). The constraint $s>\frac dp-d$
or $s\in 2\mathbb N$ for $\frac 12 <p\le 1$ is only needed for the diagonal piece. To deal with this
situation, we shall use the approach in Grafakos-Oh \cite{GO_CPDE} and write
\begin{align*}
&\Bigl( D^s(f_j \tilde g_j ) \Bigr) (x)   \notag \\
=&
\frac 1 {   (2\pi)^{2d} } 
\int 2^{js} \cdot  |2^{-j}(\xi+\eta)|^s  \chi(2^{-j} \xi) \chi(2^{-j} \eta) 
\widehat{f_j}(\xi) \widehat{\tilde g_j} (\eta) e^{i(\xi+\eta) \cdot x} d\xi d\eta,
\end{align*}
where $\chi$ is a smooth cut-off function with support in $\{ \xi: 2^{-m_0} <|\xi| <2^{m_0} \}$ for
some integer $m_0\ge 1$.  Let $ \chi_1 \in C_c^{\infty}( \mathbb R^d ) $ be such that $\chi_1(z)=1$
for $|z| \le 2^{m_0+1} $ and $\chi_1(z)=0$ for $|z|\ge 2^{m_0+1.5}$. By using Fourier series, easy to check that for $|z| \le 2^{m_0+2} $,
\begin{align*}
|z|^s \chi_1 (z) = \sum_{m \in \mathbb Z^d} C_m^s e^{  \frac {2\pi i z\cdot m} L },
\end{align*}
where $L=2^{m_0+2}$ and $C_m^s= O((1+|m|)^{-d-s})$. If $s\in 2\mathbb N$, then
$C_m^s = O((1+|m|)^{-C})$ for any $C>0$. We then have
\begin{align*}
\sum_j D^s (f_j \tilde g_j) = \sum_{m\in \mathbb Z^d} C_m^s \sum_j 2^{js} P_j^m f_j P_j^m \tilde g_j,
\end{align*}
where $\widehat{P_j^m f}(\xi) =\phi_1(2^{-j} \xi) \hat f(\xi)$, and $\phi_1(\xi)= \chi(\xi) e^{2\pi i m\cdot \xi/L}$.
It follows that
\begin{align*}
\| \sum_j D^s (f_j \tilde g_j) \|_p^p
& \lesssim  \sum_{m \in \mathbb Z^d} |C_m^s|^p\; \| \sum_j 2^{js} P_j^m f_j P_j^m \tilde g_j \|_p^p 
\notag \\
& \lesssim \sum_{m\in \mathbb Z^d} |C_m^s|^p  \; (\| (2^{js_1} P_j^m f_j )_{l_j^2} \|_{p_1}
\| (2^{js_2} P_j^m \tilde g_j)_{l_j^2} \|_{p_2} )^p \notag \\
& \lesssim (\| D^{s_1} f \|_{p_1} \| D^{s_2} g\|_{p_2} )^p,
\end{align*}
where in the last inequality above, we have used $p(d+s)>d$ (for $s\in 2\mathbb N$ we just use
the fast decay of $C_m^s$) and the fact that the operator norm of $P_j^m$ on 
$L^r(\mathbb R^d, l^2)$ ($1<r<\infty$)
is bounded by $C_{r,d}\cdot \log (10+|m|)$ ($C_{r,d}$ depends only on $r$ and $d$). 

\section{Refined Kato-Ponce inequalities}

\begin{thm} \label{thm3a}
Let $s>0$, $1<p<\infty$, $1<p_1,p_2,p_3,p_4\le \infty$ and
$\frac 1{p_1}+\frac 1{p_2}=\frac 1{p_3}+\frac 1{p_4}=\frac 1p$.
Then the following hold for any $f, g \in \mathcal S(\mathbb R^d)$:

\begin{itemize}
\item If $0<s<1$, then
\begin{align*}
&\| D^s(fg) -f D^s g - g D^s f \|_p \lesssim
\begin{cases}
\| D^s f \|_{p_1} \| g\|_{p_2}, \quad \text{if $1<p_1,p_2<\infty$}; \\
\| D^s f \|_p \| g \|_{\bmo}, \quad \text{if $p_1=p, p_2=\infty$}.
\end{cases}\\
&\| D^s(fg) -f D^s g \|_p \lesssim \| D^s f \|_{\bmo} \| g\|_p, \quad \text{if $p_1=\infty$, $p_2=p$}.
\end{align*}

\item If $s=1$, then
\begin{align*}
&\| D(fg)-f Dg- gDf + \partial f \cdot D^{-1} \partial g \|_p
\lesssim \| D f \|_{p_1} \| g \|_{p_2}, \quad \text{if $1<p_1,p_2<\infty$}; \\
& \| D(fg) - f Dg - g Df \|_p \lesssim \| Df \|_p \| g \|_{\bmo}, \quad \text{if $p_1=p,\,p_2=\infty$};\\
& \| D(fg)- f Dg + \partial f \cdot D^{-1} \partial g \|_p \lesssim
\| Df \|_{\bmo} \| g\|_p, \quad \text{if $p_1=\infty$, $p_2=p$}.
\end{align*}

\item If $1<s<2$, then
\begin{align*}
& \| D^s(fg) -f D^s g -gD^s f + s \partial f \cdot D^{s-2} \partial g \|_p
\lesssim \| D^s f \|_{p_1} \| g\|_{p_2}, \quad \notag \\
&\qquad\qquad\qquad \text{if $1<p_1,p_2<\infty$}; \\
& \| D^s(fg) -f D^s g - g D^s f +s \partial f \cdot D^{s-2} \partial g \|_p
\lesssim \| D^s f \|_p \|g\|_{\bmo}, \quad \notag \\
&\qquad\qquad\qquad\text{if $p_1=p$, $p_2=\infty$}; \\
& \| D^s(fg) -f D^s g  + s \partial f \cdot D^{s-2} \partial g \|_p \lesssim
\| D^s f \|_{\bmo} \|g\|_p, \quad \text{if $p_1=\infty$, $p_2=p$}.
\end{align*}

\item If $s\ge 2$ and $1<p_1<\infty$, then
\begin{align*}
\| D^s(fg) - fD^s g -g D^s f +s \partial f \cdot D^{s-2} \partial g \|_p
\lesssim A+B,
\end{align*}
where
\begin{align*}
&A= \begin{cases}
\| D^s f\|_{p_1} \|g\|_{p_2}, \quad \text{if $1<p_1,p_2<\infty$}; \\
\|D^s f \|_p \| g\|_{\bmo}, \quad \text{if $p_1=p$, $p_2=\infty$};
\end{cases}\\
&B=\begin{cases}
\| \partial f \|_{p_3} \cdot \|D^{s-1} g \|_{p_4}, \quad \text{if $1<p_3,p_4<\infty$}; \\
\| \partial f \|_{\dot B^0_{\infty,\infty}} \| D^{s-1} g \|_p, \quad \text{if $p_3=\infty$, $p_4=p$}; \\
\| \partial f \|_p \| D^{s-1} g \|_{\dot B^0_{\infty,\infty}}, \quad \text{if $p_3=p$, $p_4=\infty$}.
\end{cases}
\end{align*}

\item If $s\ge 2$ and $p_1=\infty$, $p_2=p$, then
\begin{align*}
\| D^s(fg) - fD^s g  +s \partial f \cdot D^{s-2} \partial g \|_p
\lesssim \| D^s f \|_{\bmo} \|g\|_p
+B,
\end{align*}
where $B$ is defined the same as above.

\end{itemize}

\end{thm}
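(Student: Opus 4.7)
The plan is to reduce every bullet to Theorem \ref{thm2} specialized to $(s_1,s_2)=(s,0)$, followed by a bookkeeping of the resulting Taylor terms using the interpolation and paraproduct lemmas of Sections~2--3. From the multiplier $\widehat{D^{s,\alpha}}(\xi)=i^{-|\alpha|}\partial_\xi^\alpha(|\xi|^s)$ one computes $D^{s,0}=D^s$ and, for a unit multi-index $\alpha=e_k$, $D^{s,e_k}=-s\,D^{s-2}\partial_k$. Consequently $\sum_{|\alpha|=1}\frac{1}{\alpha!}\partial^\alpha f\,D^{s,\alpha}g=-s\,\partial f\cdot D^{s-2}\partial g$, which is the origin of the correction $+s\,\partial f\cdot D^{s-2}\partial g$ appearing on the left-hand side of the refined inequalities.

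For the first three bullets ($0<s<1$, $s=1$, $1<s<2$), the $\alpha$-sum in Theorem \ref{thm2} has at most two nonzero terms. The three sub-cases of each bullet match \eqref{a5}, \eqref{a5c}, \eqref{a5e} applied with $(s_1,s_2)=(s,0)$; the strict inequality $|\alpha|<s_1$ in \eqref{a5c} automatically removes the $|\alpha|=1$ correction in the $s=1$, $p_1=p$, $p_2=\infty$ case (matching the absence of a correction there), while the empty range $|\beta|<s_2=0$ in \eqref{a5e} removes the $gD^s f$ term throughout the $p_1=\infty$ sub-cases.

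For the two $s\ge 2$ bullets, the same application of Theorem \ref{thm2} leaves a residual
\begin{equation*}
E=\sum_{\alpha}\frac{1}{\alpha!}\partial^\alpha f\cdot D^{s,\alpha}g,
\end{equation*}
where $\alpha$ runs over $2\le|\alpha|\le s$ in the first and third sub-cases and over $2\le|\alpha|<s$ in the middle sub-case, and $\|E\|_p$ must be absorbed into $A+B$. Factoring $D^{s,\alpha}=\tilde{\mathcal R}_\alpha D^{s-|\alpha|}$ with $\tilde{\mathcal R}_\alpha$ a bounded zero-order Fourier multiplier, one applies Gagliardo--Nirenberg (Lemma \ref{lemc2} together with its BMO/$\dot B^0_{\infty,\infty}$ endpoint refinements from Remark \ref{lemc2_rem1}) with parameter $\theta=(|\alpha|-1)/(s-1)\in(0,1)$, giving
\begin{align*}
\|\partial^\alpha f\|_{q_1}&\lesssim \|D^s f\|_{p_1}^{\theta}\|\partial f\|_{p_3}^{1-\theta},\\
\|D^{s-|\alpha|}g\|_{q_2}&\lesssim \|g\|_{p_2}^{\theta}\|D^{s-1}g\|_{p_4}^{1-\theta},
\end{align*}
where $L^{\infty}$ is replaced by $\bmo$ or $\dot B^0_{\infty,\infty}$ at the endpoint indices. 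H\"older compatibility $\frac{1}{q_1}+\frac{1}{q_2}=\frac{1}{p}$ follows automatically from $\frac{1}{p_1}+\frac{1}{p_2}=\frac{1}{p_3}+\frac{1}{p_4}=\frac{1}{p}$, so AM--GM gives $\|\partial^\alpha f\cdot D^{s,\alpha}g\|_p\lesssim A+B$. When $|\alpha|=s$ (only possible if $s\in\mathbb N$), the exponent $\theta=1$ degenerates: direct H\"older handles the $1<p_1,p_2<\infty$ case since $\tilde{\mathcal R}_\alpha$ is bounded on $L^{p_2}$, while Lemma \ref{lemc3} supplies the bound $\|D^s f\|_{\bmo}\|g\|_p+B$ in the $p_1=\infty$, $p_2=p$ case, which is precisely $A+B$ there.

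The main obstacle is the endpoint combination $|\alpha|=s$ with $p_1=p$, $p_2=\infty$: the Gagliardo--Nirenberg chain degenerates at $\theta=1$, and Lemma \ref{lemc3} only produces $\|D^s f\|_{\bmo}\|g\|_p$ rather than the required $\|D^s f\|_p\|g\|_{\bmo}$. The strict inequality $|\alpha|<s_1$ in \eqref{a5c} is the key observation that bypasses this obstruction entirely, since in that sub-case the Taylor formula is drawn from \eqref{a5c} rather than from \eqref{a5}, and so $|\alpha|<s$ automatically excludes the problematic endpoint from $E$.
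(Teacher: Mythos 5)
Your proposal is correct and follows essentially the same route as the paper: apply Theorem \ref{thm2} with $(s_1,s_2)=(s,0)$, read off the first-order correction $-s\,\partial f\cdot D^{s-2}\partial g$ from $D^{s,e_k}$, absorb the residual Taylor terms over $2\le|\alpha|\le s$ via Gagliardo--Nirenberg (Lemma \ref{lemc2} and its $\dot B^0_{\infty,\infty}$/BMO refinements) with $\theta=(|\alpha|-1)/(s-1)$, and invoke Lemma \ref{lemc3} for the boundary term $|\alpha|=s$ when $p_1=\infty$. You also correctly isolate the role of the strict inequality $|\alpha|<s_1$ in \eqref{a5c}, which is exactly how the paper's Subcase~3 ($p_1=p$, $p_2=\infty$) avoids the otherwise-incompatible $|\alpha|=s$ endpoint.
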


\begin{proof}[Proof of Theorem \ref{thm3a}]
We have to discuss several cases. The following discussions are a little bit tedious and duly long.

$\bullet$ The case $0<s<1$.

Taking $s_1=s$, $s_2=0$ in \eqref{a5}, we get
\begin{align*}
\| D^s(fg) - f D^s g - g D^s f \|_p \lesssim \| D^s f \|_{p_1} \| g \|_{p_2}, \qquad \text{if $1<p_1,p_2<\infty$}.
\end{align*}
Similarly from \eqref{a5c}--\eqref{a5e}, we get
\begin{align*}
&\| D^s (fg) -f D^s g - g D^s f \|_p \lesssim \| D^s f \|_p \| g \|_{\bmo}, \qquad \text{if $p_1=p$, $p_2=\infty$},\\
&\| D^s (fg) - f D^s g \|_p \lesssim \| D^s f \|_{\bmo} \| g \|_p, \qquad \text{if $p_1=\infty$, $p_2=p$}.
\end{align*}

\medskip

$\bullet$ The case $1\le s<2$.

\underline{Subcase $1<p_1,p_2<\infty$}:

Again taking $s_1=s$, $s_2=0$ in the inequalities \eqref{a5}, we get
\begin{align*}
&\| D^s (fg) - f D^s g - \sum_{|\alpha|=1} \partial^{\alpha} f D^{s,\alpha} g - g D^s f \|_p
\lesssim \| D^s f \|_{p_1} \| g \|_{p_2}, \quad \notag \\
&\qquad\qquad\qquad\qquad \text{if $1<p_1,p_2<\infty$}.
\end{align*}

Note that (recall $\partial=(\partial_1,\cdots, \partial_d)$)
\begin{align*}
\sum_{|\alpha|=1} \partial^{\alpha} f D^{s,\alpha} g = -s \partial f \cdot D^{s-2} \partial g.
\end{align*}

Thus for $1\le s<2$ and $1<p_1,p_2<\infty$, we get
\begin{align*}
\| D^s(fg) -f D^s g - gD^s f + s \partial f \cdot D^{s-2} \partial g \|_p \lesssim \| D^s f \|_{p_1}
\| g \|_{p_2}.
\end{align*}

\underline{Subcase $p_1=p$, $p_2=\infty$}:

Consider first $s=1$.
By using \eqref{a5c}, we have
\begin{align*}
& \| D^s (fg) - f D^s g - g D^s f \|_p \lesssim \| D^s f \|_p  \| g\|_{\bmo}.
\end{align*}
Next if $1<s<2$, then by \eqref{a5c} (note that the terms $|\alpha|=1$ are now included),
\begin{align*}
\| D^s (fg) -f D^s g -g D^s f + s \partial f \cdot D^{s-2} \partial g \|_p
\lesssim \| D^s f \|_p \|g\|_{\bmo}.
\end{align*}

\underline{Subcase $p_1=\infty$, $p_2=p$}:

By using \eqref{a5e}, obviously we have
\begin{align*}
& \| D^s (fg)- f D^s g + s \partial f \cdot D^{s-2} \partial g \|_p \lesssim
\| D^s f \|_{\bmo} \| g \|_p.
\end{align*}

$\bullet$ The case $s=2$.

In this case since $D^2=-\Delta$, we can directly use the formula
\begin{align*}
\Delta (fg) - f \Delta g -g \Delta f = 2 \partial f \cdot \partial g.
\end{align*}
Thus
\begin{align*}
D^s(fg) - fD^s g - gD^s f + s \partial f \cdot D^{s-2} \partial g =0
\end{align*}
and no estimate is needed.

$\bullet$ The case $s>2$.

\underline{Subcase 1}: $1<p_1,p_2<\infty$.

By \eqref{a5}, we have
\begin{align*}
 & \| D^s(fg) -f D^s g - g D^s f + s\partial f \cdot D^{s-2} \partial g \|_p \notag \\
 \lesssim & \| D^s f \|_{p_1} \| g\|_{p_2} +
 \sum_{2\le |\alpha| \le s} \| \partial^{\alpha} f \cdot D^{s,\alpha} g \|_p.
 \end{align*}

 Then for each $2\le |\alpha|<s$, by Lemma \ref{lemc2},
 \begin{align*}
  \| D^{|\alpha|} f
 \|_{(\frac 1 {p_1} \cdot \frac{|\alpha|-1} {s-1}
 + \frac 1 {p_3} \cdot \frac{s-|\alpha|}{s-1} )^{-1} }
 \lesssim
 \begin{cases}
 \| D^s f \|_{p_1}^{\frac{|\alpha|-1}{s-1}} \| D f \|_{p_3}^{\frac{s-|\alpha|}{s-1}},
 \quad \text{if $1<p_3<\infty$}; \\
 \| D^s f\|_{p_1}^{\frac{|\alpha|-1} {s-1}} \| D f\|_{\dot B^0_{\infty,\infty}}^{\frac{s-|\alpha|}{s-1}},
 \quad \text{if $p_3=\infty$};
 \end{cases}
 \\
 \| D^{s-|\alpha|} g \|_{( \frac 1 {p_2} \cdot \frac{|\alpha|-1}{s-1} + \frac 1 {p_4}
\cdot \frac {s-|\alpha|}{s-1} )^{-1} }
\lesssim
\begin{cases}
\| g \|_{p_2}^{\frac{|\alpha|-1}{s-1}} \| D^{s-1} g \|_{p_4}^{\frac{s-|\alpha|}{s-1}},
\qquad \text{if $p_4<\infty$}; \\
\| g \|_{p_2}^{\frac{|\alpha|-1}{s-1}}  \| D^{s-1} g \|_{\dot B^0_{\infty,\infty}}^{\frac{s-|\alpha|}{s-1}},
\quad \text{if $p_4=\infty$}.
\end{cases}
\end{align*}

Note that $p\le (\frac 1{p_1} \cdot \frac{|\alpha|-1}{s-1} + \frac 1 {p_3} \cdot \frac{s-|\alpha|}{s-1} )^{-1}<\infty$
and $p\le (\frac 1 {p_2} \cdot \frac{|\alpha|-1}{s-1} +\frac 1 {p_4} \cdot \frac{s-|\alpha|}{s-1} )^{-1}<\infty$. Clearly
\begin{align*}
&\| \partial^{\alpha} f \|_{(\frac 1{p_1} \cdot \frac{|\alpha|-1}{s-1}
+ \frac 1 {p_3} \cdot \frac{s-|\alpha|}{s-1} )^{-1}}
\lesssim
\| D^{|\alpha|} f \|_{(\frac 1{p_1} \cdot \frac{|\alpha|-1}{s-1} + \frac 1 {p_3} \cdot \frac{s-|\alpha|}{s-1} )^{-1}};\\
& \| D^{s,\alpha} g \|_{(\frac 1 {p_2} \cdot \frac{|\alpha|-1}{s-1} +\frac 1 {p_4} \cdot \frac{s-|\alpha|}{s-1} )^{-1}}
\lesssim \| D^{s-|\alpha|} g \|_{
(\frac 1 {p_2} \cdot \frac{|\alpha|-1}{s-1} +\frac 1 {p_4} \cdot \frac{s-|\alpha|}{s-1} )^{-1}}.
\end{align*}

If $|\alpha|=s$, then obviously
\begin{align*}
\sum_{|\alpha|=s} \| \partial^{\alpha} f \cdot D^{s,\alpha} g \|_p
\lesssim \| D^s f \|_{p_1} \| g \|_{p_2}.
\end{align*}

Thus
\begin{align*}
&\sum_{2\le |\alpha|\le s}
\| \partial^{\alpha} f \cdot D^{s,\alpha} g \|_p \notag \\
\lesssim &
\| D^s f \|_{p_1} \| g\|_{p_2}
+
\begin{cases}
\| \partial f \|_{p_3} \| D^{s-1} g \|_{p_4}, \quad
\text{if $1<p_3,p_4<\infty$}; \\
\| \partial f \|_{\dot B^0_{\infty,\infty}}
\| D^{s-1} g \|_p, \quad \text{if $p_3=\infty$, $p_4=p$}; \\
\| \partial f \|_p \| D^{s-1} g \|_{\dot B^0_{\infty,\infty}},
\quad \text{if $p_3=p$, $p_4=\infty$}.
\end{cases}
\end{align*}

\underline{Subcase 2}: $p_1=\infty$ and $p_2=p$. By \eqref{a5e},
\begin{align*}
&\| D^s (fg) - f D^s g +s \partial f \cdot D^{s-2} \partial g \|_p \notag \\
\lesssim & \| D^s f \|_{\bmo} \| g \|_p
+ \sum_{2\le |\alpha|\le s} \| \partial^{\alpha} f \cdot D^{s,\alpha} g \|_p.
\end{align*}

For each $2\le |\alpha|<s$, by Lemma \ref{lemc2},
\begin{align*}
& \| D^{|\alpha|} f \|_{(\frac 1 {p_3} \cdot \frac{s-|\alpha|}{s-1} )^{-1} }
\lesssim \| D^s f \|_{\dot B^0_{\infty,\infty}}^{\frac{|\alpha|-1}{s-1}}
\| D f \|_{p_3}^{\frac{s-|\alpha|}{s-1}}, \qquad \text{if $1<p_3<\infty$}; \\
& \| D^{|\alpha|} f \|_{\dot B^0_{\infty,1}}
\lesssim \| D^s f \|_{\dot B^0_{\infty,\infty}}^{\frac{|\alpha|-1}{s-1}}
\| Df \|_{\dot B^0_{\infty,\infty}}^{\frac{s-|\alpha|}{s-1}}, \qquad \text{if $p_3=\infty$}; \\
& \| D^{s-|\alpha|} g \|_{(\frac 1 p \cdot \frac{|\alpha|-1}{s-1} + \frac 1 {p_4}
\cdot \frac{s-|\alpha|}{s-1} )^{-1}}
\lesssim \| g\|_p^{\frac{|\alpha|-1}{s-1}} \| D^{s-1} g \|_{p_4}^{\frac{s-|\alpha|}{s-1}},
\qquad \text{if $1<p_4<\infty$}; \\
& \| D^{s-|\alpha|} g \|_{(\frac 1p \cdot \frac{|\alpha|-1}{s-1})^{-1}}
\lesssim \| g \|_p^{\frac{|\alpha|-1}{s-1}} \| D^{s-1} g \|_{\dot B^0_{\infty,\infty}}^{\frac{s-|\alpha|}{s-1}},
\qquad \text{if $p_4=\infty$}.
\end{align*}
The second inequality above can be easily proved by splitting into low and high frequencies.

For $|\alpha|=s$ (in this case $s$ is an integer), by Lemma \ref{lemc3}, we have
\begin{align*}
&\sum_{|\alpha|=s}
\| \partial^{\alpha} f \cdot D^{s,\alpha} g \|_p \notag \\
\lesssim & \| D^s f \|_{\bmo}
\| g \|_p
+ \begin{cases}
\| \partial f \|_{p_3} \| D^{s-1} g \|_{p_4}, \quad \text{if $1<p_3,p_4<\infty$};\\
\| \partial f \|_{\dot B^0_{\infty,\infty}} \| D^{s-1} g \|_p,
\quad \text{if $p_3=\infty$, $p_4=p$}; \\
\| \partial f \|_p \| D^{s-1} g \|_{\dot B^0_{\infty,\infty}},
\quad \text{if $p_3=p$, $p_4=\infty$}.
\end{cases}
\end{align*}

Thus
\begin{align*}
&\sum_{2\le |\alpha|\le s} \| \partial^{\alpha} f \cdot D^{s,\alpha} g \|_p \notag \\
\lesssim &\| D^s f\|_{\bmo} \| g \|_p +
 \begin{cases}
\| \partial f \|_{p_3} \| D^{s-1} g \|_{p_4}, \quad \text{if $1<p_3,p_4<\infty$};\\
\| \partial f \|_{\dot B^0_{\infty,\infty}} \| D^{s-1} g \|_p,
\quad \text{if $p_3=\infty$, $p_4=p$}; \\
\| \partial f \|_p \| D^{s-1} g \|_{\dot B^0_{\infty,\infty}},
\quad \text{if $p_3=p$, $p_4=\infty$}.
\end{cases}
\end{align*}

\underline{Subcase 3}: $p_1=p$, $p_2=\infty$. By \eqref{a5c}, we have
\begin{align*}
 & \| D^s (fg) -f D^s g - g D^s f + s\partial f \cdot D^{s-2} \partial g \|_p \notag \\
 \lesssim & \| D^s f \|_p \| g \|_{\bmo}
 + \sum_{2\le |\alpha|<s} \| \partial^{\alpha} f \cdot D^{s,\alpha} g \|_p.
 \end{align*}

 For each $2\le |\alpha<s$, by Lemma \ref{lemc2},
 \begin{align*}
& \| D^{|\alpha|} f \|_{(\frac 1 p \cdot \frac{|\alpha|-1}{s-1}
 + \frac 1 {p_3} \cdot \frac{s-|\alpha|}{s-1} )^{-1}}
 \lesssim
 \begin{cases}
 \| D^s f \|_p^{\frac{|\alpha|-1}{s-1}} \| D f \|_{p_3}^{\frac{s-|\alpha|}{s-1}},
 \quad \text{if $1<p_3<\infty$};\\
 \| D^s f \|_p^{\frac{|\alpha|-1}{s-1}}
 \| D f\|_{\dot B^0_{\infty,\infty}}^{\frac{s-|\alpha|}{s-1}},
 \quad \text{if $p_3=\infty$};
 \end{cases}\\
 &\| D^{s-|\alpha|} g \|_{(\frac 1 {p_4} \cdot \frac{s-|\alpha|} {s-1} )^{-1}}
 \lesssim \| g \|_{\dot B^0_{\infty,\infty}}^{\frac{|\alpha|-1}{s-1}}
 \| D^{s-1} g \|_{p_4}^{\frac{s-|\alpha|}{s-1}}, \quad \text{if $1<p_4<\infty$};\\
 &\| D^{s-|\alpha|} g \|_{\dot B^0_{\infty,1}}
 \lesssim \| g \|_{\dot B^0_{\infty,\infty}}^{\frac{|\alpha|-1}{s-1}}
 \| D^{s-1} g \|_{\dot B^0_{\infty,\infty}}^{\frac{s-|\alpha|}{s-1}},
 \quad \text{if $p_4=\infty$}.
 \end{align*}
Thus
\begin{align*}
&\sum_{2\le |\alpha|<s} \| \partial^{\alpha} f \cdot D^{s,\alpha} g \|_p \notag \\
\lesssim&
\| D^s f \|_p \| g \|_{\bmo}
+\begin{cases}
\| \partial f \|_{p_3} \| D^{s-1} g \|_{p_4}, \quad \text{if $1<p_3,p_4<\infty$};\\
\| \partial f \|_{\dot B^0_{\infty,\infty}} \| D^{s-1} g \|_p,
\quad \text{if $p_3=\infty$, $p_4=p$}; \\
\| \partial f \|_p \| D^{s-1} g \|_{\dot B^0_{\infty,\infty}},
\quad \text{if $p_3=p$, $p_4=\infty$}.
\end{cases}
\end{align*}

\end{proof}

\begin{cor} \label{cor3a}
Let $s>0$ and $1<p<\infty$. Then for any $f,g \in \mathcal S(\mathbb R^d)$,
\begin{align} \label{cm101}
\| D^s (fg) - f D^s g \|_p \lesssim \| D^s f \|_{p_1} \| g \|_{p_2}
+ \| \partial f \|_{p_3} \| D^{s-1} g \|_{p_4},
\end{align}
where $1<p_1,p_2,p_3,p_4\le \infty$, and $\frac 1{p_1}+\frac 1{p_2} =\frac 1 {p_3}
+ \frac 1 {p_4} = \frac 1p$.

For $0<s< 1$ and $1<p<\infty$, the following inequality hold:
\begin{align}
\| D^s(fg) -f D^s g \|_p \lesssim \| D^s f \|_{p_1} \| g\|_{p_2}, \label{cm100}
\end{align}
where $\frac 1 {p_1}+\frac 1 {p_2} =\frac 1p$, and $1<p_1,p_2\le \infty$.

For $s=1$ and $1<p<\infty$,
\begin{align}
\| D(fg) - f Dg \|_p \lesssim \| \partial f \|_{p_1} \| g\|_{p_2}, \label{cm99}
\end{align}
where $\frac 1 {p_1}+\frac 1 {p_2} =\frac 1 p$, and $1<p_1,p_2\le \infty$.

For $p_1=\infty$, $p_2=p$ and $1<p<\infty$, we have the following BMO-refinements:
\begin{itemize}
\item If $0<s<1$, then
\begin{align}
\| D^s(fg) -f D^s g \|_p \lesssim \| D^s f \|_{\bmo} \| g \|_p. \label{cm98}
\end{align}
\item If $s\ge 1$, then
\begin{align}
\| D^s(fg) -f D^s g \|_p \lesssim \| \partial f \|_{p_3} \| D^{s-1} g \|_{p_4} +
\| D^s f\|_{\bmo} \|g\|_p, \label{cm97}
\end{align}
where $\frac 1{p_3}+\frac 1{p_4} = \frac 1p$, $1< p_3\le \infty$, $1<p_4<\infty$.
\item Also for $s\ge 1$,
\begin{align}
\| D^s (fg) -f D^s g \|_p \lesssim \| \partial f \|_p \| D^{s-1} g \|_{\bmo}
+ \| D^s f \|_{\bmo} \| g \|_p. \label{cm96}
\end{align}
\end{itemize}

\end{cor}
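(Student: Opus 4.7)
The plan is to derive all six inequalities \eqref{cm101}--\eqref{cm96} as routine consequences of the refined identities already established in Theorem \ref{thm3a}. For each, the scheme is identical: start from the appropriate identity of Theorem \ref{thm3a}, isolate $D^s(fg) - fD^s g$ via the triangle inequality, and absorb the remaining correction terms (the $gD^s f$ piece and/or the $s\partial f \cdot D^{s-2}\partial g$ piece) into the claimed right-hand side. The three ingredients used in the absorption are Hölder's inequality, $L^p$-boundedness of Riesz-type operators such as $D^{-1}\partial$ and $\mathcal R_j$, and (for the BMO refinements) Lemma \ref{lemc4}.

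For \eqref{cm100} with $0<s<1$ the only correction is $gD^s f$, directly handled by Hölder. For \eqref{cm99} at $s=1$, Theorem \ref{thm3a} produces both $gDf$ and the lower-order correction $\partial f \cdot D^{-1}\partial g$; in the regime $1<p_1,p_2<\infty$ both are controlled by $\|\partial f\|_{p_1}\|g\|_{p_2}$ using $\|Df\|_{p_1}\lesssim \|\partial f\|_{p_1}$ and $\|D^{-1}\partial g\|_{p_2}\lesssim \|g\|_{p_2}$, which are $L^p$-boundedness statements for Riesz transforms. The endpoints $p_1=\infty$ or $p_2=\infty$ are handled by invoking the BMO-endpoint variant of Theorem \ref{thm3a} together with $\|Df\|_{\bmo}\lesssim \|\partial f\|_\infty$ (Riesz transforms are bounded on $\bmo$). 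For \eqref{cm101} at general $s>0$, the $s\ge 1$ surplus $\partial f \cdot D^{s-2}\partial g$ is treated via $\|D^{s-2}\partial g\|_{p_4}\lesssim \|D^{s-1}g\|_{p_4}$ for $1<p_4<\infty$, and via the embedding $\|D^{s-1}g\|_{\dot B^0_{\infty,\infty}}\lesssim \|D^{s-1}g\|_\infty$ when $p_4=\infty$.

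The BMO-refined statements \eqref{cm98}, \eqref{cm97}, \eqref{cm96} begin from the $p_1=\infty,\, p_2=p$ identity of Theorem \ref{thm3a}, whose principal right-hand side is already $\|D^s f\|_{\bmo}\|g\|_p$. Then \eqref{cm98} is immediate. For \eqref{cm97}, the correction $\partial f \cdot D^{s-2}\partial g$ is again bounded by $\|\partial f\|_{p_3}\|D^{s-1}g\|_{p_4}$ as in the derivation of \eqref{cm101}. The most delicate case is \eqref{cm96}, where we need $\|\partial f \cdot D^{s-2}\partial g\|_p \lesssim \|\partial f\|_p\|D^{s-1}g\|_{\bmo} + \|D^s f\|_{\bmo}\|g\|_p$; this is precisely the content of Lemma \ref{lemc4}, inequalities \eqref{lemc4_e3} and \eqref{lemc4_e4}, whose proofs already fed on Lemma \ref{lemq1} via a Bony-type paraproduct decomposition.

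The principal obstacle is not analytic but combinatorial: the bookkeeping of the $p_i=\infty$ endpoints, at which Riesz transforms fail to be bounded on $L^\infty$ and must be replaced by their action on $\bmo$ or $\dot B^0_{\infty,\infty}$. The clean path is to select, case by case, the endpoint identity of Theorem \ref{thm3a} already adapted to the correct target norm on the right-hand side, and to appeal to Lemma \ref{lemc4} whenever a bilinear $\partial f \cdot D^{s-2}\partial g$ correction needs to be recaptured in BMO form. Once the endpoint selection is made correctly, no new estimates beyond those in Sections~2--4 are required.
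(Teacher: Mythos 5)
Your proposal follows exactly the paper's own route: every one of the six estimates is read off the corresponding identity in Theorem~\ref{thm3a}, with the residual $gD^sf$ and $s\,\partial f\cdot D^{s-2}\partial g$ corrections absorbed via H\"older, $L^p$-boundedness of $D^{-1}\partial$ and related Riesz-type operators, and Lemma~\ref{lemc4} for the BMO-flavoured terms. Two small calibrations: for \eqref{cm96} the inequality of Lemma~\ref{lemc4} that does the work is \eqref{lemc4_e4} (not \eqref{lemc4_e3}, which distributes the norms on the wrong factors); and Lemma~\ref{lemc4} is stated only for $s>1$, so at the $s=1$ endpoint of \eqref{cm96} the paper instead carries out the Bony-paraproduct-plus-Lemma~\ref{lemq1} estimate for $\|\partial f\cdot D^{-1}\partial g\|_p$ directly rather than by citation -- the computation is the same, but your reference should be adjusted accordingly.
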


\begin{proof}[Proof of Corollary \ref{cor3a}]
We shall follow the same order as in the statement of Theorem \ref{thm3a} and discuss the regimes
$0<s<1$, $s=1$ and $s>1$ respectively.

$\bullet$ The case $0<s<1$.

Clearly by Theorem \ref{thm3a},
\begin{align*}
\| D^s(fg) - f D^s g \|_p \lesssim
\begin{cases}
\| D^s f\|_{p_1} \| g\|_{p_2}, \quad \text{if $1<p_1< \infty$, $1<p_2\le \infty$}, \\
\| D^s f\|_{\bmo} \| g\|_p, \quad \text{if $p_1=\infty$, $p_2=p$}.
\end{cases}
\end{align*}
Thus \eqref{cm100} and \eqref{cm98} hold.

$\bullet$ The case $s=1$.

If $1<p_1<\infty$, $1<p_2< \infty$, then
\begin{align*}
\| D(fg) -f Dg \|_p & \lesssim \| Df \|_{p_1} \|g\|_{p_2} + \| \partial f \cdot D^{-1} \partial g \|_p \notag \\
& \lesssim \| \partial f \|_{p_1} \| g\|_{p_2}.
\end{align*}
If $p_1=p$, $p_2=\infty$, then recall
\begin{align*}
\| D(fg) -f Dg -g Df \|_p \lesssim \| Df \|_p \| g\|_{\operatorname{BMO}}.
\end{align*}
Thus
\begin{align*}
\| D (fg) - f Dg \|_p \lesssim \| \partial f \|_p \| g \|_{\infty}.
\end{align*}

So \eqref{cm99} holds except the case $p_1=\infty$.

If $p_1=\infty$, $p_2=p$, then
\begin{align*}
\| D(fg) - f Dg \|_p & \lesssim \| Df \|_{\bmo} \| g \|_p + \| \partial f \cdot D^{-1} \partial g \|_p \notag
\end{align*}
Clearly if $1<p_3\le \infty$, $1<p_4<\infty$ with $\frac 1{p_3} + \frac 1 {p_4}=\frac 1p$, then
\begin{align*}
\| \partial f \cdot D^{-1} \partial g \|_p \lesssim \| \partial f \|_{p_3} \| g\|_{p_4}.
\end{align*}
Thus \eqref{cm97} holds for $s=1$, and also \eqref{cm99} holds.

On the other hand if $p_3=p$, $p_4=\infty$, then
\begin{align*}
\| \partial f \cdot D^{-1} \partial g \|_p & \lesssim \| \sum_j \partial f_{\le j+2} \cdot D^{-1} \partial g_j \|_p
+ \|\sum_j \partial f_j \cdot D^{-1} \partial g_{<j-2} \|_p \notag \\
& \lesssim \| \partial f \|_p \| g \|_{\bmo} + \| Df \|_{\bmo} \| g\|_p.
\end{align*}
Clearly \eqref{cm96} holds for $s=1$.

$\bullet$ The case $s>1$.

Consider first $1<p_1,p_2<\infty$. By Theorem \ref{thm3a} and Lemma \ref{lemc4}, we have
\begin{align*}
\| D^s(fg) -f D^s g \|_p & \lesssim \| D^s f\|_{p_1} \|g\|_{p_2}
+ \| \partial f \cdot D^{s-2} \partial g \|_p+\|\partial f \|_{p_3} \| D^{s-1} g\|_{p_4} \notag \\
& \lesssim \| D^s f \|_{p_1} \| g \|_{p_2}
+ \| \partial f \|_{p_3} \| D^{s-1} g\|_{p_4},
\end{align*}
for any $1<p_3,p_4\le \infty$. Thus \eqref{cm101} holds. Similarly one can easily check that
\eqref{cm97} and \eqref{cm96} holds.

\end{proof}

\begin{cor} \label{cor3b}
Let $1<p<\infty$. Let $1<p_1,p_2,p_3,p_4\le \infty$ satisfy $\frac 1 {p_1}+\frac 1{p_2}=
\frac 1 {p_3}+\frac 1 {p_4}=\frac 1p$. Then for any $f, g \in \mathcal S(\mathbb R^d)$,
the following hold:

\begin{itemize}
\item If $0<s\le 1$, then
\begin{align*}
\| D^s (fg) - f D^s g \|_p \lesssim \| D^{s-1} \partial f \|_{p_1} \| g\|_{p_2}.
\end{align*}

\item If $s>1$, then
\begin{align*}
\| D^s (fg) - f D^s g \|_p \lesssim \| D^{s-1} \partial f \|_{p_1} \| g\|_{p_2}
+ \| \partial f \|_{p_3} \| D^{s-1} g \|_{p_4}.
\end{align*}
\end{itemize}
\end{cor}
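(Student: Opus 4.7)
The plan is to derive Corollary \ref{cor3b} as a direct consequence of Corollary \ref{cor3a} by means of the identity
\begin{equation*}
D^s f = -\sum_{j=1}^d \mathcal R_j\, D^{s-1}\partial_j f,\qquad \mathcal R_j = D^{-1}\partial_j,
\end{equation*}
which follows from writing $D = D^{-1}(-\Delta)=-\sum_j \mathcal R_j\partial_j$ and commuting $D^{s-1}$ past $\mathcal R_j$. Since each Riesz transform is bounded on $L^q$ for $1<q<\infty$ and maps $L^\infty\to \bmo$, this yields the two ``substitution'' bounds
\begin{equation*}
\|D^s f\|_{p_1}\lesssim \|D^{s-1}\partial f\|_{p_1}\quad(1<p_1<\infty),\qquad \|D^s f\|_{\bmo}\lesssim \|D^{s-1}\partial f\|_\infty.
\end{equation*}

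For the case $0<s\le 1$ I distinguish three subcases. When $s=1$ the identity $D^{s-1}\partial f=\partial f$ means the claim is exactly \eqref{cm99}. When $0<s<1$ and $1<p_1<\infty$, invoke \eqref{cm100} and substitute the $L^{p_1}$-Riesz bound on $\|D^s f\|_{p_1}$. When $0<s<1$ and $p_1=\infty$ (which forces $p_2=p$), invoke the BMO refinement \eqref{cm98} and use the $\bmo$-Riesz bound.

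For $s>1$ there are two main subcases. When $1<p_1<\infty$, apply \eqref{cm101}: the second term $\|\partial f\|_{p_3}\|D^{s-1}g\|_{p_4}$ is already in the desired form, while the first term becomes $\|D^{s-1}\partial f\|_{p_1}\|g\|_{p_2}$ after the Riesz substitution. When $p_1=\infty$ (so $p_2=p$), split further according to the range of $p_4$: if $1<p_4<\infty$, apply the BMO-refined inequality \eqref{cm97} (which permits $1<p_3\le \infty$) together with $\|D^s f\|_{\bmo}\lesssim \|D^{s-1}\partial f\|_\infty$; if $p_4=\infty$ (so $p_3=p$), apply \eqref{cm96}, using additionally the trivial bound $\|D^{s-1}g\|_{\bmo}\lesssim \|D^{s-1}g\|_\infty$ to convert the BMO factor to the required $L^\infty$ factor.

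The only substantive ingredient beyond Corollary \ref{cor3a} is the Riesz-transform identity combined with the $L^\infty\to\bmo$ boundedness; there is no serious obstacle. The mild bookkeeping nuisance is simply making sure to route each combination of $(p_1,p_2,p_3,p_4)$ through the right sub-statement of Corollary \ref{cor3a}, in particular separating $p_1<\infty$ from $p_1=\infty$ (where one must pass through $\bmo$) and, in the $s>1$ regime, separating $p_4<\infty$ from $p_4=\infty$ in the endpoint $p_1=\infty$ case.
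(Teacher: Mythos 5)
Your proof is correct and follows essentially the same route as the paper: the paper's argument is simply to note that the corollary ``directly follows from Corollary \ref{cor3a} and the identity $D^s f = -D^{-1}\partial\cdot D^{s-1}\partial f$,'' with the $p_1=\infty$ endpoint handled through the $\bmo$-refinements (cm97)--(cm98) and (cm96) exactly as you describe. Your write-up spells out the case-by-case routing the paper leaves implicit, but the key ingredient (the Riesz-transform substitution, boundedness on $L^q$ for $1<q<\infty$ and $L^\infty\to\bmo$) is identical.
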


\begin{proof}
This directly follows from Corollary \ref{cor3a} and the identity $D^s f= -D^{-1} \partial \cdot D^{s-1} \partial f$. The only difference is  for the
case $p_1=\infty$, $p_2=p$. In that case since we have BMO-refinements the inequality
is obvious.
\end{proof}

For $s>0$, let $A^s$ be a differential operator such that its symbol $\widehat{A^s}(\xi)$ is a homogeneous
function of degree $s$ and $\widehat{A^s}(\xi) \in C^{\infty}(\mathbb S^{d-1})$. Then the following
corollary can be proved in much the same way as  for the operator $D^s$.

\begin{cor} \label{cor3c}
Let $s>0$ and $1<p<\infty$. Then the following hold for any $f$, $g\in \mathcal S(\mathbb R^d)$:
\begin{itemize}
\item If $0<s\le 1$, then
\begin{align*}
\| A^s(fg) -f A^s g - g A^s f\|_p \lesssim_{A^s,s,p,d} \|D^s f \|_p \| g\|_{\bmo}.
\end{align*}
\item If $s>1$, then
\begin{align}
 & \| A^s(fg) -f A^s g - g A^s f - \partial f \cdot A^{s,\partial} g \|_p
 \notag \\
 \lesssim_{A^s,s,p,d}& \; \| D^s f\|_p \|g\|_{\bmo} +
 \| \partial f \|_{\dot B^0_{\infty,\infty}} \| D^{s-1} g\|_p, \label{cor3c_e1}
 \end{align}
 where
 \begin{align*}
 \widehat{A^{s,\partial}}(\xi) = \frac 1 {i} \partial_{\xi} ( \widehat{A^s}(\xi)).
 \end{align*}
In fact a stronger inequality holds for $s>1$,
\begin{align}
 & \| A^s(fg) -f A^s g - g A^s f - \partial f \cdot A^{s,\partial} g \|_p
 \notag \\
 \lesssim_{A^s,s,p,d}& \; \| D^s f\|_p \|g\|_{\boo} +
 \| \partial f \|_{\dot B^0_{\infty,\infty}} \| D^{s-1} g\|_p, \label{cor3c_e1_e0}
 \end{align}

 \item For all $s>0$,
 \begin{align}
 \| A^s(fg) -f A^s g \|_p
 \lesssim_{A^s,s,p,d} \| D^s f \|_p \| g\|_{\infty}
 + \| \partial f \|_{\infty} \| D^{s-1} g\|_p. \label{cor3c_e1_e1}
 \end{align}
\end{itemize}
 \end{cor}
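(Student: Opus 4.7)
The strategy is to derive all three inequalities from the Leibniz-type expansion in Corollary \ref{cor2}, supplemented by Gagliardo-Nirenberg interpolation (Lemma \ref{lemc2}) to absorb the intermediate $\partial^\alpha f \cdot A^{s,\alpha} g$ terms, and concluding with trivial H\"older bounds to pass from \eqref{cor3c_e1_e0} to the Kato-Ponce form \eqref{cor3c_e1_e1}.

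For the first bullet ($0 < s \le 1$), I apply \eqref{a6a} with $s_1 = s$, $s_2 = 0$, $p_1 = p$, $p_2 = \infty$: the constraint $|\alpha| < s_1 \le 1$ admits only $\alpha = 0$ producing $f A^s g$, while $|\beta| \le 0$ admits only $\beta = 0$ producing $g A^s f$, and the stated estimate is immediate. For \eqref{cor3c_e1} in the regime $s > 1$ I again use \eqref{a6a} with the same parameters. Now $\sum_{|\alpha| < s}$ contains $\alpha = 0$, the $|\alpha| = 1$ piece yielding $-\partial f \cdot A^{s,\partial} g$ by the definition of $A^{s,\alpha}$, and residual terms with $2 \le |\alpha| < s$. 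After transferring $f A^s g$, $g A^s f$ and $\partial f \cdot A^{s,\partial} g$ to the left, the remaining task is to dominate the residual sum in $L^p$. For each such $\alpha$, Lemma \ref{lemc2} with $\theta = (|\alpha|-1)/(s-1)$ gives
\[
 \|\partial^\alpha f\|_{q_1} \lesssim \|D^s f\|_p^{\theta} \|\partial f\|_{\boo}^{1-\theta}, \qquad \|A^{s,\alpha} g\|_{q_2} \lesssim \|D^{s-|\alpha|} g\|_{q_2} \lesssim \|g\|_{\boo}^{\theta} \|D^{s-1} g\|_p^{1-\theta},
\]
for H\"older-conjugate exponents satisfying $q_1^{-1} + q_2^{-1} = p^{-1}$. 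H\"older combined with Young's inequality $a^\theta b^{1-\theta} \le a + b$ regroups the four factors as $\|D^s f\|_p \|g\|_{\boo} + \|\partial f\|_{\boo} \|D^{s-1} g\|_p$; \eqref{cor3c_e1} follows from $\|g\|_{\boo} \lesssim \|g\|_{\bmo}$.

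The main obstacle is upgrading this to \eqref{cor3c_e1_e0}, whose RHS uses $\|g\|_{\boo}$ rather than the $\|g\|_{\bmo}$ that \eqref{a6a} naturally supplies. To obtain the sharper version I would bypass \eqref{a6a} and work directly from the paraproduct representation underlying Proposition \ref{propforthm2} and its $A^s$-analogue, whose error is already $\lesssim \|D^s f\|_p \|g\|_{\boo}$. Reassembling $f A^s g$, $g A^s f$, and $\partial f \cdot A^{s,\partial} g$ out of the paraproducts $\sum_j f_{\le j-2} A^s g_j$, $\sum_j g_{\le j-2} A^s f_j$, $\sum_j \partial f_{\le j-2} A^{s,\partial} g_j$ introduces corrections of the form $\sum_j f_j A^s g_{<j+2}$, $\sum_j \partial f_j A^{s,\partial} g_{<j+2}$, and $\sum_j g_j A^s f_{<j+2}$; the first two split into diagonal and high-low subpieces controllable by Littlewood-Paley square functions together with Lemma \ref{lemc2_a1}. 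The delicate correction is $\sum_j g_j A^s f_{<j-2}$, for which neither factor supplies a summable norm directly; I expect the argument here to require almost-orthogonality of the frequency supports in $j$ combined with a telescoping-type reassembly that absorbs the main contribution back into $g \cdot A^s f$ itself. Once \eqref{cor3c_e1_e0} is in hand, \eqref{cor3c_e1_e1} follows from the crude H\"older bounds $\|g A^s f\|_p \le \|g\|_\infty \|D^s f\|_p$ and $\|\partial f \cdot A^{s,\partial} g\|_p \le \|\partial f\|_\infty \|D^{s-1} g\|_p$ combined with $\|g\|_\boo \le \|g\|_\infty$ and $\|\partial f\|_\boo \le \|\partial f\|_\infty$.
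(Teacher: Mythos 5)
Your first bullet, the derivation of \eqref{cor3c_e1} from \eqref{a6a} plus Lemma~\ref{lemc2} interpolation, and the passage from \eqref{cor3c_e1_e0} to \eqref{cor3c_e1_e1} by crude H\"older are all correct (modulo a harmless sign slip in the $|\alpha|=1$ term), and this is essentially the route the paper points to for those pieces; for $0<s\le 1$ one also derives \eqref{cor3c_e1_e1} directly from the first bullet since there is no $\partial f\cdot A^{s,\partial}g$ term. The genuine gap is in your outline of \eqref{cor3c_e1_e0}, where the ``delicate/routine'' labeling is inverted and the key tool is not the one you name.

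The piece $\sum_j g_j A^s f_{<j-2}$ that you flag as delicate is in fact the most routine of the corrections: its $j$-th summand is Fourier-supported near $|\xi|\sim 2^j$, so the square-function estimate combined with H\"older in $j$ and the $A^s$-analogue of Lemma~\ref{lemc2_a1} gives $\|\sum_j g_j A^s f_{<j-2}\|_p \lesssim \|(2^{j(s-1)}g_j)_{l_j^2}\|_p\,\|(2^{-j(s-1)}A^sf_{<j-2})_{l_j^\infty}\|_\infty \lesssim \|D^{s-1}g\|_p\,\|\partial f\|_{\boo}$, using that $A^sD^{-1}$ is homogeneous of positive order $s-1$. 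No almost-orthogonality rearrangement or telescoping is needed; and your proposed ``telescoping reassembly absorbing the main contribution back into $g\cdot A^sf$'' cannot work, since $gA^sf$ has already been moved to the left-hand side. The truly nontrivial pieces are the diagonal corrections $\sum_j f_j A^s\tilde g_j$, $\sum_j\partial f_j A^{s,\partial}\tilde g_j$, $\sum_j g_j A^s\tilde f_j$, which are \emph{not} frequency-localized at scale $2^j$ (their Fourier supports reach down to low frequencies), so Lemma~\ref{lemc2_a1} does not apply to them. These require balancing the derivative between the two factors: Cauchy--Schwarz in $j$ gives a bound by $\|D^{\frac{s+1}2}f\|_{2p}\,\|D^{\frac{s-1}2}g\|_{2p}$, and then Lemma~\ref{lemc2}-type interpolation with $\boo$ endpoints produces $\|D^sf\|_p^{1/2}\|\partial f\|_{\boo}^{1/2}\cdot\|g\|_{\boo}^{1/2}\|D^{s-1}g\|_p^{1/2}$. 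It is precisely this balancing step that wins the improvement of $\|g\|_{\bmo}$ to $\|g\|_{\boo}$, and without it your sketch does not close; the paper's direct proof of \eqref{cor3c_e1_e0} (which re-does the paraproduct and Taylor-expands the commutator $[A^s,f_{<j-2}]g_j$ only to first order) uses exactly this device.
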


\begin{proof}[Proof of Corollary \ref{cor3c}]
All the statements except \eqref{cor3c_e1_e0} follow by mimicking the proof for the operator $D^s$.
The argument proceeds by using Corollary \ref{cor2} together with further interpolation inequalities.
We omit the details.

On the other hand, it is possible to give a more ``direct" proof of the above inequalities.
We illustrate this by sketching a proof
for \eqref{cor3c_e1_e1} which is most useful in practice. To this end, we first decompose
\begin{align*}
fg = \sum_{j} f_{<j-2} g_j + \sum_j f_j g_{<j-2} + \sum_j f_j \tilde g_j.
\end{align*}
Then
\begin{align*}
\| [A^s, f_{<j-2} ]g_j \|_p & \lesssim \| ( 2^{js} \mathcal M(\partial f_{<j-2} ) 2^{-j} \mathcal M g_j )_{l_j^2} \|_p
\notag \\
& \lesssim \| D^{s-1} g\|_p \| \partial f \|_{\infty};\\
\|[A^s, g_{<j-2}] f_j \|_p & \lesssim \| (2^{js} \mathcal M (\partial g_{<j-2}) 2^{-j} \mathcal M f_j)_{l_j^2}\|_p
\notag \\
& \lsm \| D^s f\|_p \| g\|_{\boo}, \\
\| A^s (f_j \tilde g_j )\|_p & \lesssim \| (2^{ks} P_k( \sum_{j>k-10} f_j \tilde g_j)_{l_k^2} \|_p \notag \\
& \lesssim \| (2^{js} f_j \tilde g_j )_{l_j^2} \|_p \lsm \| D^s f\|_p \|g\|_{\boo}.
\end{align*}
On the other hand,
\begin{align*}
\sum_j f_{<j-2} A^s g_j = f A^s g - \sum_j f_j A^s g_{\le j+2}, \\
\sum_j g_{<j-2} A^s f_j = g A^s f - \sum_j g_j A^s f_{\le j+2}.
\end{align*}
By Lemma \ref{lemq1}, we have
\begin{align*}
\| \sum_j f_j A^s g_{\le j+2} \|_p \lesssim \| Df \|_{\bmo} \| A^s D^{-1} g\|_p \lesssim \| \partial f\|_{\infty}
\| D^{s-1} g\|_p;\\
\| \sum_j g_j A^s f_{\le j+2} \|_p \lesssim \| g\|_{\bmo} \| A^s f\|_p \lesssim \| g \|_{\infty} \| D^s f\|_p.
\end{align*}
Thus the inequality \eqref{cor3c_e1_e1} follows. A close inspection of the above shows that we actually proved
\begin{align*}
\| A^s (fg) -f A^s g-g A^s f\|_p \lesssim_{s,p,d}\| D^s f\|_p \| g\|_{\bmo} + \| \partial f\|_{\infty}
\| D^{s-1} g\|_p.
\end{align*}

We now show how to prove \eqref{cor3c_e1_e0}.
For this we just need to modify the estimate of
the pieces $\sum_j f_j A^s g_{\le j+2}$, $\sum_j g_j A^s f_{\le j+2}$ and $\sum_j [A^s, f_{<j-2}] g_j$
in the preceding argument.
We first explain how to estimate the first two pieces. In the following computation
we shall avoid
using Lemma \ref{lemq1} since this is where $\bmo$-norm comes in.
Now split
\begin{align*}
\sum_j f_j A^s g_{\le j+2} = \sum_j f_j A^s g_{<j-2} + \sum_j f_j A^s g_{j-2\le \cdot j+2}.
\end{align*}
By frequency localization,
\begin{align*}
\| \sum_j f_j A^s g_{<j-2} \|_p & \lesssim \| (f_j A^s g_{<j-2} )_{l_j^2} \|_p \notag \\
& \lesssim \| (2^{js} f_j)_{l_j^2} \|_p \| (2^{-js} A^s g_{<j-2} )_{l_j^{\infty}} \|_{\infty} \notag \\
& \lesssim \| D^s f\|_p \| g\|_{\boo}.
\end{align*}
Denote $\tilde g_j = g_{j-2\le \cdot \le j+2}$. Then
\begin{align}
\| \sum_j f_j A^s \tilde g_j \|_p  &\lesssim \| (2^{j\frac{s+1}2} f_j)_{l_j^2}
( 2^{-j\frac{s+1}2} A^s \tilde g_j)_{l_j^2} \|_p \notag \\
& \lesssim \| D^{\frac {s+1}2} f\|_{2p} \| D^{\frac{s-1}2} g \|_{2p}. \label{fAsgj_e1}
\end{align}
For $s>0$ and $s\ne 1$, we have\footnote{These interpolation inequalities can be proved in the same way as in
Lemma \ref{lemc2}.}
\begin{align*}
&\| D^{\frac{s+1}2} f \|_{2p} \lsm \| D^s f \|_p^{\frac 12} \| \partial f \|_{\boo}^{\frac 12}, \\
&\| D^{\frac{s-1} 2} g \|_{2p} \lsm \| D^{s-1} g \|_p^{\frac 12} \| g \|_{\boo}^{\frac 12}.
\end{align*}
Thus for $s>0$ and $s\ne 1$,
\begin{align*}
\| \sum_j f_j A^s g_{\le j+2} \|_p \lesssim \| D^s f \|_p \|g\|_{\boo} + \| \partial f \|_{\boo}
\| D^{s-1} g\|_p.
\end{align*}
On the other hand for the piece $\sum_j g_j A^s f_{\le j+2}$ we only need to handle the part
$\sum_j g_j A^s f_{<j-2}$ since the other part is treated the same way as in \eqref{fAsgj_e1}. Now for
$s>1$, by using Lemma \ref{lemc2_a1},
\begin{align*}
\| \sum_j g_j A^s f_{<j-2} \|_p & \lsm \| (g_j A^s f_{<j-2})_{l_j^2} \|_p \notag \\
& \lsm \| (2^{j(s-1)}g_j)_{l_j^2} \|_p \| (2^{-j(s-1)} A^s f_{<j-2})_{l_j^{\infty}} \|_{\infty} \\
& \lsm \| D^{s-1} g\|_p \| \partial f \|_{\boo}.
\end{align*}

Next for the commutator piece $[A^s, f_{<j-2}]g_j$, we denote 
$$K_j = \sum_{a=-10}^{10} A^s P_{j+a} \delta_0,$$
and write
\begin{align}
&([A^s, f_{<j-2}]g_j)(x) \notag \\
=&\; \int K_j(y) \bigl(f_{<j-2}(x-y) -f_{<j-2}(x) -(\partial f_{<j-2})(x) \cdot (-y) \bigr)
g_j(x-y)
dy \label{pf_cor3c_e1_e1_1a} \\
& \quad + \partial f_{<j-2}(x) \int K_j(y) (-y) g_j(x-y) dy. \label{pf_cor3c_e1_e1_1b}
\end{align}
For \eqref{pf_cor3c_e1_e1_1a}, easy to check that
\begin{align*}
\| \sum_j \eqref{pf_cor3c_e1_e1_1a}\|_p & \lesssim \| ( \mathcal M (\partial^2 f_{<j-2}) \cdot 2^{j(s-2)}
\mathcal Mg_j )_{l_j^2} \|_p \notag \\
& \lesssim \| D^{s-1} g\|_p \| \partial f \|_{\boo}.
\end{align*}
On the other hand,
\begin{align*}
\sum_j \eqref{pf_cor3c_e1_e1_1b}
 = \partial f \cdot A^{s,\partial} g -\sum_j \partial f_j \cdot A^{s,\partial}g_{\le j+2}.
\end{align*}
Clearly
\begin{align*}
\| \sum_j \partial f_j \cdot A^{s,\partial} g_{<j-2} \|_p
&\lesssim \| ( 2^{j(s-1)}\partial f_j \cdot 2^{-j(s-1)} A^{s,\partial} g_{<j-2})_{l_j^2} \|_p \notag \\
&\lsm \| D^{s} f\|_p \| g\|_{\boo}.
\end{align*}
The piece $\sum_j \partial f_j \cdot A^{s,\partial} g_{j-2\le \cdot \le j+2}$ can be estimated
in the same way as in \eqref{fAsgj_e1}:
\begin{align*}
\| \sum_j \partial f_j \cdot A^{s,\partial} g_{j-2 \le \cdot \le j+2}\|_p
\lsm \| D^s f\|_p \| g\|_{\boo} + \| \partial f\|_{\boo} \| D^{s-1} g\|_p.
\end{align*}
Thus \eqref{cor3c_e1_e0} holds.

\end{proof}

\begin{rem} \label{rev_rem5.5}
As was already mentioned in the introduction, Corollary \ref{cor3c} can
be used to prove \eqref{FMRR_ineq} and its $L^p$-versions. Indeed set
$A_j^{s+1} = D^s \partial_j$, $f=u_j$, $g=B$, then
\begin{align*}
\| D^s \partial_j (u_j B) - u_j D^s \partial_j B \|_p
\lesssim_{s,p,d} \| D^{s+1} u\|_p \| B\|_{\infty}
+ \| \partial u \|_{\infty} \| D^s B\|_p,
\end{align*}
or upon summing in $j$ (and using $\nabla \cdot u=0$)
\begin{align*}
\| D^s ( (u\cdot \nabla) B) - (u\cdot \nabla) (D^s B) \|_p
\lesssim_{s,p,d}
\| D^{s+1} u\|_p \| B\|_{\infty}
+ \| \partial u \|_{\infty} \| D^s B\|_p.
\end{align*}
Now if $s>d/p$, we can use Sobolev embedding to get
\begin{align}
\| D^s( (u\cdot \nabla) B) - (u\cdot \nabla) D^s B \|_p
\lesssim_{s,p,d}
\|J^s \nabla u\|_p \| J^s B\|_p.
\label{rem_cor3c_e2}
\end{align}
\end{rem}

\begin{rem} \label{rev_rem5.6}
One can construct divergence-free counterexamples to the
estimate \eqref{rem_cor3c_e2} for the borderline case
$s=d/p$. The key is to use the estimate \eqref{cor3c_e1}
(for the operator $A_j^{s+1} = D^s \partial_j$).
Easy to check that (by using $\nabla \cdot u=0$)
\begin{align*}
  & \| D^s \partial_j (u_j B) - u_j D^s \partial_j B
  + \partial u_j \cdot s D^{s-2} \partial  \partial_j B \|_p \notag \\
  \lesssim_{s,p,d}& \; \| D^{s+1} u\|_p \| B\|_{\bmo}
  + \| \partial u\|_{\dot B^0_{\infty,\infty}} \| D^s B\|_p.
  \end{align*}
  Upon summing in $j$ and using $s=d/p$, we get
  \begin{align*}
& \| D^s ((u\cdot \nabla )B) - (u\cdot D^s \nabla)B
  + s \sum_{j=1}^d ( \partial u_j \cdot D^{s-2} \partial)\partial_j B \|_p \notag \\
\lesssim_{s,p,d}&\; \| D^{s+1} u \|_p \| D^s B\|_p.
\end{align*}
Now to finish the construction it suffices for us to exhibit a pair of divergence-free $u$, $B$
with the property
\begin{align*}
\| J^{1+\frac dp} u \|_p + \| J^{\frac dp} B\|_p \le 1,
\end{align*}
such that
\begin{align*}
\| \sum_{j=1}^d (\partial u_j \cdot D^{s-2} \partial) \partial_j B \|_p \gg 1.
\end{align*}
For a construction of such examples, see Remark 1.17 in \cite{BL14}. Alternatively
one can use the idea in Section 9 of this paper.
\end{rem}
\section{Refined Kato-Ponce inequalities for the operator $J^s$}

\begin{lem} \label{lem921_1}
Let $s>0$ and $\tjs=J^s-I$ (i.e. $\widehat{\tjs}(\xi)= (1+|\xi|^2)^{s/2} -1$). For any
$\phi \in C_c^{\infty}(\mathbb R^d)$ with $\operatorname{supp}(\phi) \subset\{ \xi:\; 0<c_1<|\xi|<c_2<\infty\}$,
define $P_j^{\phi}$ as
\begin{align*}
\widehat{P_j^{\phi} f}(\xi) = \phi(\frac{\xi}{2^j}) \hat f(\xi), \quad j \in \mathbb Z.
\end{align*}
Define $K_j = \tjs P_j^{\phi} \delta_0$ where $\delta_0$ is the usual Dirac delta function. Then
for any integer $m\ge 1$, and any $x\in \mathbb R^d$,
\begin{align*}
&|K_j(x)| \lesssim_{m,\phi,d,s} 2^{j(2+d)} (1+2^j |x|)^{-m}, \quad \text{if $j\le 0$}, \\
&|K_j(x)| \lesssim_{m,\phi,d,s} 2^{j(s+d)} (1+2^j |x|)^{-m}, \quad \text{if $j>0$}.
\end{align*}
\end{lem}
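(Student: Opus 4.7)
My plan is to perform the rescaling $\eta=\xi/2^j$ in the Fourier integral defining $K_j$, thereby reducing the estimate to a uniform bound on a smooth symbol supported in a fixed annulus, followed by a standard integration-by-parts argument to extract the $x$-decay. Using $\widehat{\delta_0}\equiv 1$ and the inverse Fourier convention in \eqref{Fourier_def}, the change of variables gives
\begin{align*}
K_j(x)=\frac{2^{jd}}{(2\pi)^d}\int_{\mathbb R^d}\psi_j(\eta)\,e^{i\,2^j x\cdot\eta}\,d\eta,
\qquad \psi_j(\eta):=\bigl((1+4^j|\eta|^2)^{s/2}-1\bigr)\phi(\eta),
\end{align*}
with $\psi_j\in C_c^\infty(\mathbb R^d)$ supported in the $j$-independent annulus $\{c_1<|\eta|<c_2\}$.

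The heart of the argument is the uniform symbol estimate
\begin{align*}
|\partial_\eta^\alpha\psi_j(\eta)|\lesssim_{\alpha,\phi,s,d}
\begin{cases}2^{2j}, & j\le 0,\\ 2^{js}, & j>0,\end{cases}
\end{align*}
valid for every multi-index $\alpha$ and every $\eta\in\operatorname{supp}(\phi)$. For $j\le 0$ I would write $\psi_j=h(g_j)\phi$ with $g_j(\eta)=4^j|\eta|^2$ and $h(t)=(1+t)^{s/2}-1$; on $\operatorname{supp}(\phi)$ we have $g_j(\eta)\in[0,c_2^2]$, so each derivative of $h$ is uniformly bounded there and $|h(t)|\lesssim t$ since $h(0)=0$. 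Because every partial derivative of $g_j$ in $\eta$ is itself $O(4^j)$, the Fa\`a di Bruno formula yields $|\partial^\alpha(h\circ g_j)|\lesssim 4^j=2^{2j}$, and multiplying by the smooth $\phi$ preserves this bound. For $j>0$, on $\operatorname{supp}(\phi)$ one has $1+4^j|\eta|^2\sim 4^j$, and an induction using the chain rule shows $|\partial_\eta^\alpha(1+4^j|\eta|^2)^{s/2}|\lesssim 2^{js}$ (each differentiation lowers the exponent in $(1+4^j|\eta|^2)$ by one but brings down a compensating factor $4^j$), while the constant $-1$ is harmless since $1\le 2^{js}$.

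It then remains to transform this symbol estimate into the stated pointwise decay. Set $A_j:=2^{2j}$ if $j\le 0$ and $A_j:=2^{js}$ if $j>0$. When $2^j|x|\le 1$, the trivial bound $|K_j(x)|\le(2\pi)^{-d}2^{jd}\|\psi_j\|_{L^1}$ together with the symbol estimate immediately gives the desired $O(2^{jd}A_j)$. When $2^j|x|>1$, I would pick a coordinate $k$ with $|x_k|\gtrsim|x|/\sqrt d$ and integrate by parts $m$ times via $e^{i\,2^j x\cdot\eta}=(i\,2^j x_k)^{-m}\partial_{\eta_k}^m e^{i\,2^j x\cdot\eta}$, moving all $m$ derivatives onto $\psi_j$; the boundary terms vanish by compact support, and the symbol bound produces $|K_j(x)|\lesssim 2^{jd}A_j(2^j|x|)^{-m}$. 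Combining the two regimes yields the unified estimate $2^{jd}A_j(1+2^j|x|)^{-m}$ asserted in the lemma. No genuine obstacle is anticipated; the only point demanding real care is the chain-rule bookkeeping for the rescaled symbol in the low-frequency regime $j\le 0$, which is handled by the Fa\`a di Bruno step above.
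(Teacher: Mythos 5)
Your proof is correct and follows essentially the same route as the paper's: rescale $\xi=2^j\eta$ to put the symbol on a fixed annulus, obtain the factor $2^{2j}$ (for $j\le 0$) or $2^{js}$ (for $j>0$) from the smallness/size of the symbol there, and integrate by parts $m$ times when $2^j|x|>1$. Your treatment is in fact a bit more careful than the paper's sketch in that you explicitly verify the uniform derivative bounds on the rescaled symbol (via Fa\`a di Bruno for $j\le 0$ and the chain rule for $j>0$), which is precisely what makes the integration-by-parts step legitimate.
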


\begin{proof}
Consider first $j\le 0$. Clearly
\begin{align*}
K_j(x) &= \frac 1 {(2\pi)^d} \int_{\mathbb R^d} ( (1+|\xi|^2)^{\frac s2} -1 ) \phi(\frac{\xi}{2^j} )
e^{i \xi \cdot x} d\xi \notag \\
& = \frac 1 {(2\pi)^d}\cdot 2^{jd} \int_{\mathbb R^d}
( (1+|2^j \tilde \xi|^2)^{\frac s2} -1) \phi(\tilde \xi) e^{i \tilde \xi \cdot 2^j x} d\tilde \xi.
\end{align*}
If $|2^jx|\le 1$, then since $|(1+|2^j \tilde \xi|^2)^{\frac s2} -1| \lesssim 2^{2j}$ for $|\tilde \xi| \sim 1$, $j\le 0$,
we get
\begin{align*}
|K_j(x)| \lesssim 2^{jd} \cdot 2^{2j} = 2^{j(d+2)}.
\end{align*}
If $|2^j x|>1$, then we can integrate by parts $m$-times and get
\begin{align*}
|K_j(x) | \lesssim 2^{jd} (2^j |x|)^{-m} \cdot 2^{2j}.
\end{align*}
The case for $j>0$ is similar. We omit details.
\end{proof}

\begin{thm} \label{thm921_1}
Let $s>0$ and $1<p<\infty$. Then the following hold for any $f,g\in \mathcal S(\mathbb R^d)$:
\begin{itemize}
\item If $0<s\le 1$, then
\begin{align*}
\| J^s (fg) -f J^s g - g(J^s f -f) \|_p \lesssim_{s,d,p} \| J^{s-1} \partial f \|_p \| g \|_{\bmo}.
\end{align*}

\item If $s>1$, then
\begin{align*}
 \| J^s (fg) &-f J^s g - g(J^s f -f) + s \partial f \cdot J^{s-2} \partial g \|_p \notag \\
\lesssim_{s,p,d} & \; \| J^{s-1} \partial f \|_p \| g\|_{\boo} + \| \partial f \|_{\boo}
\| J^{s-2} \partial g \|_p.
\end{align*}
\end{itemize}
\end{thm}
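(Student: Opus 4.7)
The plan is to mirror the paraproduct-plus-Taylor-expansion strategy of Proposition \ref{propforthm2}, adapted to the inhomogeneous operator $J^s$. Starting from the algebraic identity
\begin{align*}
J^s(fg) - fJ^s g - g(J^s f - f) = \tjs(fg) - f\tjs g - g\tjs f
\end{align*}
with $\tjs = J^s - I$, the analysis is reduced to a quantity involving only $\tjs$. Its frequency-localized kernel $K_j := \tjs \tilde P_j \delta_0$ satisfies the two-regime decay of Lemma \ref{lem921_1}: $|K_j(x)| \lesssim 2^{j(d+2)}(1+2^j|x|)^{-m}$ for $j\le 0$ and $|K_j(x)|\lesssim 2^{j(d+s)}(1+2^j|x|)^{-m}$ for $j>0$. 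The crucial point is that this low-frequency gain of $2^{2j}$ (rather than $2^{js}$) matches exactly the Littlewood--Paley weight of $J^{s-1}\partial$: by Lemma \ref{lem_JsDs}, $\|P_jf\|_p\lesssim 2^{-j}\|J^{s-1}\partial f\|_p$ for $j\le 0$ and $\lesssim 2^{-js}\|J^{s-1}\partial f\|_p$ for $j>0$. This matching is what forces the use of $\tjs$ in place of $J^s$ at the outset.

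Next I would decompose $fg$ by the Bony paraproduct into low-high, high-low and diagonal pieces and apply $\tjs$ to each. For the low-high piece the commutator identity gives
\begin{align*}
[\tjs, f_{\le j-2}] g_j(x) = \int K_j(y)\bigl(f_{\le j-2}(x-y) - f_{\le j-2}(x)\bigr) g_j(x-y)\,dy,
\end{align*}
and I would Taylor-expand $f_{\le j-2}$ about $x$: to zeroth order when $0<s\le 1$, to first order when $s>1$. In the latter case the linear term, summed in $j$, reproduces $-s\partial f\cdot J^{s-2}\partial g$ (modulo a paraproduct remainder handled via Lemma \ref{lemq1}), since $\partial_\xi[(1+|\xi|^2)^{s/2}] = s(1+|\xi|^2)^{(s-2)/2}\xi$ is exactly the symbol of $sJ^{s-2}\partial$; this cancels the $+s\partial f\cdot J^{s-2}\partial g$ correction in the claim. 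The Taylor remainder is estimated pointwise via Lemma \ref{lemp2} (which controls $\partial f_{\le j-2}(x-\theta y)$ by $(1+2^j|y|)^d\mathcal M(\partial f_{\le j-2})(x)$) together with the moment bounds derived from Lemma \ref{lem921_1}, producing for each $j$ an estimate of the form $c_j\,\mathcal M(\partial f_{\le j-2})(x)\cdot \mathcal M g_j(x)$ with $c_j\sim 2^{j(s-1)}$ for $j>0$ and $c_j\sim 2^j$ for $j\le 0$. Fefferman--Stein combined with the weight matching identified above yields the desired $\|J^{s-1}\partial f\|_p\|g\|_*$ bound (with $*=\bmo$ for $0<s\le 1$ and $*=\boo$ for $s>1$).

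The high-low piece $\sum_j \tjs(g_{\le j-2} f_j)$ is handled symmetrically. After rearranging the ``good'' part to $g\tjs f - \sum_j g_j\,\tjs f_{<j+2}$ (plus a diagonal remainder), the remaining sum $\sum_j g_j\,(\tjs f)_{<j+2}$ is controlled by Lemma \ref{lemq1} applied with $g$ in BMO and $\tjs f$ in $L^p$, then $\|\tjs f\|_p\lesssim \|J^{s-1}\partial f\|_p$ by Lemma \ref{lem_JsDs}. The commutator $[\tjs, g_{\le j-2}]f_j$ is estimated using $\|\nabla g_{\le j-2}\|_\infty \lesssim 2^j\|g\|_{\boo}$ combined with the first moment $\int |K_j(y)||y|\,dy$ from Lemma \ref{lem921_1} (which equals $\lesssim 2^{j(s-1)}$ for $j>0$ and $\lesssim 2^j$ for $j\le 0$), again matching the right Littlewood--Paley weights. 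Finally, the diagonal piece $\sum_j \tjs(f_j\tilde g_j)$ is controlled by frequency localization, $\|P_k\tjs(f_j\tilde g_j)\|_p\lesssim c_j\|f_j\|_p\|\tilde g_j\|_\infty$ with $\|\tilde g_j\|_\infty\lesssim\|g\|_{\boo}\le\|g\|_{\bmo}$, and geometric decay in $j\ge k-10$ permits summation.

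The main obstacle will be the precise bookkeeping in the Taylor step for $s>1$: one must verify that summing $-s\sum_j\partial f_{\le j-2}\cdot J^{s-2}\partial g_j$ indeed reproduces $-s\partial f\cdot J^{s-2}\partial g$ modulo a paraproduct remainder absorbable into either $\|J^{s-1}\partial f\|_p\|g\|_{\boo}$ or $\|\partial f\|_{\boo}\|J^{s-2}\partial g\|_p$ via Lemma \ref{lemq1}, and simultaneously that the higher-order remainders arising from the non-homogeneity of $\tjs$ (in particular, the extra derivative landing on the cutoff $\widehat{\tilde P_j\delta_0}$ rather than on $\widehat{\tjs}$) are genuinely lower order and summable. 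A secondary difficulty is the uniform handling of the regime transition at $j\sim 0$ where the kernel bound in Lemma \ref{lem921_1} switches form, but this is already tamed by the $c_j$-vs-$J^{s-1}\partial$ weight matching noted in the first paragraph.
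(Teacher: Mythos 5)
Your plan is essentially the paper's plan: work with $\tilde J^s=J^s-I$ (so Lemma \ref{lem921_1} supplies the crucial $2^{2j}$ low-frequency gain that dovetails with $\|P_j f\|_p\lesssim 2^{-j}\|J^{s-1}\partial f\|_p$), decompose by Bony paraproducts, Taylor-expand the commutator kernel, extract $s\,\partial f\cdot J^{s-2}\partial g$ from the first-order term, and sum via Fefferman--Stein plus Lemma \ref{lemq1}. So the architecture is right, but there are two concrete places where the sketch as written would fail.

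First, the proposed \emph{zeroth-order} Taylor expansion for $0<s\le 1$ breaks down precisely at the endpoint $s=1$. The zeroth-order remainder produces terms of size $2^{j(s-1)}\mathcal{M}(\partial f_{<j-2})(x)\,\mathcal{M}g_j(x)$ for $j>0$, and to close via Fefferman--Stein you must control $\|(2^{j(s-1)}\mathcal M(\partial f_{<j-2}))_{l_j^2(j>0)}\|_p$ by $\|J^{s-1}\partial f\|_p$. That works for $s<1$ (Young's inequality with geometrically decaying kernel $2^{(j-k)(s-1)}$), but at $s=1$ the kernel is constant in $j-k$ and the $l^2_j$ sum diverges. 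This is exactly why the paper always expands to \emph{first} order: the quadratic remainder carries $\partial^2 f_{<j-2}$ together with a weight $\lesssim 2^{j(s-2)}$, which is summable at $s=1$, and the surviving linear term $-s\sum_j\partial f_{<j-2}\cdot J^{s-2}\partial g_j$ is then estimated at $s=1$ by the BMO paraproduct Lemma \ref{lemq1} (duality against $\mathcal H^1$), not by Fefferman--Stein. If you keep the zeroth-order route, you never isolate a term on which Lemma \ref{lemq1} can act, and the $s=1$ case escapes.

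Second, for $s>1$ the theorem asserts bounds with $\|g\|_{\dot B^0_{\infty,\infty}}$ and $\|\partial f\|_{\dot B^0_{\infty,\infty}}$, not BMO. Since $\|\cdot\|_{\dot B^0_{\infty,\infty}}\lesssim\|\cdot\|_{\bmo}$ (Proposition 2.5), the stated estimate is strictly stronger than what a direct application of Lemma \ref{lemq1} produces: when you absorb $\sum_j\partial f_j\cdot J^{s-2}\partial g_{\le j+2}$ (or $\sum_j g_j\,\tilde J^s f_{\le j+2}$, or the near-diagonal pieces) via Lemma \ref{lemq1} you will only get $\|\partial f\|_{\bmo}\|J^{s-2}\partial g\|_p$ and $\|g\|_{\bmo}\|\,\cdot\,\|_p$. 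To upgrade these to the Besov norm the paper avoids Lemma \ref{lemq1} on those pieces and instead splits by frequency and uses a squaring/interpolation step (estimate the near-diagonal piece in $L^{2p}\times L^{2p}$ via $2^{\pm j(s-1)/2}$ weights and the interpolation of Lemma \ref{lemc2}, yielding the geometric mean $\|J^{s-1}\partial f\|_p^{1/2}\|\partial f\|_{\boo}^{1/2}\|g\|_{\boo}^{1/2}\|J^{s-2}\partial g\|_p^{1/2}$). Without this refinement your argument proves a genuinely weaker inequality than the theorem you are aiming for.

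Both gaps are fixable within your framework: always expand to first order, use Lemma \ref{lemq1} specifically for the linear term at $s=1$, and for $s>1$ replace the blanket application of Lemma \ref{lemq1} on the remainder paraproducts by a low/high frequency split plus the squaring-trick interpolation to produce the $\dot B^0_{\infty,\infty}$ norms.
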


\begin{proof}
Define $\tjs=J^s-I$. On the Fourier side, we have $\widehat{\tjs}(\xi)= (1+|\xi|^2)^{s/2}-1$. Note that
for $|\xi|\ll 1$, $\widehat{\tjs}(\xi) \sim |\xi|^2$. This property will be useful for controlling
low frequencies in later computations.

Now
\begin{align}
 J^s(fg)- f J^s g & = \tjs(fg) - f\tjs g \notag \\
 & = \sum_j ( \tjs(f_{<j-2} g_j) - f_{<j-2} \tjs g_j ) \label{921_e1} \\
 & \quad + \sum_j ( \tjs (f_j g_{<j-2}) - f_j \tjs g_{<j-2} ) \label{921_e2} \\
 & \quad + \sum_j ( \tjs (f_j \tilde g_j) - f_j \tjs \tilde g_j), \label{921_e3}
 \end{align}
 where $\tilde g_j =\sum_{a=-2}^2 g_{j+a}$.

 \texttt{Estimate of \eqref{921_e1}}.

 Let $\tilde P_j =\sum_{l=-10}^{10} P_{j+l}$ and $K_j = \tjs \tilde P_j \delta_0$. Then
 \begin{align}
 &\tjs(f_{<j-2}g_j) -f_{<j-2} \tjs g_j \notag \\
 =& \; \int_{\mathbb R^d} K_j(y) ( f_{<j-2}(x-y) -f_{<j-2}(x) ) g_j(x-y) dy \notag \\
 =& \;\int_{\mathbb R^d} K_j(y) ( f_{<j-2}(x-y) -f_{<j-2}(x)
 + \partial f_{<j-2}(x) \cdot y) g_j(x-y) dy \label{921_e1a}\\
 & \;\; - \int_{\mathbb R^d} K_j(y) \partial f_{<j-2}(x) \cdot y g_j(x-y)dy. \label{921_e1b}
 \end{align}

 \underline{Estimate of \eqref{921_e1a}}.

 By Fundamental Theorem of calculus and Lemma \ref{lemp2}, we have
 \begin{align*}
  & |f_{<j-2}(x-y) -f_{<j-2}(x) - \partial f_{<j-2}(x) \cdot (-y) | \notag \\
  \lesssim &\; \int_0^1 | (\partial^2 f_{<j-2})(x-\theta y) | d\theta \cdot |y|^2 \notag \\
  \lesssim & \; \mathcal M( \partial^2 f_{<j-2})(x) \cdot (1+2^j |y|)^d \cdot |y|^2.
  \end{align*}

  Therefore by Lemma \ref{lem921_1},
  \begin{align*}
  &\| \sum_{j\le 0} \eqref{921_e1a} \|_p
  \lesssim \sum_{j\le 0} \| \partial^2 f_{<j-2}\|_p \cdot \| g_j \|_{\infty}
  \lesssim \| J^{s-1} \partial f \|_p \| g\|_{\dot B^0_{\infty,\infty}}.\\
  &\| \sum_{j>0} \eqref{921_e1a} \|_p
  \lesssim \| (2^{js} \mathcal M(\partial^2 f_{<j-2}) \cdot 2^{-2j} \cdot \mathcal Mg_j)_{l_j^2(j>0)} \|_p.
  \end{align*}

  If $0<s\le 1$, then
  \begin{align*}
  \| (2^{-2j} 2^{js} \mathcal M(\partial^2 f_{<j-2}) )_{l_j^2(j>0)} \|_p
  & \lesssim \| (2^{j(s-2)} | \partial^2 f_{<j-2} |)_{l_j^2(j>0)} \|_p \notag \\
  & \lesssim \| \partial^2 f_{<-4} \|_p + \| (2^{j(s-2)} |\partial^2 f_{-4\le \cdot <j-2} |)_{l_j^2(j>0)} \|_p
  \notag \\
  & \lesssim \| J^{s-1} \partial f\|_p + \| D^s f_{\ge -4} \|_p \notag \\
  & \lesssim \| J^{s-1} \partial f \|_p.
  \end{align*}
If $s>1$, then
\begin{align*}
 & \| (2^{-2j} \mathcal M (\partial^2 f_{<j-2}) \cdot 2^{js} \cdot \mathcal Mg_j)_{l_j^2(j>0)} \|_p \notag \\
 \lesssim & \; \| ( \|\partial f \|_{\dot B^0_{\infty,\infty}} \cdot 2^{j(s-1)} \mathcal M g_j)_{l_j^2(j>0)}
 \|_p \notag \\
 \lesssim & \; \| \partial f \|_{\dot B^0_{\infty,\infty}} \| J^{s-2} \partial g \|_p.
 \end{align*}
 Thus
 \begin{align*}
 \| \sum_j \eqref{921_e1a} \|_p
 \lesssim
 \begin{cases}
 \| J^{s-1} \partial f \|_p \| g\|_{\dot B^0_{\infty,\infty}},
 \quad \text{if $0<s\le 1$}, \\
 \| J^{s-1} \partial f \|_p \| g \|_{\dot B^0_{\infty,\infty}}
 + \| \partial f \|_{\dot B^0_{\infty,\infty}} \cdot \| J^{s-2} \partial g \|_p,
 \quad \text{if $s>1$}.
 \end{cases}
 \end{align*}

\underline{Estimate of \eqref{921_e1b}}.

Note that
\begin{align*}
 & \int_{\mathbb R^d} K_j(y) y g_j(x-y) dy \notag \\
 = &\; \mathcal F^{-1} ( \frac 1 {-i} \partial_{\xi} ( \widehat{K_j}(\xi)) \widehat{g_j}(\xi) ) \notag \\
 = & \; \mathcal F^{-1} ( \frac 1 {-i} \partial_{\xi} ( 
 ((1+|\xi|^2)^{\frac s2}-1) \tilde \psi(\frac{\xi}{2^j} ) )
 \psi(\frac{\xi}{2^j} ) \hat g(\xi) ) \notag \\
 =& \; \mathcal F^{-1} ( \frac 1 {-i} \partial_{\xi} ( (1+|\xi|^2)^{\frac s2}) \psi(\frac{\xi}{2^j}) \hat g(\xi) )
 \notag \\
 =&\; s J^{s-2} \partial g_j.
 \end{align*}

 So
 \begin{align*}
 \sum_j \eqref{921_e1b} = -s \sum_{j} \partial f_{<j-2} \cdot J^{s-2} \partial g_j.
 \end{align*}

If $0<s<1$, then
\begin{align*}
 & \| \sum_j \partial f_{<j-2} \cdot J^{s-2} \partial g_j \|_p \notag \\
 \lesssim & \; \sum_{j\le 0} \| \partial f_{<0}  \|_p \| g_j\|_{\infty} \cdot 2^j
 + \| ( |\partial f_{<j-2}| \cdot |J^{s-2} \partial g_j |)_{l_j^2(j>0)} \|_p \notag \\
 \lesssim & \; \| J^{s-1} \partial f \|_p \| g \|_{\dot B^0_{\infty,\infty}}
 + \| ( |\partial  f_{<j-2}| \cdot 2^{j(s-1)} )_{l_j^2(j>0)} \|_p \cdot \| g\|_{\dot B^0_{\infty,\infty}} \notag \\
 \lesssim & \; \| J^{s-1} \partial f \|_p \cdot \| g \|_{\dot B^0_{\infty,\infty}}.
 \end{align*}

 If $s=1$, then by Lemma \ref{lemq1},
 \begin{align*}
 \| \sum_j \partial f_{<j-2} \cdot J^{-1} \partial g_j \|_p \lesssim \| \partial f \|_p \| g\|_{\bmo
}.
 \end{align*}

 If $s>1$, then we write
 \begin{align*}
  \sum_j \partial f_{<j-2} \cdot J^{s-2} \partial g_j
  & = \partial f \cdot J^{s-2} \partial g - \sum_j \partial f_{\ge j-2} \cdot J^{s-2} \partial g_j \notag \\
  & = \partial f \cdot J^{s-2} \partial g - \sum_j \partial f_j \cdot J^{s-2} \partial g_{\le j+2}.
  \end{align*}

  Note that by using Lemma \ref{lemq1}, we have
  \begin{align*}
  \| \sum_j \partial f_j \cdot J^{s-2} \partial g_{\le j+2} \|_p
  \lesssim \| \partial f \|_{\operatorname{BMO}} \cdot \| J^{s-2} \partial g \|_p.
  \end{align*}
  This bound can be improved as we show below.
  
First we deal with the low frequency piece:
  \begin{align*}
  \| \sum_{j\le 10} \partial f_j \cdot J^{s-2} \partial P_{\le j+2}  g
  \|_p & \lesssim 
  \sum_{j \le 10} \| \partial f \|_p 
  \| g\|_{\boo} \cdot 2^j
  \lesssim \| J^{s-1} \partial f \|_p \| g\|_{\boo}.
  \end{align*}
  
  For non-low frequencies, we first decompose $g=g_{\le 1} + g_{>1}$ and bound 
  the piece containing $g_{\le 1}$ as:
  \begin{align*}
  \| \sum_{j>10} \partial f_j 
  \cdot J^{s-2} \partial P_{\le j+2} (g_{\le 1} ) \|_p
 \lesssim \sum_{j>10} \| \partial f_j \|_p \| g\|_{\boo}
 \lesssim \| J^{s-1} \partial f \|_p  \|g\|_{\boo}.
 \end{align*}
  
  For the piece containing $g_{>1}$, we further write
  \begin{align*}
&  \sum_{j>10} 
  \partial f_j \cdot J^{s-2} \partial P_{\le j+2} (g_{>1}) \notag \\
   = & \sum_{j>10} \partial f_j \cdot J^{s-2} \partial P_{\le j-2} (g_{>1})
  + \sum_{j>10} \partial f_j \cdot J^{s-2} \partial P_{j-2<\cdot \le j+2} (g_{>1}) \notag \\
   = & \sum_{j>10} \partial f_j \cdot J^{s-2} \partial P_{>1} P_{\le j-2} g
  + \sum_{j>10} \partial f_j \cdot J^{s-2} \partial \tilde g_j.
  \end{align*}
  Firstly thanks to frequency localisation,
  \begin{align*}
&  \| \sum_{j>10} \partial f_j \cdot J^{s-2} \partial P_{>1} P_{\le j-2} g
  \|_p  \notag \\
   \lesssim & \| ( \partial f_j \cdot J^{s-2} \partial P_{>1} P_{\le j-2} g )_{l_j^2(j>10)} \|_p
  \notag \\
   \lesssim & \| (2^{j(s-1)} \partial f_j )_{l_j^2(j>10)} \|_p \| 
  (2^{-j(s-1)} J^{s-2} \partial P_{>1} P_{\le j-2} g )_{l_j^{\infty} (j>10)} \|_{\infty} \notag \\
  \lesssim & \| J^{s-1} \partial f \|_p \cdot \| g\|_{\boo}.
  \end{align*}
  Then recalling $s>1$,
  \begin{align*}
   & \| \sum_{j>10} \partial f_j \cdot J^{s-2} \partial P_{j-2<\cdot \le j+2} (g_{>1}) \|_p 
   \notag \\
   \lesssim &\; 
   \| (2^{j\frac{s-1}2} \partial f_j )_{l_j^2(j>10)} \|_{2p} 
   \| (2^{-j\frac{s-1}2} J^{s-2} \partial  P_{j-2<\cdot \le j+2} 
   (g_{>1}) )_{l_j^2(j>10)} \|_{2p} \notag \\
   \lesssim & \| J^{s-1} \partial f \|_p^{\frac 12}
   \| \partial f \|_{\boo}^{\frac 12}
   \cdot \|g\|_{\boo}^{\frac 12} \cdot \|J^{s-2} \partial g \|_{p}^{\frac 12} \notag \\
   \lesssim & \| J^{s-1} \partial f \|_p \| g\|_{\boo}
   + \| \partial f \|_{\boo} \| J^{s-2} \partial g \|_p,
   \end{align*}
  where in the second inequality above we have used (a version of) Lemma \ref{lemc2}.

  Thus for $0<s<1$,
  \begin{align*}
  \| \sum_j \eqref{921_e1b} \|_p \lesssim \| J^{s-1} \partial f \|_p \| g \|_{\dot B^0_{\infty,\infty}};
  \end{align*}
  for $s=1$,
  \begin{align*}
  \| \sum_j \eqref{921_e1b} \|_p \lesssim \| J^{s-1} \partial f \|_p \| g \|_{\bmo};
  \end{align*}
  for $s>1$,
  \begin{align*}
  \| (\sum_j \eqref{921_e1b} ) - s \partial f \cdot J^{s-2} \partial g \|_p
  \lesssim  \| J^{s-1} \partial f \|_p \| g\|_{\boo}
   + \| \partial f \|_{\boo} \| J^{s-2} \partial g \|_p.
  \end{align*}

  \texttt{Estimate of \eqref{921_e2}}.

  We first estimate the piece $\sum_j f_j \tjs g_{<j-2}$.

  Clearly by frequency localization,
  \begin{align*}
  \| \sum_j f_j \tjs g_{<j-2} \|_p \lesssim \| (f_j \tjs g_{<j-2})_{l_j^2} \|_p.
  \end{align*}

  For $j\le 0$, by Lemma \ref{lem921_1},
  \begin{align*}
  \| \tjs g_{<j-2} \|_{\infty}  \lesssim \sum_{k<j} 2^{2k} \| g_k \|_{\infty} 
   \lesssim 2^{2j} \| g\|_{\dot B^0_{\infty,\infty}}.
  \end{align*}
  For $j>0$,
  \begin{align*}
  \| \tjs g_{<j-2} \|_{\infty} & \lesssim \| \tjs g_{<0} \|_{\infty} + \sum_{0\le k<j-2}
  2^{ks} \| g_k\|_{\infty} \notag \\
  & \lesssim \| g\|_{\dot B^0_{\infty,\infty}} \cdot 2^{js}.
  \end{align*}

Thus
\begin{align*}
\| (f_j \tjs g_{<j-2} )_{l_j^2} \|_p
& \lesssim ( \| (f_j \cdot 2^{2j} )_{l_j^2(j\le 0)} \|_p +
\| (f_j \cdot 2^{js} )_{l_j^2(j>0)} \|_p ) \cdot \| g \|_{\boo} \notag \\
& \lesssim \| J^{s-1} \partial f \|_p \cdot \| g \|_{\boo}.
\end{align*}

Next we estimate the piece $\sum_j \tjs (f_j g_{<j-2})$. Write
\begin{align*}
  & \sum_{j} \tjs (f_j g_{<j-2}) \notag \\
  = & \; \sum_{j} [\tjs, g_{<j-2}] f_j + \sum_j g_{<j-2} \tjs f_j \notag \\
  = & \; \sum_j [\tjs, g_{<j-2}] f_j + g \tjs f - \sum_j g_{\ge j-2} \tjs f_j \notag \\
  = & \; \sum_j [\tjs,g_{<j-2}] f_j - \sum_j g_j \tjs f_{\le j+2} + g \tjs f.
  \end{align*}

By Lemma \ref{lem921_1},
\begin{align*}
| ([\tjs, g_{<j-2}]f_j)(x)|
& \lesssim \int_{\mathbb R^d} |K_j(y)|
\cdot | g_{<j-2}(x-y)-g_{<j-2}(x)| \cdot |f_{j}(x-y)| dy \notag \\
& \lesssim \| \partial g_{<j-2} \|_{\infty} \cdot \int_{\mathbb R^d}
|K_j(y)| \cdot |y| \cdot |f_j(x-y)| dy \notag \\
& \lesssim \| \partial g_{<j-2} \|_{\infty} \cdot
\begin{cases}
2^j \mathcal M f_j(x), \quad \text{if $j\le 0$},\\
2^{j(s-1)} \mathcal M f_j(x), \quad \text{if $j>0$}.
\end{cases}
\end{align*}

Thus by frequency localization,
\begin{align*}
\| \sum_j [\tjs, g_{<j-2}] f_j \|_p
& \lesssim \| ([\tjs, g_{<j-2}] f_j)_{l_j^2} \|_p \notag \\
& \lesssim \; \|g\|_{\boo} \cdot ( \| (2^{2j} \mathcal M f_j)_{l_j^2(j\le 0)} \|_p
+ \| (2^{js} \mathcal Mf_j)_{l_j^2(j>0)} \|_p ) \notag \\
& \lesssim \; \| g\|_{\boo} \cdot \| J^{s-1} \partial f\|_p.
\end{align*}

By Lemma \ref{lemq1}, it is easy to see that for $s>0$:
\begin{align*}
\| \sum_j g_j \tjs f_{\le j+2} \|_p &\lesssim \| g\|_{\bmo} \| \tjs f\|_p 
 \lesssim \| g\|_{\bmo} \| J^{s-1} \partial f\|_p.
\end{align*}
However we shall improve this bound below in the case $s>1$.  Write $f_{\le j+2} = f_{<j-2} +\tilde f_j$ with
$\tilde f_j := f_{j-2\le \cdot \le j+2}$. Then
\begin{align*}
\| \sum_j &g_j \tjs f_{<j-2} \|_p \notag \\
& \lesssim \| (g_j \tjs f_{<j-2} )_{l_j^2} \|_p \notag \\
& \lesssim \| (g_j 2^j)_{l_j^2(j \le 0)} \|_p \cdot \| \partial f \|_{\boo}
+ \|  (g_j \cdot 2^{j(s-1)} )_{l_j^2(j>0)} \|_p \cdot \| \partial f\|_{\boo}
\notag \\
& \lesssim \| J^{s-2} \partial g\|_p \|\partial f \|_{\boo}.
\end{align*}
For $\tilde f_j$, we bound the low frequency piece as
\begin{align*}
\| \sum_{j\le 10} g_j \tjs \tilde f_j \|_p
& \lesssim \sum_{j\le 10}  \| g_j\|_{\infty} \cdot \| \tilde  f_j \|_p \cdot 2^{2j} 
\notag \\
& \lesssim \| g\|_{\boo} \cdot \| J^{s-1} \partial f \|_p.
\end{align*}
For the non-low frequency piece, we have
\begin{align}
\| \sum_{j>10} g_j \tjs \tilde f_j 
\|p & \lesssim \| (2^{-j(\frac{s-1}2)} \tjs \tilde f_j)_{l_j^2(j>10)} \|_{2p}
\| (2^{j(\frac{s-1}2)} g_j)_{l_j^2(j>10)} \|_{2p} \notag \\
& \lesssim \| J^{s-1} \partial f\|_p^{\frac 12} 
\| \partial f\|_{\boo}^{\frac 12} \|g\|_{\boo}^{\frac 12}
\| J^{s-2} \partial g \|_p^{\frac 12} \notag \\
& \lesssim \| J^{s-1} \partial f \|_p \| g\|_{\boo}
+ \| \partial f \|_{\boo} \| J^{s-2} \partial g\|_p. \label{nonlow_similar_e1135_00}
\end{align}

\texttt{Estimate of \eqref{921_e3}}.

Clearly by Lemma \ref{lem921_1},
\begin{align*}
\| \sum_{j\le 0} \tjs (f_j \tilde g_j) \|_p
& \lesssim \sum_{j\le 0} 2^{2j} \| f_j \|_p \| \tilde g_j \|_{\infty} \notag \\
& \lesssim \| J^{s-1} \partial f \|_p \| g\|_{\boo}.
\end{align*}

Also
\begin{align*}
\| \sum_{j>0} \tjs P_{\le 0} (f_j \tilde g_j) \|_p
& \lesssim \sum_{j>0} \| f_j\|_p \| g_j\|_{\infty} \notag \\
& \lesssim \| J^{s-1} \partial f \|_p \cdot \| g\|_{\boo}.
\end{align*}

On the other hand,
\begin{align*}
\| \sum_{j>0}\tjs P_{>0} (f_j \tilde g_j) \|_p
& \lesssim \| (2^{ks}P_k( \sum_{j>k-4} (f_{>-10})_j \tilde g_j ) )_{l_k^2} \|_p \notag \\
& \lesssim \| (2^{ks} \sum_{j>k-4}|(f_{>-10})_j \tilde g_j |)_{l_k^2} \|_p \notag \\
& \lesssim \| (2^{js} (f_{>-10})_j  \tilde g_j )_{l_j^2} \|_p \notag \\
& \lesssim \| (2^{js} (f_{>-10})_j)_{l_j^2} \|_p \cdot \| g\|_{\boo} \notag \\
& \lesssim \| J^{s-1} \partial f \|_p \cdot \| g\|_{\boo}.
\end{align*}

Similarly
\begin{align*}
\| \sum_{j\le 0} f_j \tjs \tilde g_j \|_p &\lesssim \sum_{j\le 0} 2^{2j} \|f_j\|_p \| \tilde g_j\|_{\infty} \notag \\
& \lesssim \| J^{s-1} \partial f \|_p \cdot \| g\|_{\boo}.
\end{align*}
Also
\begin{align*}
\| \sum_{j>0} f_j \tjs \tilde g_j \|_p & \lesssim
\| D^s f_{>-4} \|_p \| g\|_{\bmo} \notag \\
& \lesssim \| J^{s-1} \partial f \|_p \| g\|_{\bmo}.
\end{align*}
For the case $s>1$, by an estimate similar to \eqref{nonlow_similar_e1135_00},
we have
\begin{align*}
\| \sum_{j>0} f_j \tjs \tilde g_j \|_p 
& \lesssim \| J^{s-1} \partial f \|_p \| g\|_{\boo}
+ \| \partial f \|_{\boo} \| J^{s-2} \partial g\|_p.
\end{align*}
This ends the estimate of the diagonal piece.
\end{proof}

\section{Counterexamples}
In the previous section, we have proved several refined Kato-Ponce type inequalities for the operator $J^s$. 
In particular, we recall that for $1<p<\infty$, $0<s\le 1$:
\begin{align*}
\| J^s(fg) - fJ^s g\|_p \lesssim \| J^{s-1} \partial f \|_p \| g\|_{\infty} \lesssim \| J^s f \|_p \| g\|_{\infty};
\end{align*}
and for $s>1$,
\begin{align*}
\| J^s(fg) - f J^s g \|_p
\lesssim \| J^{s-1} \partial f \|_p \| g \|_{\infty} + \| \partial f \|_{\infty} \| J^{s-2} \partial g \|_p.
\end{align*}
In this section we collect several counterexamples which amounts to showing that in the above inequalities,
the $L^{\infty}$-norms on the RHS cannot be replaced by the weaker BMO norm, not even 
partially. Roughly speaking, Proposition \ref{prop922_1} shows that for $0<s\le 1$, one cannot
hope any quantitative bound of the form
\begin{align*}
\| J^s(fg)- f J^s g \|_p \le F( \| J^s f\|_p, \| g \|_{\operatorname{BMO} } ),
\end{align*}
where $F$ is some function; Similarly  for $1<s\le 1+\frac dp$, one \textbf{cannot} have (see Proposition
\ref{prop922_2})
\begin{align*}
\| J^s (fg) - f J^s g \|_p \le F(\|J^s f \|_p, \| g \|_{\operatorname{BMO}}, \| \partial f \|_{\infty},
\| J^{s-1} g \|_p),
\end{align*}
or (Proposition \ref{prop922_3})
\begin{align*}
\| J^s (fg) - f J^s g \|_p \le F(\|J^s f \|_p, \| g \|_{\operatorname{\infty}}, \| \partial f \|_{\operatorname{BMO}},
\| J^{s-1} g \|_p).
\end{align*}
For $s>1+\frac dp$, Proposition \ref{prop924_1} and Proposition \ref{prop924_2} show that (note
below ``$\lnsim$"!)
\begin{align*}
& \| J^s(fg) -f J^s g \|_p \displaystyle \lnsim
 \| J^s f \|_p \| g \|_{\infty}+ \| J^{s-1} g \|_p \| \partial f \|_{\operatorname{BMO}}, \\
 & \| J^s(fg) - f J^s g \|_p \lnsim
 \| J^s f \|_p \| g \|_{\operatorname{BMO}}
 + \| J^{s-1} g \|_p \| \partial f \|_{\infty}.
\end{align*}
In yet other works $L^{\infty}$ norms cannot be replaced by BMO norms even partially.

\begin{lem} \label{lem922_1}
Let $s>0$ and $1<p<\infty$. Then for any $\phi \in \mathcal S(\mathbb R^d)$ and $k\ge 1$, we have
\begin{align*}
& \| J^s( \phi(x) e^{ikx_1}) - \langle k \rangle^s \phi(x) e^{ikx_1} \|_p \lesssim_{\phi,d,s} k^{s-1}, \\
& \| J^{s-1} \partial_1 ( \phi(x) e^{ikx_1} ) - \langle k \rangle^{s-1} (ik) \phi(x) e^{ik x_1} \|_p
\lesssim_{\phi,d,s} k^{s-1}, \\
& \| D^s( \phi(x) e^{ikx_1} ) - k^s \phi(x) e^{ikx_1} \|_p \lesssim_{\phi,d,s} k^{s-1}.
\end{align*}
Also
\begin{align*}
& \| J^s( \phi(x) e^{ikx_1}) - k^{s} \phi(x) e^{ikx_1} \|_p \lesssim_{\phi,d,s} k^{s-1}, \\
& \| J^{s-1} \partial_1 ( \phi(x) e^{ikx_1} ) - i k^s \phi(x) e^{ik_1} \|_p
\lesssim_{\phi,d,s} k^{s-1}.
\end{align*}

Moreover, if $\phi$ is not identically zero, then there is a constant $C_1=C_1(\phi,p,d)>0$,
$k_0=k_0(\phi,p,d)>0$, such that if $k>k_0$, then
\begin{align}
\| \phi(x) \cos (kx_1) \|_p \ge C_1. \label{lem922_1_e5}
\end{align}
\end{lem}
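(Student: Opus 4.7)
The first three estimates share a common structure; I describe it for $J^s$ and indicate the modifications afterwards. Writing $\sigma(\xi) = (1+|\xi|^2)^{s/2}$ and changing variable $\eta = \xi - ke_1$ under the Fourier convention \eqref{Fourier_def}, one obtains
\begin{align*}
J^s(\phi(x) e^{ikx_1}) - \langle k\rangle^s \phi(x) e^{ikx_1} = e^{ikx_1} R_k(x),
\end{align*}
where $R_k(x) := \frac{1}{(2\pi)^d}\int m_k(\eta)\hat\phi(\eta) e^{ix\cdot\eta}\,d\eta$ and $m_k(\eta) := \sigma(ke_1+\eta) - \sigma(ke_1)$. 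Since $|e^{ikx_1}|\equiv 1$, the first bound reduces to showing $\|R_k\|_p \lesssim_{\phi,d,s} k^{s-1}$ uniformly in $k\ge 1$.

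The plan is to establish the pointwise decay
\begin{align*}
|R_k(x)| \lesssim_{N,\phi,d,s} k^{s-1}(1+|x|)^{-2N}, \qquad \text{for every integer } N \ge 0,
\end{align*}
which for $2N > d/p$ integrates directly to the desired $L^p$ bound. The case $N=0$ follows from $|R_k(x)| \le \frac{1}{(2\pi)^d}\|m_k\hat\phi\|_{L^1(\eta)}$, and for $N\ge 1$ integration by parts in $\eta$ yields $|x|^{2N}|R_k(x)| \lesssim \|\Delta_\eta^N(m_k\hat\phi)\|_{L^1(\eta)}$. By Leibniz and Taylor's theorem $m_k(\eta) = \eta \cdot \int_0^1 \nabla\sigma(ke_1+t\eta)\,dt$, one has $|m_k(\eta)| \lesssim |\eta|(k+|\eta|)^{s-1}$ and $|\partial_\eta^\gamma m_k(\eta)| = |(\partial^\gamma\sigma)(ke_1+\eta)| \lesssim (k+|\eta|)^{s-|\gamma|}$ for $|\gamma|\ge 1$. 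Combined with the Schwartz decay of $\hat\phi$ and its derivatives, a short case analysis (splitting $|\eta|\le k$ vs $|\eta|>k$ when $s-|\gamma|\ge 0$, and using $(k+|\eta|)^{s-|\gamma|}\le k^{s-|\gamma|}$ when $s-|\gamma|<0$) yields the uniform bound $\|\partial_\eta^\gamma(m_k\hat\phi)\|_{L^1(\eta)}\lesssim k^{s-1}$; the key numerical facts are $k^{s-|\gamma|}\le k^{s-1}$ for $|\gamma|\ge 1$ and $k\ge 1$, and the compensating $|\eta|$-factor in $m_k$ itself when $|\gamma|=0$.

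The second claim follows by the same template with $\sigma$ replaced by $\tau(\xi) = i\xi_1(1+|\xi|^2)^{(s-1)/2}$, whose $m_k$ again satisfies $|m_k(\eta)| \lesssim |\eta|(k+|\eta|)^{s-1}$ and analogous derivative bounds. For $D^s$ the symbol $\rho(\xi) = |\xi|^s$ fails to be smooth at the origin, so I would split $\hat\phi = \hat\phi\chi_{\{|\eta|\le k/2\}} + \hat\phi\chi_{\{|\eta|>k/2\}}$: on the inner region $\rho(ke_1+\eta)$ is smooth with $|\partial^\gamma \rho(ke_1+\eta)|\lesssim k^{s-|\gamma|}$ and the previous argument applies; on the outer region $|m_k^D(\eta)|\lesssim |\eta|^s$ paired with the Schwartz decay $|\hat\phi(\eta)|\lesssim (1+|\eta|)^{-M}$ for arbitrary $M$ yields a tail $\lesssim k^{-M}$, negligible compared to $k^{s-1}$. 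Replacing $\langle k\rangle^s$ by $k^s$ (and $\langle k\rangle^{s-1}(ik)$ by $ik^s$) introduces a correction of size $\|\phi\|_p\cdot O(k^{s-2})$, coming from $|\langle k\rangle^s-k^s|\lesssim k^{s-2}$ and $|k\langle k\rangle^{s-1}-k^s|\lesssim k^{s-2}$, which is absorbed into $O(k^{s-1})$.

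For the lower bound \eqref{lem922_1_e5}: since $|\cos(kx_1)|^p$ is periodic in $x_1$ with positive mean $c_p := \frac{1}{2\pi}\int_0^{2\pi}|\cos\theta|^p\,d\theta > 0$, a standard oscillation argument (Riemann--Lebesgue applied to the Fourier series of $|\cos|^p$) gives $\|\phi\cos(kx_1)\|_p^p \to c_p \|\phi\|_p^p$ as $k\to\infty$; since $\phi\not\equiv 0$ this limit is strictly positive, so the bound holds for all $k\ge k_0(\phi,p,d)$. The main obstacle is not any single step but the bookkeeping to produce the uniform $k^{s-1}$ rate for $\|\partial_\eta^\gamma(m_k\hat\phi)\|_{L^1(\eta)}$ across all $\gamma$; once the case analysis on the sign of $s-|\gamma|$ is in hand, the remaining claims follow by straightforward modifications.
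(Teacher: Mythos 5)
Your proposal is essentially the same strategy as the paper's: after a Fourier-side change of variables the error is $e^{ikx_1}R_k$, one establishes a weighted pointwise bound on $R_k$ via integration by parts in $\eta$, and the key case distinction is between $|\eta|\lesssim k$ (where the symbol estimates give the $k^{s-1}$ gain) and $|\eta|\gtrsim k$ (where the Schwartz decay of $\hat\phi$ makes the contribution negligible); likewise for $D^s$ the singularity at $\xi=0$ is isolated by a frequency cutoff. One slip to repair: the inequality $|\partial_\eta^\gamma m_k(\eta)|\lesssim (k+|\eta|)^{s-|\gamma|}$ that you state for $|\gamma|\ge 1$ is \emph{false} when $s-|\gamma|<0$ and $\eta$ is near $-ke_1$, since there $\langle ke_1+\eta\rangle\sim 1$ while $(k+|\eta|)^{s-|\gamma|}\to 0$ as $k\to\infty$; what is correct is the two-region split $\{|\eta|\le k/2\}$ versus $\{|\eta|>k/2\}$ (on the first region $|ke_1+\eta|\ge k/2$ so $\langle ke_1+\eta\rangle^{s-|\gamma|}\lesssim k^{s-|\gamma|}\le k^{s-1}$; on the second the trivial bound $\langle ke_1+\eta\rangle^{s-|\gamma|}\le 1$ together with arbitrary polynomial decay of $\partial^{\gamma'}\hat\phi$ gives $O(k^{-M})$), and this case analysis is what you evidently intend by the ``splitting'' remark, so the proof goes through once it is phrased this way. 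Your Riemann--Lebesgue argument for the lower bound \eqref{lem922_1_e5} is a correct and slightly more direct alternative to the paper's route of first computing $p=2$ via $\cos^2(k x_1)=\tfrac12(1+\cos 2kx_1)$ and then interpolating with $L^\infty$ to reach general $1<p<\infty$.
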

\begin{proof}[Proof of Lemma \ref{lem922_1}]
Denote $e_1=(1,0,\cdots,0)$. By definition, we have
\begin{align*}
   J^s  (\phi(x) &e^{ikx_1}) \notag \\
   &= \frac 1 {(2\pi)^d} \int_{\mathbb R^d} \langle \xi \rangle^s \hat{\phi} (\xi-k e_1)
  e^{i \xi \cdot x} d\xi \notag \\
  & = \frac 1 {(2\pi)^d} e^{ikx_1} \int_{\mathbb R^d} \langle \xi + k e_1 \rangle^s \hat \phi(\xi) e^{i \xi \cdot x} d\xi \notag \\
  & = \frac 1 {(2\pi)^d} e^{ikx_1} \langle k \rangle^s \int_{\mathbb R^d} \hat \phi (\xi) e^{i \xi \cdot x} d\xi
  \notag \\
  &\quad+\frac 1 {(2\pi)^d} e^{i kx_1} \langle k \rangle^s \int_{\mathbb R^d} \hat \phi(\xi) ( \frac{\langle \xi + ke_1 \rangle^s}
   {\langle k \rangle^s} -1) e^{ i \xi \cdot x} d\xi \notag \\
   & = \langle k \rangle^s \phi(x) e^{i kx_1}
   + \frac 1 {(2\pi)^d}e^{i kx_1} \langle k \rangle^s \int_{\mathbb R^d}
   \hat \phi(\xi) ( \frac{\langle \xi +ke_1 \rangle^s}{\langle k \rangle^s} -1 )e^{i \xi \cdot x} d\xi.
   \end{align*}
Rewrite
\begin{align*}
 & \int_{\mathbb R^d} \hat \phi(\xi) ( \frac{\langle \xi +ke_1 \rangle^s} { \langle k \rangle^s}
 -1) e^{i\xi \cdot x} d\xi \notag \\
 = &\; \int_{\mathbb R^d} \hat \phi(\xi) \chi_{|\xi|\ll k} ( \frac{\langle \xi+ ke_1\rangle^s} {\langle k\rangle^s}-1)
 e^{i\xi \cdot x } d\xi \notag \\
 & \quad + \int_{\mathbb R^d} \hat \phi(\xi) \chi_{|\xi| \gtrsim k} (
 \frac{\langle \xi+ ke_1\rangle^s}{\langle k\rangle^s} -1) e^{i \xi \cdot x} d\xi,
 \end{align*}
 where $\chi_{|\xi|\ll k}$ is a smooth cut-off function localized to the regime $|\xi|\ll k$, and $\chi_{|\xi|\gtrsim k}
 =1-\chi_{|\xi|\ll k}$. Consider first $|x|\lesssim 1$.
 In the regime $|\xi|\ll k$, one can use the factor
 $(\langle \xi +ke_1 \rangle^s \langle k \rangle^{-s} -1)$ to get $1/k$ decay.  In the regime $|\xi|\gtrsim k$,
 one can use the decay of $\hat \phi(\xi)$ to get $1/k$ decay. For $|x| \gtrsim 1$, one can just do repeated integration
 by parts.  It is then easy to check that
 \begin{align*}
 \left| \int_{\mathbb R^d} \hat \phi(\xi)
  ( \frac{\langle \xi +ke_1 \rangle^s} { \langle k \rangle^s}
 -1) e^{i\xi \cdot x} d\xi \right| \lesssim \langle x \rangle^{-10d} \langle k\rangle^{-1}.
 \end{align*}
 From this the desired result follows.

Now for the estimates of the operator $D^s$, we denote by $P_{|\xi| \sim k}$ the frequency
projection to the block $\{ \xi: \, ck < |\xi|< \frac 1 c  k\}$ where $c>0$ is a small constant.
It is easy to check that
\begin{align*}
&\| D^s( ( P_{|\xi| \sim k}  \phi) \cdot e^{ikx_1} ) \|_p
\lesssim k^s \| P_{|\xi| \sim k} \phi \|_p \lesssim 1; \\
& \| k^s (P_{|\xi| \sim k} \phi) e^{ ikx_1} \|_p \lesssim 1.
\end{align*}
Thus one only needs to consider the regime $|\xi| \ll k $ and $|\xi|\gg k$ for which the
factor $|\xi +k e_1|^s$ will cause no trouble when integrating by parts in $\xi$. This part of
the argument is then similar to the $J^s$ case.
The estimates for $J^{s-1} \partial_1$ is also similar. Thus we omit details.

We now prove \eqref{lem922_1_e5}. Note that if $p=2$, then
\begin{align*}
\| \phi(x) \cos k x_1 \|_2^2 = \int_{\mathbb R^d}
|\phi(x)|^2 \frac{1+\cos(2k x_1)} 2 dx.
\end{align*}
Easy to check that for any integer $m\ge 1$,
\begin{align*}
\left| \int_{\mathbb R^d} |\phi(x)|^2 \cos (2k x_1) dx \right| \lesssim_m (k^2+1)^{-m}.
\end{align*}
Thus if $k$ is sufficiently large, we have
\begin{align*}
\| \phi(x) \cos( kx_1) \|_2^2 \gtrsim 1.
\end{align*}

Next if $1<p<2$, then
\begin{align*}
\| \phi(x) \cos kx_1 \|_2 & \lesssim \| \phi (x) \cos kx_1 \|_p^{\frac p2} \| \phi(x) \cos kx_1\|_{\infty}^{1-\frac p2}
\notag \\
& \lesssim \| \phi\|_{\infty}^{1-\frac p2} \| \phi(x) \cos kx_1\|_p^{\frac p2}.
\end{align*}
Thus
\begin{align*}
\| \phi(x) \cos k x_1\|_p \gtrsim 1.
\end{align*}
Similarly the inequality also holds for $2<p<\infty$.

\end{proof}

\begin{lem} \label{lem922_2}
Assume $1<p<\infty$. For any $M>0$, there exists $g \in \mathcal S(\mathbb R^d)$, such that
\begin{align*}
\| \langle \nabla \rangle^{\frac dp} g \|_p \le 1, \quad \| g\|_{\bmo} \le 1,
\end{align*}
but
\begin{align*}
\| g \|_{\infty} >M.
\end{align*}
\end{lem}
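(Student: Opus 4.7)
The plan is to exhibit a family $\{g_N\}_{N\ge 2}\subset \mathcal S(\mathbb R^d)$ with $\|\langle \nabla\rangle^{d/p} g_N\|_p$ and $\|g_N\|_{\bmo}$ uniformly bounded while $\|g_N\|_{\infty}\to \infty$; dividing by the uniform bound and choosing $N$ large then produces the required $g$. This realizes the well-known failure of the critical Sobolev embedding into $L^{\infty}$. Concretely, I would fix $\phi\in \mathcal S(\mathbb R^d)$ with $\hat\phi\ge 0$ supported in the annulus $\{1\le |\xi|\le 2\}$, so that $\phi(0)=(2\pi)^{-d}\int \hat\phi>0$, and for integer $N\ge 2$ set
\[
g_N(x) = N^{-1/p}\sum_{j=0}^{N-1} \phi(2^j x).
\]
Then $g_N\in \mathcal S(\mathbb R^d)$ and $g_N(0) = \phi(0)\, N^{1-1/p}= \phi(0)\, N^{1/p^{\prime}}\to\infty$.

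The main quantitative step is the uniform bound $\|D^{d/p}g_N\|_p = O(1)$. Since $\widehat{\phi(2^j\cdot)}(\xi)=2^{-jd}\hat\phi(\xi/2^j)$ is supported in $\{2^j\le|\xi|\le 2^{j+1}\}$, the summands live in essentially disjoint Littlewood-Paley blocks and the square-function characterization gives
\[
\|D^{d/p}g_N\|_p \sim \Bigl\|\Bigl(\sum_{j=0}^{N-1} N^{-2/p}\,2^{2jd/p}|\psi(2^j\cdot)|^2\Bigr)^{1/2}\Bigr\|_p,
\]
where $\psi = D^{d/p}\phi\in \mathcal S(\mathbb R^d)$ (its Fourier symbol $|\xi|^{d/p}\hat\phi(\xi)$ is in $C_c^{\infty}$, since $\hat\phi$ vanishes near the origin). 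Using the Schwartz decay $|\psi(y)|\lesssim_M (1+|y|)^{-M}$ with $M$ large, the inner sum at a point $x$ is controlled by its resonant contribution at $j^{\ast}\sim \log_2(1/|x|)$, producing the pointwise bounds $S(x)\lesssim N^{-1/p}|x|^{-d/p}$ on $\{2^{-N}\le|x|\le 1\}$, rapid decay for $|x|>1$, and $S(x)\lesssim N^{-1/p}\,2^{Nd/p}$ for $|x|<2^{-N}$. Integrating $S^p$, the annular piece contributes $N^{-1}\int_{2^{-N}}^{1} r^{-1}\,dr = O(1)$, the inner ball gives $O(N^{-1})$, and the exterior is negligible, so $\|D^{d/p}g_N\|_p=O(1)$. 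A simpler direct estimate handles $\|g_N\|_p$, and the $\bmo$-bound follows from the critical Sobolev embedding $\|f\|_{\bmo}\lesssim \|\langle\nabla\rangle^{d/p}f\|_p$ valid for $1<p<\infty$.

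Thus there is an absolute constant $C_0$ with $\|\langle\nabla\rangle^{d/p}g_N\|_p + \|g_N\|_{\bmo}\le C_0$ for all $N$; setting $g = g_N/C_0$ and choosing $N$ so large that $\phi(0)\, N^{1/p^{\prime}} > M\,C_0$ finishes the proof. The main technical obstacle is the sharp $L^p$-estimate of the square function: the normalization $N^{-1/p}$ must precisely balance the logarithmic divergence $\int r^{-1}\,dr\sim N$ coming from the critical scaling, the cancellation yielding an $O(1)$ bound. This balance is the quantitative expression of the fact that $W^{d/p,p}$ just barely misses embedding into $L^{\infty}$.
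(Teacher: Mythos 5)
Your proposal is correct, but it takes a genuinely different route from the paper. The paper uses the decaying coefficients $a_j=1/j$ in $g=\sum_{j=1}^{N}a_j\phi(2^j x)$ and bounds $\|\langle\nabla\rangle^{d/p}g\|_p$ by a purely sequence-space argument: for $1<p\le 2$ it exploits $l^p\hookrightarrow l^2$ to reduce to $\sum_j j^{-p}\|\phi\|_p^p<\infty$, and for $p>2$ it uses Minkowski's inequality in the form $\|\cdot\|_{L^p(l^2)}\le\|\cdot\|_{l^2(L^p)}$ to reduce to $\sum_j j^{-2}<\infty$; the blow-up of $\|g\|_\infty$ is then only logarithmic, $g(0)\sim\phi(0)\log N$. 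You instead take flat coefficients $N^{-1/p}$ and perform the quantitative pointwise estimate on the Littlewood–Paley square function: at $|x|\sim 2^{-k}$ the sum is dominated by the resonant term $j\sim k$, so $S(x)\lesssim N^{-1/p}|x|^{-d/p}$, and $\int S^p\sim N^{-1}\int_{2^{-N}}^1 r^{-1}\,dr=O(1)$ by exactly cancelling the logarithmic divergence of the critical integral. Your argument is slightly heavier (it requires the pointwise lacunary decay analysis rather than a one-line $l^p$-embedding), but it yields a polynomially large $\|g_N\|_\infty\sim N^{1/p'}$ rather than $\log N$, and exhibits more transparently the ``failure of the endpoint Sobolev embedding'' mechanism. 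Both arguments are complete once one notes, as you and the paper both do, that the $\bmo$-bound follows from the critical embedding $\|f\|_{\bmo}\lesssim\|\langle\nabla\rangle^{d/p}f\|_p$, so only $\|\langle\nabla\rangle^{d/p}g\|_p$ and $\|g\|_\infty$ need to be tracked.
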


\begin{proof}[Proof of Lemma \ref{lem922_2}]
Since $\|g\|_{\bmo} \lesssim_{p,d} \| |\nabla| ^{\frac dp} g \|_p$, we only need to show the existence
of $g \in \mathcal S(\mathbb R^d)$, such that $\| \langle \nabla \rangle^{\frac dp} g \|_p \ll 1$,
and $\|g\|_{\infty} \gg 1$.

To this end, let $\phi \in \mathcal S(\mathbb R^d)$ be such that $\operatorname{supp}(\hat \phi)
\subset\{ \xi:\, \frac 12 <|\xi|<2 \}$, and $\phi(0)= \frac 1 {(2\pi)^d}
\int_{\mathbb R^d} \hat \phi(\xi) d\xi \ne 0$.  Define
\begin{align*}
g(x) =\sum_{j=1}^N \frac 1 j \phi(2^j x).
\end{align*}
Clearly
\begin{align*}
\| g \|_{\infty} \ge |g(0)| > O(\log N) |\phi(0)| >M,
\end{align*}
if $N$ is taken sufficiently large.

On the other hand, if $1<p\le 2$, then
\begin{align*}
  \| \langle \nabla \rangle^{\frac dp} g \|_p
 \lesssim & \| ( \frac 1j \cdot 2^{\frac {jd}p} \phi(2^j x) )_{l_j^2(j: \, 1\le j\le N)} \|_p \notag \\
 \lesssim & \| ( \frac 1 j 2^{\frac {jd} p} \phi(2^j x) )_{l_j^p(j:\, 1\le j\le N)} \|_p \notag \\
 \lesssim & (\frac 1j)_{l_j^p(j:\, 1\le j\le N)} \cdot \|\phi\|_p \notag \\
 \lesssim & \|\phi\|_p \lesssim 1.
 \end{align*}

 If $2<p<\infty$, then
 \begin{align*}
 &\| ( \frac 1 j \cdot 2^{\frac {jd} p} \phi(2^j x) )_{l_j^2(j:\, 1\le j\le N)} \|_p
 \notag \\
 \lesssim & \;  \Bigl(  \frac 1j \| 2^{\frac {jd}p} \phi(2^j x) \|_p \Bigr)_{l_j^2(j:\, 1\le j\le N)} \notag \\
\lesssim & \; (\frac 1j)_{l_j^2(j:\, 1\le j\le N)} \| \phi\|_p \lesssim \|\phi\|_p \lesssim 1.
\end{align*}

Thus multiplying $g$ by a small constant if necessary, we can easily achieve $\|\langle \nabla \rangle^{\frac dp} g
\|_p \le 1$ with $\| g\|_{\infty}>M$.

\end{proof}

\begin{prop} \label{prop922_1}
Assume $0<s\le 1$ and $1<p<\infty$. For any $M>0$, there exist $f$, $g\in \mathcal S(\mathbb R^d)$ such
that
\begin{align*}
\| J^s f\|_p + \| g\|_{\bmo} \le 1,
\end{align*}
but
\begin{align*}
\| J^s(fg) -f J^s g\|_p >M.
\end{align*}
\end{prop}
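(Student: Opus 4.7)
The strategy is to exploit the small-$\bmo$/large-$L^\infty$ Schwartz example from Lemma \ref{lem922_2}: a function of bounded $\bmo$ norm whose $L^\infty$ norm is as large as $\log N$, attained on a ball of radius $\sim 2^{-N}$ around the origin. I would pair this $g$ with an $f$ that is a high-frequency modulation (at frequency $k\gg 2^N$) of a Schwartz bump sharply localized to exactly that small ball. The commutator $J^s(fg)-fJ^s g$ then behaves like $k^s fg$, which sees $g$ essentially at its peak, producing size $\sim \log N$, while $\|J^s f\|_p\lesssim 1$ by scaling.

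Concretely, fix $\phi\in\mathcal S(\mathbb R^d)$ with $\hat\phi$ supported in the annulus $\{1/2\le|\xi|\le 2\}$ and $\phi(0)\ne 0$, and a Schwartz bump $\tilde\phi$ with $\tilde\phi(0)\ne 0$. For integers $N\ge 1$ and $k\ge 2^{AN}$ with $A$ large (to be fixed), set
\[
 g(x)=c_0\sum_{j=1}^N \tfrac 1j\phi(2^j x),\qquad f(x)=c_1\,\tilde\phi(2^N x)\,e^{ikx_1},\qquad c_1=k^{-s}2^{Nd/p},
\]
where the absolute constant $c_0>0$ is chosen, via the proof of Lemma \ref{lem922_2}, so that $\|g\|_{\bmo}\le 1/2$. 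Since $\hat f$ is supported in $\{|\xi-ke_1|\lesssim 2^N\}$ with $k\gg 2^N$, one has $\langle\xi\rangle^s\sim k^s$ on this support, and Lemma \ref{lem922_1} together with a scaling computation gives $\|J^s f\|_p\le 1/2$.

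For the lower bound, decompose
\[
 J^s(fg)-f J^s g \;=\; k^s f g \;+\; \bigl(J^s(fg)-k^s fg\bigr)\;-\; f J^s g.
\]
On the support of $\tilde\phi(2^N\cdot)$, namely $\{|x|\lesssim 2^{-N}\}$, every summand of $g$ satisfies $\phi(2^j x)=\phi(0)+O(2^{j-N})$, hence $g(x)=c_0\phi(0)\log N+O(c_0)$ there; substituting $c_1$ yields $\|k^s fg\|_p\gtrsim c_0|\phi(0)|(\log N)\|\tilde\phi\|_p$. The first error involves the Fourier multiplier $\langle\xi\rangle^s-k^s$, which has size $O(k^{s-1}2^N)$ on the band $\{|\xi-ke_1|\lesssim 2^N\}$ supporting $\widehat{fg}$, giving $\|J^s(fg)-k^s fg\|_p\lesssim (2^N/k)\,k^s\|fg\|_p=O((2^N/k)\log N)$. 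For the second, a Littlewood--Paley decomposition of $g$ yields the crude bound $\|J^s g\|_p\lesssim 1+N^{-1}2^{N(s-d/p)_+}$ (with a logarithmic correction at $s=d/p$), so $\|fJ^s g\|_p\le\|f\|_\infty\|J^s g\|_p\lesssim (2^N/k)^s$ times a polynomial in $N$. Both errors are therefore $o(\log N)$ once $A$ is taken sufficiently large depending on $(s,p,d)$. Choosing $N$ so that $c_0|\phi(0)|(\log N)\|\tilde\phi\|_p>2M$ then produces the desired $f,g$.

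The main technical point will be the uniform treatment of the two error estimates across the three subcases $s<d/p$, $s=d/p$, $s>d/p$; the last case (possible only if $p>d$, given $s\le 1$) is the most delicate since $\|J^s g\|_p$ itself grows in $N$, but the freedom to take $k=2^{AN}$ with $A$ arbitrarily large absorbs this growth and makes the construction robust.
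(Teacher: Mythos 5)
Your proposal is correct but takes a genuinely different route from the paper. The paper first invokes its refined Kato--Ponce estimate, Theorem \ref{thm921_1}, which gives $\| J^s(fg)-fJ^s g - g(J^s f - f)\|_p \lesssim \|J^s f\|_p\|g\|_{\bmo} = O(1)$ uniformly; the problem then reduces to making $\|g(J^s f-f)\|_p$ large, which the paper does with essentially the same small-BMO/large-sup $g$ (Lemma \ref{lem922_2}) together with $f = k^{-s}\phi(x)\cos(kx_1)$ for $\phi\in C_c^{\infty}$ a bump localized on the peak ball of $g$, using Lemma \ref{lem922_1} to identify $J^s f - f$ with $\phi\cos(kx_1)$ up to $O(1/k)$. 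You instead bypass Theorem \ref{thm921_1} entirely, writing $J^s(fg)-fJ^s g = k^s fg + (J^s(fg)-k^s fg) - fJ^s g$ and controlling both error terms by explicit Fourier-support and scaling arguments. This is more elementary and self-contained (no appeal to the Section 6 machinery), but the price is that you must couple $k$ to the scale of $g$ extremely aggressively, $k\ge 2^{AN}$ with $A$ large depending on $(s,p,d)$, in order to beat the growth of $\|fJ^s g\|_p$; the paper's reduction avoids this since its error is $O(1)$ regardless of $N$. One small slip in the write-up: your intermediate formula $\|fJ^s g\|_p\lesssim (2^N/k)^s$ times a polynomial in $N$ fails in the regime $s<d/p$, where the correct bound $k^{-s}2^{Nd/p}$ is $(2^N/k)^s\cdot 2^{N(d/p-s)}$ with an exponentially growing factor; your final conclusion still stands, since $A>d/(ps)$ makes it $o(1)$, but the stated formula is off.
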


\begin{proof}[Proof of Proposition \ref{prop922_1}]
By Theorem \ref{thm921_1}, and noting $\|J^{s-1}\partial f \|_p \lesssim \| J^s f\|_p$, we only
need to choose $f$, $g \in \mathcal S(\mathbb R^d)$ such that
\begin{align*}
\| J^s f \|_p + \| g\|_{\bmo} \le 1,
\end{align*}
but
\begin{align*}
\| g( J^s f-f) \|_p >M.
\end{align*}

By Lemma \ref{lem922_2}, we can choose $g\in \mathcal S(\mathbb R^d)$, such that
\begin{align*}
\| g\|_{\bmo} \le \frac 12,
\end{align*}
and for some $x_0 \in \mathbb R^d$, $\delta_0>0$,
\begin{align*}
|g(x)|>N\gg 1, \quad \forall\, |x-x_0|\le \delta_0.
\end{align*}

Then we choose $\phi \in C_c^{\infty}(B(x_0,\frac{\delta_0}2))$, such that $\|\phi\|_p =1$.
Define
\begin{align*}
f(x) = \frac 1 {k^s} \phi(x) \cos (kx_1).
\end{align*}

By Lemma \ref{lem922_1}, it is easy to check that
\begin{align*}
\| J^s f - \phi(x) \cos kx_1\|_p \lesssim_{\phi,d,s} k^{-1}.
\end{align*}

On the other hand, by Lemma \ref{lem922_1},
\begin{align*}
\| g(x) \phi(x) \cos k x_1\|_p \gtrsim N \| \phi(x) \cos k x_1\|_p \gtrsim N.
\end{align*}

Clearly we then have
\begin{align*}
\| g (J^s f-f)\|_p \gtrsim N- O(\frac 1k) -O(\frac 1 {k^s})>M,
\end{align*}
if $N$ and $k$ are sufficiently large.
\end{proof}

The same construction used in Proposition \ref{prop922_1} can be used to obtain the following
corollary. In particular, it shows that the estimate
\begin{align*}
\| D^s(fg) - f D^s g \|_p \lesssim_{s,p,d} \| D^s f \|_p \|g\|_{\infty},
\end{align*}
for $0<s\le 1$, $1<p<\infty$ is sharp.

\begin{cor} \label{prop922_1_cor1}
Assume $1<p<\infty$ and $0<s\le 1$. Then for any $M>0$, there exist $f$, $g\in \mathcal S(\mathbb R^d)$,
such that
\begin{align*}
\| J^s f\|_p + \| g\|_{\bmo} \le 1,
\end{align*}
but
\begin{align*}
\| D^s (fg) - f D^s g\|_p >M.
\end{align*}
\end{cor}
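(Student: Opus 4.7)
The plan is to reuse verbatim the test functions from Proposition \ref{prop922_1} and turn the matter into a lower bound on $\|g D^s f\|_p$ via the companion upper bound \eqref{a5c_rm1}. Concretely, pick $g \in \mathcal S(\mathbb R^d)$ via Lemma \ref{lem922_2} satisfying $\|g\|_{\bmo} \le \tfrac12$ and (by continuity) $|g(x)| \ge N$ for all $x \in B(x_0,\delta_0)$, where $N\gg 1$ will be chosen large. Then choose $\phi \in C_c^\infty(B(x_0,\delta_0/2))$ with $\|\phi\|_p = 1$ and set
$$f(x) = k^{-s}\,\phi(x)\cos(k x_1),$$
with $k$ to be taken very large at the end. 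From Lemma \ref{lem922_1} we have $\|J^s f\|_p \lesssim 1$, and both $f$ and $g$ can be rescaled by a single small universal constant to arrange $\|J^s f\|_p + \|g\|_{\bmo} \le 1$ without disturbing what follows.

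Next I would feed this pair into the key remark inequality \eqref{a5c_rm1}, valid precisely in the range $0<s\le 1$, $1<p<\infty$:
$$\| D^s(fg) - f D^s g - g D^s f \|_p \lesssim_{s,p,d} \| D^s f \|_p \, \|g\|_{\bmo} \le C_0,$$
using $\|D^s f\|_p \lesssim 1$ and $\|g\|_{\bmo}\le 1$. Hence
$$\| D^s(fg) - f D^s g \|_p \;\ge\; \| g D^s f \|_p \,-\, C_0.$$
Lemma \ref{lem922_1} gives $D^s f = \phi(x)\cos(kx_1) + R_k$ with $\|R_k\|_p \lesssim k^{-1}$, so
$$\| g D^s f\|_p \;\ge\; \| g\,\phi(x)\cos(kx_1)\|_p \,-\, \|g\|_{\infty}\,\|R_k\|_p.$$
Since $g$ is a fixed Schwartz function, $\|g\|_{\infty}$ is a finite constant, so $\|g\|_\infty\,\|R_k\|_p \to 0$ as $k \to \infty$.

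For the main term, the pointwise bound $|g(x)| \ge N$ on $\mathrm{supp}\,\phi$ combined with the lower bound \eqref{lem922_1_e5} yields
$$\| g\,\phi(x)\cos(kx_1)\|_p \;\ge\; N \,\| \phi(x)\cos(kx_1)\|_p \;\gtrsim\; N$$
for $k$ large enough. Assembling everything, $\|D^s(fg) - f D^s g\|_p \ge c_1 N - C_0 - o_k(1) > M$, by first choosing $N$ so that $c_1 N - C_0 > 2M$ and then choosing $k$ so large that the $o_k(1)$ error is less than $M$. There is no substantive obstacle here: the entire argument is driven by \eqref{a5c_rm1}, which converts the desired \emph{lower} bound on the commutator-like quantity $D^s(fg) - f D^s g$ into the \emph{upper} bound on the scalar multiplier $g D^s f$, a quantity easy to analyze with the already established tools (Lemma \ref{lem922_1} and Lemma \ref{lem922_2}).
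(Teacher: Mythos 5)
Your proof is correct and follows essentially the same route the paper intends: it reuses the test pair from Proposition \ref{prop922_1} and replaces Theorem \ref{thm921_1} (the $J^s$ KPV-type estimate used there to isolate $g(J^sf-f)$) with its homogeneous analogue \eqref{a5c_rm1}, which isolates $gD^sf$ instead. The triangle-inequality reduction, the identification $D^s f \approx \phi\cos(kx_1) + O(k^{-1})$ via Lemma \ref{lem922_1}, and the use of \eqref{lem922_1_e5} together with $|g|\geq N$ on $\operatorname{supp}\phi$ all match the paper's argument, including the logical order of choices (first $N$, hence $g$, $x_0$, $\delta_0$, $\phi$, and only then $k\to\infty$) so that all $\phi$-dependent constants and error terms are controlled.
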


\begin{prop} \label{prop922_2}
Assume $1<p<\infty$ and $1<s\le 1+\frac d p$. Then for any $M>0$, there exist
$f$, $g\in \mathcal S(\mathbb R^d)$ such that
\begin{align*}
\| J^s f \|_p + \| g\|_{\bmo} + \| \partial f \|_{\infty} + \|J^{s-1} g \|_p \le 1,
\end{align*}
but
\begin{align*}
\| J^s(fg) - f J^s g \|_p >M.
\end{align*}
\end{prop}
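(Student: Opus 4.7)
The plan is to adapt the construction in Proposition \ref{prop922_1}: take the logarithmically unbounded BMO function from Lemma \ref{lem922_2} (appropriately normalized) for $g$, and the ``modulated bump'' $f(x)= k^{-s}\phi(x)\cos(kx_1)$ for $f$ with $\phi$ supported where $g$ concentrates. The two new constraints $\|\partial f\|_{\infty}\le 1$ and $\|J^{s-1}g\|_p\le 1$ are handled separately: the first by the decaying factor $k^{-s}$ in $f$, which forces $\|\partial f\|_{\infty}\lesssim k^{1-s}\to 0$ since $s>1$; the second by the hypothesis $s\le 1+d/p$, which gives $\|J^{s-1}g\|_p\lesssim \|J^{d/p}g\|_p$ because $J^{s-1-d/p}$ is an $L^p$-bounded Bessel multiplier.

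First, invoke Lemma \ref{lem922_2} with $g_0(x)=\sum_{j=1}^N j^{-1}\phi_0(2^j x)$, so that $\|J^{d/p}g_0\|_p+\|g_0\|_{\bmo}\le C_{\phi_0}$ independently of $N$, while $g_0(0)=(\log N+O(1))\phi_0(0)$. Scale by a small $\epsilon=\epsilon(\phi_0,s,p,d)>0$ (independent of $N$) to define $g=\epsilon g_0$ with $\|g\|_{\bmo}+\|J^{s-1}g\|_p\le 1/4$; then $|g(0)|=\epsilon|g_0(0)|$ can be made arbitrarily large by taking $N$ large. Let $\delta=\delta(g)>0$ be such that $|g(x)|\ge |g(0)|/2$ on $B(0,\delta)$, and choose $\phi\in C_c^{\infty}(B(0,\delta))$ with $\|\phi\|_p$ fixed and small. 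Define $f(x)=k^{-s}\phi(x)\cos(kx_1)$; by Lemma \ref{lem922_1}, for $k$ large $\|J^s f\|_p\le \|\phi\|_p+O(k^{-1})\le 1/4$ and $\|\partial f\|_{\infty}\lesssim k^{1-s}\|\phi\|_{C^1}\le 1/4$.

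Second, use Theorem \ref{thm921_1} (case $s>1$) to write
\begin{align*}
\|J^s(fg)-fJ^s g-g(J^s f-f)+s\,\partial f\cdot J^{s-2}\partial g\|_p\lesssim \|J^{s-1}\partial f\|_p\|g\|_{\boo}+\|\partial f\|_{\boo}\|J^{s-2}\partial g\|_p.
\end{align*}
Since $\|J^{s-1}\partial f\|_p\lesssim \|J^s f\|_p\le 1$, $\|g\|_{\boo}\lesssim \|g\|_{\bmo}\le 1$, $\|\partial f\|_{\boo}\lesssim \|\partial f\|_{\infty}=O(k^{1-s})$, and $\|J^{s-2}\partial g\|_p\lesssim \|J^{s-1}g\|_p\le 1$, the right-hand side is bounded by a universal constant $C_0$ independent of $N$ and $k$. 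Likewise $\|s\,\partial f\cdot J^{s-2}\partial g\|_p\le s\|\partial f\|_{\infty}\|J^{s-1}g\|_p=O(k^{1-s})$.

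Third, estimate $\|g(J^s f-f)\|_p$ using Lemma \ref{lem922_1}: $J^s f-f=\tfrac{\langle k\rangle^s-1}{k^s}\phi(x)\cos(kx_1)+E$ with $\|E\|_p\lesssim k^{-1}$, and $(\langle k\rangle^s-1)/k^s\to 1$. Since $|g|\ge |g(0)|/2$ on $\operatorname{supp}\phi$, by Lemma \ref{lem922_1} \eqref{lem922_1_e5} we obtain, for $k$ large enough depending on $\|g\|_{\infty}$,
\begin{align*}
\|g(J^s f-f)\|_p\ge \tfrac{1}{2}|g(0)|\cdot \|\phi(x)\cos(kx_1)\|_p-\|g\|_{\infty}\|E\|_p\ge \tfrac{1}{4}C_1(\phi)|g(0)|.
\end{align*}
Now choose $N$ so that $\tfrac{1}{4}C_1(\phi)|g(0)|>C_0+M+1$, then $k$ so large that the $O(k^{1-s})$ and $\|g\|_{\infty}\cdot O(k^{-1})$ errors are together $<1$; combining with the error bound in the second step yields $\|J^s(fg)-fJ^s g\|_p>M$, as desired. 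The main obstacle is bookkeeping of dependencies: $g$ (hence $\|g\|_{\infty}$) must be fixed before $k$, and $k$ must then be taken large enough to beat both $\|g\|_{\infty}/k$ and $k^{1-s}$ simultaneously; the hypothesis $s\le 1+d/p$ is essential because it is precisely the condition under which $\|J^{s-1}g\|_p$ remains bounded for the logarithmic BMO function $g$, whereas Sobolev embedding would force $g\in L^{\infty}$ if $s-1>d/p$.
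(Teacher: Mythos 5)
Your proposal is correct and follows essentially the same route as the paper: reduce via Theorem \ref{thm921_1} to bounding $\|g(J^s f - f)\|_p$ from below, take $g$ from the logarithmic BMO construction of Lemma \ref{lem922_2} (the hypothesis $s\le 1+d/p$ giving $\|J^{s-1}g\|_p\lesssim\|J^{d/p}g\|_p$), take $f=k^{-s}\phi\cos(kx_1)$ with $\phi$ supported where $|g|$ is large, use $s>1$ to kill $\|\partial f\|_\infty = O(k^{1-s})$, and invoke Lemma \ref{lem922_1} for the lower bound. Your bookkeeping is slightly more explicit than the paper's (the extra $\epsilon$-normalization and the separate tracking of the error terms coming from Theorem \ref{thm921_1}), but the decomposition and the key lemmas are identical; the only shared subtlety---that the lower-bound constant $C_1(\phi)$ of \eqref{lem922_1_e5} must not degenerate as $N$ (hence $\phi$'s support radius) changes---is resolved the same way in both arguments by noting that $C_1$ can in fact be taken to depend only on $\|\phi\|_p$, $p$, $d$ once $k$ exceeds the $\phi$-dependent threshold $k_0(\phi)$, which is harmless since $k$ is chosen last.
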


\begin{proof}[Proof of Proposition \ref{prop922_2}]
By Theorem \ref{thm921_1}, we only need to choose $f$, $g\in \mathcal S(\mathbb R^d)$ such that
\begin{align*}
\| J^s f \|_p + \| g\|_{\bmo} + \| \partial f \|_{\infty} + \|J^{s-1} g\|_p \le 1,
\end{align*}
but
\begin{align*}
\| g \cdot (J^s f -f ) \|_p >M.
\end{align*}

By Lemma \ref{lem922_2}, we can find $g \in \mathcal S(\mathbb R^d)$ such that
\begin{align*}
\| J^{\frac dp} g \|_p \le 1, \quad \text{but $\|g\|_{\infty}>N\gg 1$}.
\end{align*}

Note that $\|g\|_{\bmo} \lesssim \| J^{\frac dp} g\|_p \lesssim 1$, and for $1<s\le 1+\frac dp$,
\begin{align*}
\| J^{s-1} g\|_p \lesssim \| J^{\frac dp} g \|_p \lesssim 1.
\end{align*}

Since $\|g\|_{\infty}>N\gg 1$, we may assume for some $B(x_0, \delta_0)$,
\begin{align*}
|g(x)|>N, \quad \text{for all $x\in B(x_0,\delta_0)$}.
\end{align*}

We then choose $\phi \in C_c^{\infty}(B(x_0,\frac{\delta_0}2))$ with $\|\phi\|_p=1$, and define
\begin{align*}
f(x) = \frac 1 {k^s} \phi(x) \cos kx_1.
\end{align*}

Since $s>1$, it is easy to check that $\| \partial f \|_{\infty} \lesssim k^{-(s-1)} \ll 1$ if
$k$ is large. Also by Lemma \ref{lem922_1},
\begin{align*}
&\| J^s f - \phi(x) \cos kx_1 \|_p+\|f\|_p \lesssim_{\phi, d,s,p} k^{-1}, \\
& \| g(x) \phi(x) \cos k x_1 \|_p \ge N \| \phi(x) \cos kx_1\|_p \gtrsim N.
\end{align*}

Clearly we get
\begin{align*}
\| g (J^s f -f )\|_p >N- O(\frac 1 k) \gg M,
\end{align*}
if $N$ and $k$ are taken sufficiently large.

\end{proof}

\begin{lem} \label{lem922_3}
Assume $1<p<\infty$. For any $M>0$, there exists $f\in \mathcal S(\mathbb R^d)$ such that
\begin{align*}
\| \langle \nabla \rangle^{1+\frac dp} f \|_p \le 1,
\end{align*}
but
\begin{align*}
\| \partial f \|_{\infty}>M.
\end{align*}
\end{lem}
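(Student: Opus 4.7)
The plan is to mimic the construction used in Lemma \ref{lem922_2}, but tuned one derivative higher so that the logarithmic blow-up occurs in $\partial f$ rather than $f$ itself. Fix $\phi\in\mathcal S(\mathbb R^d)$ with $\hat\phi$ supported in the annulus $\{1/2<|\xi|<2\}$, chosen so that $(\partial_1\phi)(0)\ne 0$. The latter is easy to arrange: $(\partial_1\phi)(0)$ is a nonzero multiple of $\int \xi_1\hat\phi(\xi)\,d\xi$, and within the class of annularly supported $\hat\phi$ we may pick one that has nonzero first moment in the $\xi_1$ direction.

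Define
\begin{align*}
f(x)=c\sum_{j=1}^N \frac{2^{-j}}{j}\,\phi(2^j x),
\end{align*}
where $c>0$ is a small constant and $N\ge 1$ a large integer, both to be chosen at the end. The pointwise lower bound comes from
\begin{align*}
(\partial_1 f)(0)=c\sum_{j=1}^N \frac{2^{-j}}{j}\cdot 2^j(\partial_1\phi)(0)=c\,(\partial_1\phi)(0)\sum_{j=1}^N\frac 1 j,
\end{align*}
so $|\partial_1 f(0)|\sim c\log N$, which exceeds $M$ once $N$ is taken large enough (given $c$).

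For the upper bound on $\|\langle\nabla\rangle^{1+d/p}f\|_p$, observe that each summand $\frac{2^{-j}}{j}\phi(2^j\cdot)$ has Fourier support in the dyadic block $\{2^{j-1}<|\xi|<2^{j+1}\}$, and a direct computation gives
\begin{align*}
\bigl\|\langle\nabla\rangle^{1+\frac d p}\bigl(\tfrac{2^{-j}}{j}\phi(2^j\cdot)\bigr)\bigr\|_p
\sim \tfrac{2^{-j}}{j}\cdot 2^{j(1+\frac d p)}\cdot 2^{-j\frac d p}\|\phi\|_p=\tfrac{\|\phi\|_p}{j}.
\end{align*}
Using the Littlewood-Paley square function characterization of $L^p$ and the now-standard sequence estimates (the $\ell^p\hookrightarrow\ell^2$ embedding for $1<p\le 2$, Minkowski's inequality for $2\le p<\infty$), exactly as in the proof of Lemma \ref{lem922_2}, yields
\begin{align*}
\|\langle\nabla\rangle^{1+\frac d p}f\|_p\lesssim_{\phi,d,p}\ c\cdot\Bigl(\sum_{j=1}^N \tfrac{1}{j^{\min(p,2)}}\Bigr)^{1/\min(p,2)}\lesssim_{\phi,d,p}\ c.
\end{align*}
Now fix $c$ small enough so the right-hand side is $\le 1$, then choose $N$ so that $c|(\partial_1\phi)(0)|\sum_{j=1}^N 1/j>M$, which forces $\|\partial f\|_\infty\ge |\partial_1 f(0)|>M$.

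There is no real obstacle: the only slightly delicate point is the existence of $\phi$ with annular Fourier support and $(\partial_1\phi)(0)\ne 0$, which is immediate, and the uniformity of the dyadic square-function estimate across $1<p<\infty$, which is handled exactly as in Lemma \ref{lem922_2}.
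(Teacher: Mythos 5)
Your proposal is correct and is essentially the paper's argument. The paper takes the building block to be $\Delta^{-1}\partial_1\phi$ for a fixed $\phi$ with radial nonnegative $\hat\phi$ (the positivity condition $\int\hat\phi\,\xi_1^2/|\xi|^2\,d\xi>0$ then guarantees $(\Delta^{-1}\partial_{11}\phi)(0)\neq 0$), whereas you absorb that Riesz-type twist into the choice of $\phi$ from the start by requiring $(\partial_1\phi)(0)\neq 0$. Since a Schwartz $\phi$ with annular Fourier support and $(\partial_1\phi)(0)\neq 0$ exists precisely by taking $\hat\phi(\xi)=i\xi_1\chi(|\xi|)$ for a smooth bump $\chi$ — which is, up to sign, exactly the Fourier transform of the paper's $\Delta^{-1}\partial_1\phi$ — the two constructions coincide; the $L^p$ bound via Littlewood--Paley and the $\ell^{\min(p,2)}$ sequence estimate is the same as in Lemma~\ref{lem922_2} in both.
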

\begin{proof}[Proof of Lemma \ref{lem922_3}]
This is similar to Lemma \ref{lem922_2}. Let $\phi \in \mathcal S(\mathbb R^d)$ be such that
$\operatorname{supp}(\hat \phi) \subset \{ \xi:\; \frac 12 <|\xi|<2\}$ and
\begin{align*}
\int_{\mathbb R^d} \hat \phi(\xi) \frac{\xi_1^2} {|\xi|^2} d\xi >0.
\end{align*}

Define
\begin{align*}
f(x) = \sum_{j=1}^N \frac 1 j (\Delta^{-1} \partial_1 \phi)(2^j x) \cdot 2^{-j}.
\end{align*}

Then
\begin{align*}
|(\partial_1 f)(0)| & \ge ( \sum_{j=1}^N \frac 1j) \cdot | (\Delta^{-1} \partial_1 \partial_1 \phi)(0)| \notag \\
& \ge O(\log N) \gg M,
\end{align*}
if $N$ is taken sufficiently large. The rest of the argument now is similar to that in Lemma \ref{lem922_2}. We omit
details.

\end{proof}

\begin{prop} \label{prop922_3}
Assume $1<p<\infty$ and $1<s\le 1+\frac dp$. Then for any $M>0$, there exist $f$, $g \in \mathcal S(\mathbb R^d)$,
such that
\begin{align*}
\| J^s f\|_p + \| g\|_{\infty} + \| J^{s-1} g\|_p + \| \partial f \|_{\bmo} \le 1,
\end{align*}
but
\begin{align*}
\| J^s (fg) - f J^s g\|_p >M.
\end{align*}
\end{prop}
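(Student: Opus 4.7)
The strategy parallels that of Proposition \ref{prop922_2}, with the roles of $f$ and $g$ interchanged: instead of using Lemma \ref{lem922_2} to create a $g$ with small BMO but enormous $L^\infty$ norm, I use Lemma \ref{lem922_3} to create an $f$ with $\|\partial f\|_{\bmo}\le 1$ yet $\|\partial f\|_\infty$ as large as I please. This is the right dual structure, since what the proposition asserts is precisely that $\|\partial f\|_\infty$ in Kato--Ponce cannot be refined to $\|\partial f\|_{\bmo}$ in the subcritical range $1<s\le 1+d/p$.

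Concretely, let $f$ be the function of Lemma \ref{lem922_3} (after normalization so that $\|\langle\nabla\rangle^{1+d/p}f\|_p\le 1$). Writing $\partial_1 f(x)=\sum_{j=1}^N \frac{1}{j}\tilde\psi(2^j x)$ with $\tilde\psi=\Delta^{-1}\partial_{11}\phi_0$, one checks that $\partial_1 f(x)=c_1\log N+O(1/N)$ uniformly for $x\in B(0,2^{-N})$, where $c_1=\tilde\psi(0)>0$ by the choice of $\phi_0$ in Lemma \ref{lem922_3}. Moreover $\|J^s f\|_p\le 1$ (since $s\le 1+d/p$), $\|\partial f\|_{\bmo}\lesssim \|D^{d/p}\partial f\|_p\lesssim 1$, and a Littlewood--Paley computation also gives $\|\partial f\|_{\boo}=\sup_j \|P_j\partial f\|_\infty\lesssim \sup_j 1/j\lesssim 1$, a bound that will be needed for the error terms. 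For the test function I set
\[
g(x)=c\,\eta(2^N x)\cos(kx_1), \qquad \eta\in C_c^\infty(B(0,1)),
\]
with $k\ge 2^{N+C_0}$ and $c=k^{-(s-1)}\cdot 2^{Nd/p}$ to be specified. Since $\hat g$ is concentrated near $\pm ke_1$ at scale $2^N\ll k$, Fourier support gives $\|g\|_\infty=c\|\eta\|_\infty$ and $\|J^{s-1}g\|_p\sim c\,k^{s-1}\,2^{-Nd/p}\sim 1$; both are $\lesssim 1$ provided additionally $c\lesssim 1$, i.e., $k\ge 2^{Nd/[p(s-1)]}$. This range is nonempty precisely because $s\le 1+d/p$.

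The heart of the argument is to show that the dominant contribution to $J^s(fg)-fJ^s g$ is $-s\partial f\cdot J^{s-2}\partial g$, whose $L^p$-norm is $\gtrsim \log N$. Applying Theorem \ref{thm921_1} in the case $s>1$,
\[
J^s(fg)-fJ^s g = g(J^s f-f)-s\partial f\cdot J^{s-2}\partial g + E,
\]
with $\|E\|_p\lesssim \|J^{s-1}\partial f\|_p\|g\|_{\boo}+\|\partial f\|_{\boo}\|J^{s-2}\partial g\|_p$. Using $\|g\|_{\boo}\le\|g\|_\infty\lesssim 1$, $\|\partial f\|_{\boo}\lesssim 1$, and $\|J^{s-2}\partial g\|_p\sim c\,k^{s-1}\,2^{-Nd/p}\sim 1$, we obtain $\|E\|_p\lesssim 1$, while $\|g(J^s f-f)\|_p\le \|g\|_\infty\|J^{s-1}\partial f\|_p\lesssim 1$. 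A direct Fourier expansion (in the spirit of Lemma \ref{lem922_1}) shows
\[
J^{s-2}\partial_1 g(x) = -c\,k^{s-1}\,\eta(2^N x)\sin(kx_1) + O_{L^p}(1),
\]
so that combining the pointwise lower bound $\partial_1 f(x)\gtrsim c_1\log N$ on $B(0,2^{-N})$ with the positive lower bound on $\int|\sin(ky_1)|^p\,dy_1$ across intervals of length $\sim 2^{-N}$ (valid since $k\cdot 2^{-N}\gg 1$) yields $\|s\partial f\cdot J^{s-2}\partial g\|_p\gtrsim c\,k^{s-1}\log N\cdot 2^{-Nd/p}=\log N$. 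Therefore $\|J^s(fg)-fJ^s g\|_p\gtrsim \log N - O(1) > M$ for $N$ sufficiently large.

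The main technical obstacle is controlling the subleading part of $J^{s-2}\partial_1 g$ uniformly in $N,k,c$: one needs a Taylor expansion of the symbol $\xi_1\langle\xi\rangle^{s-2}$ around $\xi=\pm ke_1$ analogous to Lemma \ref{lem922_1}, verifying that the error is $O(1)$ in $L^p$ with constants independent of $N$. The borderline case $s=1+d/p$ is the tightest, since then $k$ can only be a constant multiple of $2^N$; nevertheless the argument carries through with an appropriate choice of $C_0$ in $k=2^{N+C_0}$, because the non-vanishing of $\int|\sin(Ky_1)|^p|\eta(y)|^p\,dy$ only requires $K$ bounded below by a positive constant, not $K\to\infty$.
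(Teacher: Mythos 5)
Your overall reduction---via Theorem \ref{thm921_1} to showing $\|\partial f\cdot J^{s-2}\partial g\|_p>M$, then using Lemma \ref{lem922_3} for $f$ and concentrating an oscillatory bump $g$ where $\partial_1 f$ is large---is exactly the paper's strategy. The divergence is in the construction of $g$, and it is where your argument develops a genuine gap. You peel open the proof of Lemma \ref{lem922_3}, compute $\partial_1 f\sim\log N$ on the tiny ball $B(0,2^{-N})$, and then anchor the bump supporting $g$ to that scale: $g=c\,\eta(2^Nx)\cos(kx_1)$, with three coupled parameters $c$, $k$, $N$. The cost is that Lemma \ref{lem922_1} no longer applies as a black box: its implicit constants $\lesssim_{\phi,d,s,p}$ depend on the Schwartz seminorms of the bump profile, and $\eta(2^N\cdot)$ has derivatives growing as powers of $2^N$. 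You acknowledge that the symbol expansion of $\xi_1\langle\xi\rangle^{s-2}$ near $\pm ke_1$ must be redone with $N$-uniform errors but do not carry it out. Moreover, an unquantified ``$O(1)$'' error would not suffice: the relative symbol error is $\sim 2^N/k=2^{-C_0}$, so the cross terms $\partial_l f\cdot R_l$ contribute $\sim\log N\cdot 2^{-C_0}$ on $B(0,2^{-N})$---the \emph{same order} as your main term $\sim\log N$. One must make the symbol-error constant explicit and choose $C_0$ large enough afterwards; this is doable but genuinely missing from your write-up. Your remark about the borderline case $s=1+\frac dp$ being ``tightest'' is also a red herring: with $c=k^{-(s-1)}2^{Nd/p}$ imposed, the main term is $\sim\log N$ independently of $k$, so $k$ can be taken arbitrarily large throughout the range.

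The paper sidesteps all of this by refusing to track scale. After Lemma \ref{lem922_3} produces $f$, continuity alone yields some ball $B(x_0,\delta_0)$, of unspecified (and irrelevant) size, on which $|\partial_1 f|>N$. One then sets $g=k^{-(s-1)}\phi(x)\sin(kx_1)$ with $\phi\in C_c^{\infty}(B(x_0,\delta_0/2))$, $\|\phi\|_p=1$, a \emph{fixed} bump. Since $\phi$ is fixed (depending on $N$ but not on $k$), Lemma \ref{lem922_1} applies verbatim, every error is $O(k^{-1})$ with constants that may depend on $N$ but not on $k$, and the main term is $\gtrsim N$. Choose $N$ large, then $k$ large; there is no parameter coordination. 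That decoupling of the support scale of $g$ from $N$ is exactly the simplification you lose by tying $\eta$ to $2^{-N}$, and it is what would make your version of the argument considerably longer were the missing estimates supplied.
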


\begin{proof}[Proof of Proposition \ref{prop922_3}]
Thanks to Theorem \ref{thm921_1}, we only need to choose $f$, $g\in \mathcal S(\mathbb R^d)$ such that
\begin{align*}
\| J^s f \|_p + \| g\|_{\infty} + \| J^{s-1} g \|_p + \| \partial f \|_{\bmo} \le 1,
\end{align*}
but
\begin{align*}
\| \partial f \cdot J^{s-2} \partial g \|_p >M.
\end{align*}

Now by Lemma \ref{lem922_3}, we can choose $f \in \mathcal S(\mathbb R^d)$, such that
\begin{align*}
\| J^s f \|_p \le 1, \quad \text{but $\| \partial_1 f\|_{\infty} >N \gg 1$}.
\end{align*}

Thus for some $B(x_0,\delta_0)$,
\begin{align*}
|(\partial_1 f)(x) |>N\gg 1, \qquad \text{for all $x\in B(x_0,\delta_0)$}.
\end{align*}

Now choose $\phi \in C_c^{\infty}(B(x_0,\frac 12 \delta_0))$ such that $\| \phi\|_p =1$.
Define
\begin{align*}
g(x) =\frac 1 {k^{s-1}} \phi(x) \sin (kx_1).
\end{align*}

By Lemma \ref{lem922_1}, we have (note $s-1>0$ so the hypothesis of Lemma \ref{lem922_1} is
valid for $\tilde s=s-1$),
\begin{align*}
&\| J^{s-2} \partial_1 g - \phi(x) \cos kx_1\|_p \lesssim k^{-1}, \\
& \sum_{j=2}^d \| J^{s-2} \partial_j g \|_p \lesssim k^{-1}.
\end{align*}

From these we get
\begin{align*}
\| \partial f \cdot J^{s-2} \partial g \|_p & \gtrsim \| (\phi(x) \cos k x_1) \partial_1 f\|_p - O(k^{-1}) \notag\\
&\gtrsim \; N- O(k^{-1}) >M,
\end{align*}
if $N$ and $k$ are sufficiently large.

\end{proof}

\begin{prop} \label{prop924_1}
Assume $1<p<\infty$ and $s>1+\frac dp$. Then for any $M>0$, there exist $f$, $g\in \mathcal S(\mathbb R^d)$,
such that
\begin{align*}
\| J^s(fg) - f J^s g \|_p
> M \Bigl( \| J^s f\|_p \| g\|_{\infty} +
\| J^{s-1} g\|_p \| \partial f \|_{\bmo} \Bigr).
\end{align*}
\end{prop}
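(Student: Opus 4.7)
The plan is to force $\partial f$ to have a logarithmic blow-up at the origin in $L^\infty$ while keeping its $\bmo$-norm bounded, then pair it with a highly oscillating $g$ that is spatially concentrated exactly where $\partial f$ is large. Fix $\psi\in\mathcal S(\mathbb R^d)$ real-valued with $\widehat\psi$ supported in $\{\tfrac12<|\xi|<2\}\cap\{|\xi_1|>\tfrac14\}$ and with $\psi(0)=1$. Define
\begin{align*}
u(x)=\sum_{j=1}^{N}\frac{1}{j}\,\psi(2^{j}x),
\end{align*}
so that $|u(0)|\gtrsim\log N$, while the lacunary structure and $\sum_j j^{-2}<\infty$ together with the Littlewood--Paley square-function characterization of $\bmo$ give $\|u\|_{\bmo}\lesssim 1$. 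Set $\widehat f(\xi):=\widehat u(\xi)/(i\xi_1)$, which is well-defined and Schwartz since $\widehat u$ avoids $\{\xi_1=0\}$; then $\partial_1 f=u$, each $\partial_j f$ is obtained from $u$ by a bounded Fourier multiplier on the relevant dyadic shells, and $\|\partial f\|_{\bmo}\lesssim 1$. Finally, fix $\phi_0\in C_c^\infty(\mathbb R^d)$ with $\phi_0(0)=1$, put $\phi_N(x):=\phi_0(2^Nx)$, and take
\begin{align*}
g(x)=k^{-(s-1)}\phi_N(x)\cos(kx_1),
\end{align*}
where $k\gg 2^{2N}$ is chosen at the end.

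The heart of the argument is a Taylor expansion of the symbol $\langle ke_1+\xi\rangle^{s}$ about $ke_1$, in the spirit of Lemma~\ref{lem922_1}: treating $f\phi_N$ as slow and $\cos(kx_1)$ as the fast phase, the zeroth-order pieces $(f\phi_N)\langle k\rangle^s\cos(kx_1)$ and $f\phi_N\langle k\rangle^s\cos(kx_1)$ cancel when one forms the commutator, and the first-order pieces combine via the Leibniz identity $\partial_1(f\phi_N)-f\partial_1\phi_N=\partial_1 f\cdot\phi_N$, yielding
\begin{align*}
J^s(fg)-fJ^s g \;=\; s\,\phi_N\,\partial_1 f\cdot\sin(kx_1)\;+\;\mathcal E,
\end{align*}
with $\|\mathcal E\|_p\lesssim k^{-1}\|\partial^2(f\phi_N)\|_p\lesssim k^{-1}\,2^{2N}\,2^{-Nd/p}$. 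Since $|\phi_N|\sim 1$ and $|u|\gtrsim\log N$ on the ball $|x|\lesssim 2^{-N}$, one has $\|\phi_N\partial_1 f\|_p\gtrsim(\log N)\,2^{-Nd/p}$; applying \eqref{lem922_1_e5} to the Schwartz function $\phi_N\partial_1 f$ (whose intrinsic frequencies are $\lesssim 2^N\ll k$) then gives $\|\phi_N\partial_1 f\sin(kx_1)\|_p\gtrsim(\log N)\,2^{-Nd/p}$, which dominates $\|\mathcal E\|_p$ once $k\gg 2^{2N}$. Hence
\begin{align*}
\|J^s(fg)-fJ^s g\|_p\;\gtrsim\;(\log N)\,2^{-Nd/p}.
\end{align*}

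For the right-hand side: $\|\partial f\|_{\bmo}\lesssim 1$ by construction, Lemma~\ref{lem922_1} gives $\|J^{s-1}g\|_p\sim 2^{-Nd/p}$ and $\|g\|_\infty\lesssim k^{-(s-1)}$, and a direct Littlewood-Paley estimate on the dyadic representation of $f$ (using $s-1-d/p>0$, so the top frequency $j=N$ dominates) gives $\|J^s f\|_p\lesssim N^{-1}2^{N(s-1-d/p)}$. Therefore
\begin{align*}
\|J^s f\|_p\|g\|_\infty+\|J^{s-1}g\|_p\|\partial f\|_{\bmo}\;\lesssim\;\frac{2^{N(s-1-d/p)}}{N\,k^{s-1}}+2^{-Nd/p},
\end{align*}
so choosing $k$ a sufficiently large multiple of $2^{2N}$ makes the first summand negligible compared to the second, and the ratio LHS/RHS is $\gtrsim\log N$, which exceeds any prescribed $M$ once $N$ is taken large.

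The main obstacle is the delicate two-scale juggling of $N$ and $k$: the oscillation frequency $k$ must be driven far above the intrinsic spatial scale $2^N$ of $f\phi_N$ both to guarantee convergence of the symbol Taylor expansion (suppressing $\mathcal E$) and to activate the orthogonality lower bound \eqref{lem922_1_e5}, while $N\to\infty$ is the sole source of the logarithmic gain. A secondary technical point is the verification $\|u\|_{\bmo}\lesssim 1$ from its lacunary Littlewood--Paley presentation, which requires the Besov-type $\ell^2$-summability of the coefficients $(1/j)$ rather than any $L^\infty$-control, and is precisely what prevents any naive replacement of $\|\partial f\|_\infty$ by $\|\partial f\|_{\bmo}$ in this regime $s>1+d/p$.
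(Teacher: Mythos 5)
Your proposal is correct, but it follows a genuinely different route from the paper's. The paper first applies Theorem~\ref{thm921_1} (the refined Kato--Ponce inequality for $J^s$) to reduce the whole question to the single cross term $s\,\partial f\cdot J^{s-2}\partial g$; it then constructs $f$ (via Lemma~\ref{lem922_3}, essentially your lacunary $\sum_j j^{-1}\psi(2^jx)$) with $\|\partial f\|_{\bmo}\lesssim 1$ and $\|\partial f\|_\infty$ large, fixes a bump $\phi$ supported where $\partial_1 f$ is large and normalized with $\|\phi\|_p=1$, sets $g=k^{-(s-1)}\phi\sin(kx_1)$, and sends $k\to\infty$ with $f$ held fixed. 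You instead bypass Theorem~\ref{thm921_1} and re-derive the leading-term structure directly by Taylor-expanding $\langle ke_1+\xi\rangle^s$ to first order in the frequency $\xi$ of the slow factor, which puts the burden on a two-parameter calibration ($N$ for the logarithm, $k\gg 2^{2N}$ for the oscillation). Both approaches are valid; the paper's buys brevity by reusing its main theorem, while yours is more self-contained. Two small points worth tightening in your write-up: (i) ``Littlewood--Paley square-function characterization of $\bmo$'' is not the cleanest justification that $\|u\|_{\bmo}\lesssim 1$ — the short route (and the one the paper implicitly takes through Lemma~\ref{lem922_3}) is to bound $\|D^{d/p}u\|_p\lesssim(\sum_j j^{-\min(p,2)})^{1/\min(p,2)}\lesssim 1$ and invoke $\|\cdot\|_{\bmo}\lesssim\|D^{d/p}\cdot\|_p$; (ii) your appeal to \eqref{lem922_1_e5} applies it to the $N$-dependent family $\phi_N\partial_1 f$, so you should make explicit that the lower-bound constant is uniform once $k\gg 2^N$ — this is true (for $p=2$ by orthogonality, since the Fourier support of $\phi_N\partial_1 f$ lies in $\{|\xi|\lesssim 2^N\}$, and for $p\neq 2$ by the scale-invariant interpolation in the paper's proof), but it is an extra verification you skip. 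Similarly the error bound $\|\mathcal E\|_p\lesssim k^{-1}\|\partial^2(f\phi_N)\|_p$ hides an $L^p$-multiplier estimate for the second-order Taylor remainder of the symbol on $\{|\xi|\lesssim 2^N\}$, which holds with constant $\lesssim k^{s-2}$ but should be stated.
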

\begin{proof}[Proof of Proposition \ref{prop924_1}]
By Theorem \ref{thm921_1}, we only need to find $f$, $g$ such that
\begin{align} \label{prop924_1_e1}
\| \partial f \cdot J^{s-2} \partial g \|_p
>M \Bigl( \| J^s f \|_p \| g\|_{\infty} + \| J^{s-1} g\|_p \| \partial f \|_{\bmo} \Bigr).
\end{align}

To this end, we first choose $f\in \mathcal S(\mathbb R^d)$ such that
\begin{align*}
\| \partial f \|_{\bmo} \le 1,
\end{align*}
but
\begin{align*}
\| \partial f \|_{\infty} >M^2 \gg 1.
\end{align*}

Without loss of generality we may assume that for some $B(x_0,\delta_0)$,
\begin{align*}
|(\partial_1 f)(x)|>M^2, \quad \forall\,x \in B(x_0,\delta_0).
\end{align*}

Let $\phi \in C_c^{\infty}(B(x_0,\frac 12 \delta_0))$ be such that $\|\phi\|_p=1$ and define
\begin{align*}
g(x) =\frac 1 {k^{s-1}} \phi(x) \sin (kx_1).
\end{align*}

Then by Lemma \ref{lem922_1}, we have
\begin{align*}
& \| J^{s-2} \partial_1 g - \phi(x) \cos kx_1 \|_p \lesssim_{\phi,d,s,p} k^{-1}, \\
& \sum_{j=2}^d \| J^{s-2} \partial_j g\|_p \lesssim_{\phi,d,s,p} k^{-1}.
\end{align*}

Thus
\begin{align*}
\| \partial f \cdot J^{s-2} \partial g \|_p
&\gtrsim \| \phi(x) \cos k x_1 \cdot \partial_1 f(x)\|_p - O(k^{-1}) \notag \\
& \gtrsim M^2 -O(k^{-1}).
\end{align*}

On the other hand
\begin{align*}
&\|J^s f\|_p \| g\|_{\infty} \lesssim \frac 1 {k^{s-1}} \|\phi\|_{\infty} \| J^s f\|_p, \\
& \| J^{s-1} g \|_p \| \partial f \|_{\bmo} \lesssim 1 + O(\frac 1 k).
\end{align*}

Clearly if $k$ is sufficiently large, then \eqref{prop924_1_e1} follows.
\end{proof}

\begin{prop} \label{prop924_2}
Assume $1<p<\infty$ and $s>1+\frac dp$. Then for any $M>0$, there exist $f$, $g \in \mathcal S(\mathbb R^d)$,
such that
\begin{align*}
\| J^s(fg) -f J^s g\|_p >M ( \| J^s f\|_p \| g\|_{\bmo}
+\| J^{s-1} g\|_p \| \partial f \|_{\infty} ).
\end{align*}
\end{prop}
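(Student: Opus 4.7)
The plan is to mirror the construction used for Proposition \ref{prop924_1}, but with the roles of $f$ and $g$ reversed. In Proposition \ref{prop924_1} one takes $f$ with $\|\partial f\|_\bmo$ bounded but $\|\partial f\|_\infty$ huge, paired with an oscillation in $g$; here we instead take $g$ with $\|g\|_\bmo$ bounded but $\|g\|_\infty$ huge (via Lemma \ref{lem922_2}) and put the high-frequency modulation into $f$, so that $\|\partial f\|_\infty$ is tiny. The key identity, obtained from Theorem \ref{thm921_1} for $s>1$, is
\begin{align*}
\|J^s(fg)-fJ^s g\|_p & \ge \|g(J^s f-f)\|_p-s\|\partial f\cdot J^{s-2}\partial g\|_p \\
& \quad -C_{s,p,d}\bigl(\|J^{s-1}\partial f\|_p\|g\|_\bmo+\|\partial f\|_\infty\|J^{s-2}\partial g\|_p\bigr),
\end{align*}
so if $\|g(J^s f-f)\|_p$ is forced to be large while everything else in sight stays $O(1)$ (uniformly in the blow-up parameter), we are done.

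Concretely, given $M>0$, use Lemma \ref{lem922_2} to fix $g\in\mathcal S(\mathbb R^d)$ with $\|g\|_\bmo\le\|J^{d/p}g\|_p\le 1$ and $\|g\|_\infty>N$, where $N\gg M$ is chosen below; by continuity $|g|>N$ on some ball $B(x_0,\delta_0)$. Pick $\phi\in C_c^\infty(B(x_0,\delta_0/2))$ with $\|\phi\|_p=1$ and set $f(x)=k^{-s}\phi(x)\cos(kx_1)$ with $k\gg 1$ to be selected. Lemma \ref{lem922_1} then yields $\|J^s f\|_p+\|J^{s-1}\partial f\|_p\lesssim 1$, $\|\partial f\|_\infty\lesssim k^{-(s-1)}$, $\|f\|_p\lesssim k^{-s}$, and $\|J^s f-\phi(x)\cos(kx_1)\|_p\lesssim k^{-1}$. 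Since $g$ is fixed once $N$ is, the quantities $\|g\|_\infty$, $\|J^{s-1}g\|_p$, $\|J^{s-2}\partial g\|_p$ are all finite constants $C(g)$ depending on $g$ only.

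The right-hand side of the claim satisfies $\|J^s f\|_p\|g\|_\bmo+\|J^{s-1}g\|_p\|\partial f\|_\infty\lesssim 1+C(g)k^{-(s-1)}$, which is an $O(1)$ quantity uniform in large $k$. For the left-hand side, pointwise $|g\phi|\ge N|\phi|$ on $\operatorname{supp}\phi$, so by \eqref{lem922_1_e5} one has $\|g\phi\cos(kx_1)\|_p\ge N\|\phi\cos(kx_1)\|_p\gtrsim c_0 N$ for $k$ large. Combining with $\|J^s f-\phi(x)\cos(kx_1)\|_p\lesssim k^{-1}$ and $\|f\|_p\lesssim k^{-s}$ gives $\|g(J^s f-f)\|_p\ge c_0 N-\|g\|_\infty\cdot O(k^{-1})\ge c_0 N-C(g)\,O(k^{-1})$; the correction and error terms in the displayed inequality above are bounded by $O(1)+O(C(g)k^{-(s-1)})$. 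Choose $N$ so that $c_0 N$ dominates $10M$ times the uniform $O(1)$ bound, then choose $k=k(N,g)$ large enough to absorb all $O(k^{-1})$ and $O(k^{-(s-1)}C(g))$ contributions into $c_0 N/10$; the strict inequality follows.

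The only real subtlety is that the approximation error $\|g\|_\infty\cdot O(k^{-1})$ incurred by replacing $J^s f$ with $\phi(x)\cos(kx_1)$ contains $\|g\|_\infty$, which may itself be comparable to $N$; however, this causes no trouble because $g$ (hence $\|g\|_\infty$ and $C(g)$) is frozen before $k$ is chosen, so taking $k$ sufficiently large always beats any fixed constant depending on $g$ alone.
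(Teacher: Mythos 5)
Your proposal is correct and follows essentially the same route as the paper: reduce via Theorem \ref{thm921_1} to controlling $\|g(J^s f - f)\|_p$, choose $g$ with bounded BMO norm but huge $L^\infty$ norm on a ball (Lemma \ref{lem922_2}), modulate a bump supported in that ball by $\cos(kx_1)$ with amplitude $k^{-s}$ to build $f$, and invoke Lemma \ref{lem922_1} for the asymptotics; the decay $\|\partial f\|_\infty \lesssim k^{-(s-1)}$ uses $s>1$ to kill the $\|J^{s-1}g\|_p$ factor once $g$ is fixed. You are in fact slightly more careful than the paper in flagging that the $O(k^{-1})$ approximation error picks up a factor $\|g\|_\infty$ (which the paper leaves implicit), and in spelling out that $N$ must be fixed before $k$; both are sound refinements of the same argument rather than a different approach.
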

\begin{proof}[Proof of Proposition \ref{prop924_2}]
Again by Theorem \ref{thm921_1}, we only need to find $f$ and $g \in \mathcal S(\mathbb R^d)$, such
that
\begin{align} \label{prop924_2_e1}
\| g( J^s f-f) \|_p > M ( \| J^s f\|_p \| g\|_{\bmo}
+ \| J^{s-1} g\|_p \| \partial f \|_{\infty}).
\end{align}

Choose $g\in \mathcal S(\mathbb R^d)$ such that
\begin{align*}
\|g\|_{\bmo} \le 1,
\end{align*}
and for some $x_0 \in \mathbb R^d$, $\delta_0>0$,
\begin{align*}
 |g(x)|>M^2, \qquad \forall\, |x-x_0|\le \delta_0.
 \end{align*}

 Let $\phi \in C_c^{\infty}(B(x_0,\frac{\delta_0}2))$ be such that $\|\phi\|_p=1$. Define
 \begin{align*}
 f(x) = \frac 1 {k^s} \phi(x) \cos (kx_1).
 \end{align*}

 Then by Lemma \ref{lem922_1},
 \begin{align*}
 \| J^s f -f - \phi(x) \cos kx_1 \|_p \lesssim_{\phi,d,s,p} k^{-1}.
 \end{align*}

 Then
 \begin{align*}
 \| g( J^s f-f) \|_p & \gtrsim \| g(x) \phi(x) \cos kx_1\|_p - O(k^{-1}) \notag \\
 & \gtrsim M^2 -O(k^{-1}).
 \end{align*}

 On the other hand,
 \begin{align*}
 & \| J^s f\|_p \| g\|_{\bmo} \lesssim 1 + O(k^{-1}), \\
 & \| J^{s-1} g \|_p \| \partial f \|_{\infty} \lesssim \| J^{s-1} g \|_p
 \cdot \frac 1 {k^{s-1} } ( \|\phi\|_{\infty} + \| \partial \phi\|_{\infty}).
 \end{align*}

 Thus \eqref{prop924_2_e1} follows easily.

\end{proof}

\section{Proof of Theorem \ref{thm3}}
Denote $\tjs =J^s-I$ and write
\begin{align*}
\tjs (fg) = \sum_j \tjs (f_{<j-2} g_j) + \sum_{j} \tjs (g_{<j-2} f_j) + \sum_j \tjs ( f_j \tilde g_j),
\end{align*}
where $\tilde g_j = \sum_{a=-2}^2 g_{j+a}$.

\underline{The diagonal piece}.   By Lemma \ref{lem921_1}, we have
\begin{align*}
\| \sum_{j\le 0} \tjs  (f_j \tilde g_j) \|_p & \lesssim \sum_{j\le 0} 2^{2j} \| f_j \tilde g_j \|_p \notag \\
& \lesssim \| J^{s-1} \partial f \|_{\dot B^0_{p_1,\infty} } \|g\|_{p_2}.
\end{align*}
Next
\begin{align*}
 \| \sum_{j>0} \tjs  P_{\le 0} (f_j \tilde g_j) \|_p  \lesssim \sum_{j>0} \|f_j\|_{p_1} \| \tilde g_j\|_{p_2}
\lesssim \| J^{s-1} \partial f \|_{\dot B^0_{p_1,\infty}} \|g\|_{p_2}.
\end{align*}
Then
\begin{align*}
\| \sum_{j>0} \tjs P_{>0} (f_j \tilde g_j) \|_p
& \lesssim \| (P_k \tjs (\sum_{j\ge k-4} f_j \tilde g_j )_{l_k^2(k>0)} \|_p \notag \\
& \lesssim \| (2^{ks}( \sum_{j\ge k-4} f_j \tilde g_j ) )_{l_k^2(k>0)} \|_p \notag \\
& \lesssim \| (2^{js} f_j \tilde g_j)_{l_j^2(j>-10)} \|_p.
\end{align*}
Now consider two cases.

If $p_1<\infty$ and $p_2\le \infty$, then
\begin{align*}
\| (2^{js} f_j \tilde g_j)_{l_j^2(j>-10)} \|_p
& \lesssim \| (2^{js} f_j)_{l_j^2(j>-10)} \|_{p_1} \| g\|_{p_2} \notag \\
& \lesssim \| J^{s-1} \partial f \|_{p_1} \|g\|_{p_2}.
\end{align*}

If $p_1=\infty$, $p_2=p$, then
\begin{align*}
\| (2^{js} f_j \tilde g_j )_{l_j^2(j>-10)} \|_p
& \lesssim \| (2^{js} f_j)_{l_j^{\infty}(j>-10)} \|_{\infty} \| (\tilde g_j)_{l_j^2} \|_{p} \notag \\
& \lesssim \| J^{s-1} \partial f \|_{\dot B^0_{\infty,\infty} } \| g\|_p.
\end{align*}

Collecting the estimates, we get
\begin{align*}
\| \sum_j \tjs (f_j \tilde g_j ) \|_p \lesssim
\begin{cases}
\| J^{s-1} \partial f \|_{p_1} \|g\|_{p_2}, \quad \text{if $p_1<\infty$}; \\
\| J^{s-1} \partial f \|_{\dot B^0_{\infty,\infty}} \| g\|_p, \quad \text{if $p_1=\infty$}.
\end{cases}
\end{align*}

\underline{The low-high piece}.
We first write
\begin{align*}
\tjs (f_{<j-2} g_j) = [\tjs, f_{<j-2}] g_j + f_{<j-2} \tjs g_j.
\end{align*}

Clearly
\begin{align*}
\| \sum_{j\le 0} [\tjs, f_{<j-2}] g_j \|_p & \lesssim \sum_{j\le 0} 2^{j}
\| \partial f_{<0} \|_{p_1} \|g\|_{p_2}
\lesssim \| J^{s-1} \partial f \|_{p_1} \| g\|_{p_2}.
\end{align*}

Let $\tilde P_j = \sum_{l=-10}^{10} P_{j+l}$ and denote $K_j = \tjs \tilde P_j \delta_0$.
Then in the same way as in \eqref{921_e1a}--\eqref{921_e1b}, we have
\begin{align}
  & \tjs (f_{<j-2} g_j) - f_{<j-2} \tjs g_j \notag\\
  = & \int_{\mathbb R^d} K_j(y) ( f_{<j-2}(x-y) -f_{<j-2}(x) + \partial f_{<j-2}(x) \cdot y) g_j(x-y) dy
  \label{930_e1a} \\
  & \quad - \int_{\mathbb R^d} K_j(y) \partial f_{<j-2}(x) \cdot y g_j (x-y) dy. \label{930_e1b}
  \end{align}

\texttt{Estimate of \eqref{930_e1a}}.
First
\begin{align*}
 & | f_{<j-2}(x-y) -f_{<j-2}(x) + \partial f_{<j-2}(x) \cdot y | \notag \\
 \lesssim & \; \mathcal M ( \partial^2 f_{<j-2} )(x) \cdot (1+2^j |y|)^d \cdot |y|^2.
 \end{align*}

 So
 \begin{align*}
 \| \sum_{j>0} \eqref{930_e1a} \|_p
 \lesssim \| (2^{js}\cdot 2^{-2j} \cdot \mathcal M(\partial^2 f_{<j-2}) \cdot \mathcal Mg_j)_{l_j^2(j>0)} \|_p.
 \end{align*}

 If $0<s\le 1$, $p_1<\infty$, and $p_2\le \infty$, then
 \begin{align*}
 &\| ( 2^{j(s-2)} \cdot \mathcal M(\partial^2 f_{<j-2}) \cdot \mathcal M g_j )_{l_j^2(j>0)} \|_p \notag \\
\lesssim &\; \| (2^{j(s-2)} \cdot \mathcal M(\partial^2 f_{<j-2}) )_{l_j^2(j>0)} \|_{p_1}
\cdot \| (\mathcal M g_j )_{l_j^{\infty}} \|_{p_2} \notag \\
\lesssim & \; \| J^{s-1} \partial f \|_{p_1} \| g\|_{p_2}.
\end{align*}

If $0<s\le 1$, $p_1=\infty$, $p_2=p$, then
\begin{align*}
 &\| ( 2^{j(s-2)} \cdot \mathcal M(\partial^2 f_{<j-2}) \cdot \mathcal M g_j )_{l_j^2(j>0)} \|_p \notag \\
\lesssim & \; \| (2^{j(s-2)} \cdot \mathcal M (\partial^2 f_{<j-2}) )_{l_j^{\infty} (j>0)} \|_{\infty}
\cdot \| (\mathcal M g_j)_{l_j^2} \|_p \notag \\
\lesssim & \; \| J^{s-1} \partial f\|_{\infty} \|g\|_p.
\end{align*}

If $s>1$, $p_3\le \infty$ and $p_4<\infty$, we first note
\begin{align*}
| \partial^2 f_{<j-2} |\lesssim \sum_{k<j-2} 2^k |\tilde P_k (\partial f) | \lesssim 2^j \mathcal M(\partial f).
\end{align*}
Then
\begin{align*}
\| (2^{j(s-2)} \cdot \mathcal M(\partial^2 f_{<j-2}) \cdot \mathcal Mg_j)_{l_j^2(j>0)}\|_p
& \lesssim \| \mathcal M (\partial f) \|_{p_3} \cdot \| (2^{j(s-1)} g_j)_{l_j^2(j>0)} \|_{p_4} \notag \\
& \lesssim \| \partial f \|_{p_3} \| J^{s-2} \partial g \|_{p_4}.
\end{align*}

If $s>1$, $p_3=p$ and $p_4=\infty$, then
\begin{align*}
&\| (2^{j(s-2)} \cdot \mathcal M(\partial^2 f_{<j-2}) \cdot \mathcal Mg_j)_{l_j^2(j>0)}\|_p \notag\\
 \lesssim &\; \| (2^{-j} \mathcal M (\partial^2 f_{<j-2}))_{l_j^2(j>0)} \|_{p}
\cdot \| (2^{j(s-1)} \mathcal M g_j)_{l_j^{\infty} (j>0)} \|_{\infty} \notag \\
 \lesssim &\; \| \partial f \|_p \| J^{s-2} \partial g\|_{\infty}.
\end{align*}

Thus \eqref{930_e1a} is OK for us.

\texttt{Estimate of \eqref{930_e1b}}.

In the same way as in \eqref{921_e1b}, we have
\begin{align*}
\sum_{j>0} \eqref{930_e1b} = -s \sum_{j>0} \partial f_{<j-2} \cdot J^{s-2} \partial g_j.
\end{align*}

If $0<s<1$, then
\begin{align*}
  & \| (\partial f_{<j-2} \cdot J^{s-2} \partial g_j)_{l_j^2(j>0)} \|_p \notag \\
  \lesssim & \| \partial f_{\le 0} \|_{p_1} \cdot \| (J^{s-2} \partial g_j)_{l_j^2(j>0)} \|_{p_2}
  + \| ( \partial f_{0<\cdot <j-2} \cdot J^{s-2} \partial g_j )_{l_j^2(j>0)} \|_p.
  \end{align*}
  If $p_2<\infty$, then
  \begin{align*}
  \| \partial f_{\le 0} \|_{p_1} \| (J^{s-2} \partial g_j)_{l_j^2(j>0)} \|_{p_2}
  \lesssim \| J^{s-1} \partial f \|_{p_1} \|g\|_{p_2};
  \end{align*}
and also
  \begin{align*}
   & \| (\partial f_{0<\cdot <j-2} \cdot J^{s-2} \partial g_j)_{l_j^2(j>0)} \|_p \notag \\
   \lesssim & \; \| ( 2^{j(s-1)} \partial f_{0<\cdot <j-2} )_{l_j^{\infty}} \|_{p_1}
   \| (2^{j(1-s)} J^{s-2} \partial g_j)_{l_j^2(j>0)} \|_{p_2} \notag \\
   \lesssim & \; \| J^{s-1} \partial f\|_{p_1} \| g\|_{p_2}.
   \end{align*}

  If $p_2=\infty$, then $p_1=p$, and
  \begin{align*}
  \| \partial f_{\le 0} \|_p \| (J^{s-2} \partial g_j)_{l_j^2(j>0)} \|_{\infty}
  \lesssim \| J^{s-1} \partial f \|_p \| g\|_{\infty};
  \end{align*}
  and also
  \begin{align*}
   & \| (\partial f_{0<\cdot <j-2} \cdot J^{s-2} \partial g_j)_{l_j^2(j>0)} \|_p \notag \\
   \lesssim & \; \| (2^{j(s-1)} \partial f_{0<\cdot <j-2} )_{l_j^2} \|_p
   \| (2^{j(1-s)} J^{s-2} \partial g_j)_{l_j^{\infty}(j>0)} \|_{\infty} \notag \\
   \lesssim & \| J^{s-1} \partial f \|_p \| g\|_{\infty}.
   \end{align*}

Next consider $s=1$.

If $p_2=\infty$, then by Lemma \ref{lemq1},
\begin{align*}
\| \sum_{j>0} \partial f_{<j-2} \cdot J^{-1} \partial g_j \|_p & \lesssim
\| \partial f \|_{p} \| J^{-1} \partial g \|_{\bmo} \notag \\
& \lesssim \| \partial f\|_p \| g\|_{\bmo}.
\end{align*}

If $p_2<\infty$, then
\begin{align*}
\| \sum_{j>0} \partial f_{<j-2} \cdot J^{-1} \partial g_j \|_p
& \lesssim \| ( \partial f_{<j-2} \cdot J^{-1} \partial g_j)_{l_j^2} \|_p \notag \\
& \lesssim \| \partial f \|_{p_1} \| J^{-1} \partial g\|_{p_2}
\lesssim \| \partial f \|_{p_1} \|g\|_{p_2}.
\end{align*}

Next consider $s>1$.

If $p_4=\infty$, then again by Lemma \ref{lemq1},
\begin{align*}
\| \sum_{j>0} \partial f_{<j-2} \cdot J^{s-2} \partial g_j \|_p
& \lesssim \| \partial f\|_p \cdot \| J^{s-2} \partial g \|_{\bmo}.
\end{align*}

If $p_4<\infty$, then clearly
\begin{align*}
\| \sum_{j>0} \partial f_{<j-2} J^{s-2} \partial g_j \|_p
\lesssim \| \partial f\|_{p_3} \| J^{s-2} \partial g \|_{p_4}.
\end{align*}

This ends the estimate of \eqref{930_e1b}. We have finished the estimate of the commutator
piece $[\tjs, f_{<j-2}]g_j$.

To finish the estimate of the low-high piece, we still need to estimate the contribution
of the piece $\sum_{j} f_{<j-2} \tjs g_j$. Write
\begin{align*}
\sum_j f_{<j-2} \tjs g_j & = f \tjs g - \sum_j f_{\ge j-2} \tjs g_j \notag \\
& = f \tjs g - \sum_j f_j \tjs g_{\le j+2}.
\end{align*}

Clearly
\begin{align*}
\|\sum_{j\le 10} f_j \tjs g_{\le j+2} \|_p
\lesssim \sum_{j\le 10} \|f_j\|_{p_1} \cdot 2^{2j} \|g\|_{p_2}
\lesssim \| J^{s-1} \partial f\|_{p_1} \| g\|_{p_2}.
\end{align*}

On the other hand,
\begin{align*}
\sum_{j>10} f_j \tjs g_{\le j+2} = \sum_{j>10} f_j \tjs g_{<j-2} + \sum_{j>10} f_j \tjs \tilde g_j,
\end{align*}
where $\tilde g_j = g_{j-2\le \cdot \le j+2}$.

By frequency localization, for $p_1<\infty$, $p_2\le \infty$, we have
\begin{align*}
 &\| \sum_{j>10} f_j \tjs g_{<j-2} \|_p \notag \\
\lesssim&\; \| (2^{js} f_j \cdot 2^{-js} \tjs g_{<j-2} )_{l_j^2(j>10)} \|_p \notag \\
\lesssim & \; \| (2^{js} f_j)_{l_j^2(j>10)} \|_{p_1} \| (2^{-js} \tjs g_{<j-2} )_{l_j^{\infty}(j>10)} \|_{p_2} \notag \\
\lesssim & \; \| J^{s-1} \partial f\|_{p_1} \| g\|_{p_2}.
\end{align*}
If $p_1=\infty$, $p_2=p$, then
\begin{align*}
 &\| \sum_{j>10} f_j \tjs g_{<j-2} \|_p \notag \\
 \lesssim & \; \| (2^{js} f_j )_{l_j^{\infty}(j>10)} \|_{\infty} \| (2^{-js} \tjs g_{<j-2} )_{l_j^2(j>10)} \|_p
 \notag \\
 \lesssim & \; \| J^{s-1} \partial f \|_{\dot B^0_{\infty,\infty} } \|g\|_p.
 \end{align*}

For the piece $\sum_{j>10} f_j \tjs \tilde g_j$, if $p_1<\infty $ and $p_2<\infty$, then
\begin{align*}
\| \sum_{j>10} f_j \tjs \tilde g_j\|_p
& \lesssim \| (2^{js} f_j)_{l_j^2(j>10)} \|_{p_1} \| (2^{-js} \tjs \tilde g_j )_{l_j^2(j>10)} \|_{p_2} \notag \\
& \lesssim \| J^{s-1} \partial f \|_{p_1} \| g\|_{p_2}.
\end{align*}

If $p_1=p$, $p_2=\infty$, then by Lemma \ref{lemq1},
\begin{align*}
\| \sum_{j>10} f_j \tjs \tilde g_j \|_p \lesssim \| D^s f_{>0} \|_{p} \| D^{-s} \tjs g_{>0} \|_{\bmo}
\lesssim \| J^{s-1} \partial f \|_p \| g\|_{\bmo}.
\end{align*}

Similarly if $p_1=\infty$, $p_2=p$, then
\begin{align*}
\| \sum_{j>10} f_j \tjs  \tilde g_j \|_p \lesssim \| J^{s-1} \partial f \|_{\bmo} \| g\|_p.
\end{align*}

\underline{The high-low piece}.

First
\begin{align*}
\| \sum_{j\le 0} \tjs (g_{<j-2} f_j) \|_p \lesssim \sum_{j\le 0} 2^{2j}
\| g\|_{p_2} \| f_j\|_{p_1}
\lesssim \| J^{s-1} \partial f \|_{p_1} \|g\|_{p_2}.
\end{align*}

If $p_1<\infty$, then
\begin{align*}
\| \sum_{j>0} \tjs (g_{<j-2} f_j) \|_p
\lesssim \| (2^{js} f_j g_{<j-2} )_{l_j^2(j>0)} \|_p
\lesssim \| J^{s-1} \partial f \|_{p_1} \| g\|_{p_2}.
\end{align*}

If $p_1=\infty$, then $p_2=p$. In this case we write
\begin{align*}
\tjs (g_{<j-2} f_j) = [\tjs,g_{<j-2}] f_j + g_{<j-2} \tjs f_j.
\end{align*}

We first estimate the commutator as
\begin{align*}
&\| \sum_{j>0} [\tjs, g_{<j-2}] f_j \|_p \notag \\
\lesssim & \; \| (2^{js} \mathcal M (\partial g_{<j-2}) \cdot 2^{-j} \cdot \mathcal M f_j)_{l_j^2(j>0)} \|_p
\notag \\
\lesssim & \; \| (2^{js} \mathcal M f_j )_{l_j^{\infty}(j>0)} \|_{\infty}
\| (2^{-j} \mathcal M (\partial g_{<j-2}))_{l_j^2(j>0)} \|_p \notag \\
\lesssim & \; \| J^{s-1} \partial f\|_{\dot B^0_{\infty,\infty} } \|g\|_p.
\end{align*}
Finally by Lemma \ref{lemq1},
\begin{align*}
\| \sum_{j>0} g_{<j-2} \tjs f_j \|_p & \lesssim \| \tjs f_{>0} \|_{\bmo} \| g\|_p \notag \\
& \lesssim \| J^{s-1} \partial f \|_{\bmo } \| g\|_p.
\end{align*}
This concludes the proof of Theorem \ref{thm3}.

\section{Further counterexamples}
In this section we collect further counterexamples for the operator $J^s$ on divergence-free velocity
fields which are deeply connected with the investigation of norm estimates of incompressible Euler equations in Sobolev spaces  (see \cite{BL1,BL2, LS16} and the references therein). In typical energy-type estimates for Euler equations, we have
for $s>0$, $1<p<\infty$, and divergence free $u$:
\begin{align*}
\| J^s(  (u\cdot \nabla) u) - (u\cdot \nabla) J^s u\|_p \lesssim_{s,p,d} \| \partial u \|_{\infty}
\| J^s u\|_p.
\end{align*}
A natural question\footnote{We thank Zhuan Ye for raising this question.}
 is whether $\|\partial u \|_{\infty}$ can be replaced by $\| \partial u \|_{\operatorname{BMO}}$.
If this is the case it would yield single exponential in time growth of  Sobolev norms of two-dimensional Euler
thanks to conservation of vorticity.  Proposition \ref{prop_1007_1} and Proposition \ref{prop_1007_2} disprove
any such possibility. In particular, we show that for $1<p<\infty$, $s>1+\frac dp$ and any Schwartz $u$ with 
$\nabla \cdot u=0$, one cannot
hope any quantitative bound of the form
\begin{align*}
\| J^s ( (u\cdot \nabla) u) - (u\cdot \nabla)(J^s u) \|_p \le F( \| \partial u \|_{\operatorname{BMO}}) \cdot \|J^s u\|_p,
\end{align*}
and also for $0<s\le 1+\frac dp$, one cannot have
\begin{align*}
\| J^s ( (u\cdot \nabla) u) - (u\cdot \nabla)(J^s u) \|_p \le F( \| J^s u \|_p, 
\| \partial u \|_{\operatorname{BMO}}).
\end{align*}
An enlightening feature of our construction is the incorporation of the divergence-free constraint. 

\begin{thm} \label{thm921_1a}
Let $s>0$ and $1<p<\infty$. Fix an integer $l \in \{1,\cdots, d\}$. Then the following hold for
any $f$, $g\in \mathcal S(\mathbb R^d)$:
\begin{align*}
 & \| J^s \partial_l (fg) - f J^s \partial_l g - g J^s \partial_l f - \partial f \cdot J^{\partial}g\\
 & \quad - \partial g \cdot J^{\partial} f
 \|_p \lesssim_{s,p,d}
 \| J^s f \|_p \| \partial g \|_{\bmo} + \| \partial f \|_{\bmo} \| J^s g\|_p,
 \end{align*}
 where
 \begin{align*}
  & \widehat{J^{\partial} g}(\xi) = \widehat{J^{\partial}}(\xi) \hat g(\xi), \\
  & \widehat{J^{\partial}}(\xi) = - i \partial_{\xi} ( (\langle \xi \rangle^s -1) \cdot i \xi_l ).
  \end{align*}
  \end{thm}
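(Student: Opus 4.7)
The plan is to work with the auxiliary operator $T := \tjs \partial_l$, whose symbol $\tau(\xi) = (\langle\xi\rangle^s - 1)(i\xi_l)$ vanishes at the origin. Since $\partial_l(fg) = (\partial_l f) g + f \partial_l g$, the identity
\begin{align*}
J^s \partial_l(fg) - f J^s \partial_l g - g J^s \partial_l f = T(fg) - fTg - gTf
\end{align*}
lets one produce all main and error terms from the symmetric right-hand side, paralleling the reduction in Theorem \ref{thm921_1} but adapted to the extra factor $\partial_l$. This is the key structural simplification: it isolates the genuinely nontrivial part of the symbol in $T$ while the flat piece $\partial_l$ contributes nothing to the Leibniz defect.

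Next I would Bony-decompose $fg = \sum_j f_{<j-2} g_j + \sum_j g_{<j-2} f_j + \sum_j f_j \tilde g_j$ and handle each piece in turn. For the low-high piece, split $T(f_{<j-2} g_j) = [T, f_{<j-2}] g_j + f_{<j-2} Tg_j$ and represent the commutator as $\int \widetilde K_j(y)(f_{<j-2}(x-y) - f_{<j-2}(x)) g_j(x-y)\, dy$ with $\widetilde K_j := T \widetilde P_j \delta_0$. A first-order Taylor expansion $f_{<j-2}(x-y) - f_{<j-2}(x) = -y\cdot\partial f_{<j-2}(x) + R(x,y)$ extracts a linear contribution equal to $\partial f_{<j-2}(x)\cdot J^\partial g_j(x)$; this identification is routine Fourier-side bookkeeping using that the cutoff in $\widehat{\widetilde K_j}$ is identically one on $\operatorname{supp}\widehat{g_j}$ and that $-i\partial_{\xi_k}\tau(\xi) = \widehat{J^{\partial,k}}(\xi)$. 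The telescoping identities
\begin{align*}
\sum_j f_{<j-2}\, Tg_j &= fTg - \sum_j f_j\, Tg_{\leq j+2}, \\
\sum_j \partial f_{<j-2}\cdot J^\partial g_j &= \partial f \cdot J^\partial g - \sum_j \partial f_j \cdot J^\partial g_{\leq j+2},
\end{align*}
combined with the symmetric high-low analysis, then produce all four main terms $fTg$, $gTf$, $\partial f \cdot J^\partial g$, and $\partial g \cdot J^\partial f$ in the statement.

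Three error families must then be absorbed into $\|J^s f\|_p\|\partial g\|_\bmo + \|\partial f\|_\bmo \|J^s g\|_p$: (i) the Taylor remainder $R$; (ii) the tails $\sum_j f_j\, Tg_{\leq j+2}$ and $\sum_j \partial f_j\cdot J^\partial g_{\leq j+2}$; and (iii) the diagonal $\sum_j T(f_j \tilde g_j)$. For (iii), I would split $j\le 0$ (trivial by the symbol bound $|\tau|\lesssim 2^{3j}$) from $j>0$, and treat the latter via frequency localization plus the paraproduct-BMO estimate \eqref{lemq1_e1_new2} of Lemma \ref{lemq1}. For (ii), Lemma \ref{lemq1} applied after rescaling to isolate the $J^{s-1}\partial$-type piece of $Tg_{\leq j+2}$ (and analogously for $J^\partial g_{\leq j+2}$) yields the mixed BMO bounds, with the hardest sub-case handled by the squaring interpolation already used in \eqref{nonlow_similar_e1135_00}. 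For (i), Lemma \ref{lemp2} gives $|R|\lesssim |y|^2\mathcal{M}(\partial^2 f_{<j-2})(x)(1+2^j|y|)^d$, integration against $\widetilde K_j$ produces $2^{j(s-1)}\mathcal M(\partial^2 f_{<j-2})\,\mathcal M g_j$, and the Bernstein-type reduction $2^{-j}|\partial^2 f_{<j-2}|\lesssim \sum_{k<j-2} 2^{k-j}\mathcal{M}(\partial f)_k$ reframes the sum as a paraproduct controlled by $\|\partial f\|_\bmo \|J^s g\|_p$ via Lemma \ref{lemq1}.

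The main obstacle is carrying out (i) and (ii) so that the final error is purely BMO rather than $L^\infty$: the naive bound produces $\|\partial f\|_\infty$ or $\|\partial g\|_\infty$, and the BMO refinement requires pairing $\mathcal H^1$-$\bmo$ duality with the fractional interpolation of Lemma \ref{lemc2} via the squaring-and-reordering mechanism of Remark \ref{lemc2_rem-1}, exactly as used to derive \eqref{nonlow_similar_e1135_00}. Once this interpolation step is in place, summing all contributions and invoking the initial reduction $J^s\partial_l(fg) - f J^s\partial_l g - g J^s\partial_l f = T(fg) - fTg - gTf$ yields the claimed inequality.
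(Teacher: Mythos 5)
Your plan is correct and follows essentially the same route as the paper's proof: the same auxiliary operator $T=(J^s-I)\partial_l$ (called $B$ in the paper), the same reduction identity, Bony decomposition, commutator kernel with first-order Taylor expansion to extract $\partial f\cdot J^\partial g$, and the same telescoping to produce the four main terms, with errors absorbed via Lemmas \ref{lemp2} and \ref{lemq1}. One small overcalculation: the squaring interpolation of \eqref{nonlow_similar_e1135_00} is not actually needed here, because the symmetric form of the right-hand side (both $\|J^s f\|_p\|\partial g\|_{\bmo}$ and $\|\partial f\|_{\bmo}\|J^s g\|_p$ are available) lets Lemma \ref{lemq1} close the diagonal and tail pieces directly, and the Taylor remainder is most economically bounded via the Besov estimate $\mathcal M(\partial^2 f_{<j-2})\lesssim 2^j\|\partial f\|_{\dot B^0_{\infty,\infty}}$ rather than through a further paraproduct.
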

  \begin{proof}[Proof of Theorem \ref{thm921_1a}]
We shall only sketch the proof since it is similar to the proof of Theorem \ref{thm921_1}.
Define $B=(J^s-1)\partial_l$. Then
\begin{align}
B(fg) -f Bg & = \sum_j ( B(f_{<j-2} g_j) - f_{<j-2} B g_j) \label{921_1a_e1} \\
& \quad + \sum_j ( B(f_j g_{<j-2}) - f_j Bg_{<j-2} ) \label{921_1a_e2} \\
& \quad + \sum_j (B(f_j \tilde g_j) - f_j B\tilde g_j),\label{921_1a_e3}
\end{align}
where $\tilde g_j=\sum_{a=-2}^2 g_{j+a}$.

\texttt{Estimate of \eqref{921_1a_e3}}.
Clearly
\begin{align*}
 \| \sum_{j\le 0} B(f_j \tilde g_j) \|_p
&\lesssim \sum_{j\le 0} 2^{3j} \|f_j \|_p \| \tilde g_j \|_{\infty}
\lesssim \| J^s f \|_p \| \partial g \|_{\dot B^0_{\infty,\infty}}, \\
 \| \sum_{j>0} P_{\le 0} B (f_j \tilde g_j) \|_p
&\lesssim \sum_{j>0} \|f_j \|_p \| \tilde g_j \|_{\infty}
\lesssim \| J^s f\|_p \| \partial g \|_{\boo};\\
 \| \sum_{j>0} P_{>0} B(f_j \tilde g_j) \|_p
 & \lsm \| (2^{k(1+s)} \tilde P_k ( \sum_{j>k-4} f_j \tilde g_j ) )_{l_k^2(k>0)} \|_p
 \notag \\
 & \lsm \| (2^{j(1+s)} f_j \tilde g_j)_{l_j^2(j>-10)} \|_p \lsm \| J^s f \|_p \| \partial g \|_{\boo}.
 \end{align*}

Similarly
\begin{align*}
\| \sum_{j\le 0} f_j B \tilde g_j \|_p & \lsm \| J^s f \|_p \| \partial g \|_{\boo},\\
\end{align*}
and by Lemma \ref{lemq1},
\begin{align*}
\| \sum_{j>0} f_j B \tilde g_j \|_p & = \|\sum_{j} 2^{-js} \tilde P_j D^s f_{>0} 2^{js}
 \tilde P_j ( (J^s-1) \partial_l D^{-s} g_{>-4} ) \|_p \notag \\
& \lsm \| D^s f \|_p \| D^{-s} (J^s-1) \partial g_{>-4} \|_{\bmo} \notag \\
& \lsm \| J^s f \|_p \| \partial g \|_{\bmo}.
\end{align*}
Thus \eqref{921_1a_e3} is OK for us.

\texttt{Estimate of \eqref{921_1a_e1}}.

Let $K_j = \sum_{a=-10}^{10} BP_{j+a} \delta_0$. Then
\begin{align}
 &B(f_{<j-2} g_j) -f_{<j-2} Bg_j \notag \\
= & \; \int K_j(y) (f_{<j-2}(x-y) -f_{<j-2}(x) -\partial f_{<j-2}(x) \cdot (-y) ) g_j(x-y) dy
\label{921_1a_e6}\\
& \qquad + \int K_j(y) \partial f_{<j-2}(x) \cdot (-y) g_j(x-y) dy.
\label{921_1a_e7}
\end{align}

\underline{Estimate of \eqref{921_1a_e6}}.

By using
\begin{align*}
&| f_{<j-2}(x-y) -f_{<j-2}(x)- \partial f_{<j-2} (x) \cdot (-y)| \notag \\
\lsm & \mathcal M(\partial^2 f_{<j-2})(x) \cdot (1+2^j |y|)^d \cdot |y|^2,
\end{align*}
we get
\begin{align*}
|\eqref{921_1a_e6}| \lsm
\mathcal M(\partial^2 f_{<j-2})(x) \cdot (\mathcal M g_j)(x)
\cdot
\begin{cases}
2^j, \quad \text{if $j\le 0$},\\
2^{j(s-1)}, \quad \text{if $j>0$}.
\end{cases}
\end{align*}

Therefore by frequency localization,
\begin{align*}
& \| \sum_{j} \eqref{921_1a_e6} \|_p \notag \\
\lsm & \; \| ( \mathcal M (\partial^2 f_{<j-2}) \mathcal Mg_j 2^j )_{l_j^2(j\le 0)} \|_p
+ \| ( \mathcal M(\partial^2 f_{<j-2}) \mathcal Mg_j 2^{j(s-1)})_{l_j^2(j>0)} \|_p
\notag \\
\lsm & \; \| \partial f \|_{\boo} \| J^s g\|_p.
\end{align*}

\underline{Estimate of \eqref{921_1a_e7}}.

Easy to check that
\begin{align*}
  & \int K_j(y) (-y) g_j(x-y) dy \notag \\
  = & \; \frac 1 i \mathcal F^{-1} ( \partial_{\xi} ( \widehat{K_j}(\xi) \widehat{g_j}(\xi) )
  = J^{\partial} g_j,
  \end{align*}
  where we recall
  \begin{align*}
  \widehat{J^{\partial}}(\xi)
  =  \frac 1 i \partial_{\xi} ( \hat B(\xi)) = -i
  \partial_{\xi} ( (\langle \xi \rangle^s -1) i \xi_l).
  \end{align*}

Then
\begin{align*}
\sum_j \eqref{921_1a_e7}
& = \sum_{j} \partial f_{<j-2} \cdot J^{\partial} g_j \notag \\
& = J^{\partial} g \cdot \partial f - \sum_{j} J^{\partial} g_{\le j+2} \cdot \partial f_j.
\end{align*}
Clearly by Lemma \ref{lemq1},
\begin{align*}
\| \sum_{j} J^{\partial} g_{\le j+2}\cdot \partial f_j
\|_p
& \lsm \| J^{\partial} g\|_p \| \partial f \|_{\bmo} \notag\\
& \lsm \| J^s f\|_p \| \partial f \|_{\bmo}.
\end{align*}
Thus
\begin{align*}
\sum_j\eqref{921_1a_e7} = J^{\partial} g\cdot \partial f +\operatorname{OK},
\end{align*}
where
\begin{align*}
\| \operatorname{OK}\|_p \lsm \| J^s f\|_p \| \partial g \|_{\bmo}
+ \| J^s g\|_p \| \partial f \|_{\bmo}.
\end{align*}

\underline{Estimate of \eqref{921_1a_e2}}.

First note that by swapping $f$ and $g$, and using the previous estimates, we have
\begin{align*}
\sum_j ( B(f_j g_{<j-2}) -g_{<j-2} Bf_j ) = J^{\partial} f \cdot \partial g +\operatorname{OK}.
\end{align*}
On the other hand,
\begin{align*}
\sum_j g_{<j-2} B f_j &= gBf - \sum_j g_j Bf_{\le j+2}, \\
\| \sum_{j\le 0} g_j B f_{\le j+2} \|_p
& \lsm \sum_{j\le 0} 2^{3j} \| g_j \|_{\infty} \| f\|_p \lsm \| J^s f \|_p \| \partial g \|_{\boo},\\
\| \sum_{j>0} g_j B f_{\le j+2} \|_p
& \lsm \| \partial g \|_{\bmo} \| J^s f\|_p.
\end{align*}
Also similarly
\begin{align*}
\| \sum_j f_j B g_{<j-2} \|_p \lsm \| \partial f \|_{\bmo} \| J^s g\|_p.
\end{align*}
Thus
\begin{align*}
\eqref{921_1a_e2} = J^{\partial} f \cdot \partial g + gBf +\operatorname{OK}.
\end{align*}
The desired estimates then follow from the simple identity that
\begin{align*}
B(fg) - f Bg -g Bf
= J^s \partial_l (fg) - f J^s \partial_l g - g J^s \partial_l f.
\end{align*}

\end{proof}

An immediate corollary of Theorem \ref{thm921_1a} is the following estimate.

\begin{cor}
Let $s>0$ and $1<p<\infty$. Then for any $u \in \mathcal S(\mathbb R^d)$ with $\nabla \cdot u=0$, we have
\begin{align*}
\| J^s(  (u\cdot \nabla) u) - (u\cdot \nabla) J^s u\|_p \lesssim_{s,p,d} \| \partial u \|_{\infty}
\| J^s u\|_p.
\end{align*}
\end{cor}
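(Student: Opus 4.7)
My plan is to reduce the corollary to the scalar fractional Leibniz identity of Theorem~\ref{thm921_1a} via the divergence-free structure. Since $\nabla\cdot u=0$, for each fixed $k\in\{1,\ldots,d\}$ I rewrite
$$\bigl((u\cdot\nabla)u\bigr)_k \;=\; \sum_{l=1}^d u_l\,\partial_l u_k \;=\; \sum_{l=1}^d \partial_l(u_l u_k).$$
This conversion to divergence form is the algebraic step that sets up the crucial cancellation below.

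For each $l$, applying Theorem~\ref{thm921_1a} with $f=u_l$, $g=u_k$ (and writing $J^{\partial}_l$ to make the $l$-dependence of the operator visible) yields
\begin{align*}
\Bigl\| J^s\partial_l(u_l u_k) - u_l\,J^s\partial_l u_k - u_k\,J^s\partial_l u_l &- \partial u_l\cdot J^{\partial}_l u_k - \partial u_k\cdot J^{\partial}_l u_l\Bigr\|_p \\
&\lesssim \|J^s u_l\|_p\,\|\partial u_k\|_{\bmo} + \|\partial u_l\|_{\bmo}\,\|J^s u_k\|_p.
\end{align*}
Summing over $l$, the term $u_k\,J^s\partial_l u_l$ vanishes entirely because $\sum_l\partial_l u_l=0$, while $\sum_l u_l\,J^s\partial_l u_k = \bigl((u\cdot\nabla)J^s u\bigr)_k$ since $J^s$ commutes with $\partial_l$. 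Hence
$$J^s\bigl((u\cdot\nabla)u\bigr)_k - \bigl((u\cdot\nabla)J^s u\bigr)_k \;=\; \sum_{l=1}^d \bigl(\partial u_l\cdot J^{\partial}_l u_k + \partial u_k\cdot J^{\partial}_l u_l\bigr) + R_k,$$
with $\|R_k\|_p\lesssim \|\partial u\|_{\bmo}\|J^s u\|_p \le \|\partial u\|_\infty\|J^s u\|_p$, using the trivial embedding $L^\infty\hookrightarrow\bmo$.

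To dispose of the remaining bilinear sum I observe that each component of $J^{\partial}_l$ has Fourier symbol $s\langle\xi\rangle^{s-2}\xi_j\xi_l+(\langle\xi\rangle^s-1)\delta_{jl}$, so $\langle\xi\rangle^{-s}\widehat{J^{\partial}_l}(\xi)$ is a smooth bounded Mikhlin-type multiplier. Therefore $\|J^{\partial}_l v\|_p\lesssim \|J^s v\|_p$, and H\"older's inequality gives $\|\partial u_l\cdot J^{\partial}_l u_k\|_p + \|\partial u_k\cdot J^{\partial}_l u_l\|_p \lesssim \|\partial u\|_\infty\|J^s u\|_p$. Summing in $k$ yields the claim.

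The only conceptually delicate ingredient is the divergence-free cancellation: without it, the unsummable $u_k\,J^s\partial_l u_l$ term would force an estimate involving $\|u\|_\infty\|J^{s+1}u\|_p$, strictly stronger than what we can afford. The BMO-valued error from Theorem~\ref{thm921_1a} is harmless since it is immediately dominated by the $L^\infty$-level quantity $\|\partial u\|_\infty\|J^s u\|_p$, and verifying the multiplier bound on $J^{\partial}_l$ is routine.
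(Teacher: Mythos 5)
Your proof is correct and is exactly the intended elaboration: the paper's own proof of this corollary consists of the single word ``Obvious,'' so the reader is expected to supply precisely the argument you gave---rewrite $(u\cdot\nabla)u_k=\sum_l\partial_l(u_lu_k)$ via $\nabla\cdot u=0$, apply Theorem~\ref{thm921_1a} to each summand, kill $u_k\sum_l J^s\partial_l u_l$ by the divergence-free condition, and absorb both the $J^{\partial}_l$ terms (via the Mikhlin bound $\|J^{\partial}_l v\|_p\lesssim\|J^s v\|_p$ and H\"older) and the BMO-valued remainder (via $\|\cdot\|_{\bmo}\lesssim\|\cdot\|_\infty$) into $\|\partial u\|_\infty\|J^s u\|_p$. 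One small remark: in the adjacent Proposition~\ref{prop_1007_1} the author exploits $\nabla\cdot u=0$ a second time to kill the $e_l(J^s-1)$ part of $J^{\partial}_l$ when summed against $\partial u_l$, but for the corollary this extra cancellation is unnecessary---your crude $L^p$ multiplier bound on the full $J^{\partial}_l$ suffices, as you say.
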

\begin{proof}
Obvious.
\end{proof}

\begin{prop}\label{prop_1007_1}
Let $1<p<\infty$ and $s>1+\frac dp$. Then for any $M>0$, there exists
$u \in \mathcal S(\mathbb R^d)$ with $\nabla \cdot u=0$, such that
\begin{align*}
\| \partial u \|_{\bmo} \le 1,
\end{align*}
but
\begin{align*}
\| J^s ( (u\cdot \nabla) u) - (u\cdot \nabla) (J^s u) \|_p > M \| J^s u\|_p.
\end{align*}
\end{prop}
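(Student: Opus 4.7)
The plan is to reduce the proposition to Theorem \ref{thm921_1a}, then build an explicit divergence-free counterexample by combining a logarithmic lacunary BMO-construction with a high-frequency oscillation. Summing the identity of Theorem \ref{thm921_1a} over $l=1,\dots,d$ applied to $f=u_l$, $g=u_j$ and exploiting $\nabla\cdot u=0$ (to kill $u_j J^s\partial_l u_l$), one obtains componentwise
\begin{align*}
J^s((u\cdot\nabla)u)_j - (u\cdot\nabla)(J^s u)_j = T_j + E_j,\quad T_j=\sum_l\bigl[\partial u_l\cdot J^{(l),\partial}u_j + \partial u_j\cdot J^{(l),\partial}u_l\bigr],
\end{align*}
where $J^{(l),\partial}$ is the vector operator $J^\partial$ of Theorem \ref{thm921_1a} at index $l$, and $\|E_j\|_p\lesssim \|J^s u\|_p\|\partial u\|_{\bmo}$. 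Since we will enforce $\|\partial u\|_{\bmo}\le 1$, it suffices to exhibit $u$ with $\|T\|_p\ge (M+C_0)\|J^su\|_p$ for a fixed absolute constant $C_0$.

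Work in $d=2$ (the general case is analogous). Fix $\phi\in\mathcal S(\mathbb R^2)$ with $\hat\phi\ge 0$ supported in an annulus, set $\eta := \Delta^{-1}\partial_1\partial_2\phi$, which is Schwartz with $(\partial_1\partial_2\eta)(0)\neq 0$. For large $N$ and small $\varepsilon>0$ define
\begin{align*}
\psi^{(1)}(x) := \varepsilon\sum_{j=1}^N \tfrac{1}{j}\,2^{-2j}\eta(2^j x),\qquad u^{(1)} := \nabla^\perp\psi^{(1)}\in\mathcal S(\mathbb R^2),
\end{align*}
which is automatically divergence-free. Since each summand has annular frequency support, the lacunary BMO bound yields $\|\partial u^{(1)}\|_{\bmo}\lesssim \varepsilon\sup_j\tfrac{1}{j}\lesssim \varepsilon$, and a first-order Taylor expansion gives $|\partial_1\partial_2\psi^{(1)}(x)|\ge c_0\varepsilon\ln N$ on $|x|\le 2^{-N-1}$. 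A direct dyadic computation shows $\|J^su^{(1)}\|_p\sim \varepsilon A_N$ with $A_N := \bigl(\sum_{j=1}^N (1/j)^2 2^{2j(s-1-2/p)}\bigr)^{1/2}$; note $A_N\to\infty$ because $s>1+2/p$.

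Next fix $\phi_0\in C_c^\infty(B(0,1))$ with $\|\phi_0\|_p=1$, set $\varphi(x):= 2^{2(N+10)/p}\phi_0(2^{N+10}x)\in C_c^\infty(B(0,2^{-N-10}))$, and for a (very) large $k$ to be chosen define
\begin{align*}
\psi^{(2)}(x) := A_N\,k^{-s-1}\varphi(x)\sin(kx_1),\qquad u^{(2)} := \nabla^\perp\psi^{(2)},\qquad u := u^{(1)}+u^{(2)}.
\end{align*}
Then $u\in\mathcal S$ is divergence-free. Expanding $u^{(2)}\approx (0,-A_Nk^{-s}\varphi\cos(kx_1))$ to leading order in $1/k$ and applying Lemma \ref{lem922_1} with frequency $ke_1$ gives $\|J^su^{(2)}\|_p\sim A_N$, and taking $k$ large enough (e.g. $k\ge 2^{2N}$) makes the $L^\infty$-bound $\|\partial u^{(2)}\|_\infty\lesssim A_Nk^{1-s}\cdot 2^{2N/p}\le 1/4$, so $\|\partial u^{(2)}\|_{\bmo}\le 1/4$. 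After fixing $\varepsilon$ small once and for all, we have $\|\partial u\|_{\bmo}\le 1$ and $\|J^su\|_p\le C'A_N$.

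Finally, split $T = T^{(1,1)}+T^{(1,2)}+T^{(2,1)}+T^{(2,2)}$ by the origin ($u^{(1)}$ vs $u^{(2)}$) of the factors. Frequency localization is the key: $T^{(1,1)}$ is supported in $|\xi|\lesssim 2^{N+2}$, and $T^{(2,2)}$ in $\{0\}\cup\{|\xi|\sim 2k\}$, so both vanish under $P_{|\xi|\sim k}$ for $k\gg 2^N$. For $T^{(2,1)}$, H\"older combined with Bernstein's inequality applied to the low-frequency $J^{(l),\partial}u^{(1)}$ gives $\|T^{(2,1)}\|_p\lesssim A_Nk^{1-s}\cdot 2^{2N/p}\cdot\varepsilon A_N$, which is $\ll \varepsilon A_N\ln N$ once $k$ is chosen large. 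For the main piece, specialize $l=i=1$, $j=2$: the symbol $\widehat{J^{(1),\partial,1}}(\xi)\to (s+1)|\xi|^s$ at $\xi=ke_1$, so Lemma \ref{lem922_1} gives $J^{(1),\partial,1}u^{(2)}_2\approx -(s+1)A_N\varphi\cos(kx_1)+O(A_N/k)$; since $\partial_1 u^{(1)}_1=\partial_1\partial_2\psi^{(1)}$ is essentially the constant $c_0\varepsilon\ln N$ on $\operatorname{supp}\varphi$ and $\|\varphi\cos(kx_1)\|_p\ge C_1>0$ for $k$ large (by \eqref{lem922_1_e5}), one concludes
\begin{align*}
\|T\|_p\ge \|P_{|\xi|\sim k}T\|_p \ge c(s+1)\varepsilon\ln N\cdot A_N - o(\varepsilon A_N\ln N) \ge c'\varepsilon\ln N\cdot A_N.
\end{align*}
Choosing $N$ so that $c'\varepsilon\ln N>(M+C_0)C'$ (and then $k$ large enough for all asymptotics) completes the proof. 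The main obstacle will be the careful bookkeeping in the final step: making the frequency-separation quantitative so that the self-interactions $T^{(1,1)},T^{(2,2)}$ and reverse cross-interaction $T^{(2,1)}$ are all genuinely negligible compared to the isolated main cross term $T^{(1,2)}$ at frequencies $|\xi|\sim k$.
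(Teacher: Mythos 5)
Your overall strategy is the same as the paper's: reduce via Theorem~\ref{thm921_1a}, build a logarithmically amplified divergence-free lacunary field $u^{(1)}$, and superpose a localized high-frequency oscillation $u^{(2)}$. Where you diverge is that the paper uses a Case 1 / Case 2 dichotomy (either $B(u^o,u^o)$ is already large, or one perturbs, avoiding having to estimate the self-interaction at all), while you instead project onto $|\xi|\sim k$ to kill the self-interactions by frequency separation. That is a legitimate and arguably cleaner variant, though it requires somewhat more delicate bookkeeping.

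There is, however, a genuine gap in that bookkeeping. You identify the main cross term by ``specializing $l=i=1$, $j=2$'' and claim the effective coefficient is $(s+1)$. But the full cross piece is
\[
T^{(1,2)}_2 = \sum_{l}\sum_{i}\Bigl[\partial_i u^{(1)}_l\,(J^{(l),\partial})_i u^{(2)}_2 + \partial_i u^{(1)}_2\,(J^{(l),\partial})_i u^{(2)}_l\Bigr],
\]
and at frequency $ke_1$ there are \emph{three} leading-order contributions: $(l,i)=(1,1)$ in the first sum (symbol $(s+1)k^s$ paired with $\partial_1 u_1^{(1)}$), and $(l,i)=(2,2)$ in each of the two sums (symbol $k^s$ paired with $\partial_2 u_2^{(1)}$). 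Because $\partial_1 u_1^{(1)}=-\partial_2 u_2^{(1)}$ (this is exactly $\nabla\cdot u^{(1)}=0$), the latter two cancel against part of the former, and the net coefficient is $-(s+1)+2 = -(s-1)$, not $-(s+1)$. This matches the paper, which obtains the factor $(s-1)$ by explicitly tracking both terms of its operator $B$. For $s>1$ the constant $(s-1)$ is still nonzero and the construction survives, but your sketch misses a nontrivial cancellation that, had it been complete (as it is at $s=1$), would have broken the argument; this needs to be worked out rather than asserted. As a much smaller point, $P_{|\xi|\sim k}T^{(2,2)}$ does not vanish exactly, since $\varphi$ has compact spatial support and hence non-compact Fourier support --- it is only rapidly decaying at $|\xi|\sim k$; that is easily repaired by bounding $\|T^{(2,2)}\|_p$ directly via H\"older as you do for $T^{(2,1)}$.
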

\begin{proof}[Proof of Proposition \ref{prop_1007_1}]
We shall choose $u$ in the form
\begin{align*}
u=(u_1,u_2,0,\cdots,0).
\end{align*}
Then we only need to show\footnote{Here we choose $u_2$ for convenience only. One can of course
choose $u_1$ as well.}
\begin{align*}
\| \sum_{l=1}^2 J^s \partial_l (u_l u_2) - \sum_{l=1}^2 u_l \partial_l J^s u_2 \|_p
> M \|J^s u\|_p.
\end{align*}
By Theorem \ref{thm921_1a}, we have
\begin{align*}
\| \sum_{l=1}^2 &J^s \partial_l (u_l u_2) - \sum_{l=1}^2 u_l J^s \partial_l u_2
- \sum_{l=1}^2 (J^s \partial_l u_l) u_2  -\sum_{l=1}^2 \partial u_l
\cdot ( (J^s-1) \partial_l)^{\partial} u_2 \notag \\
& - \sum_{l=1}^2 ( (J^s-1) \partial_l)^{\partial} u_l \cdot \partial u_2\|_p
\lsm \| J^s u \|_p \| \partial u \|_{\bmo},
\end{align*}
where
\begin{align*}
  \mathcal F{\bigl( ( (J^s-1) \partial_l)^{\partial} \bigr)} (\xi)
=& - i \partial_{\xi} ( (\langle \xi \rangle^s-1) i \xi_l) \notag \\
= & \; s \langle \xi \rangle^{s-2} \xi \xi_l
+ (\langle \xi \rangle^s -1) e_l, \qquad e_l=(\underbrace{0,\cdots,0,1}_l,0\cdots,0), \\
 ( (J^s-1) \partial_l)^{\partial} & = -s J^{s-2} \partial \partial_l + e_l (J^s-1).
\end{align*}
Clearly by using $\nabla \cdot u=0$, we get
\begin{align*}
& \sum_{l=1}^2 \partial u_l \cdot ( (J^s-1) \partial_l)^{\partial} u_2
= -s \sum_{l=1}^2 \partial u_l \cdot J^{s-2} \partial \partial_l u_2, \\
& \sum_{l=1}^2 ( (J^s-1) \partial_l)^{\partial} u_l \cdot \partial u_2
= \sum_{l=1}^2 ( (J^s-1) u_l) \partial_l u_2.
\end{align*}
Thus it suffices to show
\begin{align*}
\| s\sum_{l=1}^2 \partial u_l
\cdot J^{s-2} \partial \partial_l u_2
- \sum_{l=1}^2 \partial_l u_2 (J^s-1) u_l \|_p
> M \|J^s u\|_p.
\end{align*}
For convenience of notation, for $f=(f_1,f_2)$, $g=(g_1,g_2)$, denote
\begin{align*}
B(f,g) = s \sum_{l=1}^2 \partial f_l \cdot J^{s-2} \partial \partial_l g_2
-\sum_{l=1}^2 \partial_l f_2 (J^s-1) g_l.
\end{align*}
Now choose $\tilde \phi \in \mathcal S(\mathbb R^d)$, such that
$\operatorname{supp}(\widehat {\tilde \phi}) \subset \{ \xi:\; 2/3 <|\xi| <1\}$, and
\begin{align*}
\int_{\mathbb R^d} \widehat{\tilde \phi} (\xi) \xi_1 \xi_2 d\xi >0.
\end{align*}
The last condition is to ensure that $|(\partial_{12} \tilde \phi)(0)| \ne 0$.
Now define
\begin{align*}
\phi(x) = \sum_{\tilde l=1}^m \tilde \phi(2^{3\tilde l} x) 2^{-6\tilde l}\cdot \frac 1 {\tilde l},
\end{align*}
where $m$ is chosen sufficiently large such that $\sqrt{\log m} \gg M$. Easy to check that
\begin{align*}
& | (\partial_{12} \phi)(0)| \sim \log m, \\
& \sum_{i,j=1}^d\| \partial_i \partial_j \phi\|_{\bmo} \lesssim
\sum_{i,j=1}^d \| D^{\frac d 2} \partial_i \partial_j \phi\|_2 \lsm 1.
\end{align*}
Set $u^o = (u^o_1, u^o_2,0,\cdots, 0)$, and
\begin{align*}
u^o_1 = - \partial_2 \phi, \quad u_2^o = \partial_1 \phi.
\end{align*}
Note that $(\partial_1 u^o_1)(0) = -(\partial_2 u^o_2)(0)  =-(\partial_{12} \phi)(0)\sim \log m$.
Now discuss two cases.

\underline{Case 1}: $\|B(u^o,u^o)\|_p \ge \sqrt{\log m} \| J^s u^o\|_p$.
In this case we set $u=u^o$ and no work is needed.

\underline{Case 2}: $\| B(u^o,u^o)\|_p < \sqrt{\log m} \| J^s u^o \|_p$.
In this case we shall do a further perturbation argument.

First by continuity, we can find $\delta_0 >0$ such that
\begin{align*}
| (\partial_{12} \phi)(x) | > \frac 12 |(\partial_{12} \phi)(0)| \sim \log m, \quad
\forall\, |x|<\delta_0.
\end{align*}

Now choose $b \in C_c^{\infty}(B(0,\frac{\delta_0}2))$ such that
\begin{align*}
\| b\|_p = \frac 1{100} \| J^s u^o\|_p.
\end{align*}

For $k\gg 1$, define
\begin{align*}
 & \phi^n = \frac 1 {k^{1+s}}( \sin kx_1) b(x), \\
 & u^n = (-\partial_2 \phi^n, \partial_1 \phi^n, 0,\cdots, 0), \\
 & u=u^o+u^n.
 \end{align*}
Then by taking $k$ large, it is easy to check that $\|J^s u^n \|_p \lsm \| J^s u\|_p$, and
\begin{align*}
 \| B(u^n,u^n) \|_p & \lsm_{b,d,s,p} \frac 1 {k^{s-1}} \ll 1, \\
 \|B(u^n,u^o)\|_p & = \| s \sum_{l=1}^2 \partial u_l^n \cdot J^{s-2} \partial \partial_l u_2^o
 -\sum_{l=1}^2 \partial_l u_2^n \cdot (J^s-1) u_l^o \|_p \notag \\
 & \lsm_{b,d,s,p,\phi} \frac 1 {k^{s-1}} \ll 1, \\
 B(u^o,u^n) &= s\sum_{l=1}^2 \partial u_l^o \cdot J^{s-2} \partial \partial_l u_2^n
 -\sum_{l=1}^2 \partial_l u_2^o \cdot (J^s-1) u_l^n \notag \\
 & = s \partial_1 u^o_1 J^{s-2} \partial_1 \partial_1 u_2^n -\partial_2 u_2^o J^s u_2^n
 + \operatorname{error}, 
 \end{align*}
 where
 \begin{align*}
 &\| \operatorname{error} \|_p \lesssim_{\tilde \phi, d,s,p} 1.
 \end{align*}
On the other hand,
\begin{align*}
\| J^{s-2} \partial_1 \partial_1 u_2^n - (-1) \cos (kx_1) b(x) \|_p
& \lesssim_{b,d,s,p} \frac 1k, \\
\| J^s u_2^n -\cos (kx_1) b(x) \|_p \lesssim_{b,d,s,p} \frac 1k.
\end{align*}
Thus
\begin{align*}
\| B(u^o,u^n)\|_p \gtrsim_{d,s,p} (s-1) \log m \|b\|_p,
\end{align*}
and it follows easily that
\begin{align*}
\|B(u,u)\|_p \gtrsim_{d,s,p} ((s-1)\log m- \sqrt{\log m} ) \| J^s u^o\|_p >M \|J^s u\|_p.
\end{align*}
\end{proof}

\begin{prop} \label{prop_1007_2}
Let $1<p<\infty$ and $0<s\le 1+\frac dp$. Then for any $M>0$, there exists
$u \in \mathcal S(\mathbb R^d)$ with $\nabla \cdot u=0$ such that
\begin{align*}
\| J^s u\|_p + \| \partial u \|_{\bmo} \le 1,
\end{align*}
but
\begin{align*}
\| J^s ( (u\cdot \nabla ) u) - (u\cdot \nabla ) (J^s u) \|_p >M.
\end{align*}
\end{prop}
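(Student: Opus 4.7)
The plan is to follow the strategy of Proposition~\ref{prop_1007_1}, adapted to the regime $0<s\le 1+d/p$ in which the Sobolev embedding $J^{s}L^p\hookrightarrow W^{1,\infty}$ no longer holds, so that the two hypotheses $\|J^s u\|_p\le 1$ and $\|\partial u\|_{\bmo}\le 1$ must be maintained simultaneously.  First, applying Theorem~\ref{thm921_1a} with $f=u_l$ and $g=u_i$, summing over $l$, and exploiting $\nabla\cdot u=0$ (which kills the terms $u_i\,J^s\partial_l u_l$ and $s\partial_j u_i\,J^{s-2}\partial_j(\nabla\cdot u)$ coming from the formula $J^\partial=sJ^{s-2}\partial\partial_l+J^s e_l$), I would obtain the reduction
\begin{align*}
J^s((u\cdot\nabla)u)_i-((u\cdot\nabla)J^s u)_i
=s\sum_{j,l}\partial_j u_l\cdot J^{s-2}\partial_j\partial_l u_i
+\nabla u_i\cdot J^s u+\operatorname{OK}_i,
\end{align*}
where $\|\operatorname{OK}\|_p\lesssim \|J^s u\|_p\|\partial u\|_{\bmo}\le 1$.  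Denoting the principal term by $\tilde B(u,u)$, it suffices to produce a divergence-free $u\in\mathcal S(\mathbb R^d)$ with $\|J^s u\|_p+\|\partial u\|_{\bmo}\le 1$ and $\|\tilde B(u,u)\|_p>2M+2$.

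To generate a log-type blow-up of $\partial u$ at the origin while keeping both norms small, I would take $u^o=(-\partial_2\phi^o,\partial_1\phi^o,0,\ldots,0)$ with
\begin{align*}
\phi^o(x)=\varepsilon\sum_{\tilde l=1}^m\frac{1}{\tilde l}\cdot 2^{-6\tilde l}\,\tilde\phi(2^{3\tilde l}x),
\end{align*}
exactly as in Proposition~\ref{prop_1007_1}, where $\tilde\phi\in\mathcal S(\mathbb R^d)$ has Fourier support in $\{2/3<|\xi|<1\}$ and $(\partial_1\partial_2\tilde\phi)(0)\ne 0$, and $\varepsilon>0$ is small.  The lacunary frequency structure yields $\|\partial^2\phi^o\|_{\bmo}\lesssim\varepsilon\sup_{\tilde l}(1/\tilde l)\lesssim\varepsilon$.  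The crucial new estimate, absent in Proposition~\ref{prop_1007_1}, is $\|J^{s+1}\phi^o\|_p\lesssim\varepsilon$ uniformly in $m$: each dyadic piece satisfies $\|J^{s+1}\phi^o_{\tilde l}\|_p\sim\varepsilon(1/\tilde l)\cdot 2^{3\tilde l(s-1-d/p)}$, and the hypothesis $s\le 1+d/p$ makes the exponent $\le 0$, so the series converges (in $\ell^{\min(p,2)}$ via Littlewood--Paley for lacunary sums).  After choosing $\varepsilon$ small enough to absorb constants, this gives $\|J^s u^o\|_p+\|\partial u^o\|_{\bmo}\le 1/100$, while $|\partial u^o(x)|\gtrsim\varepsilon\log m$ on some fixed ball $B(0,\delta_0)$.

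A two-case analysis then follows.  If $\|\tilde B(u^o,u^o)\|_p\ge 2M+2$ already, set $u=u^o$ and we are done.  Otherwise we perturb by $u^n=\nabla^\perp\phi^n$ and expand
\begin{align*}
\tilde B(u^o+u^n,u^o+u^n)=\tilde B(u^o,u^o)+\tilde B(u^o,u^n)+\tilde B(u^n,u^o)+\tilde B(u^n,u^n).
\end{align*}
For $s>1$ the Proposition~\ref{prop_1007_1} choice $\phi^n=k^{-(s+1)}\sin(kx_1)\,b(x)$ with $b\in C_c^\infty(B(0,\delta_0/2))$ still works: one has $\|J^s u^n\|_p+\|\partial u^n\|_{\bmo}\lesssim k^{1-s}+\|b\|_p\ll 1$, while $\partial u^o_l\cdot J^{s-2}\partial_j\partial_l u^n_2$ contains the piece $(\partial_1 u^o_1)(x)\cdot(-\cos kx_1)b(x)$ whose $L^p$ norm is $\gtrsim\varepsilon(\log m)\|b\|_p$.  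For $s\le 1$ the oscillatory choice fails because $\|\partial u^n\|_{\bmo}\sim k^{1-s}$ no longer decays as $k\to\infty$; I would instead take a bounded-frequency perturbation $\phi^n=c_0 b(x)$ with $b\in C_c^\infty$ chosen so that $(J^s\nabla^\perp b)(0)$ is nonzero and not orthogonal to $\nabla u^o_i(0)$.  Then $\|J^s u^n\|_p+\|\partial u^n\|_{\bmo}\le Cc_0$, and the cross term $\nabla u^o_i\cdot J^s u^n$ has $L^p$ norm $\gtrsim c_0\varepsilon\log m$ on a ball around the origin.

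The main obstacle is avoiding cancellation, both internal to $\tilde B$ (between its two principal summands $s\sum\partial_j u_l\cdot J^{s-2}\partial_j\partial_l u_i$ and $\nabla u_i\cdot J^s u$) and among the four bilinear pieces above.  This is handled, as in Proposition~\ref{prop_1007_1}, by fixing the signs of the relevant Taylor coefficients at the origin ($(\partial_1\partial_2\tilde\phi)(0)$, $(J^s\nabla^\perp b)(0)$, etc.)\ so that the dominant contributions are additive with a definite sign on a fixed ball, and by observing that all sub-dominant terms of size $O(k^{-\min(s-1,1)})$ (for $s>1$) or $O(c_0^2)$ (for $s\le 1$) are absorbed for sufficiently large $m$ and appropriate $k$ or $c_0$.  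The genuinely new difficulty compared to Proposition~\ref{prop_1007_1} is the regime $s\le 1$, where we lose the decay gained from high-frequency oscillation and must rely purely on the smallness of $c_0$ together with the new uniform estimate $\|J^s u^o\|_p\lesssim\varepsilon$---the essential content provided by the assumption $s\le 1+d/p$.
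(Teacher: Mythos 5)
Your proposal follows the paper's overall strategy: the reduction via Theorem~\ref{thm921_1a}, the lacunary stream function $\phi^o$ (identical to the one in Proposition~\ref{prop_1007_1}), the key new observation that $\|J^{1+d/p}u^o\|_p\lesssim 1$ uniformly in $m$ (which is precisely what the paper records as the essential modification), and the dichotomy on $\|B(u^o,u^o)\|_p$ with a further perturbation in Case 2. Two small slips in the reduction: the second principal summand should be $\nabla u_i\cdot(J^s-1)u$, not $\nabla u_i\cdot J^s u$, and the sign in front of the $J^{s-2}\partial\partial_l$ summand is the opposite; neither affects the argument.

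Where you genuinely depart from the paper is the Case-2 perturbation when $s\le 1$. The paper's proof literally says ``the rest of the argument is similar to Proposition~\ref{prop_1007_1}'', i.e.\ uses the same oscillatory $\phi^n=k^{-(1+s)}\sin(kx_1)\,b(x)$. You switch to a bounded-frequency $\phi^n=c_0 b(x)$. Your stated motivation --- that $\|\partial u^n\|_{\bmo}\sim k^{1-s}$ no longer decays as $k\to\infty$ --- is correct for $s<1$, but it misses the sharper obstruction at $s=1$: in the oscillatory construction the two halves of $B(u^o,u^n)$ collapse, via the divergence-free identity $\partial_2u^o_2=-\partial_1u^o_1$ and the high-frequency symbol approximations $J^{s-2}\partial_1^2\approx -|\xi|^s$, $J^s-1\approx|\xi|^s$, to a single multiple $(1-s)\,\partial_1u^o_1\cdot\cos(kx_1)b$, so the leading cross term vanishes \emph{identically} when $s=1$. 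Your bounded-frequency choice sidesteps both problems at once, since for a fixed bump $b$ the functions $J^{s-2}\partial_1^3b$ and $(J^s-1)\partial_1 b$ are not (approximately) negatives of each other; this is a real advantage over a literal reading of the paper's sketch, which would require pushing to a next-order correction at $s=1$.

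One point you should tighten: the dominant cross term is not just $\nabla u^o_i\cdot(J^s-1)u^n$ but the full expression $-s\sum_{j,l}\partial_j u^o_l\,J^{s-2}\partial_j\partial_l u^n_2+\nabla u^o_2\cdot(J^s-1)u^n$, which as a linear functional on $b$ (with coefficients $\sim(\partial^2\phi^o)(0)\sim\log m\cdot(\partial^2\tilde\phi)(0)$) must be checked to be non-degenerate; you cannot in general force $(\partial_{ij}\tilde\phi)(0)=0$ for $(i,j)\ne(1,2)$ because the Fourier support condition already forces $(\partial_{11}\tilde\phi)(0),(\partial_{22}\tilde\phi)(0)$ to interact with $(\partial_{12}\tilde\phi)(0)$. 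You acknowledge the cancellation risk and propose to ``fix the signs,'' which is the right idea, but this verification carries the same weight of work as in Proposition~\ref{prop_1007_1} and should not be left implicit.
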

\begin{proof}[Proof of Proposition \ref{prop_1007_2}]
We only need to alter slightly the construction in the proof of Proposition
\ref{prop_1007_1}. We use the same notation as therein and define
\begin{align*}
& \phi(x) = \sum_{ l=1}^m \tilde \phi(2^{3 l} x) 2^{-6 l}\cdot \frac 1 { l}, \\
& u^o = (u_1^o,u_2^o,0,\cdots,0), \\
& u_1^o=-\partial_2 \phi, \quad u_2^o = \partial_1 \phi.
\end{align*}
Easy to check that $\| J^{1+\frac dp} u^o\|_p \lesssim 1$.

\underline{Case 1}: $\|B(u^o,u^o)\|_p \ge \sqrt{\log m}$. No work is needed and we can take $u=u^o$.

\underline{Case 2}: $\| B(u^o,u^o)\|_p < \sqrt{\log m}$. In this case we just choose
$b\in C_c^{\infty}(B(0,\frac {\delta_0}2))$ such that $\|b\|_p =1$. The rest of the argument is then similar
to that in the proof of Proposition \ref{prop_1007_1}. We omit details.

\end{proof}

\end{document}